\newtheorem{theorem}{Theorem}[section]
\newtheorem{corollary}[theorem]{Corollary}
\newtheorem{definition}[theorem]{Definition}
\newtheorem{example}[theorem]{Example}
\newtheorem{lemma}[theorem]{Lemma}
\newtheorem{notation}[theorem]{Notation}
\newtheorem{proposition}[theorem]{Proposition}
\newtheorem{remark}[theorem]{Remark}
\newenvironment{proof}[1][Proof]{\textbf{#1.} }{\ \rule{0.5em}{0.5em}}
\def \L{\Lambda}
\def \<{\langle}
\def \>{\rangle}
\def \l{\lambda}
\def \R{\mathbb R}
\def \A{{\mathcal A}}
\def \D{{\mathcal D}}
\def \H{{\cal H}}
\def \H^0{{\cal H}^0 or}
\def \G{{\mathcal G}}
\def \P{{\mathcal P}}
\def \V{{\mathcal V}}
\def \w{\omega}
\def \kf{\frak k}
\def \p{\partial}
\def \beq{\begin{equation}}
\def \eeq{\end{equation}}
\def \en{{\bf n}}
\def \n{\nabla}
\def \eref{\eqref}
\def \lrc{\lrcorner}
\numberwithin{equation}{section}
\begin{document}


\title{The Yang-Mills heat semigroup on three-manifolds with boundary.
\footnote{\emph{Key words and phrases.} Yang-Mills, heat equation,
manifolds with boundary, Gaffney-Friedrichs inequality, weakly parabolic. \newline \indent
\emph{2010 Mathematics Subject Classification.} Primary; 35K58, 35K65, Secondary;  70S15, 35K51, 58J35.} }
\author{
Nelia Charalambous
\thanks{Research was supported in part by NSF Grant DMS-0072164,
 NSF Grant DMS-0223098 and by CONACYT of Mexico.}\\
Department of Mathematics, \\
Instituto Technol\'ogico Aut\'onomo de M\'exico, \\
M\'exico \\
{\tt nelia.charalambous@itam.mx}\\ \\
Leonard Gross\\
Department of Mathematics\\
Cornell University\\
Ithaca, NY 14853-4201\\
{\tt gross@math.cornell.edu}\\ \\
}



\date{\today}

\maketitle

\newpage
\begin{abstract} Long time existence and uniqueness of solutions
to the Yang-Mills heat equation is proven over a compact 3-manifold
with smooth boundary. The initial data is taken to be a
Lie algebra valued connection form in the Sobolev space
$H_1$. Three kinds of boundary conditions are explored, Dirichlet type,
Neumann type and Marini boundary conditions. The last is a
nonlinear boundary condition, specified by setting the normal
component of the curvature to zero on the boundary. The Yang-Mills heat equation  is a weakly parabolic non-linear equation. We use a technique of
Donaldson and Sadun to convert it to a parabolic equation and then
gauge transform the solution of the parabolic equation
back to a solution of the original equation.
Apriori estimates are developed  by first establishing a gauge invariant
version of the Gaffney-Friedrichs inequality. A gauge invariant
regularization procedure for solutions is also established.
  Uniqueness holds upon imposition of boundary conditions on
  only two of the three components of the connection form
   because of weak parabolicity.
 This work is motivated by possible applications to quantum field theory.

\end{abstract}
\tableofcontents


\newpage

\section{Introduction} \label{secIntro}

\subsection{Nonlinear distribution spaces.} \label{secIntro1}

Heat equations have been  used to characterize
 various function spaces by identifying these
 function spaces with  the
 initial data space for a parabolic equation. This method of characterizing
  function spaces  goes  back at least to the 1961 paper of
   Lions \cite{Lio}[Section 5],
  the 1960s papers, \cite{Tai1,Tai2,Tai3}, of Taibleson and
 to  the 1980s papers, \cite{Mat1,Mat2,Mat3}, of Matsuzawa.
 The  papers  of Matsuzawa  characterize an
 ultradistribution $u$ on a compact subset of $\R^n$ by properties of the
 solution to the heat equation with initial data $u$. See the classic book
 \cite{BB} for early work and the paper  \cite{AEO} for some recent history.

        By way of a simple example, consider a non-negative unbounded
   self-adjoint operator
  $A$ acting on a Hilbert space $H$. Assume for simplicity that $A \ge I$.
   Let $\alpha > 0$.
  The easily verified identity,
  \beq
  \| A^{-\alpha} u_0\|^2
  = C_\alpha  \int_0^\infty s^{2\alpha-1} \|e^{-sA} u_0\|^2 ds, \ C_\alpha = \text{constant}  \label{0.1}
    \eeq
  shows that the norm $ u_0 \rightarrow \|A^{-\alpha} u_0 \|$ on $H$
  can be characterized in terms of solutions to the initial value problem
  \beq
  u'(s) = -A u(s),\ \text{for}\ s >0, \ \  u(0) = u_0,     \label{0.2}
  \eeq
  since the solution is just $u(s) = e^{-sA} u_0$.
  In fact it is clear that the initial value problem \eref{0.2}
  sets up a one-to-one correspondence between  the space of those solutions
  of the equation  $u'(s) = -A u(s)$ for which the right side of \eref{0.1} is finite,
  and  the large initial data space consisting of  the  completion
   of $H$ in the norm   $\|A^{-\alpha} u_0 \|$.
    If $H$ is an $L^2$ space over some
    Riemannian manifold
    and $-A$ is a second order elliptic operator then these completed spaces
    are just negative Sobolev spaces and the correspondence
    $u_0 \leftrightarrow u(\cdot)$, set up by \eref{0.2}, identifies these
    Sobolev spaces with certain spaces of solutions of the heat equation for $-A$.
 In general $H$ may be some other kind of Banach space or Frechet space,
 and the   completion spaces  need not be Sobolev spaces, \cite{AEO}.

              Some quantum field theories seem to require use of
   large completions    of spaces which  are  not  linear spaces.
  Most important is the example     in which the space to be completed
   is a space $\A$ of connections on $\R^3$ modulo a gauge group $\G$.
    Whatever smoothness one imposes on $\A$ and $\G$, the space $\A/\G$
  is not a linear space in typical  cases of interest.
   See e.g. \cite{Si1,Si2,NR} for  discussions
   of the geometry of this space in case $\R^3$ is replaced by a
    compact manifold.

    The reason for the need to complete such a quotient space
   is that the quantum theory requires a space large enough to support
   certain measures of physical interest. Typically, the measures arising in quantum field theory need some negative Sobolev space to live on.
   In the preceding  example some kind of {\it nonlinear} negative
   Sobolev space seems to be required.
   We are going to explore the  (nonlinear) Yang-Mills heat equation
   as a replacement for the linear equation \eref{0.2}.
         The measure theoretic difficulties increase with spatial dimension
          as does the difficulty in proving existence of solutions to the
          Yang-Mills heat equation.
          For example no  completion is necessary
          for addressing the measure theory in one spatial dimension,
          even though study of the associated stochastic process
           presents severe problems of its own.
             See, e.g., A. Sengupta, \cite{Sen1,Sen2}.
           We are going to address the Yang-Mills heat equation in
            three space dimensions only.
            The corresponding existence and uniqueness theorems are
            simpler in two space dimensions and follow easily from our techniques.

         This paper is intended as a first step in constructing
      non-linear distribution spaces for Yang-Mills fields over
       three dimensional space.
 In contrast with the simple example of \eref{0.2}, the flow  equation
    associated  to such a nonlinear distribution space will be  itself nonlinear.
    In the case of a Yang-Mills field the natural equation is the gradient flow
     equation of the magnetic energy
     (which is the square of the $L^2$ norm  of the curvature).
     Elsewhere, the nonlinear sigma model will be investigated from
      this same point
     of view and the nonlinear equation will again be a gradient flow equation
     of a non-quadratic energy.
     Thus in each of the examples of interest  the flow  equation is a
   geometric flow given as the gradient flow of some natural energy
   functional on some non-linear manifold.
     It is the intention  of this program to realize the required non-linear
    distribution spaces     as   complete ``Riemannian'' infinite dimensional
     manifolds whose elements are geometric flows and which support
     genuine functions, such as gauge invariantly regularized Wilson loop
     variables.

                In order to understand the spaces of flows for which there is no
    identifiable initial data it is first necessary to understand those
   flows for which there is an identifiable initial value. Unlike the linear case
   a proper  understanding of the space of initial data  for some class of flows
   requires treating both the space of flows and the initial data space
   as infinite dimensional Riemannian manifolds: one needs to know
   not only which initial data propagates to a flow but also which
    variations of the     initial data propagate to a solution of the
     variational equation along the flow. In the linear
   case there is no distinction between the flow equation and its variational equation.  In the nonlinear case, when the initial data
   is singular,  the variational equation will have singular coefficients at
   time zero,
   and a variation of the initial data may not propagate past the singularity.
   This issue will be treated in a separate work.
       In the present paper we are going
   to prove existence and uniqueness of solutions to the Yang-Mills
    heat equation, \eref{0.5}, with initial data in Sobolev class 1,
    and also establish apriori bounds useful for extending the class
   of initial data to connection forms of Sobolev class 1/2.
   The latter appears to be the largest class of initial data for which
    variations propagate, and is also the natural  initial data  space
        from the point of  view of relativity theory.
           The extension, however, will  be made elsewhere.

\subsection{Manifolds with boundary and local observables} \label{secIntro2}

            We are going to consider the Yang-Mills heat equation in
             a product bundle over a
 compact Riemannian 3-manifold $M$ with smooth boundary.
  The case of interest for quantum field theory is that in which $M$ is the closure
  of a bounded open set $O$ in $\R^3$ with smooth boundary.
    Roughly, our main theorem asserts
   that if $K$ is a compact, connected Lie group with Lie algebra $\kf$ and if
   $A_0$ is a  $\kf$ valued connection form over $M$,
    lying in the first order Sobolev space $W_1(M)$, then
    there exists a unique  solution  to the Yang-Mills heat equation
   \beq
   \p A(t)/\p t = - d_{A(t)}^* B(t),\ \ \ t >0\ \ \text{with}\ \    A(0) = A_0,     \label{0.5}
   \eeq
   satisfying Dirichlet type or Neumann type boundary conditions.
  Here $B(t)$ is the curvature 2-form, $B(t)= dA(t) + A(t)\wedge A(t)$,
   of the connection form $A(t)$ and
  $d_{A(t)}^*$ is the gauge covariant coderivative. Equation \eref{0.5}
  is the gradient flow equation for the magnetic energy $\|B\|_{L^2(M)}^2$.
  We are also going to examine a purely nonlinear boundary condition
  suggested by work of A. Marini in the context of nonlinear
  elliptic  boundary value problems for Yang-Mills connections over
   four dimensional manifolds, \cite{Ma3, Ma4,Ma7}.

        There is a fundamental conceptual reason for considering the  Yang-Mills
 heat equation over  a bounded open set  $O$ in $\R^3$ rather than
  over all of $\R^3$ or over a closed 3-manifold such as $T^3$:
 Suppose that $\gamma$ is a piecewise smooth closed curve in $\R^3$.
      Denote by $W_\gamma(A)$  the composition of a character of $K$
  with  the parallel transport  around $\gamma$  by a connection
   form $A$ defined in a neighborhood of $\gamma$.
      That is,  $W_\gamma(A) \equiv trace\ ( //_\gamma^A)$, where the trace
 is computed in some finite dimensional unitary representation of $K$ and
 $//_\gamma^A$ denotes parallel transport.

          Then the  holonomy function $A \mapsto W_\gamma(A)$
  (the Wilson loop variable)   is gauge invariant and descends
  to the quotient manifold $\A/\G$  discussed above.
          In the sought for space of connection
   forms, on whose moduli space the desired measure lives, a typical
   connection form $A$ is not even an almost everywhere defined form,
   let alone  continuous,
   and the function $W_\gamma(A)$ is therefore not well defined.
          This is known from the electromagnetic case, $K = U(1)$, for which
   the measure theory is explicitly solvable.
         Nevertheless similar holonomy functions on $\A/\G$ have been used extensively
  both  for formulation of a mathematical theory  \cite{Si2}, \cite{Sei}[Chapter 8],
  and for computational comparisons with experiment \cite{LeP}.
         But if $A(\cdot)$ solves the Yang-Mills heat equation \eref{0.5}, with
  initial data $A_0$, which we take to be some kind of generalized
  connection form on $\R^3$,
   then, for any $t > 0$, $A(t)$ will be (essentially) a $C^\infty$
   1-form and the map  $A_0 \mapsto W_\gamma(A(t))$ will be
    well defined and gauge invariant.
          Thus the Yang-Mills heat equation provides a gauge invariant
   regularization  procedure for a connection form $A_0$,
   which is applicable even when $A_0$ is in some distribution  space.
              However, since the (weakly) parabolic equation \eref{0.5}
   propagates information with infinite speed, the map
   $A_0 \mapsto W_\gamma(A(t))$ depends on $A_0$ over all of $\R^3$.
          This is unsatisfactory from the point of view of local quantum field
  theory, which requires use of ``local observables'', \cite{GJ}, \cite{SW},
  that is, functions of $A_0$ which depend only on the  behavior
  of $A_0$ in some specified (say bounded) open set $O \subset \R^3$.
           Now solving equation \eref{0.5}
      over $O$ with initial data $A_0|O$ produces a function
      $W_\gamma(A(t))$ depending only on $A_0|O$,
      when $\gamma \subset O$.
            In this way we expect to construct useful ``local observables''.

                     We anticipate that the conventional lattice regularization
    of Yang-Mills quantum field theory, \cite{Wil,KS,Sei,G7}, will mesh well
     with the present     continuum regularization.

\subsection{Technical description and history} \label{sec1.3}

The Yang-Mills heat equation has a long history
\cite{AB, ChSh, Do1, Sa, Do2,Ra,Has,Ho, HT2,HT1}.
While most of these works were aimed at immediate
 application in mathematics, some, e.g. \cite{Sa},
    were aimed primarily at application to physics.

 Standard methods for proving existence and uniqueness for
  nonlinear parabolic equations do not seem applicable to
  equation \eref{0.5} because the
 equation is only weakly parabolic and  the functional
 $ A \mapsto \| d A +A\wedge A \|_{L^2(M)}^2$, whose flow we
  are following, is not  (even weakly) convex.
       We are going to adapt a method that seems to have its origin
  in papers of   Zwanziger, \cite{Z},  Donaldson, \cite{Do1}, and
  Sadun, \cite{Sa}.  This consists in adding a term $-d_A d^*A$ to the right side
  of \eref{0.5}, which  makes the equation parabolic.
   A time dependent gauge transformation can then be constructed
 which  changes the solution of the modified equation into a solution
  of the original equation,  \eref{0.5}.
      Zwanziger first added  such a term into the stochastic evolution equation
  for a  quantum field theory, \cite{Z}.
   Donaldson, \cite{Do1}, independently
  added such a term to the evolution equation of a classical Yang-Mills
   heat equation and similarly ``gauged it away''. L. Sadun, motivated by
   Zwanziger's work,  used this technique in proving existence of solutions
  to \eref{0.5} over $\R^3$ in his Ph. D. thesis, \cite{Sa},  as a step in
  carrying out stochastic quantization for Yang-Mills fields.
  See also   the book \cite [Section 6.3]{DK}  for further exposition
   of Donaldson's method.

   Our proofs depend on establishing  apriori estimates for solutions
   of \eref{0.5}. There are two kinds of apriori estimates,
   both  based on energy estimates for various gauge
 invariant derivatives  of the curvature.
        One type of estimate is  based on the
   assumption that the initial data has finite action
   (loosely equivalent to $A_0 \in H_{1/2}$) and the other
   on the assumption that the initial data has finite energy
   (loosely equivalent to $A_0 \in H_1$).

       The proof of the energy estimates is based on a re-expression of
       Sobolev's inequality for $H_1$ functions in terms of  the
        gauge invariant exterior    derivatives $d_A$ and $d_A^*$ instead
        of the  gauge invariant Riemannian gradient $\n^A$.
   For real valued forms the key inequality relating these two kinds of estimates
   is the Gaffney-Friedrichs inequality \cite{Ga1,Fr,ME56,Mt01, Tay1}. In our case we need
   gauge covariant derivatives and for this purpose we will establish
   a gauge invariant version of the Gaffney-Friedrichs inequality.
   Not surprisingly, the curvature of the connection form $A$ enters
   in a substantial way and contributes to some of the technical
    problems to be resolved.

       J. R{\aa}de,  \cite{Ra}, has proven existence of solutions for the
  Yang-Mills heat equation on a closed 3-manifold and investigated
   the longtime behavior of the solutions.
  The method used by R{\aa}de to solve the problem of lack of
  parabolicity is quite different from the method of Donaldson and Sadun.
   The curvature, $F_A$,  of the 1-form $A$ is taken as
  an unknown, $L$,  independent  of $A$, and a joint system of equations
  for $A$ and $L$  is solved. The joint system is parabolic.
  R{\aa}de    proved
  that the solution $L(t)$ agrees with $F_{A(t)}$  for all time
   if they agree at time zero.
  This method seems to go back to
   Ginibre and Velo, \cite{GV1,GV2}, in the context of the hyperbolic
      Yang-Mills equations and  to  De Turck, \cite{DeT}, in the
  context of the parabolic Ricci flow problem.   This method might offer some advantages in our circumstance. But
  the presence of boundary conditions seems to add considerable
   difficulty.

   The transition from short time existence to long time existence is carried out
   in different ways in the various works \cite{Do1, Do2,Ra,Sa} and in
    the present paper. In addition, semi-probabilistic methods have
    also been used:  See, e.g.,  Arnoudon et al, \cite{ABT},
     and  Pulemotov, \cite{Pu},
    for a very different approach to long time existence.

\section{Statement of results}   \label{secstate}

\begin{notation} \label{not2.1}{\rm $M$ will denote a compact
Riemannian 3-manifold with smooth boundary.  $K$ will denote
a compact connected Lie group. Without loss of generality we may
 and will identify $K$ with a subgroup of the orthogonal group, respectively unitary group,  of some finite dimensional real, respectivley complex, inner product space $\V$. Thereby the Lie algebra of $K$, denoted $\frak k$, is a
 real  subspace of $End\ \V$. We will be concerned only with  a product bundle $M\times \V \rightarrow M$ over $M$. We assume given
 an  $Ad\ K$ invariant inner product $\<\cdot, \cdot\>$ on $\frak k$ with norm
  denoted by $|\xi|_{\frak k}$  for $\xi \in \frak k$.

 If $\w$ and $\phi$ are $\frak k$ valued p-forms  define
 $(\w, \phi) = \int_M\<\w(x), \phi(x)\>_{\L^p\otimes \frak k} dx$
 and $\|\w \|_2^2 = (\w, \w)$.
 Define also
 $\|w\|_\infty = \sup_{x \in M}|\w(x)|_{\L^p \otimes \frak k}$ and
 \beq
 \| \w \|_{W_1(M)}^2
 = \int_M |\n \w|_{\Lambda^p\otimes\frak k}^2 d\, \text{Vol}\ \ + \| \w \|_2^2   \label{ymh2}
 \eeq
 where $\n$ is the Riemannian gradient on forms. Define
 $W_1 = W_1(M) = \{ \w: \|\w\|_{W_1(M)} <\infty\}$.
            The notation $H_1$ will be used later for forms in $W_1$ which
     satisfy specified boundary conditions.
 Since we are concerned only with a product bundle, a connection form
  can be identified with a $\frak k$ valued 1-form.
  For a connection form $A$, given in local coordinates by
  $A = \sum_{j=1}^3 A_j(x) dx^j$, its curvature (magnetic field) is given by
   \beq
 B = dA +(1/2) [A\wedge A]                                \label{ymh3}
 \eeq
 where $[A\wedge A] = \sum_{i,j} [A_i, A_j] dx^i\wedge dx^j$ and
 $[A_i(x), A_j(x)]$
 is the commutator in $\frak k$. $B$ is a   $\frak k$ valued 2-form.
 For $\w \in W_1$
  we define  $d_A \w = d \w + (ad\ A) \wedge \w$ and
  $d_A^* \w = d^*\w + (ad\ A\wedge)^* \w$. No boundary conditions
   are implied on these operators in this section.
  The domains of these operators will be discussed further in Section \ref{secDN}.
 }
 \end{notation}

\begin{definition}\label{defstrsol} {\rm Let $0 < T \le \infty$.
By a {\it strong solution} to the Yang-Mills heat
equation over $[0, T)$
we mean a continuous function
\beq
A(\cdot): [0,T) \rightarrow W_1 \subset \frak k\text{-valued 1-forms}
\eeq
such that
\begin{align}
&a) \ B(t) \in W_1 \  \text{for each}\ \  t\in (0,T),                    \label{ymh8}\\
&b) \ \ \text{the strong $L^2(M)$ derivative $A'(t) \equiv (d/dt) A(t)$}\notag \\
& \qquad   \qquad    \text{exists for each} \    t\in (0, T),        \label{ymh9}\\
&c) \  A'(t) = - d_{A(t)}^* B(t)\ \ \text{for each}\ t \in(0, T),     \label{ymh10}\\
&d) \  \| B(t)\|_\infty\ \text{is bounded on each
                             bounded interval $ [a,b) \subset (0, T)$,}  \label{ymh11}\\
&e) \  t^{3/4} \| B(t)\|_\infty\
           \text{ is bounded on some interval $(0, b) \subset (0, T)$.} \label{ymh12}
\end{align}
}
\end{definition}

\begin{remark} {\rm The condition e) allows the degree of singular behavior near
$ t =0$ that is to be expected in three dimensions.
It will be shown in a separate work
that, when $M$ is convex,  conditions d) and e) follow
 from a), b) and c). Usually $A'(t)$ will signify $\p A(t)/ \p t$. But in b)
 we are regarding $A(\cdot)$ as a function into $L^2(M; \L^1\otimes \frak k)$.
}
\end{remark}

\subsection{Dirichlet, Neumann and Marini boundary conditions}

\begin{notation}\label{notnorm}{\rm (Tangential and normal components.)
At a point $x\in \p M$ denote by $\en$
 the outward drawn unit normal and by $\nu$ the dual unit conormal. Any p-form $\gamma$ over $T_x(M)$ can be written uniquely as
 $\gamma = \alpha\wedge \nu + \beta$ where $\beta(\en, X_1, \dots, X_{p-1}) = \alpha(\en, X_1,\dots, X_{p-2}) =0$ for all $X_j \in T_x(M)$.
     As is customary, we will write
 $\gamma_{norm} = \alpha \wedge \nu$ and $\gamma_{tan} = \beta$.
 The restriction maps $\alpha\rightarrow i^* \alpha$ and
 $\beta \rightarrow i^*\beta$ are clearly isomorphisms on
  these classes of forms when
 $i:T_x(\p M)\rightarrow T_x(M)$ is the inclusion map. Clearly
 $\gamma_{tan} =0$ if and only if $\gamma\wedge \nu =0$.
  A coordinate based description of these two components of
  $\gamma$ will be given in Section \ref{secGFS}.
 }
 \end{notation}

\begin{theorem} \label{thm1N} $($Neumann boundary conditions.$)$
Suppose that $A_0 \in W_1$ and
$(A_0)_{norm} =0$. Then there is a strong solution $A(\cdot)$ over $[0, \infty)$ such that $A(0) = A_0$ and that satisfies the boundary conditions
\begin{align}
&i)\ \ \  A(t)_{norm} =0\ \ \text{for}\ t \ge 0 \ \text{and}    \label{N1}\\
&ii)\ \ B(t)_{norm}=0\ \   \text{for}\  t >0.         \label{N2}
\end{align}
Uniqueness: If $A_1$ and $A_2$ are two strong
solutions  which agree at time zero and satisfy \eref{N2}
then $A_1 = A_2$ on $[0, \infty)$.
\end{theorem}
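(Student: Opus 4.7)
The plan is to follow the Donaldson--Sadun--Zwanziger procedure described in the introduction: convert the weakly parabolic equation \eref{0.5} into a strictly parabolic augmented equation, solve the augmented equation for short time, and then gauge-transform its solution back to a solution of \eref{0.5}. Concretely, I would first study
$$\partial \tilde A/\partial t = -d_{\tilde A}^* B(\tilde A) - d_{\tilde A}\, d^*\tilde A, \qquad \tilde A(0) = A_0,$$
whose linearization has principal part the Hodge Laplacian on 1-forms, with absolute boundary conditions $\tilde A_{norm} = 0$ and $(d^*\tilde A)|_{\p M}$ constrained so that the Hodge--Laplacian is self-adjoint. Short-time existence in $C([0,\tau]; W_1)$ for $A_0 \in W_1$ with $(A_0)_{norm}=0$ then follows from standard quasilinear parabolic theory (Galerkin, or analytic semigroup methods applied to the absolute Hodge Laplacian together with a fixed-point argument on the nonlinear lower-order terms). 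To return to \eref{0.5}, I would solve the pointwise ODE $g'(t)g(t)^{-1} = -d^*\tilde A(t)$, $g(0) = I$, in the gauge group, and set $A(t) := g(t)\cdot \tilde A(t)$; a direct computation going back to Donaldson shows $\p_t A = -d_A^* B(A)$, while $B(A) = g B(\tilde A) g^{-1}$ carries curvature bounds across, and the choice of boundary data for $\tilde A$ is exactly what preserves $A_{norm}=0$. The second boundary condition $B(t)_{norm}=0$ for $t>0$ then follows from elliptic regularity for the absolute boundary Hodge problem once $\tilde A(t)\in W_2$, together with the equivariance of $B$ under $g$.

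To extend the short-time solution to $[0,\infty)$ and verify the pointwise bounds (d), (e), I would establish a priori $L^2$ energy estimates for the gauge-covariant derivatives $\n^A B$, $\n^A \n^A B$, and so on along the flow, by differentiating $\|B(t)\|_2^2$ and using Bianchi's identity $d_A B = 0$. The key analytic input is the gauge-invariant Gaffney--Friedrichs inequality announced in the introduction, which allows one to replace $\|\n^A \w\|_2^2$ by $\|d_A\w\|_2^2 + \|d_A^*\w\|_2^2$ plus curvature and second-fundamental-form correction terms; the condition $(A_0)_{norm}=0$ is precisely the boundary hypothesis compatible with the absolute form of this inequality applied to $B$. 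Combined with Sobolev embedding $W_1\hookrightarrow L^6$ in dimension three and parabolic smoothing, this converts the energy bounds into the pointwise bound (d) on each $[a,b)\subset (0,T)$ and the singular short-time bound $t^{3/4}\|B(t)\|_\infty$ in (e), and a Gr\"onwall argument rules out blow-up.

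For uniqueness, let $A_1,A_2$ be two strong solutions with $A_1(0)=A_2(0)$ and $B_i(t)_{norm}=0$, and set $a(t) = A_1(t) - A_2(t)$. Differentiating $\|a(t)\|_2^2$ and using $\p_t a = -(d_{A_1}^* B_1 - d_{A_2}^* B_2)$, integration by parts produces a boundary term of the form $\int_{\p M}\langle a, \nu\lrcorner(B_1-B_2)\rangle$, which vanishes by $(B_i)_{norm}=0$; note that \emph{no} boundary condition on $a$ itself is used, which is exactly how the loss of one component in weak parabolicity manifests itself. The remaining interior terms can be bounded, using the a priori $L^\infty$ control on $B_i$ and $A_i$ from the bounds (d), (e) and interpolation, by $C(t)\|a(t)\|_2^2$ with $C\in L^1_{loc}(0,\infty)$, and Gr\"onwall then forces $a\equiv 0$.

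The main obstacle I expect is the a priori estimate step. One has to produce the gauge-invariant Gaffney--Friedrichs inequality in a form robust enough to handle the full $A$-dependent Hodge Laplacian $d_A^* d_A + d_A d_A^*$ with absolute boundary conditions, including the quadratic-in-curvature boundary corrections coming from the second fundamental form of $\p M$, and then to iterate it into global-in-time bounds in the face of the $t^{-3/4}$-type singularity at $t=0$ allowed by (e). A secondary difficulty is verifying that the gauge transformation $g(t)$ constructed in the first step has enough regularity up to $\p M$ both to preserve the Neumann boundary condition on $A$ and to remain in the Sobolev class that makes the energy estimates on $\tilde A$ transfer cleanly to estimates on $A$.
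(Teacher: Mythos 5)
Your plan is the same as the paper's at every stage: add $-d_A d^*A$ to make the equation strictly parabolic, solve it with absolute boundary conditions, gauge-transform the solution back, prove apriori estimates via a gauge-invariant Gaffney--Friedrichs inequality, and obtain uniqueness from an $L^2$ Gronwall inequality whose boundary term vanishes because $(B_j)_{norm}=0$, with no boundary condition imposed on $a=A_1-A_2$. The uniqueness argument you sketch is essentially the content of Lemma \ref{lemU1} and Theorem \ref{thmunique}; the only inaccuracy is that the Gronwall coefficient involves $\|B_j(t)\|_\infty$ alone, supplied by conditions d) and e) of Definition \ref{defstrsol}, not $\|A_j(t)\|_\infty$.

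There are, however, two genuine gaps. The regularity of the gauge transformation is not a ``secondary difficulty'' but the central obstruction, and you offer no way past it. The ODE $g'g^{-1}=d^*C$, $g(0)=I$, is solvable pointwise since $\|d^*C(t)\|_\infty=O(t^{-3/4})$ is integrable near $t=0$, but to have $A=C^g\in W_1$ one needs $g^{-1}dg\in W_1$, i.e.\ control of two spatial derivatives of $g$, hence roughly three spatial derivatives of $C$, which the short-time parabolic estimates do not control as $t\downarrow 0$. The paper never estimates $g$ directly: it solves the ODE from time $\epsilon$ with $g_\epsilon(\epsilon)=I$, so the gauge transforms $A_\epsilon=C^{g_\epsilon}$ are smooth Yang--Mills solutions on $[\epsilon,T)$, proves these form a uniformly Cauchy family in $W_1$ by applying the gauge-invariant apriori bounds of Sections \ref{secfa}--\ref{secfe} to $A_\epsilon$ itself (Theorem \ref{thmSTE3}), and only afterwards infers convergence of $h_\epsilon=g_\epsilon^{-1}dg_\epsilon$ from $h_\epsilon=A_\epsilon-g_\epsilon^{-1}Cg_\epsilon$ (Corollary \ref{corSTE3}). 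Without some version of this device your step of setting $A=g\cdot\tilde A$ does not land in $W_1$ near $t=0$. Second, the long-time bound $\|A(t)\|_{W_1}\le C_5(t,\|A_0\|_{W_1})$ (Theorem \ref{thmM6}) is derived for smooth solutions using identities such as $d^*A'=[A\cdot A']$ that involve more derivatives than a strong solution is known to possess, and it must be transferred to strong solutions; the paper does this by locally approximating the strong solution by the smooth gauge transforms $A_{1/n}$ and passing to the limit in the inequalities (Lemma \ref{lemlocreg}, Corollary \ref{corapstrong}). Your ``Gronwall argument rules out blow-up'' silently presupposes this regularization step.
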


\begin{remark}{\rm  Notice that for uniqueness the condition \eref{N1}
is  not required, even for $t =0$.
For an explanation of the terminology ``Neumann boundary conditions''
for the pair of conditions \eref{N1} and \eref{N2}
see Remark \ref{remU3}.
}
\end{remark}

\begin{theorem}\label{thm1D} $($Dirichlet boundary conditions.$)$
Suppose that $A_0 \in W_1$ and $(A_0)_{tan} =0$.
 Then there is a strong solution over $[0, \infty)$ such that
 $A(0) = A_0$ and that satisfies the boundary conditions
  \begin{align}
 &i)\ \ \  A(t)_{tan} =0\ \ \ \text{for all}\ t \ge 0 \ \text{and}    \label{D1}\\
  &ii)\ \  B(t)_{tan} =0 \ \ \ \text{for all}\ t >0.                 \label{D2}
 \end{align}
  Uniqueness: If $A_1$ and $A_2$ are two strong
  solutions which agree at time zero and satisfy \eref{D1}
 then $A_1 = A_2$ on $[0,\infty)$.
 \end{theorem}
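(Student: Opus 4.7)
The plan is to follow the Donaldson--Sadun strategy outlined in Section \ref{sec1.3}: convert the weakly parabolic equation \eref{0.5} into a strictly parabolic one by adding a gauge-fixing term, solve the parabolic equation in a suitable Sobolev setting with compatible Dirichlet boundary data, and then gauge-transform the resulting flow back to a solution of \eref{0.5}. Concretely, I would work with the augmented equation
\[
\p_t \tilde A = -d_{\tilde A}^*\, B_{\tilde A} \;-\; d\, d^* \tilde A,
\]
imposed together with the boundary conditions $\tilde A_{tan}=0$ and $(d^*\tilde A)|_{\p M}=0$. The principal symbol of the right-hand side acts on $\kf$-valued 1-forms as $-\Delta$, and the two boundary conditions are of Dirichlet type on the tangential components together with a scalar Neumann-like condition, so the Lopatinski\u\i --Shapiro test is satisfied and the problem is strongly parabolic. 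Standard parabolic theory on a compact 3-manifold with smooth boundary then yields a short-time strong solution $\tilde A(\cdot)$ with $\tilde A(0)=A_0$ whenever $A_0\in W_1$ and $(A_0)_{tan}=0$.

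Next, I would construct a time-dependent gauge transformation $g(t):M\to K$ as in Donaldson, by solving the pointwise ODE
\[
g(t)^{-1}\p_t g(t) \;=\; d^*\tilde A(t),\qquad g(0)=e,
\]
and setting $A(t)=g(t)^{-1}\tilde A(t) g(t) + g(t)^{-1}\,dg(t)$. A direct computation shows $A(t)$ solves \eref{0.5}. Because $(d^*\tilde A)|_{\p M}=0$ and $\tilde A_{tan}|_{\p M}=0$, the ODE can be arranged so that $g(t)|_{\p M}=e$ for all $t$; hence the gauge transformation preserves $A(t)_{tan}=0$, giving \eref{D1}. Taking tangential parts of $A'=-d_A^* B$ at $\p M$ (using that the tangential part of $d^*_A$ lands in the tangential directions) forces $B(t)_{tan}=0$ for $t>0$, which is \eref{D2}.

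To upgrade to long-time existence and to verify conditions d) and e) of Definition \ref{defstrsol}, I would run the energy and regularization scheme built earlier in the paper. The $L^2$ norm $\|B(t)\|_2^2$ is nonincreasing along the gradient flow; combined with the gauge-invariant Gaffney--Friedrichs inequality for $B$, which applies precisely because $B(t)_{tan}=0$ under \eref{D2}, this yields $H_1$ bounds on $B(t)$ and then, via Sobolev embedding in three dimensions and a Moser-type iteration, both the interior bound $\|B(t)\|_\infty<\infty$ on $[a,b]\subset(0,T)$ and the singular-at-zero estimate $t^{3/4}\|B(t)\|_\infty$ on $(0,b)$. Standard continuation then rules out finite-time blow-up, extending $A(\cdot)$ to $[0,\infty)$. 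For uniqueness, let $A_1,A_2$ be two strong solutions with the same initial data satisfying \eref{D2}, set $C=A_1-A_2$, and compute
\[
\tfrac{d}{dt}\tfrac12\|C(t)\|_2^2 \;=\; -\bigl(C(t),\, d_{A_1}^* B_1 - d_{A_2}^* B_2\bigr).
\]
The tangential boundary conditions $(B_i)_{tan}=0$ kill the boundary terms arising from integration by parts, and rewriting the difference as a sum of a term linear in $C$ and a quadratic error controlled by $\|B_i(t)\|_\infty$ gives a Gronwall inequality $\tfrac{d}{dt}\|C\|_2^2\le K(t)\|C\|_2^2$ with $K$ locally integrable thanks to d); hence $C\equiv 0$. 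Note that \eref{D1} on $A_i$ itself is never invoked, consistent with the weak parabolicity phenomenon flagged in the abstract.

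The main obstacle is the a priori estimate step: producing the Moser iteration for $\|B(t)\|_\infty$ and the sharp singular rate $t^{3/4}\|B(t)\|_\infty\in L^\infty(0,b)$ starting only from $A_0\in W_1$, while tracking boundary terms. This is the point at which the Dirichlet conditions \eref{D1}--\eref{D2} must be combined with the gauge-invariant Gaffney--Friedrichs inequality, and where the curvature-dependent error terms coupling $\n^A B$ to $B\otimes B$ must be reabsorbed into the leading energy through careful interpolation. Once these estimates are in hand, short-time existence, the gauge-back construction, and the energy/Gronwall uniqueness argument fit together cleanly.
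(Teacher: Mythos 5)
Your high-level strategy matches the paper's: Donaldson--Sadun gauge-fixing, short-time parabolic existence, gauge-back, a priori estimates, long-time continuation, Gronwall uniqueness. But there are several concrete gaps where your sketch would not go through as written.

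First, the gauge ODE. You propose solving $g(t)^{-1}\partial_t g(t)=d^*\tilde A(t)$ with $g(0)=e$ directly. The paper explicitly cannot do this: $d^*C(t,\cdot)$ blows up like $t^{-3/4}$ as $t\downarrow 0$, and while the ODE itself still has a solution in time, the paper points out (after Lemma \ref{lemDS}) that the difficulty is controlling two spatial derivatives of $g$, which are needed to guarantee $A(t)=g^{-1}Cg+g^{-1}dg\in W_1(M)$ down to $t=0$. The paper instead defines $g_\epsilon$ with $g_\epsilon(\epsilon)=I$, forms the smooth approximants $A_\epsilon=C^{g_\epsilon}$ on $[\epsilon,T)$, and proves uniform-in-$\epsilon$ convergence in $W_1$ via the a priori estimates of Sections \ref{secfa}--\ref{secfe}. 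That limiting argument is the technical core of Theorem \ref{thmSTE3}, and you have skipped it.

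Second, your derivation of \eref{D2}. You claim $B(t)_{tan}=0$ follows by ``taking tangential parts of $A'=-d_A^*B$ at $\partial M$.'' That does not work: the tangential part of $d_A^*B$ vanishing does not force $B_{tan}$ to vanish, and in any case $A'(t)_{tan}=0$ is something you would have to justify. The actual argument is simpler and purely algebraic/functional: since $A(t)_{tan}=0$ and $B(t)\in W_1$, one has $B(t)_{tan}=0$ by the functional Bianchi identity \eref{DN72} (or at the level of $C^\infty$ forms, because $i^*A=0$ on $\partial M$ forces $i^*(dA)=0$ and $i^*[A\wedge A]=0$).

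Third, the $L^\infty$ bounds d) and e) of Definition \ref{defstrsol}. You propose proving these via a Moser iteration and identify it as ``the main obstacle,'' but this is not where the paper gets them. Conditions d) and e) are built into the definition of strong solution, and the existence construction \emph{delivers} them as a byproduct of the path-space norm \eref{ST23}: the contraction argument in $\mathcal P_T$ yields $t^{3/4}\|B_{C(t)}\|_\infty$ bounded, and this bound is gauge-invariant, so it transfers to $A(\cdot)$. No Moser iteration is used. As stated, this part of your plan is not carried out and is in fact unnecessary in the paper's framework.

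Fourth, the uniqueness argument has a genuine error. You take $A_1,A_2$ satisfying \eref{D2} and claim ``$(B_i)_{tan}=0$ kills the boundary terms,'' with the remark that \eref{D1} is never invoked. This is backwards for the Dirichlet case. The boundary term in the integration by parts is $\int_{\partial M}\langle\nu\wedge(A_1-A_2),B_i\rangle$; it is annihilated when $(A_1-A_2)_{tan}=0$ (so $\nu\wedge(A_1-A_2)=0$) or when $(B_i)_{norm}=0$, but \emph{not} by $(B_i)_{tan}=0$ (both $\nu\wedge(A_1-A_2)$ and a $B$ with $B_{tan}=0$ are normal $2$-forms, and their inner product need not vanish). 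The theorem's hypothesis for uniqueness is precisely \eref{D1}; \eref{D2} is the superfluous condition (see the Remark after Theorem \ref{thm1D}). This is the exact opposite of what you wrote.

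Finally, you do not address the regularization issue for strong solutions: the a priori estimates in Sections \ref{secSobsol}--\ref{secfe} are derived for smooth solutions, but for the long-time continuation one must apply them to an arbitrary strong solution. The paper handles this via Lemma \ref{lemlocreg} and Corollary \ref{corapstrong}, approximating a strong solution on small time intervals by the smooth $A_\epsilon$. Without this step the growth bound \eref{M30} cannot be invoked to rule out blow-up.
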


 \begin{remark}{\rm Notice that for uniqueness the conditions $ B_j(t)_{tan} =0 ,
  t >0$ are not required. In fact  $A(t)_{tan}=0$ implies $B(t)_{tan}=0$.
   (See, e.g., \eref{DN72}). So the latter is not an independent condition.
   }
   \end{remark}

   \begin{remark} \label{remreg}{\rm  (Weak parabolicity and regularization.)
  Suppose that $g \in C^2(M;K)$ and is the identity in a neighborhood
  of $\p M$. Let $A_0 = g^{-1} dg$.
  Then $A_0 \in C^1(M:\L^1\otimes\frak k) \subset W_1$ and is zero in a neighborhood of $\p M$.
  Define $A(t) = A_0$ for all $t\ge 0$.
  $A(t)$ has curvature zero and
   satisfies   all of the Neumann and Dirichlet boundary conditions,
   \eref{N1}, \eref{N2}, \eref{D1} and  \eref{D2}, including the initial conditions.
    It  is the unique strong solution  specified in Theorems
     \ref{thm1N} and \ref{thm1D}.
    Thus the Yang-Mills heat equation does not regularize all initial data,
    reflecting the well known fact that it is only weakly parabolic.
    The weak parabolicity  will be   particularly visible in
    equation \eref{ST13} and the discussion following it.
  There is a gain of regularity for the curvature, however, and this will
    allow  the strong sense of solution specified  in Definition \ref{defstrsol}.
    Nevertheless, for $t >0$, the curvature $B(t)$ itself will not be smooth
    under our initial conditions. For example if $g$ is as above and $A_0$
    is any initial condition in $W_1(M)$ then the gauge transform $A_0^g$
    is also in $W_1$ while $B^g(t)(x) = g(x)^{-1}B(t) g(x)$,
    which will not be smooth even if $B(t)$ is smooth.
}
\end{remark}

 \begin{remark}\label{remU1} {\rm (Weak parabolicity and uniqueness.)
 Theorems \ref{thm1N} and \ref{thm1D}
 show that, for both Dirichlet and Neumann type
  boundary conditions, uniqueness follows from the imposition of only
  two boundary conditions on the three component connection form $A(t)$.
    This effect can   be attributed to the fact that the Yang-Mills heat equation
  is only weakly parabolic. It is well known that degeneracy of
  an elliptic operator $L$ on a manifold with boundary  can force
   uniqueness on solutions of the  weakly parabolic equation
  $\p u/\p t = Lu$ under fewer boundary conditions on $u$ than usual.
  See \cite[Section 7.2]{MeMu}  for a recent work discussing
   this issue for scalar functions.
  }
  \end{remark}

 \begin{remark}\label{remU3} {\rm (Neumann and Marini boundary conditions.)
 In Theorem \ref{thm1D} the boundary condition
  $A(t)_{tan}=0$, $t \ge 0$, appears in both the existence and uniqueness
  portion of the theorem, whereas in Theorem \ref{thm1N} the
  initial boundary condition $(A_0)_{norm} =0$ is needed for the existence
  proof while  $A(t)_{norm}=0, t >0$ is not needed  for  uniqueness.
  If $A(t)_{norm} =0$ for $t >0$  then $[A(t)\wedge A(t)]_{norm}=0$ and consequently
  $B(t)_{norm} = (dA(t))_{norm}$. Thus in  the presence of \eref{N1}
   the nonlinear boundary condition
  $B(t)_{norm} =0$ in \eref{N2} is equivalent to the pure
     Neumann boundary condition $(dA(t))_{norm} =0$.

          A. Marini, \cite{Ma3,Ma4,Ma7}, has explored the nonlinear
      boundary condition $F_{norm} =0$ in the context of the weakly elliptic
     boundary value problem $d_A^* F =0$, where $F= F_A$
  is the curvature of a connection $A$ over a 4-manifold with boundary.
    In the context of Theorem \ref{thm1N}, the corresponding
  Marini boundary condition,
  $B(t)_{norm} =0$, is fully gauge invariant and does not depend on
   the choice of a fiducial gauge, unlike  the pair of conditions
   $ A_{norm} = 0, \ (dA)_{norm}=0$,
  to which the pair of equations \eref{N1} and \eref{N2} is equivalent.

   The Marini boundary condition  will ultimately be the case
    of interest for the intended application to quantum field theory.
   Theorem \ref{thm1N} easily yields the following existence and uniqueness
   theorem with the pure nonlinear boundary condition
   $B(t)_{norm} =0$ by itself.
   The restrictive regularity of the initial data will be removed in a later work.
   }
   \end{remark}

      \begin{theorem} \label{thm1M} $($Marini boundary conditions.$)$
 Suppose that $A_0\in C^2(M; \Lambda^1\otimes \kf)$. Then
 there is a unique strong solution over $[0,\infty)$ such that
 $A(0) = A_0$ and
 \beq
 B(t)_{norm}=0\  \text{for}\ \ t >0.         \label{M1}
   \eeq
   \end{theorem}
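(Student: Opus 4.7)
The approach exploits the gauge invariance of both the Yang-Mills heat equation \eqref{0.5} and the Marini boundary condition $B_{norm}=0$ under time-independent gauge transformations, reducing Theorem \ref{thm1M} to Theorem \ref{thm1N}. Uniqueness is then immediate: the uniqueness clause of Theorem \ref{thm1N} only assumes that each strong solution satisfies $B(t)_{norm}=0$ for $t>0$, which is precisely the Marini hypothesis. Hence any two strong solutions with the same initial data $A_0$ and both satisfying \eqref{M1} must coincide on $[0,\infty)$.

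For existence, the plan is first to construct a gauge transformation $g\in C^2(M;K)$ such that $A_0^g := g^{-1}A_0 g + g^{-1}dg$ satisfies $(A_0^g)_{norm}=0$ on $\partial M$. In a tubular neighborhood of $\partial M$ with outward normal coordinate $r\ge 0$, decomposing $A_0 = a + A_0^n\,dr$ where $a$ is tangential to the slices and $A_0^n$ is the normal component, the condition $(A_0^g)_{norm}=0$ at $r=0$ reads $\partial_r g = -A_0^n g$ on $\partial M$. Taking $g|_{\partial M}=e$ prescribes $g$ and $\partial_r g$ on $\partial M$ in class $C^2$ (since $A_0\in C^2$), and one extends to a $C^2$ map $g:M\to K$ --- for instance as $g=\exp\phi$ with a $C^2$ function $\phi:M\to\kf$ having the prescribed Cauchy data at $\partial M$. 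Now apply Theorem \ref{thm1N} to $A_0^g \in C^2\subset W_1$ with $(A_0^g)_{norm}=0$, obtaining a strong solution $\tilde A(\cdot)$ with $\tilde A(0)=A_0^g$, $\tilde A(t)_{norm}=0$, and $\tilde B(t)_{norm}=0$. Finally gauge back by $g^{-1}$: set $A(t):= g\,\tilde A(t)\,g^{-1} - (dg)\,g^{-1}$. Then $A(0)=A_0$, and since $B(t) = g\,\tilde B(t)\,g^{-1}$, one has $B(t)_{norm} = g\,\tilde B(t)_{norm}\,g^{-1}=0$ for $t>0$.

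The main technical point is to verify that a $C^2$ time-independent gauge transformation takes strong solutions to strong solutions in the sense of Definition \ref{defstrsol}. The $L^\infty$-bounds (d) and (e) on $B$ are automatic since $|B^g(t,x)|_\kf=|B(t,x)|_\kf$ pointwise; continuity of $A(\cdot):[0,\infty)\to W_1$ and existence of the $L^2$ derivative follow because $C^2$ functions induce bounded multiplication operators on $W_1$ and $L^2$; and the differential identity $A'(t)=-d_{A(t)}^*B(t)$ is preserved by the standard covariance relation $d_{A^g}^*B^g = g^{-1}(d_A^*B)g$. The only real work is the concrete construction of $g$, which amounts to a straightforward ODE argument in a collar neighborhood followed by a smooth extension; neither step is deep, since $K$ is a compact Lie group and the Cauchy data for $g$ are of class $C^2$.
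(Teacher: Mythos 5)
Your proposal is correct and follows essentially the same route as the paper: Lemma \ref{lemnormal} constructs exactly such a $g\in C^2(M;K)$ (via a collar neighborhood and the explicit formula $g=e^{h(s)\<A,\en\>_P}$, which is your Cauchy-data/extension argument made concrete), and the proof of Theorem \ref{thm1M} then applies Theorem \ref{thm1N} to $A_0^g$ and gauges back, with uniqueness coming from the uniqueness clause of Theorem \ref{thm1N} just as you say.
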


   Theorems \ref{thm1N}, \ref{thm1D} and \ref{thm1M}
    will be proven in Section \ref{secLTE}.

\noindent
\subsection{The method of  Donaldson and Sadun}

            In order to prove the existence of solutions to \eref{ymh10} we
 are first going  to add a  gauge symmetry breaking
term to the equation \eref{ymh10} and prove short time existence
 and uniqueness of solutions for the modified equation.
 Let us write $C(t)$ for a time dependent  $\frak k$ valued 1-form on $M$
 which satisfies the initial value problem
\beq
(\p/\p t)C = -(d_C^* B_C + d_C d^*C), t >0, \ \
         C(0) = A_0 .                                             \label{ST11}
\eeq
along with one of the following two kinds of boundary conditions, (N) or (D).
\begin{align}
(N)\ \  C(t)_{norm}=0\
    &\text{for}\ t \ge 0, \ \ (B_{C(t)})_{norm}\  =0 \ \ \text{for}\ t >0  \label{ST11N} \\
(D)\ \    C(s)_{tan} =0\ \ \
    &\text{for}\ t \ge 0,\ \ (d^*C(t))|_{\p M} = 0\ \ \text{for}\ t >0. \label{ST11D}
\end{align}

The equation \eref{ST11}  is a strictly parabolic differential equation,
    unlike \eref{ymh10}.
 The boundary conditions (D) are relative boundary conditions
 in the sense of  Ray and Singer, \cite{RaS}, while,
 in view of Remark    \ref{remU3},
 the boundary conditions (N) are equivalent to absolute boundary conditions.
                 For  recent systematic discussions of absolute and relative
   boundary conditions for real valued forms see the book
   \cite[Chapter 5, Section 9]{Tay1}
      and \cite{MMT01}, especially Chapter 5.

    In Section \ref{secST} we are going to use a quadratic form version
    of these boundary conditions.
    We will prove existence of solutions to these initial-boundary value
    problems as in Theorem \ref{thmpara} below, and then,
     roughly speaking,  we will  construct  a function
$g: [0,T) \rightarrow C^1(M ; K)$
 for which the gauge transform $A \equiv C^g$ satisfies the
 Yang-Mills heat equation, \eref{ymh10},
 together with either the Neumann type  boundary conditions
 \eref{N1}, \eref{N2} or the Dirichlet type boundary
 conditions \eref{D1}, \eref{D2}.
 For the resulting solution $A(\cdot)$,  the relative and absolute boundary
  conditions partly disappear.
 For Marini boundary conditions they disappear completely.

     Here is an informal description of the gauge transform procedure
     of Donaldson, \cite{Do1},  and Sadun, \cite{Sa}.
           A precise version will be given in Theorem \ref{thmSTE3}.

  \begin{lemma}\label{lemDS}  Let $C(t)$ be a solution to \eref{ST11} with
  boundary conditions \eref{ST11N}, respectively \eref{ST11D}.
   Define a function
  $ g: [0,T) \rightarrow C^\infty(M;K)\subset C^\infty(M; End\ \V)$
   as the solution to the initial value problem
  \beq
  g'(t, x) g(t,x)^{-1} = d^* C(t,x),\ \ g(0,x) = I_\V         \label{ST30}
  \eeq
   for each $x \in M$. Let $A = C^g$. That is,
   $ A(t,x) = g(t,x)^{-1} C(t,x) g(t,x) + g^{-1} dg $.
   Then $A$ solves \eref{ymh10} with the boundary conditions
    \eref{N1}, \eref{N2}, respectively \eref{D1}, \eref{D2}.
    \end{lemma}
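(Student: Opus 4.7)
The plan is to establish $\p_t A = -d_A^* B_A$ by direct computation, exploiting the gauge covariance of the curvature and its covariant codifferential, and then to propagate the boundary conditions on $C$ through the time-dependent gauge $g$. The initial condition $A(0)=A_0$ is immediate from $g(0)=I_\V$.

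For the evolution equation, the key input is that $A = C^g$ satisfies the covariance identities $B_A = g^{-1} B_C g$ and hence $d_A^* B_A = g^{-1}(d_C^* B_C) g$. Using \eref{ST30} in the form $\p_t g = (d^* C) g$ and the induced identity $\p_t(g^{-1}) = -g^{-1}(d^* C)$, I would differentiate $A = g^{-1} C g + g^{-1} dg$ term by term; the terms involving $dg$ cancel after invoking $d((d^*C)g) = d(d^*C)\, g + (d^*C)\, dg$, leaving
\beq
\p_t A \;=\; g^{-1}\bigl(\p_t C + [C, d^*C] + d(d^*C)\bigr) g \;=\; g^{-1}\bigl(\p_t C + d_C d^*C\bigr) g.
\eeq
Substituting \eref{ST11} then collapses this to $\p_t A = -g^{-1}(d_C^* B_C) g = -d_A^* B_A$, which is \eref{ymh10}.

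For the Dirichlet case the boundary analysis is transparent: the condition $(d^*C)|_{\p M}=0$ from \eref{ST11D} together with \eref{ST30} shows $\p_t g \equiv 0$ on $\p M$, so $g \equiv I_\V$ on $\p M$ for all $t$; hence all tangential derivatives of $g$ along $\p M$ vanish, $(g^{-1}dg)_{tan}=0$ on $\p M$, and $A_{tan} = g^{-1} C_{tan} g = 0$, giving \eref{D1}. Condition \eref{D2} then follows automatically since $A_{tan}=0$ forces $B_{tan}=0$. For the Neumann case, $(B_A)_{norm} = g^{-1}(B_C)_{norm} g = 0$ on $\p M$ is immediate from \eref{ST11N} and the pointwise nature of the normal projection, giving \eref{N2}. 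What remains is $A_{norm}=0$, which reduces to showing $\p_\en g = 0$ on $\p M$; differentiating \eref{ST30} in the normal direction produces a linear ODE along $\p M$ for $\p_\en g$ with zero initial data, so if the Neumann boundary conditions force $\p_\en(d^*C)|_{\p M}=0$, uniqueness of the ODE closes the argument. The Marini case of Theorem \ref{thm1M} requires only \eref{N2} and is therefore handled purely by the covariance of $B$, with no analysis of $g$ near $\p M$ needed.

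The hard part will be this last boundary propagation step in the Neumann case, since the conditions $C_{norm}=0$ and $(B_C)_{norm}=0$ are not visibly strong enough by themselves to force $\p_\en(d^*C)$ to vanish on $\p M$; one has to unpack the weak/quadratic-form formulation of \eref{ST11N} anticipated in Section \ref{secST}, where the natural Neumann-type boundary conditions supply the needed regularity of $d^*C$ at $\p M$. This is presumably why the authors flag the statement as an \emph{informal description} and defer the precise version to Theorem \ref{thmSTE3}.
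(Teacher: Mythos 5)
Your computation of the evolution equation is exactly the paper's Lemma \ref{lem3a}: writing $V=g'g^{-1}=d^*C$, differentiating $C^g$ gives $g^{-1}\{C'+[C,V]+dV\}g = g^{-1}\{C'+d_Cd^*C\}g$, and substituting \eref{ST11} yields $-g^{-1}(d_C^*B_C)g=-d_A^*B_A$. Your Dirichlet boundary argument also matches the paper's (given as a parenthetical at the end of the proof of Lemma \ref{lemg4}): $d^*C|_{\p M}=0$ forces $g\equiv I_\V$ on $\p M$, so $(g^{-1}dg)_{tan}=0$ and $A_{tan}=g^{-1}C_{tan}g=0$; then $B_{tan}=0$ follows from \eref{DN72}.

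The genuine gap is the Neumann normal-condition step, and you have correctly located it but not filled it. You say the conditions $C_{norm}=0$, $(B_C)_{norm}=0$ are ``not visibly strong enough'' to give $\p_\en(d^*C)|_{\p M}=0$ and speculate that one needs extra regularity from the quadratic-form setup. In fact the argument is algebraic and uses the PDE itself (Lemma \ref{lemg4}): differentiating $C(t)_{norm}=0$ in $t$ gives $C'(t)_{norm}=0$; by Corollary \ref{corDN5} (specifically \eref{DN61}) applied to $B_C$ with $(B_C)_{norm}=0$ one gets $(d_C^*B_C)_{norm}=0$; subtracting in \eref{ST11} gives $(d_Cd^*C)_{norm}=0$; and finally $(dd^*C)_{norm}=(d_Cd^*C)_{norm}-[C_{norm},d^*C]=0$. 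This is equivalent to your $\p_\en(d^*C)|_{\p M}=0$ and closes your ODE argument (or the paper's equivalent version, propagating $h(t)_{norm}=(g^{-1}dg)_{norm}$ in $t$ via $h'=g^{-1}(dd^*C)g$ from $h(0)_{norm}=0$). Also note: your diagnosis of \emph{why} the authors call Lemma \ref{lemDS} informal is off the mark. The difficulty is not the boundary condition on $g$; it is that $d^*C(t)$ blows up as $t\downarrow 0$, so \eref{ST30} with initial time $0$ does not obviously yield a $g$ regular enough to keep $A(t)\in W_1$. That is why Theorem \ref{thmSTE3} replaces $g$ by $g_\epsilon$ with initial condition $g_\epsilon(\epsilon)=I_\V$ and takes a limit.
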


    Actually, because of the singular behavior of $d^*C(t,x)$ as
     $t \downarrow 0$ it is difficult to establish the regularity
      of $g(t, x)$ needed to ensure that $A(t) \in W_1(M)$ for $t \ge 0$.
      We will instead define $g_\epsilon(t)$ for $t\ge \epsilon$
       using the same differential equation, \eref{ST30},  but with initial condition
       $g_\epsilon(\epsilon) = I_\V$.
       Defining $A_\epsilon(t) = C(t)^{g_\epsilon(t)}$ for $t \ge \epsilon$,
       we will then show that the connection forms $A_\epsilon(\cdot)$
       define smooth solutions which converge
        in a strong sense to the desired solution to \eref{ymh10}
         as $\epsilon \downarrow 0$.
         See Section \ref{secST3}   for precise statements and proof.

\begin{theorem}\label{thmpara} Let $A_0 \in W_1$. Assume that
$(A_0)_{norm} =0$, respectively $(A_0)_{tan} =0$. Then there exists
$T >0$ and a continuous function $C:[0, T) \rightarrow W_1$
 such that   $C(0) = A_0$ and

a$)$ \  $B_{C(t)} \in W_1$ and $ d^* C(t) \in W_1$ for each $t \in (0,T)$,

b$)$ \  the strong $L^2(M)$ derivative $(d/dt) C(t)$ exists for each $t >0$,

c$)$ \ the equation \eref{ST11} holds for each $t >0$ along with
      the boundary conditions  \eref{ST11N}, respectively \eref{ST11D},

f$)$ \  \  $t^{3/4} \| B_{C(t)}\|_\infty$ is bounded on $(0, T)$.

      The solution is unique under the preceding conditions.
  Moreover, $C(\cdot)$ lies in $C^\infty((0,T)\times M; \L^1 \otimes \frak k)$.
   \end{theorem}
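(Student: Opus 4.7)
The plan is to exhibit \eqref{ST11} as a strictly parabolic semilinear equation and solve it by standard semigroup methods. Expanding the right-hand side using $B_C = dC + \tfrac12[C\wedge C]$ and $d_C = d + \mathrm{ad}(C)\wedge\cdot\,$, one finds that $d_C^* B_C + d_C d^* C$ equals the Hodge Laplacian $\Delta C = (d^*d + dd^*)C$ on $\frak k$-valued 1-forms plus strictly lower-order nonlinear terms which are polynomial in $C$ and at most linear in $\nabla C$. Writing $N(C)$ for that remainder, \eqref{ST11} becomes $\partial_t C + \Delta C = N(C)$, where $N$ is a sum of terms schematically of the form $[C,\nabla C]$, $[C,d^*C]$ and $[C,[C,C]]$ with universal coefficients arising from the bracket on $\frak k$.

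To incorporate the boundary conditions I would, as the excerpt foreshadows, introduce the closed sesquilinear form
\begin{equation}
Q(\omega, \eta) = (d\omega, d\eta) + (d^*\omega, d^*\eta)
\end{equation}
with form domain taken to be the $W_1$-closure of the smooth $\frak k$-valued 1-forms obeying $\omega_{norm} = 0$ in case (N), respectively $\omega_{tan} = 0$ in case (D). The associated self-adjoint operator is the Hodge Laplacian with absolute, respectively relative, boundary conditions, and the natural boundary conditions encoded by its operator domain are exactly the second boundary condition in \eqref{ST11N}, respectively \eqref{ST11D}. Consequently $-\Delta$ generates a bounded analytic semigroup $e^{-t\Delta}$ on $L^2(M;\Lambda^1\otimes\frak k)$ that maps $L^2$ into the form domain for $t>0$, and standard Hodge-theoretic elliptic regularity identifies its domain with forms in $W_2(M)$ satisfying the stated boundary conditions.

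I would then construct $C(\cdot)$ as a fixed point of the Duhamel map
\begin{equation}
\Phi(C)(t) = e^{-t\Delta} A_0 + \int_0^t e^{-(t-s)\Delta} N(C(s))\, ds
\end{equation}
in a Banach space $X_T$ whose norm controls (i) $C \in C([0,T]; W_1)$ and (ii) weighted $L^\infty$-bounds $t^\alpha \|C(t)\|_\infty$ and $t^{3/4}\|B_{C(t)}\|_\infty$ on $(0,T)$, for a suitable $\alpha\in(0,1/2)$. The required ingredients are the standard three-dimensional smoothing estimates $\|e^{-t\Delta}\|_{L^2\to L^\infty} \lesssim t^{-3/4}$ and $\|\nabla e^{-t\Delta}\|_{L^2\to L^2}\lesssim t^{-1/2}$, applied term by term to $N$. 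The main obstacle is that $A_0\in W_1$ does not embed into $L^\infty$ in three dimensions, so the quadratic and cubic terms in $N$ must be controlled through these weighted norms; the weight exponents are chosen precisely so that the time singularities at $s=0$ in the Duhamel integral remain integrable, which is exactly what the weighted bound (f) is tailored to provide. A standard small-$T$ contraction argument on a ball in $X_T$ then produces a mild solution satisfying (a)--(c) and (f).

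Smoothness and uniqueness finish the proof. Once a mild solution exists in $X_T$, differentiating the Duhamel formula and iterating the smoothing bounds, together with the gauge-invariant Gaffney--Friedrichs inequality to convert bounds on $d_C$ and $d_C^*$ into full gradient bounds, bootstraps $C$ to $C^\infty((0,T)\times M; \Lambda^1\otimes\frak k)$, with the boundary conditions \eqref{ST11N}, respectively \eqref{ST11D}, propagating for $t>0$ from the operator domain of $e^{-t\Delta}$. For uniqueness within the class (a)--(f), if $C_1, C_2$ are two such solutions then $D = C_1 - C_2$ satisfies the linear equation $\partial_t D + \Delta D = N(C_1) - N(C_2)$ with $D(0)=0$ and the same boundary conditions; testing against $D$ in $L^2$ and using (d) and (f) to control the local Lipschitz constant of $N$ (with the singular factor $t^{-3/4}$ treated via Gronwall with a singular kernel) forces $D\equiv 0$.
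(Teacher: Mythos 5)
Your plan is the one the paper carries out: rewrite $d_C^*B_C + d_C d^*C$ as $(d^*d+dd^*)C$ plus a nonlinearity $X(C)$ that is cubic with at most one derivative on $C$, encode the boundary conditions by taking $\Delta$ to be the absolute/relative Hodge Laplacian (form domain $H_1$), pass to the Duhamel integral equation, and run a contraction in a weighted path space, using $\|e^{t\Delta}\|_{L^2\to L^\infty}\lesssim t^{-3/4}$ and $\|\nabla e^{t\Delta}\|_{L^q\to L^\infty}\lesssim t^{-1/2-3/(2q)}$. The structure, the choice of $H_1$, and the reliance on weighted $L^\infty$ smoothing to absorb the failure of $W_1\hookrightarrow L^\infty$ in $3$D all match Section~\ref{secST}.

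However, there is a gap in your choice of path space norm, and it is not cosmetic. You take $X_T$ to control $\sup_t\|C(t)\|_{H_1}$, $\sup_t t^\alpha\|C(t)\|_\infty$, and $\sup_t t^{3/4}\|B_{C(t)}\|_\infty$. Since $B_C=dC+\tfrac12[C\wedge C]$, this last quantity controls $t^{3/4}\|dC(t)\|_\infty$ (using the $\|C\|_\infty$ bound) but gives you \emph{no} $L^\infty$ control of $d^*C(t)$. That is a problem because the nonlinearity $X(C)$ in \eref{ST14} contains the term $[C,d^*C]$, and, more generally, the quadratic terms are of the schematic form $C\cdot\partial C$ with $\partial$ ranging over \emph{both} $d$ and $d^*$. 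To close the fixed-point estimate for the $t^{3/4}\|\partial C(t)\|_\infty$ component of the norm you must estimate
\begin{equation}
\int_0^t \|\partial e^{(t-\sigma)\Delta}\|_{L^q\to L^\infty}\,\|X(C(\sigma))\|_{L^q}\,d\sigma
\end{equation}
with some $q>3$ (so that $(t-\sigma)^{-1/2-3/(2q)}$ is integrable), and the only way to obtain $\|C\cdot d^*C\|_{L^q}$ with $q>2$ from path-space data is to interpolate $\|d^*C(\sigma)\|_{L^q}\le\|d^*C(\sigma)\|_2^{2/q}\|d^*C(\sigma)\|_\infty^{1-2/q}$ — which requires exactly the weighted $\|d^*C\|_\infty$ bound your norm omits. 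With only $\|d^*C\|_2$ available you are forced to $q=2$, producing $(t-\sigma)^{-5/4}$, which is not integrable near $\sigma=t$. This is precisely why the paper's path norm \eref{ST23} controls $t^{3/4}\bigl(\|dC(t)\|_\infty + \|d^*C(t)\|_\infty\bigr)$ rather than $t^{3/4}\|B_{C(t)}\|_\infty$: condition~(f) is read off \emph{after} the contraction closes (see \eref{ST150}), it is not what the contraction is run in. The remedy is to add $t^{3/4}\|d^*C(t)\|_\infty$ to your $X_T$-norm; then the argument of Lemma~\ref{lemST3b} gives $\|X(C(\sigma))\|_q\lesssim \sigma^{-3/2(1/2-1/q)}(R^3 + \sigma^{-1/4}R^2)$ for all $q\in[2,\infty]$, and choosing any $q\in(3,\infty)$ makes the Duhamel integrals for all three (now four) weighted norms converge.

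Two smaller remarks. First, your uniqueness argument (energy estimate for $D=C_1-C_2$ with singular Gronwall driven by~(f)) is sound in spirit, but the paper's proof is cheaper: it works in the plain space $\hat{\mathcal P}_T=C([0,T);H_1)$ with a bounded $H_1$-norm, using that $X(C(\sigma))\in L^{3/2}$ and $\|e^{t\Delta}\|_{L^{3/2}\to H_1}=O(t^{-3/4})$, so any strong $H_1$ solution already satisfies the integral equation, where uniqueness was already established; in particular~(f) is not needed for uniqueness of the parabolic problem. Second, for smoothness you should not need the gauge-invariant Gaffney--Friedrichs inequality: once you know $C(t)\in\mathcal D(\Delta)$ and the data is Hölder in time (the paper's Lemmas~\ref{lemST8b} and~\ref{lemST9}), the interior-plus-boundary parabolic regularity theory for the absolute/relative Laplacian suffices, as the paper does by citing \cite[Prop.~3.2, p.~289]{Tay3}.
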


   This will be proved in Section \ref{secST1}.
   The proof  proceeds by a standard reduction
    to an integral equation and a contraction mapping argument,
     followed then by  a regularity theorem.
        However we  will also deduce in Section \ref{secST} some
    regularity directly from our  form of the contraction argument.

\begin{remark}\label{remU4} {\rm Our proof of uniqueness for the equation \eref{ST11}
requires use of all of the boundary conditions \eref{ST11N}, respectively
\eref{ST11D}. Each of these imposes three conditions at each point of the boundary. This is to be expected for a strictly parabolic equation.
This should be contrasted with the discussion in Remark \ref{remU1}.
}
\end{remark}

\subsection{Gauge invariant Gaffney-Friedrichs inequalities}


Most of the estimates in this paper will depend on the use of
Sobolev inequalities in which the energy form
$\| \n^A \w \| _{L^2(M)}^2 + \| w \|_2^2 $ is replaced by the Hodge version,
$\| d_A \w\|_2^2 + \| d_A^* \w \|_2^2 + C \|\w\|_2^2$.
Here we have written
              $(\n^A)_j \w = \n_j \w +[A_j,\w]$
     for the gauge covariant gradient of a $\frak k$ valued form $\w$.
It will be necessary to establish equivalences between these two
energy forms because it is the former that controls $L^p$ norms via
Sobolev inequalities  while it is the latter that
relates well to the Yang-Mills heat equation.
The constant $C$ depends on the curvature of the connection form
$A$, and the nature of this
dependence is crucial for dealing with singular initial data.
It is this gauge-invariant Gaffney-Friedrichs inequality that
 will allow us to prove regularity  in the only directions in which
 regularity can occur. See Remark \ref{remreg}.

      In the classical case, i.e., real valued forms, such equivalences go back
   to Gaffney, \cite{Ga1}, and Friedrichs, \cite{Fr}. See also Eells and Morrey,
   \cite{ME56},  for a very  early work in this direction.
   The constants in these classical inequalities depend on the Riemannian
    curvature of $M$ and the curvature of its boundary. M. Mitrea, \cite{Mt01},
    has shown that such inequalities can be established with no
    dependence on the Riemannian curvature of $M$ and only mild dependence
    (convexity) on the curvature of the boundary in these classical cases.
    The benefit of using a convex domain for real valued forms
    was  observed  early on by   Saranen, \cite{Sar},
     for a convex domain  in $\R^3$.
    A reader may consult the book by Taylor, \cite[pages 361-364]{Tay1}
  for a recent derivation
    of the Gaffney-Friedrichs inequality in the classical case and \cite{MMT01}
    for extensions to nonsmooth Riemannian manifolds in the classical case.

           Our concern here is primarily with the dependence of the
     constant $C$  on the curvature of  the connection form  $A$.

    \begin{theorem}\label{thmGF}
    $($Gauge invariant Gaffney-Friedrichs inequality.$)$
          Suppose that $M$ is a compact Riemannian 3-manifold with smooth boundary and that $A$ is a $\frak k$ valued 1-form in $W_1(M)$
  with curvature $B$. Assume that $\|B\|_{L^3(M)} < \infty$.
 Let $\w$ be a    $\frak k$ valued p-form in $W_1(M)$ for which either
 \beq
 \w_{tan} =0\ \ \ \text{or}\ \ \ \w_{norm} =0.                    \label{gaf49}
 \eeq
   Then
        \begin{align}
(1/2)\{\|\n^A \w \|_{L^2(M)}^2 + \|\w \|_2^2\}
   \le  \|d_A \w \|_{L^2(M)}^2  + \|d_A^* \w \|_{L^2(M)}^2
   + \lambda_3 \| \w\|_2^2                                                   \label{gaf50}
   \end{align}
   holds with
\beq
\lambda_3=  \lambda_M + (\kappa c)^2 \| B \|_{L^3(M)}^2 .      \label{gaf51}
\eeq

  Here the constant $\lambda_M$  and the Sobolev constant $\kappa$
   depend only
   on the geometry of $M$ and not on $A$.
$d_A$ is the covariant exterior derivative with domain matching the boundary condition on $\w$
and $d_A^*$ is its adjoint. $\lambda_M = 1$ if $M$ is a convex
subset of $\R^3$.
The constant
 $c\equiv  \sup \{ \| ad\ x\|_{\frak k \rightarrow \frak k} : |x|_{\frak k} \le 1\}$
 measures the non-commutativity of $K$ and is zero if $K$ is commutative.
     \end{theorem}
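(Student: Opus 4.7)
The plan is to derive the gauge-invariant Gaffney-Friedrichs inequality by reducing it to the classical (scalar) Gaffney-Friedrichs inequality plus a controlled error term involving $B$, using the gauge-covariant Weitzenb\"ock formula as the bridge. The identity in question has the form
\[
(d_A d_A^* + d_A^* d_A)\omega = \nabla^{A*}\nabla^A \omega + \mathcal{R}_M^{(p)}\omega + \mathcal{B}(\omega),
\]
where $\mathcal{R}_M^{(p)}$ depends only on the Riemann curvature of $M$ and $\mathcal{B}(\omega)$ is a pointwise contraction of $B$ with $\omega$ satisfying $|\mathcal{B}(\omega)| \le c_p\, c\, |B||\omega|$, with $c_p$ a dimensional constant coming from the form structure and $c$ the $\text{ad}$-norm from the theorem statement. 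First I would derive this by a local-frame calculation, noting that the $B$-term originates from $d_A^2 \omega = [B\wedge\omega]$.

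Next, I would pair the identity with $\omega$ and integrate over $M$. The integrations by parts of $d_A$, $d_A^*$, and $\nabla^{A*}\nabla^A$ produce boundary terms, and the key observation is that the $A$-dependent corrections to each of these operators contribute boundary pieces whose $\frak k$-inner-products with $\omega$ vanish pointwise by $Ad\,K$-invariance of $\langle\cdot,\cdot\rangle_{\frak k}$ (the same skew-symmetry $\langle[A(X),\omega],\omega\rangle = 0$ that will also yield the Kato inequality below). Consequently the combined boundary plus Riemann-curvature contribution $\mathcal{Q}(\omega)$ coincides with the one in the classical ($A = 0$) Weitzenb\"ock identity. I then apply the classical Gaffney-Friedrichs inequality componentwise in an orthonormal basis of $\frak k$:
\[
\|\nabla\omega\|_2^2 + \|\omega\|_2^2 \le \|d\omega\|_2^2 + \|d^*\omega\|_2^2 + \lambda_M\|\omega\|_2^2,
\]
which, combined with the classical Weitzenb\"ock identity, forces $-\mathcal{Q}(\omega) \le (\lambda_M - 1)\|\omega\|_2^2$. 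Substituting back into the rearranged gauge-covariant identity yields
\[
\|\nabla^A\omega\|_2^2 \le \|d_A\omega\|_2^2 + \|d_A^*\omega\|_2^2 + (\lambda_M - 1)\|\omega\|_2^2 + |(\mathcal{B}(\omega), \omega)|.
\]

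For the curvature term I would use H\"older to get $|(\mathcal{B}(\omega),\omega)| \le c_p c \|B\|_3 \|\omega\|_3^2$, interpolate $\|\omega\|_3^2 \le \|\omega\|_2 \|\omega\|_6$, and invoke the gauge-covariant Kato inequality $|\nabla|\omega|| \le |\nabla^A\omega|$ (again a consequence of skew-symmetry of $\text{ad}\,A$) together with the scalar Sobolev embedding $W_1 \hookrightarrow L^6$ in three dimensions to obtain
\[
\|\omega\|_6 \le \kappa \bigl(\|\nabla^A\omega\|_2^2 + \|\omega\|_2^2\bigr)^{1/2}.
\]
An AM-GM splitting with a well-chosen $\epsilon$ absorbs a fraction of $\|\nabla^A\omega\|_2^2 + \|\omega\|_2^2$ into the left side, producing the quadratic $(\kappa c)^2\|B\|_3^2$ coefficient on $\|\omega\|_2^2$ and completing the inequality after a final rearrangement; the factor $1/2$ stated in the theorem is in fact weaker than what this argument delivers.

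The main obstacle I anticipate is the bookkeeping of boundary terms in the Weitzenb\"ock integration by parts. Under only a single-sided boundary condition $\omega_{tan} = 0$ or $\omega_{norm} = 0$ (rather than the full pair of absolute or relative conditions), one must carefully verify that the Hodge-type boundary integrals $\int_{\partial M} i^*(d_A^*\omega \wedge *\omega)$ and $\int_{\partial M} i^*(\omega\wedge *d_A\omega)$, together with the gradient boundary term $\int_{\partial M}\langle \nabla^A_\nu \omega, \omega\rangle$, reduce exactly to their classical $A = 0$ counterparts. This is where $Ad\,K$-invariance of the $\frak k$-inner-product does the essential work, ensuring no uncontrolled $A$-dependence leaks into the boundary contribution and allowing all of it to be absorbed into $\lambda_M$.
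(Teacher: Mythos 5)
Your overall architecture matches the paper's: gauge-covariant Weitzenb\"ock identity, verification that the boundary terms are $A$-independent, and then H\"older + interpolation + Kato + Sobolev + absorption for the curvature term $(B\omega,\omega)$. The last step is exactly the paper's estimate \eqref{gaf58}. But there is a genuine gap in the middle of your argument.

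You claim that combining the classical ($A=0$) Weitzenb\"ock identity with the classical Gaffney--Friedrichs inequality in the form $\|\nabla\omega\|_2^2+\|\omega\|_2^2\le\|d\omega\|_2^2+\|d^*\omega\|_2^2+\lambda_M\|\omega\|_2^2$ forces $-\mathcal{Q}(\omega)\le(\lambda_M-1)\|\omega\|_2^2$, where $\mathcal{Q}(\omega)$ contains the boundary integral $\int_{\partial M}\langle K(x)\omega,\omega\rangle$. This cannot work, because the lossless form of the classical inequality you invoke is false on a manifold whose boundary is not convex: $K(x)$ can be negative, and the boundary term is of size $\|\omega\|_{L^2(\partial M)}^2$, which is \emph{not} controlled by $\|\omega\|_{L^2(M)}^2$ (take $\omega$ concentrated in an $\epsilon$-collar of $\partial M$; the trace stays of order one while $\|\omega\|_{L^2(M)}^2\sim\epsilon$). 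The true classical inequality necessarily carries a factor strictly less than one on the gradient term — this is the $1/2$ in \eqref{gaf50} — because a fraction of $\|\nabla\omega\|_2^2$ must be sacrificed to absorb the trace term. If you use that correct version, your deduction yields $-\mathcal{Q}(\omega)\le(1/2)\|\nabla\omega\|_2^2+(\lambda_M-1/2)\|\omega\|_2^2$, and the leftover $(1/2)\|\nabla\omega\|_2^2$ involves the \emph{ordinary} gradient, which cannot be absorbed into $\|\nabla^A\omega\|_2^2$ without introducing dependence on $A$ itself (not just on $B$), which the theorem forbids. The repair is what the paper does: estimate $\int_{\partial M}\langle K\omega,\omega\rangle$ directly via the trace inequality and interpolation applied to the scalar function $|\omega|$, so that Kato's inequality converts the absorbed gradient piece into the covariant one, $\|\omega\|_{L^2(\partial M)}^2\le\tau^2\|\omega\|_2^{1/2}\|\omega\|_{W_1^A}^{3/2}\le\gamma_1\|\omega\|_2^2+(1/4)\|\omega\|_{W_1^A}^2$. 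Consequently your closing remark that the factor $1/2$ is "weaker than what this argument delivers" is wrong; the $1/2$ is the genuine cost of absorbing the boundary and curvature terms.

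A smaller point: the $A$-dependent boundary corrections do not vanish by skew-symmetry of $\mathrm{ad}$ alone. One must first decompose $A$ into normal and tangential parts and use the boundary condition on $\omega$ to reduce the terms $\langle\nu\wedge\omega,[A\wedge\omega]\rangle$ and $\langle\omega,\nu\wedge[A\lrcorner\omega]\rangle$ to the form $\pm\langle[A_\nu,\omega],\omega\rangle$, at which point $Ad\,K$-invariance finishes the job (this is the content of Lemma \ref{lemgaf2}). You flag this verification as needed, so it is an acknowledged incompleteness rather than an error, but the mechanism is cancellation mediated by the boundary condition, not pointwise skew-symmetry of each term.
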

     This will be proven in Section \ref{secGFS}.

 \begin{remark}{\rm A Gaffney-Friedrichs estimate of the above form
 will also be derived in which $\|B\|_{L^2(M)}$ enters (but quartically)  instead
 of $\| B\|_{L^3(M)}^2$. Both forms of the Gaffney-Friedrichs inequality
 will be useful for initial conditions of finite energy.
  }
  \end{remark}

  \begin{remark}\label{remcontdep}
  {\rm (Continuous dependence on initial data.)
  The solution to \eref{ST11} described in Theorem \ref{thmpara}
  is easily shown to depend continuously on the initial data $A_0$ in $W_1$ norm. So do the solutions in Theorems \ref{thm1N} and \ref{thm1D}.
  But the proofs for these two cases will be postponed to a later work in which the initial data space will be enlarged to include $H_{1/2}$ data.
  }
  \end{remark}

  \noindent
{\bf Apriori estimates.}

In order to carry out the transition from the parabolic equation \eref{ST11}
to the Yang-Mills equation \eref{ymh10} and to develop further properties
of the solution to \eref{ymh10} we will derive a number of apriori estimates
which have the form
\beq
t^\alpha \| D^n B(t) \|_p < \infty,\ \ 0 < t < T                   \label{ap20}
\eeq
for various gauge covariant derivatives $D^n$, with $ p \in [2,6]$
 and with corresponding    values of $\alpha \ge 0$.
  In this paper we will need to use only $n =0,1$.
   For most of these
apriori estimates we will assume that $\| B_0\|_{L^2(M)} < \infty$.
 This is the assumption of finite (magnetic) energy. It holds if $A_0 \in W_1(M)$.
 But for some estimates we will assume only the weaker condition
 \beq
 \int_0^1 s^{-1/2} \| B(s) \|_2^2 ds < \infty,                    \label{ap21}
 \eeq
 which should be interpreted as defining an initial condition $A_0$
 of finite action.
 This is a gauge invariant version of the condition that $A_0$
 is in the Sobolev space $H_{1/2}$ (modulo gauge transformations).
 Both the assumptions and conclusions of these
  apriori estimates are gauge invariant.
 The key tool in proving these apriori estimates will be  repeated use
 of  gauge invariant Gaffney-Friedrichs-Sobolev inequalities.

 Our derivation of  apriori estimates requires more differentiability
  of the   solution $A(\cdot)$ than is available in the definition of
   strong solution.
   To resolve this we are going to introduce a regularization method
    that produces smooth solutions which approximate strong
    solutions for a short time. It is based on the use of the approximation
    implicit in the discussion following Lemma \ref{lemDS}.
     This will be done  in Section \ref{secLTE}, where it will be needed
      for proving long time      existence.


\section{Dirichlet and Neumann boundary conditions} \label{secDN}

 In this section we will extend some of the machinery  developed by
 Conner,  \cite{Co}, for real valued differential forms to forms in a
  product bundle over $M$ with a connection.
  See \cite[Chapter 5, Section 9]{Tay1}  for a recent
  exposition of the real valued case.
    Our objective is to develop the mechanisms  needed to make effective
  use of the gauge invariant Gaffney-Friedrichs inequality of
  Section \ref{secGFS}.

\subsection{The minimal and maximal  exterior derivatives}\label{secDN1}

 \begin{notation} {\rm
  $M^{int}$ will denote the interior of the compact Riemannian 3-manifold $M$.
     Denote by $\delta$ the coderivative on
     $C^\infty(M; \L^{p+1} \otimes \kf)$. Thus if $\phi$ is a $\kf$ valued
     p - form  in $C_c^\infty(M^{int})$  and
       $\w \in  C^\infty(M; \L^{p+1} \otimes \kf)$
     then
     \beq
   (\phi, \delta \w )_{ L^2(M ;\L^p \otimes \kf)} =
               (d\phi, \w)_{L^2(M; \L^{p+1} \otimes \kf)}.           \label{C7}
     \eeq
 If $u= \sum_{|I|=r} u_I dx^I$ and $ v =\sum_{|J|=p} v_J dx^J$ are $End \ \V$
 valued forms then their wedge product,
      $u\wedge v = \sum_{I,J} u_I v_J dx^I\wedge dx^J$, is another
       $End\ \V$ valued form.
         But when the appropriate action of $u$ on $v$ is via $ad\ u$
      then we will write
  $[u\wedge v] = \sum_{I,J} [u_I, v_J]dx^I\wedge dx^J$.
 This will be the case when $u$ is an $End\ \V$ valued connection form or its time derivative. If $u$ and $v$ take their values in $\frak k$ then so does $[u\wedge v]$.

    The interior product,  $[u\lrc v]$, of an element $u \in \L^p \otimes \kf$ with
         an element $v \in \L^{p+r} \otimes \kf$ is defined by
         \beq
         \< w, [u\lrc v]\> _{\L^r \otimes \kf}
         = \< [ u\wedge w ], v\>_{\L^{p+r} \otimes \kf} \ \ \text{ for all}\ \
                    w \in \L^r\otimes\kf.     \label{C9}
         \eeq
          If $u$ or $v$ or both are real valued then we will write simply $u\lrc v$ since the commutator bracket should be omitted.
 If     $u$ and $v$ are both in $\Lambda^1 \otimes \frak k$ then \eref{C9} gives
    $\frak k \ni [u\lrc v] = - [u\cdot v] = -\sum_j [u_j, v_j]$ in an orthonormal frame for $\L^1$.  And if $w\in \L^2\otimes \frak k$ then $[w\lrc w] =0$.
}
\end{notation}

  We wish to consider a connection on the product bundle
   $M\otimes \V \rightarrow M$.
    We may     and will identify the connection with a
    $ \kf$ valued 1-form $A$ on $M$.
   The corresponding  gauge covariant exterior derivative is then given by
   $ d_A \w = d \w + [ A\wedge \w]$ on smooth $\kf$ valued forms.
    However we are going to use the symbols $d$ and $d_A$ for the closed
   versions of these differential operators as follows.

\begin{notation}{\rm Denote by $D$ the closure of the exterior derivative
 operator
defined initially on $\frak k$ valued p-forms in $C^\infty(M)$.
Denote by $d$ the closure of $D|C_c^\infty(M^{int})$. Then $d \subset D$.
$D$ and $d$ are the maximal and minimal  exterior derivative
operators respectively.

 For $A\in L^\infty (M; \L^1\otimes \frak k)$ define
 \begin{align}
 D_A \w &= D \w +[ A\wedge \w]\ \ \
         \text{for}\ \ \ \w \in \D(D)    \label{C13}\\
 d_A\w &= d\w +[ A\wedge \w]\ \ \ \
        \text{for}\ \ \ \w\in  \D(d)   \label{C14}
\end{align}
}
\end{notation}

The Hodge star operator $*$ on forms defines a unitary map
from $L^2$ forms to itself with the following properties.
\begin{align}
D_A^*  &= *^{-1} (d_A) *                                        \label{C16}\\
d_A^*  &=*^{-1} (D_A )*                                          \label{C17}\\
W_1 &\subset \D(D_A) \cap \D(d_A^*)                  \label{C18}\\
d_A &= (\delta_A)^*                                              \label{C19d}\\
D_A & = (\delta_A|C_c^\infty(M^{int}))^*                   \label{C19}
\end{align}
where
\beq
  \delta_A \w = \delta \w + [A\lrc \w] \ \ \
 \text{for}\ \  \w \in C^\infty( M; \L^p \otimes \kf), \ \  p \ge 1  \label{C11}
  \eeq

\begin{remark}{\rm $ D_A$ and $d_A^*$ are maximal operators
in the sense that their domains are restricted only by size and regularity
and not by boundary conditions.
 However the domains of their adjoints,
 $D_A^*$ and $d_A$  are restricted also by
  boundary conditions as follows.

         The symbol $(D)$ in front of an equation will
  signify that the equation is relevant for Dirichlet boundary conditions.
  An $(N)$ signifies that the equation is relevant for Neumann boundary conditions.
  }
  \end{remark}

           \begin{lemma}\label{lemDN1}
  Suppose that $\w \in W_1(M; \L^p\otimes \frak k)$ and $A \in L^\infty(M)$.
   Then
  \begin{align}
  (D)\qquad \qquad & \w \in \D(d_A)\ \ \text{if and only if}\ \ \w_{tan}=0 \qquad\qquad                                         \label{C19D}\\
  (N)\qquad \qquad &  \w \in \D(D_A^*)\ \ \text{if and only if}\ \ \w_{norm}=0\qquad\qquad                      \label{C19N}
\end{align}
\end{lemma}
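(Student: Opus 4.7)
My plan is to reduce to the case $A = 0$ and then invoke Green's formula together with the trace theorem for $W_1$. Since $A \in L^\infty(M; \L^1 \otimes \kf)$, the multiplication operators $[A \wedge \cdot]$ and $[A \lrc \cdot]$ are bounded on $L^2$, and by (C9) they are mutually adjoint. Hence (C19d) gives $d_A = (\delta_A)^* = \delta^* + [A \lrc \cdot]^* = d + [A \wedge \cdot]$, so $\D(d_A) = \D(d)$; similarly (C19) yields $\D(D_A^*) = \D(D^*)$. It therefore suffices to establish the two boundary characterizations in the case $A = 0$.

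For the Dirichlet case (D), I would use the Green's identity
\beq
(d\w, \phi)_M - (\w, \delta \phi)_M = \int_{\p M} i^*(\w \wedge *\phi),   \label{planDN1}
\eeq
with $\w$ a $p$-form and $\phi$ a $(p+1)$-form. This is classical for smooth pairs and extends, for $\w \in W_1$ and $\phi \in C^\infty(M)$, by the trace theorem $W_1(M) \to L^2(\p M)$ together with density of $C^\infty(M)$ in $W_1$. Because $d = \delta^*$, $\w \in \D(d)$ iff $\phi \mapsto (\delta \phi, \w)$ is bounded over smooth $(p+1)$-forms. If $\w_{tan} = 0$ then $i^*\w = 0$, the boundary term in \eref{planDN1} vanishes, and the required bound follows from $(\delta \phi, \w) = (d\w, \phi)$. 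Conversely, if $\w \in W_1 \cap \D(d)$, then by definition of $\delta^*$ one has $(\delta\phi, \w) = (\phi, d\w)$ for every smooth $\phi$; comparison with \eref{planDN1} forces $\int_{\p M} i^*(\w \wedge *\phi) = 0$ for every such $\phi$. Letting $i^*(*\phi)$ sweep out smooth $(2-p)$-forms on $\p M$ (possible by extension from $\p M$ to $M$ combined with the Hodge star), nondegeneracy of the wedge pairing on the 2-dimensional boundary forces $i^*\w = 0$.

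For the Neumann case (N), I would run the symmetric argument. By (C19), $D^*$ is the closure of $\delta|_{C_c^\infty(M^{int})}$, so $\w \in \D(D^*)$ iff $\phi \mapsto (D \phi, \w)$ is bounded over $\phi \in C^\infty(M)$ with no boundary constraint on $\phi$. Applying \eref{planDN1} with the roles interchanged (now $\phi$ a $(p-1)$-form and $\w$ a $p$-form) yields the boundary integrand $i^*(\phi \wedge *\w)$. A short coordinate computation using an orthonormal frame with conormal $dx^3$ shows that the Hodge star interchanges tangential and normal components in the sense $\w_{norm} = 0 \Leftrightarrow i^*(*\w) = 0$. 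Under this hypothesis the boundary term vanishes for every smooth $\phi$, so $\w \in \D(D^*)$ with $D^*\w = \delta \w$. Conversely, for $\w \in W_1 \cap \D(D^*)$ the same comparison, together with surjectivity of the boundary trace $C^\infty(M) \to C^\infty(\p M)$ on $(p-1)$-forms, forces $i^*(*\w) = 0$ by the analogous nondegeneracy argument.

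The main technical hurdle will be rigorously extending \eref{planDN1} from smooth pairs to $W_1 \times C^\infty$ pairs and justifying the two nondegeneracy steps that convert vanishing of a boundary integral against arbitrary smooth test forms into vanishing of the relevant tangential trace. Both rest on standard trace theory for $W_1$ and are carried out for real-valued forms in \cite[Ch.~5, Sec.~9]{Tay1} and \cite{Co}; the $\kf$-valued case reduces to the real one componentwise via an orthonormal basis of $\kf$ relative to the $Ad\ K$-invariant inner product.
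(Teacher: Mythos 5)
Your proposal is correct, but it goes a genuinely different route from the paper's proof, which is a one-line citation: reduce to $A=0$ because $A$ is bounded, and then refer to Conner \cite{Co} for the $A=0$ case. You instead supply a self-contained derivation of the $A=0$ characterization via Green's formula, the $W_1$ trace theorem, and a nondegeneracy argument for the wedge pairing on $\p M$. That is the standard proof of the result cited, so your filling it in is a reasonable and instructive alternative; the paper's brevity buys nothing beyond economy of exposition. One small observation about your reduction step: in the paper's setup $\D(d_A) = \D(d)$ is immediate from the definition \eref{C14}, so your argument $d_A = (\delta_A)^* = d + [A\wedge\cdot]$ is really only needed to check consistency with \eref{C19d}; the bounded-perturbation reasoning is genuinely required only for the Neumann case, where one must pass from $D_A = D + [A\wedge\cdot]$ (with $\D(D_A)=\D(D)$) to $\D(D_A^*) = \D(D^*)$ via the identity $(T+S)^* = T^*+S^*$ for $S$ bounded. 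Your handling of the boundary-term nondegeneracy is right: $i^*(*\phi)$ (Dirichlet) and $i^*\phi$ (Neumann) sweep out all smooth forms on $\p M$ of the right degree by extension and Hodge star, the pairing $\int_{\p M}\alpha\wedge\beta$ is nondegenerate against smooth test forms on $L^2(\p M)$, and the identification $\w_{norm}=0 \Leftrightarrow i^*(*\w)=0$ is correct. The reduction of the $\kf$-valued case to the scalar case via an orthonormal basis of $\kf$ is also exactly what is needed.
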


             \begin{proof}
    These boundary conditions are already known for the minimal and maximal
 operators when $A=0$. See \cite{Co}. Since $A$ is bounded the domains are the same as for $A=0$
 \end{proof}

\begin{proposition}\label{propDN4} Assume that $\w$
is a $\frak k$ valued form  and that $A \in W_1\cap L^\infty$.
 Denote the curvature  of $A$ by $B$, as in \eref{ymh3}.

\noindent
If $ [B\wedge \w] \in L^2$ then
\begin{align}
(N)\ \ \ \w \in \D(D_A)\ &\text{implies}\  \w \in \D((D_A)^2)\
        \text{and}\  D_A^2 \w = [ B\wedge \w]       \label{DN50}  \\
 \text{and}    \ (D)\  \ \     \w \in \D(d_A)\ &\text{implies}\
            \w \in \D((d_A)^2)\
       \text{and}\   d_A^2 \w = [ B\wedge \w].              \label{DN51}
        \end{align}
  \noindent
 If $[B\lrc \w] \in L^2$ then
 \begin{align}
 (D)\ \ \  \w \in \D(d_A^*)\  &\text{implies}\     \w \in \D((d_A^*)^2)\
      \text{and}\  (d_A^*)^2 \w = [ B\lrc \w]                 \label{DN52} \\
\text{and}\ (N)\ \   \w \in \D(D_A^*)\  & \text{implies}\    \w \in \D((D_A^*)^2)
\ \text{and}\ \ \ (D_A^*)^2 \w = [ B\lrc \w].                              \label{DN53}
\end{align}
\end{proposition}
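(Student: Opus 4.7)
\textbf{Proof plan for Proposition \ref{propDN4}.}

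The proof reduces to an algebraic identity on smooth forms, extended by closure, with Hodge duality handling two of the four assertions. First I would verify the identity on smooth forms. For $\w \in C^\infty(M;\L^p\otimes\kf)$, a direct computation using $d^2=0$ and the graded Leibniz rule gives
\begin{align*}
d_A(d_A\w) &= d(d\w) + d[A\wedge\w] + [A\wedge d\w] + [A\wedge[A\wedge\w]] \\
&= [dA\wedge\w] - [A\wedge d\w] + [A\wedge d\w] + \tfrac12[[A\wedge A]\wedge\w] \\
&= [(dA + \tfrac12[A\wedge A])\wedge\w] = [B\wedge\w],
\end{align*}
where the double-bracket term collapses by the graded Jacobi identity for $ad$.

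Next, I would check that the relevant boundary conditions are preserved. If $\w_{tan}=0$, then $(d\w)_{tan}=d(i^*\w)=0$ and $(A\wedge\w)_{tan}=i^*A\wedge i^*\w=0$, so $(d_A\w)_{tan}=0$; this places $d_A\w$ (in degree $p+1$) in the correct domain for a second application of $d_A$. An analogous check applies to $D_A$ acting on forms with no boundary restriction.

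The third step is the closure argument. Given $\w\in\D(d_A)$ with $\w_{tan}=0$, I would choose smooth approximants $\w_n\in C^\infty(M;\L^p\otimes\kf)$ with $(\w_n)_{tan}=0$ (indeed compactly supported in $M^{int}$) so that $\w_n\to\w$ and $d_A\w_n\to d_A\w$ in $L^2$, and such that $\w_n\to\w$ in a norm strong enough to force $[B\wedge\w_n]\to[B\wedge\w]$ in $L^2$. Using the smooth identity $d_A^2\w_n=[B\wedge\w_n]$ together with the closedness of $d_A$ applied to the pair $(d_A\w_n, [B\wedge\w_n])$, one concludes $d_A\w\in\D(d_A)$ and $d_A^2\w=[B\wedge\w]$. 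The same argument, with approximants drawn from $C^\infty(M)$ instead of $C_c^\infty(M^{int})$, handles the $D_A^2$ case \eqref{DN50}.

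Finally, the two statements involving $d_A^*$ and $D_A^*$ would be obtained from \eqref{DN50}--\eqref{DN51} by Hodge duality via \eqref{C16}--\eqref{C17}: applying $*$ interchanges tangential and normal boundary conditions and converts the wedge action of $B$ into its interior-product action, since for real forms $*[B\wedge *^{-1}\eta]=\pm[B\lrc\eta]$ componentwise, the bracket on $\kf$ being unaffected by $*$. Concretely, \eqref{DN52} follows by applying \eqref{DN50} to $*\w$, and \eqref{DN53} follows from \eqref{DN51} similarly.

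\textbf{Main obstacle.} The crux is the convergence $[B\wedge\w_n]\to[B\wedge\w]$ in $L^2$ in the closure step, since we are given only $A\in W_1\cap L^\infty$ (hence $B\in L^2$ a priori) together with the standing assumption $[B\wedge\w]\in L^2$. This is delicate because $\D(d_A)$ does not come with automatic $L^6$ control on $\w$. I expect to handle this either by supplementing the approximation with $W_1$-convergence of $\w_n$ (combined with Sobolev embedding $W_1\hookrightarrow L^6$ and the $L^3$ part of $B=dA+\tfrac12[A\wedge A]$ supplied by $[A\wedge A]\in L^3$), or by a simultaneous mollification of $A$ that yields smooth curvatures $B_\epsilon$, establishes the identity for $(A_\epsilon,\w_\epsilon)$, and then removes the regularization using the hypothesis $[B\wedge\w]\in L^2$ directly. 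The boundary-condition bookkeeping in each of the four cases is routine once this analytic point is dispatched.
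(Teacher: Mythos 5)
Your algebraic computation of the smooth Bianchi identity $\{d_A\}^2\w=[B\wedge\w]$ and your reduction of \eqref{DN52}--\eqref{DN53} to \eqref{DN50}--\eqref{DN51} by Hodge duality both match the paper. The gap is precisely the one you flag in your ``main obstacle'' paragraph, and neither of the two workarounds you sketch closes it. You propose to pick smooth approximants $\w_n\to\w$ in the graph norm of $d_A$ (or $D_A$) and then force $[B\wedge\w_n]\to[B\wedge\w]$ in $L^2$. But the graph norm of the closed operators $d_A$, $D_A$ gives no control beyond $L^2$ on $\w$ and its exterior derivative; in particular $\D(d_A)\not\subset W_1$ (the paper's Remark~\ref{remdom} gives an explicit $\w\in\D(d)$ that is not even weakly differentiable in one direction). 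So there is no $W_1$-convergent sequence of approximants to lean on, Sobolev embedding buys nothing, and the hypothesis $[B\wedge\w]\in L^2$ by itself does not make $\w\mapsto[B\wedge\w]$ continuous on $\D(d_A)$. The ``simultaneous mollification of $A$'' idea is unelaborated and faces the same difficulty: after mollifying $A$ you still must pass to the limit in a term quadratic in the unregularized data.

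The paper's proof avoids this entirely by never putting the product $[B\wedge\w_n]$ on the approximating side. Starting from the smooth identity, it integrates by parts once against a test form $u\in C_c^\infty(M^{int})$ to obtain
\begin{equation*}
(\{d_A\}\w,\{\delta_A\}u)=(\w,[B\lrc u]),
\end{equation*}
whose right side depends on $\w$ only through $\|\w\|_2$ because $u$ (hence $[B\lrc u]$) is fixed and smooth. This pairing therefore extends from smooth $\w$ to all $\w\in\D(D_A)$ by density in the graph norm, giving $(D_A\w,\{\delta_A\}u)=(\w,[B\lrc u])=([B\wedge\w],u)$. Only at this last stage does the hypothesis $[B\wedge\w]\in L^2$ enter, making the right side bounded in $\|u\|_2$, so that $D_A\w\in\D((\delta_A|C_c^\infty)^*)=\D(D_A)$ by \eqref{C19}. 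In short, the curvature factor is transferred onto the fixed test form via the interior product, sidestepping the need to approximate $[B\wedge\w]$. Your proposal needs this adjoint/duality maneuver (or an equivalent) to go through; as written it stops at the obstacle you identified.
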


        \begin{proof} It will be clarifying to distinguish the closed
  operators $d_A$ and $D_A$ from the pointwise defined
          differential operator $\{ d_A\}$ acting
on smooth forms, and which ignores boundary conditions. If $A$ and
$\w$ are in $C^\infty(M)$ then the Bianchi identity
$ \{ d_A \}^2 \w = [B\wedge \w]$ holds and we need only
address domain issues in the four assertions of the proposition. To this end observe that if $\w$ and  $u$ are both in $C^\infty (M)$ and one
 has compact support in $M^{int}$ then we may integrate by parts to find
\beq
( \{d_A\} \w, \{\delta_A\} u) = ( \{d_A\}^2 \w, u) = ( [B\wedge \w], u)  \label{DN57}
 = (\w, [B\lrc u])\eeq
Since the first, third and fourth terms are continuous in $A$ in $W_1$ norm
the equality of these terms persists for $A\in W_1$.

        Now since $C^\infty(M)$ is a core for $D_A$ and the far right side
 is continuous in $\w$ in $L^2$ norm it follows that
\beq
(D_A \w, \{\delta_A\} u ) = (\w, [B\lrc u] ) = ([B\wedge \w], u)   \label{DN58}
\eeq
for all $\w \in \D(D_A)$ and $u \in C_c^{\infty}(M^{int})$.
 Since $[B\wedge \w] \in L^2$ the right side
 is continuous in $ u$ in $L^2$ norm and therefore so is
 $(D_A \w, \{\delta_A\} u )$. Hence
  $D_A\w \in \D(D_A)$, by \eref{C19} and  $(D_A^2 \w , u) = ([B\wedge \w], u)$.
       This proves \eref{DN50}.

To prove \eref{DN51} take $\w \in C_c^\infty(M^{int})$ and
   $u \in C^\infty(M)$ in \eref{DN57}.
   Then $\w \in \D(d_A)$ and, since $C_c^\infty(M^{int})$ is a core for $d_A$
   and $[B\lrc u] \in L^2$, equality of the first and fourth terms in \eref{DN57}
 implies that
$(d_A\w, \{\delta_A\} u) = (\w, [B\lrc u]) = ([B\wedge \w], u)$
for all $\w \in \D(d_A)$ and $u \in C^\infty(M)$. Since $[B\wedge \w] \in L^2(M)$
the equality of the first and third terms now shows that $(d_A\w, \{\delta_A\} u)$
is  continuous in $u$ in $L^2$ norm and therefore $d_A\w \in \D(d_A)$.
Thus $(( d_A)^2\w, u) = ( [B\wedge \w], u)$ for all $u \in C^\infty(M)$.
   This proves \eref{DN51}.

   The assertions \eref{DN52} and \eref{DN53} could be derived in the same
 way as \eref{DN50} and \eref{DN51}. But they also follow directly from these
 by use of \eref{C16} and \eref{C17}. Thus if $\w \in \D(d_A^*)$ then
 \eref{C17} shows that $*\w \in \D(D_A)$. By \eref{DN50} $*\w$  is therefore
 in $\D(D_A^2)$. It now follows from \eref{C17} again that $\w \in \D((d_A^*)^2)$.
 Of course $(d_A^*)^2 \w = [B\lrc \w]$ since the adjoint of
 $[B\wedge \cdot]$ is $[B\lrc \cdot]$  by \eref{C9}.
  The proof of \eref{DN52} is similar.
  \end{proof}

\begin{corollary}\label{corDN5} Suppose that $\w$ is a $\frak k$
 valued p-form in $W_1$, that $A \in W_1\cap  L^\infty$
and that  the function
$x \mapsto |B(x)| |\w(x)|$ is in $L^2(M)$.
      \begin{align}
(D)&   \qquad  \text{If}\ \  \w_{tan}=0\ \ \text{and}\ \ d_A \w \in W_1
             \ \ \text{then}\ \  (d_A \w)_{tan} = 0.  \qquad           \label{DN60} \\
 (N)& \qquad  \text{If}\ \   \w_{norm}=0\ \ \text{and}\ \  D_A^* \w \in W_1
             \ \ \text{then}\ \  (D_A^* \w)_{norm} = 0.                   \label{DN61}
        \end{align}
      \end{corollary}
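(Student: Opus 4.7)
The plan is to deduce both statements directly from Lemma \ref{lemDN1} combined with Proposition \ref{propDN4}. The key observation is that, by Lemma \ref{lemDN1}, the boundary conditions $(\cdot)_{tan}=0$ and $(\cdot)_{norm}=0$ for a $W_1$ form are exactly characterizations of membership in $\D(d_A)$ and $\D(D_A^*)$ respectively. So the claim in each case is equivalent to showing that $d_A\w$, resp.\ $D_A^*\w$, lies in the appropriate minimal/maximal domain; and this is precisely what Proposition \ref{propDN4} provides, through its ``Bianchi'' identities for $(d_A)^2$ and $(D_A^*)^2$.

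For part (D), first observe that the pointwise hypothesis $|B(x)||\w(x)| \in L^2(M)$ implies $[B\wedge \w] \in L^2$, since $|[B\wedge \w]|_{\L^{p+2}\otimes\kf} \le c\,|B||\w|$ for a constant $c$ determined by the Lie bracket on $\kf$. The hypothesis $\w_{tan}=0$ with $\w \in W_1$ places $\w$ in $\D(d_A)$ by \eqref{C19D}. Proposition \ref{propDN4}, equation \eqref{DN51}, then yields $\w \in \D((d_A)^2)$; equivalently, $d_A\w \in \D(d_A)$. Since by hypothesis $d_A\w \in W_1$, the other direction of Lemma \ref{lemDN1} applies and gives $(d_A\w)_{tan}=0$, as required.

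The argument for part (N) is strictly parallel, using \eqref{DN53} in place of \eqref{DN51}. The pointwise bound gives $[B\lrc\w] \in L^2$. From $\w_{norm}=0$ and $\w \in W_1$ we obtain $\w \in \D(D_A^*)$ via \eqref{C19N}. Proposition \ref{propDN4} then forces $D_A^*\w \in \D(D_A^*)$, and combining with the hypothesis $D_A^*\w \in W_1$ and Lemma \ref{lemDN1} again yields $(D_A^*\w)_{norm}=0$.

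There is no real obstacle here: everything of substance, namely the domain-preservation under $d_A$ and $D_A^*$ for forms satisfying the appropriate boundary condition, has already been encoded in Proposition \ref{propDN4}. The only thing to verify is that the $L^2$ hypotheses on $[B\wedge\w]$ and $[B\lrc\w]$ needed in that proposition are implied by the single pointwise hypothesis $|B||\w| \in L^2(M)$, which is immediate from the boundedness of $\mathrm{ad}$ on $\kf$. Thus the corollary is little more than a bookkeeping consequence of the two preceding results.
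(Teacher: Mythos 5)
Your proof is correct and follows essentially the same path as the paper's: translate the boundary condition into domain membership via Lemma \ref{lemDN1}, apply \eqref{DN51} (resp.\ \eqref{DN53}) to get $d_A\w \in \D(d_A)$ (resp.\ $D_A^*\w \in \D(D_A^*)$), then translate back. The only addition is your explicit check that $|B||\w|\in L^2$ implies $[B\wedge\w],\,[B\lrc\w]\in L^2$, a detail the paper leaves implicit.
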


                 \begin{proof} If $\w \in W_1$ and $\w_{tan} =0$ then
    $\w \in \D(d_A)$ by  \eref{C19D}.
  From \eref{DN51} we see that $d_A \w \in \D(d_A)$. Therefore
 if $d_A \w \in W_1$ then $(d_A\w)_{tan} = 0$ by \eref{C19D}. This proves
 \eref{DN60}. The proof of \eref{DN61}  follows from
  \eref{C19N} and \eref{DN53}  similarly.
      \end{proof}

       \begin{corollary}\label{corfbi} (Functional Bianchi identity.)
 Assume that $A \in W_1 \cap L^\infty$. Then
 \begin{align}
 &B \in \D(D_A)\ \text{and}\ \ D_A B =0. \qquad \qquad \label{DN70}
   \end{align}
 If, moreover, $A_{tan} =0$ and $ B \in W_1$ then
 \beq
(D)\qquad    B_{tan}=0,\ \ B\in \D(d_A) \  \
            \text{and}\ \ d_A B =0.  \qquad\qquad     \ \ \                  \label{DN72}
 \eeq
    \end{corollary}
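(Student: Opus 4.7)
The plan is to derive both parts of Corollary \ref{corfbi} from Proposition \ref{propDN4} applied to $\w = A$ itself, together with the identity $D_A A = B + (1/2)[A\wedge A]$ that comes directly from the definitions \eref{ymh3} and \eref{C13}.

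For the first assertion, $A \in W_1 \subset \D(D_A)$, and $[B \wedge A] \in L^2$ since $B \in L^2$ (as $A \in W_1 \cap L^\infty$) and $A \in L^\infty$. Hence \eref{DN50} in Proposition \ref{propDN4} gives $A \in \D(D_A^2)$ with
\[
D_A^2 A = [B \wedge A],
\]
and in particular $D_A A \in \D(D_A)$. Separately, $[A \wedge A] \in W_1 \subset \D(D_A)$ because $A \in W_1 \cap L^\infty$ makes the weak exterior derivative of $[A\wedge A]$ lie in $L^2$. A Leibniz-plus-Jacobi computation (using $dA = B - (1/2)[A\wedge A]$, and the fact that $[A \wedge [A\wedge A]]$ and $[[A\wedge A]\wedge A]$ both vanish by the Jacobi identity antisymmetrized against a 3-form wedge) yields
\[
D_A [A \wedge A] \;=\; d[A\wedge A] + [A \wedge [A\wedge A]] \;=\; 2[B \wedge A].
\]
Writing $B = D_A A - (1/2)[A\wedge A]$ and invoking linearity of $D_A$ then produces $B \in \D(D_A)$ and
\[
D_A B \;=\; D_A^2 A - (1/2)\, D_A[A\wedge A] \;=\; [B\wedge A] - [B\wedge A] \;=\; 0,
\]
which is \eref{DN70}.

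For the second assertion, assume additionally $A_{tan} = 0$ and $B \in W_1$. Apply Corollary \ref{corDN5} with $\w = A$: since $A \in W_1$, $A_{tan}=0$, and $d_A A = B + (1/2)[A\wedge A] \in W_1$ (both summands being in $W_1$), \eref{DN60} yields $(d_A A)_{tan}=0$. But $A_{tan}=0$ means $A$ kills every tangent vector to $\p M$, so the Lie bracket $[A\wedge A]$ kills every pair of such tangent vectors; thus $[A \wedge A]_{tan}=0$. Subtracting gives $B_{tan}=0$, and then $B\in W_1$ with $B_{tan}=0$ puts $B$ in $\D(d_A)$ by \eref{C19D}. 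Since $d_A \subset D_A$ and $D_A B = 0$ from the first part, we conclude $d_A B = D_A B = 0$.

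The main technical point I anticipate is the rigorous justification of the identity $D_A[A\wedge A] = 2[B\wedge A]$ for $A \in W_1 \cap L^\infty$ rather than smooth $A$. Formally it is just Leibniz combined with Jacobi, but one must transfer the pointwise identity to the Sobolev setting. The key observation is that $[A\wedge A] \in W_1$ whenever $A \in W_1 \cap L^\infty$, so its distributional exterior derivative is an honest $L^2$ form, and the identity persists under mollification of $A$ in the $W_1\cap L^\infty$ topology; this is the same mechanism used in the proof of Proposition \ref{propDN4}, where the role of $C^\infty(M)$ as a core for $D_A$ allowed the classical Bianchi relation to be promoted to its closed-operator form.
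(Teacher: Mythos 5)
Your proof is correct, but it takes a genuinely different route from the paper's on both halves. For \eref{DN70} the paper never touches a Leibniz rule: it pairs $B$ against $\delta_A u$ for $u\in C_c^\infty(M^{int})$, notes that $(B,\delta_A u)=(\{d_A\}B,u)=0$ for smooth data by the classical Bianchi identity, and passes to $A\in W_1\cap L^\infty$ by continuity of that pairing in $A$; since the result is (trivially) continuous in $u$ in $L^2$, the conclusion $B\in\D(D_A)$, $D_AB=0$ falls out of \eref{C19} directly. You instead feed $\w=A$ into \eref{DN50} and then subtract $\tfrac{1}{2}D_A[A\wedge A]$, which obliges you to justify the weak identity $D_A[A\wedge A]=2[B\wedge A]$ and the membership $[A\wedge A]\in W_1$ for $A\in W_1\cap L^\infty$ --- a real (if standard) extra step, which you correctly flag and which the paper's argument avoids entirely; in exchange, your route makes the algebraic mechanism ($D_A^2=[B\wedge\cdot\,]$ plus Jacobi) completely explicit. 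For \eref{DN72} the paper exploits the observation that $d_{A/2}A=B$ exactly, so one application of \eref{C19D} and \eref{DN51} with the connection $A/2$ yields $B_{tan}=0$ with no leftover term; you work with the connection $A$ itself, obtain $(d_AA)_{tan}=(B+\tfrac{1}{2}[A\wedge A])_{tan}=0$ from \eref{DN60}, and then kill $[A\wedge A]_{tan}$ pointwise using $i^*A=0$. Both are sound; the $A/2$ trick is slicker, while your version needs the (easy) extra observation about $[A\wedge A]_{tan}$. Your final step, $B\in\D(d_A)$ via \eref{C19D} and $d_AB=D_AB=0$ via $d_A\subset D_A$, coincides with the paper's.
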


              \begin{proof}
  For $A \in C^\infty(M)$ and $u \in C_c^\infty(M^{int})$ an integration by parts
  and Bianchi's identity gives
  $(B, \delta_A u) = (\{ d_A\} B, u ) =0$. The left side of this identity is continuous
  in $A \in W_1$ and therefore
   \beq
  (B, \delta_A u ) =0
  \eeq
  for all $A \in W_1$ and all $u \in C_c^\infty(M^{int})$.
    Since the right side of this identity, being zero,
   is continuous in    $u $ in  $L^2$ norm, it follows that
   $B \in \D((\delta_A|C_c^\infty(M^{int}))^*)$\linebreak
   $ =\D(D_A)$ and that $D_A B =0$,
   proving  \eref{DN70}.

  Now suppose that $A_{tan} =0$ and that $B \in W_1$. Take $\w = A$ in \eref{C19D} and take
  the form $A$ of that lemma to be our present $A/2$. It follows that
  $A \in \D(d_{A/2})$. But $d_{A/2} A = B$.
  Further, we see that $[B\wedge \w] = [B\wedge A] \in L^2$ because
  $B \in L^2$ and $A \in L^\infty$.
   Hence $ B \in \D(d_{A/2})$ by \eref{DN51}.
  Reapplying \eref{C19D} again we find that $B_{tan} =0$. Equation
  \eref{C19D} now shows that $B \in \D(d_A)$. But $d_A B = D_A B=0$,
   by \eref{DN70}.
 \end{proof}

\begin{corollary} \label{corDN9}
Assume that $A\in W_1\cap L^\infty$ and $B\in W_1$.
 Then $B \in \D((d_A^*)^2)$ and
\beq
(d_A^*)^2 B =0.                                    \label{DN80}
\eeq
If, in addition, $B_{norm} =0$ then  $ B \in \D((D_A^*)^2)$ and
\beq
(D_A^*)^2 B =0.                                    \label{DN81}
\eeq
\end{corollary}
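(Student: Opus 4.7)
Both statements are immediate applications of Proposition \ref{propDN4}, specifically the assertions \eqref{DN52} and \eqref{DN53}, with the choice $\omega = B$. The key observation making the ``source term'' disappear is the algebraic identity $[B \lrc B] = 0$ recorded at the end of the Notation preceding the subsection, which holds because $B$ is a $\kf$-valued 2-form (equivalently, it follows from $\mathrm{ad}$-invariance of $\langle \cdot,\cdot \rangle_{\kf}$ combined with $[B_{ij},B_{ij}]=0$). Since the hypotheses require $A \in W_1 \cap L^\infty$, which is given, and the proposition is already proved, the whole argument is structural.

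For the first claim, I would first observe that $B \in W_1 \subset \D(d_A^*)$ by \eqref{C18}. The hypothesis of \eqref{DN52} demands $[B \lrc B] \in L^2$, which is vacuously satisfied since this quantity is identically zero. Applying \eqref{DN52} with $\omega = B$ then delivers $B \in \D((d_A^*)^2)$ together with $(d_A^*)^2 B = [B \lrc B] = 0$.

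For the second claim, the additional hypothesis $B_{norm}=0$ together with $B \in W_1$ places $B$ in $\D(D_A^*)$ by Lemma \ref{lemDN1}, specifically equation \eqref{C19N}. Applying \eqref{DN53} with $\omega = B$, and again invoking $[B \lrc B] = 0 \in L^2$, yields $B \in \D((D_A^*)^2)$ and $(D_A^*)^2 B = 0$.

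\textbf{Anticipated difficulty.} There is no substantial obstacle; the work has already been done in Proposition \ref{propDN4} and Lemma \ref{lemDN1}. The only thing to be careful about is correctly identifying which of \eqref{DN50}--\eqref{DN53} governs the operators $d_A^*$ and $D_A^*$, and verifying that $B$ indeed lies in the appropriate initial domain. As a conceptual sanity check one may also note that the result is the Hodge dual of the functional Bianchi identity $D_A B = 0$ of Corollary \ref{corfbi}: under $*$, the operator $D_A$ corresponds to $d_A^*$ by \eqref{C17}, and ``$D_A^2 = [B\wedge\cdot]$'' transfers to ``$(d_A^*)^2 = [B\lrc\cdot]$'', which vanishes on $B$.
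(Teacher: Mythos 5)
Your proposal is correct and follows exactly the paper's own argument: invoke $B \in W_1 \subset \D(d_A^*)$ via \eqref{C18}, use $[B \lrc B]=0$ to satisfy the hypothesis of and apply \eqref{DN52}, then for the second claim use \eqref{C19N} to place $B$ in $\D(D_A^*)$ and apply \eqref{DN53}. The closing remark about Hodge duality with Corollary \ref{corfbi} is a sound sanity check but not a distinct route.
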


            \begin{proof}
 We see that $B \in W_1 \subset \D(d_A^*)$, by  \eref{C18},
   and, since $[B\lrc B] =0$, we may choose $\w = B$ in \eref{DN52},
   from which it follows that $B \in \D( (d_A^*)^2)$ and
   that \eref{DN80} holds.
Suppose, further, that $B_{norm} =0$. Then \eref{C19N} implies that
$B \in \D(D_A^*)$. Therefore \eref{DN53} now shows that
 $D_A^* B \in \D(D_A^*)$ and $(D_A^*)^2 B = [B\lrc B] = 0$,
 which proves \eref{DN81}.
 \end{proof}

              \begin{remark}\label{remCon}
  {\rm  For real valued forms the use of the maximal and minimal
   operators $D$ and $d$ goes back to Conner,  \cite{Co}.
   In particular, Proposition \ref{propDN4} in the real valued case,
   which simply reads
$d^2 = 0$, $D^2 =0$, $(d^*)^2 =0$ and $(D^*)^2=0$,
 along with proper statements about the
domains, was proved by Conner \cite{Co}[ninth page].
}
\end{remark}

                 \begin{remark}\label{remdom}{\rm  A 1-form $\w$ need not be
      even weakly differentiable in
      order to be in the domain of the minimal operators $d$ or $d_A$.
      For example if $M \subset \R^3$
      and $f$ is  a $\kf$ valued smooth function with compact support in $M^{int}$
      of the form $f(x_1, x_2, x_3) = h(x_1) g(x_2, x_3)$ then, defining
      $\w = f dx_1$, one has
       $d\w = h(x_1)\{(\p_2 g) dx_2\wedge dx_1  + (\p_3 g) dx_3\wedge dx_1\}$
       so that $h$ is not differentiated. Thus  if we now  allow
       $h \in L^2(\R^1;\kf)$ and
       $g \in C_c^\infty(\R^2)$, still insisting that support $ f \subset M^{int}$, then  the resulting form $\w$ can
        easily be approximated
       in $d$ graph norm by functions in $C_c^\infty(M^{int})$ of the same form.
       So $\w \in \D(d)$.
       This example also shows that if $A$ is unbounded then $d$ and $d_A$
       will not have the same domain. One need only take $A= A_2(x_1)dx_2$.
       Then $d_A\w - d\w = [A_2(x_1), h(x_1)] g(x_2, x_3) dx_2\wedge dx_1$,
       which need not be in $L^2(M)$   if $A$ is unbounded
        and $ h \in L^2(\R; \kf)$.
               We conjecture, however that all results in this section
      will remain valid if the condition $A\in L^\infty$ is replaced by $A \in L^3$.
       }
 \end{remark}


\section{Gauge invariant Gaffney-Friedrichs-Sobolev inequalities} \label{secGFS}

In this section we will prove Theorem \ref{thmGF} and derive from it
 Sobolev inequalities  in a form that will be needed for establishing
 apriori estimates. We will also prove a version in which the constant
 $\lambda_3$ is replaced by a constant $\lambda_2$ depending  (quartically)
 on $\|B\|_2$. The former will be most useful for initial data of finite action.
 Both will be useful for initial data of finite energy.

     Throughout this section the exterior derivative operators $d$ and $d_A$
     and their adjoints are to be interpreted as acting on smooth forms
     or on $W_1$ forms, as indicated, without boundary conditions built in.

\subsection{A gauge invariant Gaffney identity}     \label{secGF1}

         \begin{theorem} $($A gauge invariant Gaffney identity.$)$ \label{thmgaf}
         Let $M$ be a compact Riemannian n-manifold with smooth boundary.
Suppose that $A \in C^\infty(M; \L^1\otimes \frak k)$.
Let $\alpha$ and $\beta$ be smooth $\frak k$ valued p-forms on
$ M$ with either
\beq
\alpha_{tan} = \beta_{tan} =0 \ \text{on}\ \p M \ \ \ \
\text{or}\ \ \ \  \alpha_{norm} = \beta_{norm} =0\  \text{on}\ \p M .      \label{gaf4}
\eeq
  Then
\begin{align}
(d_A \alpha, d_A \beta ) &+ (d_A^* \alpha, d_A^* \beta )
                   - (\n^A \alpha, \n^A \beta)   -((W + B)\circ \alpha, \beta)  \notag \\
 &=  \int_{\p M} \<K(x) \alpha(x), \beta (x) \> \label{gaf5}
  \end{align}
  where
  $W$ denotes the Riemannian Bochner-Weitzenboch operator,
  $B$ is the curvature of $A$, $\circ$ denotes a pointwise product operation,
  $$
  (\n^A \alpha, \n^A \beta ) = \sum_{i=1}^n (\n^A_{e_i} \alpha, \n^A _{e_i} \beta)
  $$
  locally, for any orthonormal frame field $e_1, \dots, e_n$ of $T(M)$ and
  $$K(x) : \Lambda^p(T_x(\p M) ) \rightarrow \Lambda^p(T_x(\p M) )$$
  is a symmetric operator, bounded uniformly in $x$, and dependent
   only on the second fundamental form of $\p M$, on the value of $p$ and on the choice of boundary condition in \eref{gaf4}. Moreover, $K(x) \ge 0$ for all
   $x \in \p M$ if $ M$ is convex in the sense that
    the second fundamental form  is  non-negative on $\p M$.
      \end{theorem}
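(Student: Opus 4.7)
My approach is to derive this identity from a pointwise Bochner--Weitzenb\"ock formula coupled with careful integration by parts and bookkeeping of boundary terms. First I would establish the pointwise identity
$$
d_A^* d_A \alpha + d_A d_A^* \alpha = (\nabla^A)^* \nabla^A \alpha + W\alpha + B\circ \alpha,
$$
holding on $M^{int}$ for smooth $\kf$-valued $p$-forms, with all adjoints and the rough Laplacian interpreted as formal differential operators, no boundary conditions built in. This is the standard Weitzenb\"ock computation carried out in a local orthonormal frame $\{e_i\}$ chosen so that $\nabla_{e_i}e_j = 0$ at the point of interest: writing $d_A = \sum_j e^j \wedge \nabla^A_{e_j}$ and $d_A^* = -\sum_j \iota_{e_j} \nabla^A_{e_j}$, the graded commutation $e^j\wedge \iota_{e_i} + \iota_{e_i}\, e^j\wedge = \delta_{ij}$ together with the curvature identity $[\nabla^A_{e_i}, \nabla^A_{e_j}] = R(e_i, e_j)\cdot + [B(e_i, e_j),\,\cdot\,]$ reduces the difference of the two sides; the Riemannian piece yields the Weitzenb\"ock curvature $W$ and the connection piece yields the algebraic operator $B\circ$.

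Second, I would pair this pointwise identity with $\beta$ and integrate over $M$, performing a single integration by parts on each of the three operator compositions to obtain
$$
(d_A^* d_A \alpha, \beta) = (d_A\alpha, d_A\beta) + \mathrm{Bdry}_1, \quad (d_A d_A^* \alpha, \beta) = (d_A^*\alpha, d_A^*\beta) + \mathrm{Bdry}_2,
$$
$$
((\nabla^A)^*\nabla^A \alpha, \beta) = (\nabla^A\alpha, \nabla^A\beta) + \mathrm{Bdry}_3,
$$
where each $\mathrm{Bdry}_i$ is a specific surface integral over $\p M$ pairing a normal derivative of $\alpha$ against $\beta$ (or vice versa). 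Substituting these three identities into the integrated Weitzenb\"ock formula and rearranging gives the claim, up to identifying the residual $\mathrm{Bdry}_1 + \mathrm{Bdry}_2 - \mathrm{Bdry}_3$ as $\int_{\p M}\<K(x)\alpha(x),\beta(x)\>$.

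Third, I would carry out that identification. Decomposing $\alpha$ and $\beta$ at $\p M$ into their tangential and normal parts, and invoking either $\alpha_{tan}=\beta_{tan}=0$ or $\alpha_{norm}=\beta_{norm}=0$, one rewrites each normal derivative appearing in the $\mathrm{Bdry}_i$ in terms of derivatives along $\p M$ plus a shape-operator correction (the standard Weingarten decomposition of $\nabla_\en$ applied to forms whose boundary trace satisfies the imposed constraint). The purely tangential-derivative contributions assemble into a total divergence on the closed manifold $\p M$ and vanish by Stokes' theorem, while the shape-operator corrections collect into the operator $K$, which therefore depends only on the second fundamental form of $\p M$, on $p$, and on the boundary-condition choice. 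Crucially, the algebraic corrections $[A\wedge\cdot]$, $[A\lrc\cdot]$, and $[A,\cdot]$ inside $d_A$, $d_A^*$, $\nabla^A$ are zeroth-order; they contribute no new surface integrals under integration by parts, and their full effect is already absorbed into the bulk term $B\circ\alpha$. Consequently $K$ coincides with the classical Gaffney operator treated in Taylor \cite[Chapter 5, Section 9]{Tay1} and Mitrea \cite{Mt01}, and its pointwise nonnegativity on convex $M$ is inherited from the classical Saranen--Mitrea observation applied componentwise in $\kf$.

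The main technical obstacle is the third step: confirming that every boundary contribution involving $A$ actually cancels, so that $K$ is independent of $A$. This is enabled by the fact that the algebraic connection corrections pair pointwise via the identity $\<[A\wedge u], v\> = \<u, [A\lrc v]\>$ with no appeal to Stokes' theorem, so they never generate boundary integrals of their own; all of their nonclassical effect accumulates in the bulk Weitzenb\"ock term $B\circ\alpha$.
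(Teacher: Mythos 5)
Your outline follows the paper's own route — covariant Weitzenb\"ock identity, pair with $\beta$, three integrations by parts, boundary-term analysis — but the reasoning you give for the crucial cancellation step is incorrect, and your description of the final identification of $K$ misrepresents what actually happens.

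The adjoint identity $\<[A\wedge u],v\> = \<u,[A\lrc v]\>$ is the reason the covariant Stokes formula $(d_A\omega,u) - (\omega,\delta_A u) = \int_{\p M}\<\nu\wedge\omega,u\>$ has the same shape as the classical one: the $A$-pieces cancel between the two \emph{bulk} terms on the left. It does \emph{not} make the surface integral $A$-independent. After the three integrations by parts the boundary integrand (the paper's \eref{gaf13}) is $\<\nu\wedge\beta, d_A\alpha\> - \<\beta, \nu\wedge d_A^*\alpha\> - \<\beta, \nabla^A_\nu\alpha\>$, and each summand carries a zeroth-order $A$-term ($[A\wedge\alpha]$, $[A\lrc\alpha]$, $[A_\nu,\alpha]$ respectively). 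These cancel against one another only once you impose the boundary conditions on \emph{both} $\alpha$ and $\beta$, and the cancellation is a pointwise algebraic computation at each boundary point (the paper's Lemma \ref{lemgaf2}: under $\alpha_{tan}=\beta_{tan}=0$ the first piece vanishes and the other two cancel; under $\alpha_{norm}=\beta_{norm}=0$ the middle piece vanishes and the outer two cancel). Your claim that the algebraic corrections ``never generate boundary integrals of their own'' is simply false — the paper's Remark \ref{remAdep} exhibits a nonvanishing $A$-dependent boundary integrand for semisimple $\kf$ when only one of the two constraints is imposed — so this step is a genuine gap in your argument.

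Second, you propose to dispose of the tangential-derivative pieces of the boundary integrand by collecting them into a total divergence on the closed manifold $\p M$ and invoking Stokes there. That is not the mechanism, and if it were, it would lose information the theorem needs. In adapted Fermi coordinates with $\alpha = \sum_{J<n} a_J\,dx^J + \sum_{I<n} b_I\,dx^I\wedge dx^n$, the tangential derivatives of the $a_J$ are annihilated by the interior product $\nu\lrc$, and the tangential derivatives of the $b_I$ vanish because (in the normal case) $b_I\equiv 0$ identically on $\p M$; what remains equals $\<Q(x)\alpha,\beta\>$ \emph{pointwise}. That pointwise identity is essential: the conclusions that $K(x)$ is bounded uniformly in $x$ and that $K(x)\ge 0$ on a convex $M$ — both used downstream in the Gaffney--Friedrichs inequality — are pointwise statements that cannot be read off from an identity valid only modulo a $\p M$-divergence.
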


 The proof depends on the following lemmas.
 It is important for our applications that
 the boundary terms in \eref{gaf5}  above do not depend on the gauge
  connection form. For this reason we are going to carry out explicitly
   what is otherwise a standard kind of integration by parts
  computation.

 \begin{notation}\label{notadc}
     {\rm(Adapted coordinates.) We will make use in this section
  of an adapted coordinate  system for a neighborhood $U$
 containing a part of the boundary of $M$.
This is a coordinate system $x = (x^1,\dots, x^n) $ in $U$  such that
a) $|x^j| <1$ for $j = 1,\dots, n-1$ and $-1 < x^n \le 0$,
 while $U\cap \p M = \{ x: x^n =0\}$.
  $(x^1,\dots, x^{n-1})$ form coordinates on $ U \cap \p M$.
b) The curve $ (-1, 0] \ni t \mapsto x(t) = (a^1,\dots, a^{n-1}, t)$ is a geodesic
normal to $\p M$ at $t=0$, and
 $\< \p/\p x^n, \p/ \p x^j \> =0$ on $U$ for $ j =1, \dots, n-1$.
See for example \cite[page 167]{RaS}  for the existence of such a
coordinate chart.
}
\end{notation}

           \begin{lemma} \label{lemgaf1}
Assume that $ A \in C^\infty( M; \L^1 \otimes \kf)$
and that $\alpha$ and $\beta$ are smooth $\kf$ valued p-forms on $ M$.
 Then
  \begin{align}
 (d_A \alpha, d_A \beta ) &+ (d_A^* \alpha, d_A^* \beta )
                    - (\n^A \alpha, \n^A \beta)   - ((W + B)) \alpha, \beta) \notag\\
  &= L^A(\alpha,\beta)
                                                                        \label{gaf12}
   \end{align}
   where
   \beq
   L^A(\alpha, \beta) = \int_{\p M} \{ \< \nu\wedge \beta, d_A  \alpha \>
    - \< \beta, \nu \wedge d_A^*  \alpha\> - \<\beta, \n^A_{\nu} \alpha\> \}. \label{gaf13}
   \eeq
Here $\nu$ is the  outward  drawn unit co-normal and $\n_\nu^A$ is
the covariant gradient in the normal direction.
   \end{lemma}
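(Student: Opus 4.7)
The plan is to combine the pointwise covariant Weitzenb\"ock formula
\[
(d_A d_A^* + d_A^* d_A)\alpha = (\nabla^A)^*\nabla^A \alpha + W\alpha + B\circ\alpha,
\]
valid on $M^{\text{int}}$, with three Green-type integrations by parts, one for each of the $L^2$ pairings on the left of \eqref{gaf12}. The Weitzenb\"ock identity above is the standard extension of the classical Bochner formula to the product bundle $M\times \V$: the zeroth-order term $B\circ \alpha$ appears exactly because commutators of $\nabla^A$ are given by the curvature $B$, and the computation is local and identical to the real-valued case away from $\p M$.

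Working in a partition of unity subordinate to a cover of $M$ by interior charts together with the adapted coordinate charts of Notation \ref{notadc}, I would then establish the three Green identities
\begin{align*}
(d_A\alpha, d_A\beta) &= (d_A^* d_A\alpha, \beta) + \int_{\p M}\langle \nu\wedge\beta,\, d_A\alpha\rangle, \\
(d_A^*\alpha, d_A^*\beta) &= (d_A d_A^*\alpha, \beta) - \int_{\p M}\langle \beta,\, \nu\wedge d_A^*\alpha\rangle, \\
(\nabla^A\alpha, \nabla^A\beta) &= ((\nabla^A)^*\nabla^A\alpha, \beta) + \int_{\p M}\langle \nabla^A_\nu\alpha,\, \beta\rangle.
\end{align*}
Each follows from Stokes' theorem applied, in local coordinates, to $\alpha\wedge *(d_A\beta)$ (and the analogous forms for the other two), using $d_A = d + [A\wedge\cdot\,]$ and $d_A^* = d^* + [A\lrc\,\cdot\,]$: the $A$-pieces are purely algebraic, so they contribute only to the integrand and produce no additional boundary term beyond the ones arising in the $A=0$ case. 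Combining these three identities and substituting the Weitzenb\"ock formula into the first two volume terms on the right, the four volume integrals collapse against $((\nabla^A)^*\nabla^A\alpha,\beta) + (W\alpha,\beta) + (B\circ\alpha,\beta)$, leaving precisely the boundary expression $L^A(\alpha,\beta)$ of \eqref{gaf13}.

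The main obstacle is the careful bookkeeping of the three boundary integrands, and in particular verifying the second Green identity in the form stated. A direct integration by parts on $(d_A^*\alpha, d_A^*\beta)$ naturally produces a boundary term of the type $\int_{\p M}\langle \iota_\nu\alpha,\, d_A^*\beta\rangle$; to rewrite it as $-\int_{\p M}\langle \beta,\, \nu\wedge d_A^*\alpha\rangle$ one must apply the Hodge-duality identities \eqref{C16}--\eqref{C17} together with the fact that, in adapted coordinates, the Hodge star exchanges the tangential and normal components in a way compatible with the outward conormal $\nu$. Once the three boundary integrands are aligned as in $L^A$, no further calculation is needed; the gauge-independence of these boundary integrands under the boundary conditions \eqref{gaf4} is then the separate issue that Theorem \ref{thmgaf} addresses.
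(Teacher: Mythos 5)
Your proposal is correct and follows the same route as the paper: start from the gauge covariant Bochner--Weitzenb\"ock identity \eref{gaf11}, pair with $\beta$, and perform three integrations by parts (the two exterior-derivative ones via Stokes, the $\nabla^A$ one via a partition of unity and adapted coordinates). The only place you overcomplicate matters is the second Green identity: there is no need for any Hodge-duality manipulation of the boundary term, because taking $\w = d_A^*\alpha$ and $u = \beta$ in Stokes' identity \eref{C27} (i.e.\ moving the coderivative off $\beta$ rather than off $\alpha$) yields both the desired volume term $(d_A d_A^*\alpha,\beta)$ and the desired boundary term $-\int_{\p M}\langle \nu\wedge d_A^*\alpha,\beta\rangle$ in one stroke; the substitution you describe, moving $d_A^*$ off $\alpha$, would instead put $(\alpha, d_A d_A^*\beta)$ in the volume term and fail to match the Weitzenb\"ock formula applied to $\alpha$.
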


            \begin{proof}
      The Bochner-Weitzenboch formula for a ${\frak k}$ valued
 $p$-form on
   $ M$ is
   \beq
   \{d_A^*d_A + d_A d_A^*\}\alpha- (W+B)\circ \alpha
              = (\n^A)^* \n^A \alpha,                           \label{gaf11}
   \eeq
   which may be found in  \cite{BL}.
       We need only take the inner product of \eref{gaf11} with $\beta$
  and do three integrations by parts to deduce \eref{gaf12}.
   Two of the integrations by parts will follow from Stokes' theorem,
   \begin{align}
        (d_A \w, u) - ( \w, \delta_A u)  = (\nu \wedge \w, u)_{\p M}, \ \
        \w \in C^\infty(M),
        \ \ u \in C^\infty(M),                     \label{C27}
        \end{align}
  which itself can be derived from the standard Stokes theorem by observing
  first that the terms involving the connection form $A$ cancel on the left,
  in view of \eref{C7}, \eref{C9}, \eref{C13}
  and \eref{C11}, and  second,
   that the resulting identity
  holds for forms $\w \in C^\infty(M)$ and $u \in C^\infty(M)$ because it holds
  for the real valued components of these forms with respect to an orthonormal
  basis of $\frak k$.

Now  inserting first   $\w = \beta$, $ u = d_A \alpha$ into \eref{C27}
  and then  inserting $\w = d_A^* \alpha$, $u =\beta$
   into \eref{C27} we find, respectively,
  \begin{align*}
  \<d_A \beta, d_A,\alpha)
  &= (\beta, d_A^* d_A\alpha)
            + \int_{\p M} \< \nu \wedge \beta , d_A \alpha \>,\\
  \< d_A^* \alpha, d_A^*\beta\> &= (\beta,  d_Ad_A^* \alpha) -
  \int_{\p M} \< \nu \wedge d_A^* \alpha, \beta \> .
    \end{align*}
  Combining these with \eref{gaf11} we find that the left
  side of \eref{gaf12} is equal to
  $((\n^A)^* (\n^A) \alpha, \beta) - ((\n^A) \alpha, (\n^A) \beta)
  +(\nu \wedge \beta, d_A \alpha)_{\p M}
  - ( \nu \wedge d_A^* \alpha, \beta)_{\p M}$.

          To complete the proof of \eref{gaf12} it suffices to  show that
     \beq
     ((\n^A)^* (\n^A) \alpha, \beta) - ((\n^A) \alpha, (\n^A) \beta)
      = -\int_{\p M}\< \n_{\nu}^A \alpha, \beta\> .                         \label{gaf16}
      \eeq
   For the needed integration by parts we may write,
   with the help of a partition of unity,
   $ \alpha = \alpha_0 + \sum_{j=1}^r \alpha_j$, where
   $\alpha_0$ is supported in
    $M^{int}$ and each $\alpha_j$ is supported in an
    adapted coordinate patch $U_j$.
           For an arbitrary $\frak k$ valued
  $p$-form $\beta$ in $C^\infty( M)$
  the identity  \eref{gaf16} holds for  $\alpha_0$ and $\beta$
  by an integration by parts because there are no boundary terms.
 It suffices therefore
  to prove \eref{gaf16} for each $\alpha_j$.
 To this end we will prove \eref{gaf16}  in case $\beta \in C^\infty( M)$
   while $\alpha$ is supported in an adapted
   coordinate patch $U \subset  M$.

               For any smooth vector field $X$ on $U$ and real valued function
         $f \in C_c^\infty(U)$ we may apply the identity
         $\int_U Xf  +\int_U f(div\ X) = \int_{\p U} f (\nu\cdot X) $,
          to the real valued function
 $f(x) = \< \w(x), \beta(x) \>_{\L^p \otimes \kf}$ to find
    $$
          \int_U (div \ X)  \<\w, \beta\> + (\n_X^A \w, \beta) + ( \w, \n_X^A \beta) =
         \int_{\p M} \< \w, \beta\> (\nu \cdot X)
   $$
   for any p-form $\w \in C_c^\infty(U)$.
   We read off from this that the formal adjoint of $\n_X^A$ is given by
   $ (\n_X^A)^*\w = - \n_X^A \w- (div\ X) \w$ and that
   \beq
   ((\n_X^A)^* \w, \beta) = ( \w, \n_X^A \beta)
   - \int_{\p M} \< \w, \beta\> ( \nu\cdot X)          \label{gaf18}
   \eeq
   Choose an orthonormal frame field $ e_1,\dots, e_n$ in the
   coordinate patch $U$  and apply \eref{gaf18} with
    $X = e_j$ and $\w = \n_{e_j}^A\alpha$ to find
    \beq
    ( ( \n_{e_j}^A)^*\n_{e_j}^A \alpha, \beta) = (\n_{e_j}^A \alpha, \n_{e_j}^A \beta) - \int_{\p M} \< \n_{e_j}^A \alpha, \beta \> \nu\cdot e_j
    \eeq
    Summing over $j$ gives \eref{gaf16}.
  \end{proof}

\bigskip
Unlike Stokes' theorem, \eref{C27}, the connection form $A$ shows up in the boundary term $L^A(\alpha, \beta)$ of  \eref{gaf13}.
We may disentangle
 the $A$ dependence in $L^A(\alpha, \beta)$. We find
\begin{align}
L^A(\alpha,\beta)
 &= \int_{\p M} \{ \< \nu \wedge \beta, d \alpha \>
 - \< \beta, \nu \wedge d^* \alpha\> -\<\beta, \n_\nu \alpha \> \}\label{gaf19.1}\\
 &+ \int_{\p M} \{ \< \nu \wedge \beta, [ A \wedge \alpha]\>
  - \< \beta , \nu \wedge [ A\lrc \alpha]\> -\<\beta, [ A_\nu, \alpha ] \> \}      \label{gaf19.2}
 \end{align}
 where $A_\nu(x) \nu = A_{norm}(x)$ is the normal component of $A$ at $x$.
 It will be important for us that the boundary term be independent of $A$
 when the p-forms $\alpha$ and $\beta$ satisfy appropriate boundary conditions.

        \begin{lemma}\label{lemgaf2} The integrand in line \eref{gaf19.2}
is zero at a point $x \in \p M$ if either
\beq
\alpha_{tan}= \beta_{tan} = 0\ \ \text{at}\ x     \label{gaf20.1}
\eeq
or
\beq
\alpha_{norm}= \beta_{norm} = 0\ \ \text{at}\ x.  \label{gaf20.2}
\eeq
$A(x)$ need not satisfy any boundary condition in either case.
\end{lemma}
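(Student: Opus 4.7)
The key feature that makes the lemma clean is that the statement is pointwise and concerns only the values of $A$, $\alpha$, $\beta$ at a single boundary point $x$. So the plan is to reduce everything to a finite-dimensional linear-algebraic identity and simply verify it by direct computation in adapted coordinates.

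At the chosen point $x$, pick an orthonormal frame $e_1=\mathbf n,e_2,\ldots,e_n$ so that $\nu=e^1$ and $e^2,\ldots,e^n$ span the tangential cotangent space. Decompose each of the three objects into a tangential and a normal part at $x$:
\[
A=A^T+A_\nu\,\nu,\qquad \alpha=\alpha_t+\nu\wedge\alpha_n,\qquad \beta=\beta_t+\nu\wedge\beta_n,
\]
where the subscripts $t$ and $n$ denote forms with no $\nu$ factor. Recall the elementary identities $\nu\wedge\nu=0$ and, for any $\kf$-valued form $\eta$ without $\nu$ component and any $c\in\kf$, $[c,\nu\wedge\eta]=\nu\wedge[c,\eta]$. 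Finally, the definition \eqref{C9} together with $Ad$-invariance of $\<\cdot,\cdot\>_{\kf}$ lets one compute $[A\lrc\cdot]$ by expanding $A=A^T+A_\nu\nu$ and using that $\nu\wedge\nu=0$ kills certain pieces.

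The plan is then to compute each of the three summands in \eqref{gaf19.2} in each of the two boundary cases and observe the cancellation.

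In the Dirichlet-type case \eqref{gaf20.1}, $\alpha=\alpha_t$ and $\beta=\beta_t$ are purely tangential. A short calculation yields
\[
[A\wedge\alpha]=[A^T\wedge\alpha_t]+\nu\wedge[A_\nu,\alpha_t],\qquad [A\lrc\alpha]=[A^T\lrc\alpha_t],\qquad [A_\nu,\alpha]=[A_\nu,\alpha_t].
\]
Thus $\<\nu\wedge\beta,[A\wedge\alpha]\>=\<\nu\wedge\beta_t,\nu\wedge[A_\nu,\alpha_t]\>=\<\beta_t,[A_\nu,\alpha_t]\>$, the middle term $\<\beta,\nu\wedge[A\lrc\alpha]\>$ vanishes because $\beta_t$ is tangential while $\nu\wedge[A^T\lrc\alpha_t]$ is purely normal, and $\<\beta,[A_\nu,\alpha]\>=\<\beta_t,[A_\nu,\alpha_t]\>$. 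The first and third terms cancel and the second is zero, so the integrand vanishes.

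In the Neumann-type case \eqref{gaf20.2}, $\alpha=\nu\wedge\alpha_n$ and $\beta=\nu\wedge\beta_n$. The first term vanishes immediately because $\nu\wedge\beta=\nu\wedge\nu\wedge\beta_n=0$. An analogous bracket computation using the adjoint characterization \eqref{C9} of $\lrc$ gives
\[
[A\lrc\alpha]=-\nu\wedge[A^T\lrc\alpha_n]-[A_\nu,\alpha_n],\qquad [A_\nu,\alpha]=\nu\wedge[A_\nu,\alpha_n],
\]
so that $\<\beta,\nu\wedge[A\lrc\alpha]\>=-\<\beta_n,[A_\nu,\alpha_n]\>$ and $\<\beta,[A_\nu,\alpha]\>=\<\beta_n,[A_\nu,\alpha_n]\>$. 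The sum $0-(-X)-X=0$ again gives the vanishing of the integrand.

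The only real obstacle is sign-bookkeeping in the identification of $[A\lrc\alpha]$ via the adjoint relation \eqref{C9}, together with the correct sign in $\iota_X(\nu\wedge\eta)=(\iota_X\nu)\eta-\nu\wedge\iota_X\eta$; getting the two $\pm$ signs consistent with the $Ad$-invariance rule $\<[u,v],w\>=-\<v,[u,w]\>$ is what makes the first and third terms cancel in one case and the second and third in the other. Once those signs are pinned down, the remainder of the verification is purely mechanical.
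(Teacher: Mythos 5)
Your computations are correct and the approach is essentially the same as the paper's: a pointwise calculation at $x\in\p M$ in a frame with $\nu=e^1$, decomposing $A$, $\alpha$, $\beta$ into tangential and normal parts and using the $\mathrm{Ad}$-invariance of $\<\cdot,\cdot\>_\kf$ to trace the signs. In each case one of the three terms vanishes by orthogonality of tangential and normal forms and the other two cancel, which is exactly what the paper does. The bracket identities you write down, including the signs in $[A\lrc\alpha]$, check out against the adjoint definition \eref{C9}.

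However, you have inverted the notation from Notation \ref{notnorm} and so have the two cases attached to the wrong equations. In the paper's convention $\gamma=\gamma_{norm}+\gamma_{tan}$ with $\gamma_{norm}=\alpha\wedge\nu$ the part carrying a $\nu$ factor and $\gamma_{tan}$ the part without one. Hence the hypothesis $\alpha_{tan}=\beta_{tan}=0$ in \eref{gaf20.1} makes $\alpha$ and $\beta$ purely \emph{normal} (of the form $\nu\wedge\alpha_n$, $\nu\wedge\beta_n$), not purely tangential as you assert; and \eref{gaf20.2}, $\alpha_{norm}=\beta_{norm}=0$, makes them purely tangential ($\alpha=\alpha_t$, $\beta=\beta_t$). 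So the computation you label ``Dirichlet-type'' is really the verification of the Neumann case \eref{gaf20.2}, and the one you label ``Neumann-type'' is really the Dirichlet case \eref{gaf20.1}. Since the two computations are each correct for the decomposition actually used and together they exhaust both hypotheses, the lemma is still proved; but you should swap the labels (and the stated consequences of the boundary conditions) to match the paper's conventions, which also govern Theorems \ref{thm1N} and \ref{thm1D}.
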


        \begin{proof}
 Fix $x \in \p M$.
Assume first that $  \alpha_{tan}= \beta_{tan} = 0\ \ \text{at}\ x $. Then
$\nu\wedge \beta =0$. So the first term in \eref{gaf19.2} is zero at $x$.
We assert that the remaining two terms cancel. Indeed, since $\beta_{tan} = 0$
we may write $ \beta = \nu \wedge \phi$ at $x$ with $\phi_{norm} =0$.
Then
$ \< \beta, \nu\wedge [ A \lrc \alpha] \>
= \<\nu \wedge \phi,\nu\wedge [ A \lrc \alpha] \>
=\< \phi, [ A \lrc \alpha]\>
= \< [A\wedge \phi], \alpha\>
 = \< [A_\nu, \nu\wedge \phi]+ \text{a tangential term},\alpha \>
 = - \< \nu \wedge \phi, [ A_\nu, \alpha] \>
 = - \< \beta, [ A_\nu, \alpha ] \>$.
  Thus the second and third terms in \eref{gaf19.2} cancel.

Assume next that $\alpha_{norm}= \beta_{norm} = 0\ \ \text{at}\ x$.
The middle term is zero because $\beta_{norm} =0$. We assert that
the first and third terms cancel. Indeed
$ \< \nu\wedge \beta, [ A \wedge \alpha]\>
= \< \nu\wedge \beta, [ A_\nu , \nu \wedge \alpha] + \text{a tangential term}\>
= \< \nu \wedge \beta, \nu \wedge [ A_\nu, \alpha]\>
= \< \beta, [ A_\nu, \alpha]\>$, which shows that the first and third
terms in  \eref{gaf19.2} cancel.
\end{proof}


             \begin{remark}\label{remAdep}
{\rm It is  illuminating to understand when the
 integrand  in \eref{gaf19.2} is  identically zero, independently of boundary conditions on $\alpha$ and $\beta$.
It can be shown that

 a)  the integrand is zero at a point $x \in \p M$ for all $\alpha$ and
           $\beta$ if $A_{tan}(x) = 0$

 b) if $ \kf$ is semisimple and  $\alpha_{tan}(x) =0$ then
    there exist $A$ and $\beta$  such that the integrand is not zero at $x$.

 We omit the proofs.
 }
 \end{remark}

        \begin{notation}\label{notshape}
{\rm  (Extended shape operator)
An adapted coordinate system (see Notation \ref{notadc})
will be useful for describing the shape
operator and its extension to the exterior algebra.
 Writing $\p_j = \p / \p x^j$, the outward drawn unit normal and co-normal
are given by $\p_n$ and $\nu = dx^n$, respectively,  on $U\cap \p M$.
 The shape operator at a point
$P \in  U\cap \p M$ is given by $ S(X) = \n_X \p_n$ for $ X \in T_P (\p M)$,
\cite[page 217]{GHL},
where $\n_X$ is the Riemannian covariant derivative.
 The adjoint $S^* \in End(T_P^*(\p M))$ extends uniquely
to a derivation $Q$ of the exterior algebra $\L (T_P^*(\p M))$.
We may identify $\L (T_P^*(\p M))$ with the algebra of
exterior polynomials in the 1-forms $dx^1,\dots, dx^{n-1}$ with constant coefficients. The action of $Q$ on such an exterior polynomial $\w$ is given
by
\beq
- (\n_n \w )|_{\p M} = Q (\w|_{\p M}),                       \label{sh8}
\eeq
as one sees by observing first, that $\n_n \p_n =0$ because $t\mapsto (a_1,\dots, a_{n-1}, t)$ is a geodesic, second, that $\n_n$ therefore   leaves invariant
the span of $dx^1,\dots, dx^{n-1}$, third, that $S^* = \n_n^* = -\n_n$ on this span,
and finally, that $Q$ and $-\n_n$ are derivations of this algebra.
}
\end{notation}

         \begin{proof}[Proof  of Theorem \ref{thmgaf}]
In view of \eref{gaf12} and Lemma \ref{lemgaf2} we can ignore the connecton form $A$ and just show that the integrand  in \eref{gaf19.1} has the form assserted in \eref{gaf5}. Explicitly, we will show that
\beq
\< K(x) \alpha(x), \beta(x)\> =
\begin{cases}  \< \{ I_{\frak k} \otimes Q(x)\} \alpha(x), \beta(x)\>\ \
         &\text{if}\ \alpha_{norm} = \beta_{norm} =0 \\
\< \{ I_{\frak k} \otimes (*^{-1}Q(x)*)\} \alpha(x), \beta(x)\>\ \
         &\text{if}\ \alpha_{tan} = \beta_{tan} =0.
\end{cases}                   \label{sh11}
\eeq

        Assume first that $\alpha_{norm} = \beta_{norm} =0$. Choose an
        orthonormal basis $e_1, \dots, e_d$ of $\frak k$ and write
       $ \alpha = \sum_{i=1}^d e_i \alpha^i$ and $\beta = \sum_{i=1}^d e_i \beta^i$
        where $\alpha^i$ and $\beta^i$ are real valued p-forms.
    Then the integrand in    \eref{gaf19.1} is
        $$
        \sum_{i=1}^d \{ \< \nu \wedge \beta^i, d \alpha^i \>_{\L^{p+1}}
        -  \< \beta^i , \nu \wedge d^*\alpha^i \>_{\L^p}
        - \< \beta^i, \n_{\nu} \alpha^i \> \}
        $$
    at a point $ P \in \p M$. It suffices to show that this has the form
        $\sum_{i=1}^d \< Q(x) \alpha^i, \beta^i \>$ for then one can take
    $K(x) = I_{\frak k} \otimes Q(x)$ in \eref{gaf5}.

         Now $\alpha(x)_{norm} =0$ if and only if $\alpha^i(x)_{norm} =0$
          for each  $i$.
     Thus it suffices to prove that the integrand in \eref{gaf19.1}
          is equal to
         $ \< Q(x) \alpha(x), \beta(x) \>$ when $ \alpha$  and $\beta$
          are real valued p-forms such that
          $\alpha_{norm} = \beta_{norm}$ on $U\cap \p M$.  In this case
          the middle term in \eref{gaf19.1}, $ \< \beta, \nu \wedge d^* \alpha\> = 0$
          and we are left with
          $\< \beta, \nu \lrc d\alpha - \n_\nu \alpha\>$.

          We will compute this in an  adapted coordinate system.
We may write
\beq
\alpha(x) = \sum_{J<n} a_J(x) dx^J + \sum_{I<n} b_I(x) dx^I \wedge dx^n
\eeq
Here and below $ J = (j_1,\dots j_p)$ with $j_1 < \cdots < j_p <n$
 and $I =(i_1,\dots, i_{p-1})$ with $i_1 <\cdots   < i_{p-1} < n$. Moreover $b_I(x) =0$ if $x \in U\cap \p M$.
Then
\begin{align*}
 \nu\lrc (d \alpha(x))&= \sum_{J<n} \{ \nu \lrc \sum_{k=1}^{n-1} \p_k a_J(x) dx^k \wedge dx^J + \nu \lrc (\p_n a_J(x)) dx^n \wedge dx^J\} \\
 &+ \sum_{I<n} \{ \nu \lrc \sum_{k=1}^{n-1} \p_k b_I(x) dx^k \wedge dx^I \wedge dx^n\}\\
 & =  \sum_{J<n} \p_n  a_J(x) dx^J
 \end{align*}
 because $ \nu \lrc( dx^k \wedge dx^J) =0$ and $ \p_k b^I (x) =0$ on $U\cap \p M$
 for $ k = 1, \cdots, n-1$

   On the other hand, on $\p M$,
        \begin{align*}
 \n_\nu \alpha =  \n_n \alpha & =   \sum_{J<n} \{ (\p_n a_J) dx^J
              +  a_J \n_n (dx^J) \} \\
    & +\sum_{I<n}\{ (\p_n b_I) dx^I \wedge dx^n
          + b_I \n_n (dx^I \wedge dx^n)\}
 \end{align*}
 On $\p M$, therefore, we find some cancellation in the following difference
 and, since $b_I = 0$ on $\p M$, we arrive at
 \begin{align*}
 \nu \lrc (d \alpha(x)) - \n_\nu \alpha(x) = - \sum_{J<n} a_J(x) \n_n (dx^J)
 -\sum_{I<n} (\p_n b_I(x) ) dx^I \wedge dx^n
 \end{align*}
        Finally, since $\beta_{norm} =0$ we find, at $x \in \p M$, in view of \eref{sh8},
 \begin{align}
 \< \beta, \nu \lrc d \alpha - \n_\nu \alpha \>
 &= -\sum_{J<n} \< \beta, a_J \n_n (dx^J) \>\notag\\
 &= \sum_{J<n} \< \beta , a_J Q dx^J \>\notag\\
 & = \< \beta, Q(x) \alpha \>        \label{sh17}
 \end{align}
 This proves \eref{sh11} and \eref{gaf5} if $ \alpha_{norm} = \beta_{norm} =0$.

         In the case $\alpha_{tan} = \beta_{tan} = 0$ we may reduce to real
  valued forms  in the same way as above. Denoting the Hodge star operator
 on $\L(T^*(M))$ by $*$  we can reduce this case to the preceding by applying
 the preceding case to the $n-p$ forms $*\alpha$ and $*\beta$, which,
  as is well known, satisfy now
   $(*\alpha)_{norm} = (*\beta)_{norm} = 0$. Applying the identity
   \eref{sh17} to these two forms we find
 $$
 \< * \beta, \nu \lrc d*\alpha - \n_\nu *\alpha\>_{\L^{n-p}}
  = \< * \beta, Q(x) *\alpha\>_{\L^{n-p}}
 $$
 and therefore
 $$
  \<\beta , *^{-1} ( \nu \lrc (d *\alpha)) - *^{-1} \n_\nu * \alpha\> _{\L^p}
  = \< \beta, *^{-1} Q(x)* \alpha \>_{L^p}
  $$
  But $ *^{-1} \n_\nu * = \n_\nu$ while
  $$
  *^{-1} (\nu \lrc (d*\alpha)) = - \nu \wedge d^* \alpha
  $$
  by      \cite[Lemma 4.1, items (1), (6) and (10)]{MMT01}.
         Hence
\beq
- \<\beta, \nu \wedge d^*\alpha + \n_\nu \alpha\>
 = \< \beta, *^{-1} Q(x) * \alpha\>         \label{sh31}
\eeq
Now the first term  in the integrand in \eref{gaf19.1} is zero because
$\nu\wedge \beta = 0$. Thus \eref{sh31} shows that the integrand
in \eref{gaf19.1}  is $\<\beta(x), *^{-1} Q(x) * \alpha(x)\>$.
Hence we may take $K(x) = I_{\frak k} \otimes (*^{-1} Q(x) *)$ in this case.
This completes the proof of \eref{sh11} and \eref{gaf5}.

     Finally,  observe that if the second fundamental form is  greater
 than or equal to zero,
i.e. $S(x) \ge 0$ on $\p M$, then $S^* \ge 0$ also, as is also
$Q(x)$ and the unitary transform $*^{-1} Q(x) *$. The identity \eref{sh11} therefore shows that $K(x) \ge 0$ in both cases.
 \end{proof}


\subsection{A Gaffney-Friedrichs inequality in 3 dimensions} \label{secGF2}

          \begin{proof}[Proof of Theorem \ref{thmGF}]
          We resume the assumption that $M$ is a compact Riemannian 3-manifold
          with smooth boundary and assume now
          that $A$ and $\w$ are in $W_1(M)$.
We will write
\beq
\|\w\|_{W_1^A(M)}^2 = \|\n^A \w \|_{L^2(M)}^2 + \| \w \|_{L^2(M)}^2.    \label{gaf55}
\eeq
for any $\frak k$ valued p-form $\w$.
By Kato's inequality
\beq
\int_M | grad | \w|\, |^2 \le \| \n^A \w \|_2^2,        \label{gaf53}
\eeq
and, by Sobolev's inequality, there exists  a constant $\kappa$,
 depending on the  geometry of $M$
 but not on $A$, such that
$\|\w \|_6^2 \le (\kappa^2/2) ( \int_M | grad |\w |\, |^2 + \|\w \|^2_2 )$
for all $\w \in W_1(M)$. (See e.g., \cite[Theorem 7.26]{GT}.)
Hence
\begin{align}
\|\w \|_6^2 \le (\kappa^2/2) (\|\n^A \w \|_{L^2(M)}^2
          + \|\w \|_2^2)\ \ \text{for}\  \w\  \text{and}\ A \in W_1(M)    \label{gaf54}
\end{align}
Applying H\"older's inequality for the product $ |B(x)| |\w(x)| |\w(x)|$ we  therefore find (ignoring a factor $2^{-1/2}$)
\begin{align}
|(B \w, \w)| &\le c \|B\|_3 \|\w\|_2 \|\w\|_6  \notag \\
&\le c \|B\|_3 \|\w\|_2 \kappa \|\w\|_{W_1^A(M)} \notag \\
&\le (c\kappa)^2 \|B\|_3^2 \|\w\|_2^2 +(1/4) \|\w\|_{W_1^A(M)}^2 .  \label{gaf58}
\end{align}
     Define, for $0 < a \le 1$,  the fractional  Sobolev norm
\beq
\| f \|_{H_a(M)} = \| (1 - \Delta)^{a/2} f \|_{L^2(M)}\ \ \
                 \text{for}\ \ f: M  \rightarrow R,                              \notag
\eeq
where $\Delta$ denotes the self-adjoint Neumann Laplacian on real
valued functions on $M$.  By the  spectral theorem we have the
interpolation inequality $\|f\|_{H_a} \le \|f\|_{L^2(M)}^{1-a} \|f \|_{H_1}^a$.
Moreover for $1/2 < a \le 1$  the trace inequality
$\|f|\p M \|_{H_{a-1/2}(\p M)} \le \tau_a \| f \|_{H_a(M)}$  holds for some constant
$\tau_a$. (See e.g. \cite[Chapter 4, Proposition 4.5]{Tay1}.)
         In particular, taking $ a = 3/4$, and observing that
         $ \|f|\p M \|_{L^2(\p M)} \le \| f|\p M \|_{H_{1/4}(\p M)}$,
         it follows that  $\| f  |\p M\|_{L^2(\p M)} \le \tau \| f\|_{H_{3/4}(M)}$ for some
         $\tau < \infty$, and  therefore
  \beq
\| f  |\p M\|_{L^2(\p M)}  \le \tau \| f\|_{H_{3/4}(M)}
\le \tau  \|f \|_{L^2(M)}^{1/4} \|f \|_{H_1(M)}^{3/4}   \notag
\eeq
Taking account of \eref{gaf53} and putting $f = |\omega|$ in the last
 inequality we have
\begin{align}
\| \w \|_{L^2(\p M)}^2
&\le \tau^2 \|\w \|_{L^2(M)}^{1/2} \|\w\|_{W_1^A(M)}^{3/2}    \label{gaf60}\\
&\le (1/4) \Big(\frac{\tau^2}{\epsilon}\Big)^4
     \| \w \|_{L^2(M)}^2 +(3/4) \epsilon^{4/3} \| \w  \|_{W_1^A(M)}^2          \notag
\end{align}
by virtue of the convexity inequality
 $ uv \le (1/4) u^4 + (3/4) v^{4/3}$.
 With $K$ defined by \eref{gaf5} let $\|K\|_\infty = \sup_{x \in \p M} \| K(x)\|_{{\rm End}\L^p(T_x(\p M)}$.
 Choose $\epsilon$ so that $(3/4) \epsilon^{4/3} \|K\|_\infty = 1/4$
 to deduce
 \begin{align}
 |\int_{\p M} \< K(x) \w(x), \w(x) \>|
 &\le \| K\|_\infty \| \w \|_{L^2(\p M)}^2 \notag           \\
 &\le \gamma_1 \| \w \|_{L^2(M)}^2 + (1/4) \|\w \|_{W_1^A(M)}^2    \label{gaf62}
 \end{align}
 where $\gamma_1 = (27/4) (\tau^2 \|K\|_\infty)^4$. \eref{gaf62} holds for any
 p-form $\w \in W_1^A$. Note that neither $\w$ nor $A$ need
  satisfy any boundary
 conditions for the validity of \eref{gaf58} and \eref{gaf62}.

    Suppose now that $\w$ satisfies one of the boundary
  conditions in \eref{gaf49}.  Put $\alpha = \beta = \w$ in \eref{gaf5}
    to find, for smooth $A$ and $\w$,
 \begin{align}
\|\n^A \w \|_{L^2(M)}^2  &=  \|d_A \w \|_2^2  +\|d_A^* \w \|_2^2
- (W \w,\w)     \notag\\
& -(B \w,\w) - \int_{\p M} \<K(x) \w(x), \w(x)\>.      \label{gaf52}
\end{align}
This identity was derived in Theorem \ref{thmgaf} under the
assumption that $A$ and $\w$ are smooth.
 But all six terms in  \eref{gaf52} are jointly continuous in $A$ and $\w$ in $W_1\times W_1$. So \eref{gaf52} is valid for $A$ and $\w$ in $W_1(M)$
 also, if one simply interprets $d_A^* \w = d^*\w + [A\lrc \w]$.

We will bound the last two terms in \eref{gaf52} using
 \eref{gaf58} and \eref{gaf62}.
Inserting these inequalities into \eref{gaf52} we find
\begin{align}
\|\n^A \w \|_2^2 +\|\w\|_2^2
&\le \|d_A \w\|_2^2 + \|d_A^*\w\|_2^2 + (1 +\|W\|_\infty) \|\w\|_2^2\notag\\
&+ \gamma_2 \| \w \|_2^2 + (1/2) \| \w \|_{W_1^A(M)}^2          \label{gaf63}
\end{align}
with
$\gamma_2 = (\kappa c)^2 \|B\|_3^2 + (27/4) (\tau^2  \| K \|_\infty)^4$

  Shift the last term in \eref{gaf63} to the left side  to deduce
    \eref{gaf50} with
\beq
\lambda_3 = 1 + \|W\|_\infty + 7(\tau^2 \|K\|_\infty )^4
      + (\kappa c)^2 \|B\|_3^2,                        \label{gaf66}
 \eeq
 which is of the form asserted in \eref{gaf51}, wherein we may take
 \beq
 \lambda_M = 1 + \|W\|_\infty  +7(\tau^2 \|K\|_\infty )^4         \label{gaf67}
 \eeq
 and $\tau$ is the norm of the trace map $H_{3/4}(M) \rightarrow L^2(\p M)$.
\end{proof}

\begin{corollary}\label{corGFiconvex}
Suppose that $M\subset \R^3$ and is convex in the sense that its second fundamental form is non-negative. Let $A\in W_1(M)$.
If $\w \in W_1(M; \L^p\otimes \frak k)$
and either $\w_{tan} =0$ or $\w_{norm}=0$ then
\beq
 \| \n^A \w\|_2^2 + \|\w\|_2^2
 \le (4/3)\{\| d_A \w\|_2^2 +| d_A^* \w\|_2^2 + \l_3 \|\w\|_2^2\}  \label{gaf76}
 \eeq
 with
 $\l_3 = 1 +(c\kappa)^2 \|B\|_3^2 $.
\end{corollary}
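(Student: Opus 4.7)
The plan is to revisit the proof of Theorem \ref{thmGF}, and observe that two of the error terms treated crudely there either vanish or have a favorable sign when $M$ is a convex subset of $\R^3$. Concretely, since the ambient space is flat, the Bochner-Weitzenboch operator $W$ is zero; and since the second fundamental form $S \ge 0$, the end of the proof of Theorem \ref{thmgaf} shows that $K(x) \ge 0$ pointwise on $\p M$, so the boundary integral $\int_{\p M} \langle K(x)\w(x),\w(x)\rangle$ is non-negative. These two simplifications eliminate any dependence on $\|W\|_\infty$ or on the trace/interpolation bound (\ref{gaf62}), and in particular eliminate the constant $7(\tau^2\|K\|_\infty)^4$ that contributed to $\lambda_M$ in (\ref{gaf67}).

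Starting from the identity (\ref{gaf52}) with $\alpha = \beta = \w$, which is valid for $A,\w \in W_1(M)$ whenever $\w$ satisfies one of the boundary conditions in (\ref{gaf49}), I would drop the non-negative boundary term and the vanishing $W$-term to obtain
\begin{equation*}
\|\n^A \w\|_2^2 \;\le\; \|d_A \w\|_2^2 + \|d_A^* \w\|_2^2 - (B\w,\w).
\end{equation*}
The curvature-times-form term is then handled exactly as in (\ref{gaf58}), which requires no boundary condition on $\w$ and only the 3-dimensional Sobolev embedding $W_1 \hookrightarrow L^6$ plus H\"older:
\begin{equation*}
|(B\w,\w)| \;\le\; (c\kappa)^2 \|B\|_3^2 \|\w\|_2^2 + \tfrac{1}{4}\bigl(\|\n^A \w\|_2^2 + \|\w\|_2^2\bigr).
\end{equation*}

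Substituting this estimate into the previous inequality, adding $\|\w\|_2^2$ to both sides, and absorbing the $(1/4)(\|\n^A\w\|_2^2 + \|\w\|_2^2)$ term into the left-hand side produces
\begin{equation*}
\tfrac{3}{4}\bigl(\|\n^A \w\|_2^2 + \|\w\|_2^2\bigr) \;\le\; \|d_A \w\|_2^2 + \|d_A^* \w\|_2^2 + \bigl(1 + (c\kappa)^2\|B\|_3^2\bigr)\|\w\|_2^2.
\end{equation*}
Multiplying through by $4/3$ yields (\ref{gaf76}) with $\lambda_3 = 1 + (c\kappa)^2\|B\|_3^2$, as claimed.

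There is no real obstacle here: all the analytic heavy lifting, including the verification that (\ref{gaf52}) extends by continuity from smooth to $W_1$ data and the Sobolev-based bound on $(B\w,\w)$, was already done in Theorem \ref{thmGF}. The only bookkeeping point worth checking is that the constant $1$ appearing in $\lambda_3$ is exactly what comes from adding $\|\w\|_2^2$ once to both sides before absorbing; in particular no further curvature-of-$M$ or trace-constant contribution survives the convexity hypothesis.
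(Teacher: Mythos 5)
Your argument is correct and coincides with the paper's own proof: the paper likewise sets $\alpha=\beta=\w$ in the Gaffney identity, discards the non-negative $K$-term and the vanishing $W$-term to obtain \eref{gaf78}, then inserts \eref{gaf58} and absorbs the $W_1^A$ term. The only point the paper leaves implicit, which you spell out, is the addition of $\|\w\|_2^2$ to both sides and the resulting $4/3$ factor, and your bookkeeping there is accurate.
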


         \begin{proof}
Put $\alpha = \beta =\w$ in \eref{gaf5} to find
\begin{align}
 \|\n^A \w\|_2^2 &=  \| d_A \w \|_2^2 + \| d_A^* \w \|_2^2 - ( B \w, \w)
      - \int_{\p M} \< K(x) \w(x), \w(x) \>   \notag \\
     & \le   \| d_A \w \|_2^2 + \| d_A^* \w \|_2^2 - ( B \w, \w), \label{gaf78}
   \end{align}
since $W= 0$ and $K(x) \ge 0$.
Insert the estimate \eref{gaf58}  to deduce \eref{gaf76}.
\end{proof}

        If one takes $M$ to be a cube in $\R^3$ then although $M$
  does not have a smooth boundary the identity \eref{gaf5} is easily
  verified directly and, since $K(x) = 0$ on the flat
sides of $\p M$, one finds no boundary terms.  The inequality \eref{gaf76}
holds, therefore, in this case also.

       \begin{corollary}\label{corGFS} $($Gaffney-Friedrichs-Sobolev inequality.$)$
Assume  that  $M$ is a compact Riemannian 3-manifold with smooth boundary.
Let $A \in W_1(M)$  and suppose that $\|B\|_{L^3(M)} < \infty$.
If $\w \in W_1(M: \L^p\otimes \frak  k)$ and either
$\w_{tan}= 0$ or $\w_{norm}=0$ then
\beq
\|\w \|_{L^6(M)}^2
\le \kappa^2 ( \|d_A \w \|_2^2 + \|d_A^* \w \|_{L^2(M)}^2
     + \lambda_3\| \w \| _{L^2(M)}^2)                                  \label{gaf68}
\eeq
with $\lambda_3 $ given by \eref{gaf51}.
Moreover if $M\subset \R^3$ and is convex then one can take
$\lambda_3 = 1 +(c\kappa)^2 \| B\|_3^2$.
\end{corollary}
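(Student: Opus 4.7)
The plan is to combine Theorem \ref{thmGF} with the $W_1^A$ Sobolev inequality \eqref{gaf54}. The Gaffney--Friedrichs inequality \eqref{gaf50} is designed precisely to compare the Hodge energy $\|d_A\omega\|_2^2+\|d_A^*\omega\|_2^2+\lambda_3\|\omega\|_2^2$ to the gauge covariant gradient energy $\|\omega\|_{W_1^A(M)}^2=\|\nabla^A\omega\|_2^2+\|\omega\|_2^2$, and \eqref{gaf54} controls $\|\omega\|_6^2$ by the gauge covariant gradient energy via Kato's inequality and the scalar Sobolev embedding $W_1(M)\hookrightarrow L^6(M)$; chaining these two inequalities gives exactly \eqref{gaf68}. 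Both inputs are already available under the stated hypotheses ($A,\omega\in W_1$, $\|B\|_3<\infty$, and $\omega$ satisfying one of the two boundary conditions), so no additional work beyond bookkeeping of constants is needed.

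Concretely, I would first rewrite \eqref{gaf50} as
\[
\|\nabla^A\omega\|_2^2+\|\omega\|_2^2\;\le\;2\bigl(\|d_A\omega\|_2^2+\|d_A^*\omega\|_2^2+\lambda_3\|\omega\|_2^2\bigr),
\]
which is the content of Theorem \ref{thmGF} under the boundary hypothesis $\omega_{tan}=0$ or $\omega_{norm}=0$. Next I would invoke \eqref{gaf54}, valid for any $\omega\in W_1$ with no boundary condition required, to write
\[
\|\omega\|_6^2\;\le\;\frac{\kappa^2}{2}\bigl(\|\nabla^A\omega\|_2^2+\|\omega\|_2^2\bigr).
\]
Substituting the first inequality into the second, the factor $2$ cancels the $1/2$, and the result is precisely \eqref{gaf68}.

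For the convex case, the only change is the input: in place of Theorem \ref{thmGF} I would use Corollary \ref{corGFiconvex}, whose constant is $4/3$ (with $\lambda_3=1+(c\kappa)^2\|B\|_3^2$, no curvature-of-$M$ contribution) rather than $2$. The same substitution then yields
\[
\|\omega\|_6^2\;\le\;\frac{\kappa^2}{2}\cdot\frac{4}{3}\bigl(\|d_A\omega\|_2^2+\|d_A^*\omega\|_2^2+\lambda_3\|\omega\|_2^2\bigr),
\]
which is sharper than \eqref{gaf68} but in particular implies the claimed estimate with the improved $\lambda_3$.

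There is no real obstacle here: all the analytic content (the Bochner--Weitzenb\"ock identity, the nonnegativity or controlled size of the boundary shape term $K(x)$, the $L^3$-control of the curvature term $(B\omega,\omega)$, and Kato plus scalar Sobolev) has already been absorbed into Theorem \ref{thmGF} and \eqref{gaf54}. The proof is therefore just the two-line chaining described above.
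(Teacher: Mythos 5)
Your proof is correct and is exactly the paper's argument: the authors' own proof of Corollary \ref{corGFS} reads simply ``Combine \eref{gaf50} and \eref{gaf54}; use \eref{gaf76} if $M\subset\R^3$ and is convex.'' Your bookkeeping of the constants (the factor $2$ from \eref{gaf50} cancelling the $\kappa^2/2$ from \eref{gaf54}, and the sharper $2\kappa^2/3$ in the convex case) is accurate.
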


                       \begin{proof}
    Combine \eref{gaf50} and \eref{gaf54}. Use \eref{gaf76} if $M\subset \R^3$
   and  is convex.
\end{proof}

       \begin{example}\label{exball}
{\rm  Take $M$ to be a closed ball of radius $R$ in $\R^3$. In this case
 $W =0$ and the principal curvatures are both $1/R$. Hence
$$
     K(x) =
     \begin{cases}  &  1/R\ \text{if}\ p=1\\
                         & 2/R\ \text{if}\ p =2\\
     \end{cases}
$$
in the case $\alpha_{norm} = \beta_{norm} =0$.
In the case $\alpha_{tan} = \beta_{tan} = 0$ the two lines should be interchanged.
}
\end{example}

\begin{remark}{\rm
 The following theorem is a slight
variant of Theorem \ref{thmGF}, in which the $A$ dependent constant,
$\lambda_3$ in   \eref{gaf51}, is replaced by a constant $\lambda_2$
depending quartically on $\|B\|_2$ instead of quadratically on $\|B\|_3$.
 Both forms of these inequalities will be needed.
 }
 \end{remark}


\begin{theorem}\label{GFi'}
          $($Gauge invariant Gaffney-Friedrichs inequality for finite\linebreak
    energy.$)$
Suppose, as before, that dimension $M =3$ and that $M$ has
 a smooth boundary.
Assume that $A \in W_1(M)$.
Then  for any $\frak k$ valued p-form $\w$ in $W_1(M)$ with
  $\w_{tan} =0$ or $\w_{norm}=0$
the inequality
\beq
(1/2)\{\|\n^A \w \|_{L^2(M)}^2 + \|\w \|_2^2\}
    \le  \|d_A \w \|_{L^2(M)}^2  + \|d_A^* \w \|_{L^2(M)}^2
            + \lambda_2\|\w\|_{L^2(M)}^2                                       \label{gaf50'}
\eeq
holds with a constant $\lambda_2$ that depends quartically on $\|B\|_{L^2(M)}$
and on the geometry of $M$ but not otherwise on the connection
 form $A$. Explicitly, $\lambda_2$ may be taken to be given by \eref{gaf71'},
 where $\tau$ is a trace map norm.
\end{theorem}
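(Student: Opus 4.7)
The structure of the proof mirrors that of Theorem~\ref{thmGF}: start from the Gaffney identity \eref{gaf52}, estimate the curvature term $(B\w,\w)$ and the boundary term, and absorb a fraction of $\|\w\|_{W_1^A}^2$ back to the left side. The only place where $\|B\|_3$ entered was in the H\"older estimate \eref{gaf58}, so the plan is to rework exactly that step so that $\|B\|_2$ appears instead, at the cost of a higher power.

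The key observation is that the product $|B||\w|^2$ can be split into $|B|\cdot|\w|^2$ with $|B|\in L^2$ and $|\w|^2\in L^2$, giving
\[
|(B\w,\w)| \le c\,\|B\|_2\,\|\w\|_4^2.
\]
The $L^4$ norm is then interpolated between $L^2$ and $L^6$: writing $1/4 = (1/4)\cdot(1/2) + (3/4)\cdot(1/6)$, one gets $\|\w\|_4 \le \|\w\|_2^{1/4}\|\w\|_6^{3/4}$, hence $\|\w\|_4^2 \le \|\w\|_2^{1/2}\|\w\|_6^{3/2}$. Substituting the Sobolev estimate \eref{gaf54}, $\|\w\|_6 \le (\kappa/\sqrt2)\,\|\w\|_{W_1^A(M)}$, yields
\[
|(B\w,\w)| \le C\,\|B\|_2\,\|\w\|_2^{1/2}\,\|\w\|_{W_1^A(M)}^{3/2},
\]
with $C$ a universal multiple of $c\kappa^{3/2}$.

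Next I apply Young's inequality in the form $uv \le u^4/4 + 3v^{4/3}/4$ with the splitting $u = \epsilon^{-1}\cdot C\|B\|_2\|\w\|_2^{1/2}$ and $v = \epsilon\cdot\|\w\|_{W_1^A(M)}^{3/2}$, where $\epsilon>0$ is chosen so that $(3/4)\epsilon^{4/3}=1/4$. This produces
\[
|(B\w,\w)| \le \tfrac{1}{4}\|\w\|_{W_1^A(M)}^2 + c'\,\|B\|_2^4\,\|\w\|_2^2
\]
for an explicit universal constant $c'$, i.e.\ a quartic-in-$\|B\|_2$ bound with a small absorbable piece of $\|\w\|_{W_1^A(M)}^2$. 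The boundary term in \eref{gaf52} is treated exactly as in the proof of Theorem~\ref{thmGF} via the trace inequality $\|f|_{\partial M}\|_{L^2(\partial M)}\le\tau\|f\|_{L^2(M)}^{1/4}\|f\|_{H_1(M)}^{3/4}$ and Kato's inequality, yielding the same bound \eref{gaf62}, namely $\gamma_1\|\w\|_2^2 + (1/4)\|\w\|_{W_1^A(M)}^2$, with no $B$ dependence.

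Substituting both estimates into the Gaffney identity \eref{gaf52} and absorbing the two $(1/4)\|\w\|_{W_1^A(M)}^2$ terms to the left, I obtain \eref{gaf50'} with
\[
\lambda_2 \;=\; 1 + \|W\|_\infty + 7\,(\tau^2\|K\|_\infty)^4 + c'\,\|B\|_{L^2(M)}^4,
\]
which will be displayed as \eref{gaf71'}. No real obstacle is foreseen: the delicate point is choosing the H\"older split so that the final absorption works and so that the power of $\|B\|_2$ comes out as 4 (rather than, say, 6), and this is forced by the interpolation exponent $a=1/4$ and the Young pairing $(4/3,4)$. The boundary term handling is identical to the Theorem~\ref{thmGF} case and requires no change.
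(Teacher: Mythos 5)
Your proof is correct and follows essentially the same route as the paper: replace the $L^3\!-\!L^2\!-\!L^6$ H\"older split of Theorem~\ref{thmGF} by the $\|B\|_2\,\|\w\|_4^2$ estimate, interpolate $\|\w\|_4^2\le\|\w\|_2^{1/2}\|\w\|_6^{3/2}$, feed in \eref{gaf54}, and absorb via Young's inequality with the conjugate pair $(4,4/3)$. The only (cosmetic) difference is that the paper applies Young's inequality once to the \emph{sum} of the curvature and boundary contributions, producing the single quartic $(c\|B\|_2\kappa^{3/2}+\tau^2\|K\|_\infty)^4$ in \eref{gaf71'}, whereas you absorb each term separately, giving a sum of two quartics; both yield the required quartic dependence of $\lambda_2$ on $\|B\|_{L^2(M)}$.
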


                 \begin{proof}
        The proof largely duplicates the proof of Theorem \ref{thmGF} with
  some small  changes.
From the interpolation  inequality
 $\| \w \|_4^2 \le \| \w \|_2^{1/2} \| \w \|_6^{3/2}$ and \eref{gaf54} we find
            \begin{align}
|(B\w, \w)| \le c\|B\|_2 \| \w \|_4^2
\le c\|B \|_2 \| \w \|_2^{1/2}
               \kappa^{3/2} \| \w\|_{W_1^A(M)}^{3/2}      \label{gaf57'}
          \end{align}
Combining \eref{gaf52}, \eref{gaf57'}, and \eref{gaf60} we find
\begin{align}
\|\n^A \w \|_2^2
&\le \|d_A \w\|_2^2 + \|d_A^*\w\|_2^2 + \|W\|_\infty \|\w\|_2^2\notag\\
&+\{ c\|B\|_2  \kappa^{3/2} \} \|\w\|_2^{1/2} \|\w\|_{W_1^A(M)}^{3/2}\notag \\
 &+ \{ \|K\|_\infty \tau^2 \}\|\w\|_2^{1/2} \|\w\|_{W_1^A(M)}^{3/2}
                                                                                     \label{gaf63'}
\end{align}
Denote by $\gamma$ the  sum of the two terms in braces.
Then the last two terms add to
$$
(\gamma/\epsilon) \|\w\|_2^{1/2} (\epsilon \|\w\|_{W_1^A(M)}^{3/2})
\le (1/4) (\gamma/\epsilon)^4 \|\w\|_2^2 +(3/4) \epsilon^{4/3} \|w\|_{W_1^A(M)}^2
$$
by imitation of the convexity argument in \eref{gaf60}.
Choose $\epsilon$ so that $(3/4) \epsilon^{4/3} = 1/2$.
Then $(1/4)\epsilon^{-4} <1$ and \eref{gaf63'} yields
\begin{align*}
\|\n^A \w \|_2^2 + \| \w \|_2^2
&\le \|d_A \w\|_2^2 + \|d_A^*\w\|_2^2 + (1 +\|W\|_\infty) \|\w\|_2^2\\
&+(\gamma)^4 \| \w \|_2^2 + (1/2) \| \w \|_{W_1^A(M)}^2.
\end{align*}
Shift the last term to the left side to deduce  \eref{gaf50'} with
\beq
\lambda_2 = 1 + \|W\|_\infty
  +  ( c\|B\|_2 \kappa^{3/2}  +  \tau^2\| K\|_\infty )^4.    \label{gaf71'}
\eeq
\end{proof}

       \begin{corollary}\label{corGFi'convex} If $M\subset \R^3$  and
  is convex then  one can take
\beq
\lambda_2 = 1 +\kappa^6 (c \|B_0\|_2)^4.     \label{gaf72'}
\eeq
\end{corollary}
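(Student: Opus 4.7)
The plan is to re-run the argument of Theorem \ref{GFi'} but exploit the simplifications afforded by convexity, exactly as Corollary \ref{corGFiconvex} relates to Theorem \ref{thmGF}. Since $M \subset \R^3$, the Bochner--Weitzenb\"ock operator $W$ vanishes. Since $M$ is convex, Theorem \ref{thmgaf} tells us $K(x) \ge 0$ on $\p M$, so the boundary term in \eref{gaf52} has a favorable sign and may simply be discarded. This is precisely what eliminates the need for the trace inequality \eref{gaf60}, which was the source of the $\tau^2\|K\|_\infty$ contribution to \eref{gaf71'}.

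Concretely, I would start by putting $\alpha = \beta = \w$ in the identity \eref{gaf52} to obtain, after using $W = 0$ and dropping the nonnegative boundary integral,
\begin{equation}
\|\n^A\w\|_2^2 \;\le\; \|d_A\w\|_2^2 + \|d_A^*\w\|_2^2 \;-\; (B\w,\w). \notag
\end{equation}
Then I would bound $|(B\w,\w)|$ via the same chain as in \eref{gaf57'}, using H\"older with the interpolation $\|\w\|_4^2 \le \|\w\|_2^{1/2}\|\w\|_6^{3/2}$ and the Sobolev inequality \eref{gaf54}, to arrive at
\begin{equation}
|(B\w,\w)| \;\le\; c\,\|B\|_2\,\kappa^{3/2}\,\|\w\|_2^{1/2}\,\|\w\|_{W_1^A(M)}^{3/2}. \notag
\end{equation}

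Setting $\gamma := c\,\kappa^{3/2}\,\|B\|_2$ and applying the convexity inequality $uv \le \tfrac14 u^4 + \tfrac34 v^{4/3}$ with $u = \gamma\epsilon^{-1}\|\w\|_2^{1/2}$ and $v = \epsilon\|\w\|_{W_1^A}^{3/2}$, I would choose $\epsilon$ so that $\tfrac34\epsilon^{4/3} = \tfrac12$; this gives $\tfrac14\epsilon^{-4} = 27/32 < 1$, hence
\begin{equation}
|(B\w,\w)| \;\le\; \gamma^4\,\|\w\|_2^2 \;+\; \tfrac12\,\|\w\|_{W_1^A(M)}^2. \notag
\end{equation}
Adding $\|\w\|_2^2$ to both sides of the inequality for $\|\n^A\w\|_2^2$, substituting the above, and shifting the $\tfrac12\|\w\|_{W_1^A}^2$ term to the left yields \eref{gaf50'} with $\lambda_2 = 1 + \gamma^4 = 1 + \kappa^6(c\|B\|_2)^4$, as desired.

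There is no genuine obstacle here; the whole point is that convexity plus flatness of ambient $\R^3$ removes the two ingredients ($\|W\|_\infty$ and the trace-based control of the boundary integral) that made \eref{gaf71'} complicated. The only point requiring a little care is verifying that the boundary integral really can be dropped with the correct sign, which is immediate from the last assertion of Theorem \ref{thmgaf}.
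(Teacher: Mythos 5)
Your proposal is correct and follows essentially the same route as the paper: the paper's (one-line) proof simply observes that convexity and flatness let one set $K$ and $W$ to zero in the formula \eref{gaf71'}, which is exactly the simplification you carry out in detail, and your convexity-inequality bookkeeping ($\tfrac14\epsilon^{-4}=27/32<1$) reproduces the constant $1+\kappa^6(c\|B\|_2)^4$ of \eref{gaf72'}.
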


              \begin{proof} Comparing \eref{gaf78} with \eref{gaf52} we see that
 we need only set $K$ and $W$ equal to zero in \eref{gaf71'}
 to derive\eref{gaf72'}.
\end{proof}

         \begin{corollary}\label{corGFS'} (Gaffney-Friedrichs-Sobolev inequality.)
Assume  that  dimension $M =3$ and that $M$ has a smooth boundary.
Let $A \in W_1(M)$.
If $\w \in W_1(M; \L^p\otimes \frak k)$ and either $\w_{tan}=0$ or $\w_{norm}=0$
then
\beq
\|\w \|_{L^6(M)}^2
\le \kappa^2 ( \|d_A \w \|_2^2 + \|d_A^* \w \|_{L^2(M)}^2
     + \lambda_2\| \w \| _{L^2(M)}^2)                                  \label{gaf68'}
\eeq
with $\lambda_2$ given by \eref{gaf71'}, in general, or by \eref{gaf72'} if $M\subset \R^3$ and is convex.
\end{corollary}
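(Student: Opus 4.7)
The plan is to obtain the $L^6$ bound by composing two already-established estimates: the Gaffney-Friedrichs inequality \eqref{gaf50'} of Theorem \ref{GFi'}, which controls the gauge covariant energy $\|\nabla^A\omega\|_2^2 + \|\omega\|_2^2$ by the Hodge-type quantities $\|d_A\omega\|_2^2 + \|d_A^*\omega\|_2^2 + \lambda_2\|\omega\|_2^2$, and the Sobolev inequality \eqref{gaf54}, which in turn controls $\|\omega\|_6^2$ by the same gauge covariant energy. This mirrors exactly the derivation of Corollary \ref{corGFS} from \eqref{gaf50} and \eqref{gaf54}; only the constant in front of $\|\omega\|_2^2$ changes, reflecting the fact that we are now using the finite-energy version of the Gaffney-Friedrichs estimate.

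More precisely, I would first recall from \eqref{gaf54} the Kato-Sobolev bound
\begin{equation*}
\|\omega\|_{L^6(M)}^2 \le (\kappa^2/2)\bigl(\|\nabla^A\omega\|_{L^2(M)}^2 + \|\omega\|_{L^2(M)}^2\bigr),
\end{equation*}
valid for any $\omega \in W_1$ and any $A \in W_1$, with no boundary condition required. Then, under the hypothesis $\omega_{tan}=0$ or $\omega_{norm}=0$, I would apply Theorem \ref{GFi'} to majorize the right-hand side by
\begin{equation*}
\kappa^2\bigl(\|d_A\omega\|_2^2 + \|d_A^*\omega\|_2^2 + \lambda_2\|\omega\|_2^2\bigr),
\end{equation*}
which is exactly \eqref{gaf68'} with $\lambda_2$ as in \eqref{gaf71'}.

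For the convex case $M \subset \mathbb{R}^3$, I would instead invoke Corollary \ref{corGFi'convex} in place of Theorem \ref{GFi'}; this replaces $\lambda_2$ by the cleaner expression $1 + \kappa^6(c\|B_0\|_2)^4$ in \eqref{gaf72'}, because the boundary curvature term $K(x)$ contributes a non-positive quantity and the Riemannian Bochner-Weitzenb\"ock curvature $W$ vanishes. There is really no obstacle here: everything reduces to chaining the two inequalities, and the only bookkeeping is tracking the exact form of the constant $\lambda_2$.
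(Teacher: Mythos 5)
Your proposal is correct and matches the paper's proof, which is simply to combine the Sobolev bound \eref{gaf54} with the Gaffney-Friedrichs inequality \eref{gaf50'} (and \eref{gaf72'} for the convex case). The chaining and constant tracking you describe is exactly what the paper does.
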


             \begin{proof}
        Combine \eref{gaf50'} and \eref{gaf54}.
\end{proof}

In the following remark we resume the notation for minimal and maximal operators from Section \ref{secDN}.

\begin{remark}\label{remcoerc}
    {\rm For a $\kf$ valued p-form $\w$ on $M$ with $p=1$ or $2$ define
\begin{align}
Q_N(\w) &= \| D\w\|_2^2 + \| D^* \w \|_2^2 + \| \w\|_2^2,\ \ \
               \w \in \D(D) \cap \D(D^*)              \label{GF83'}\\
Q_D(\w) &= \| d\w\|_2^2 + \| d^* \w \|_2^2 + \| \w\|_2^2,\ \
                        \w \in \D(d) \cap \D(d^*).     \label{GF84'}
\end{align}
Both of these quadratic forms are coercive in the sense that their  domains
are contained in $W_1$ and each controls the $W_1$ norm as in \eref{gaf50} with $A=0$ and $\lambda_3 = \lambda_M$, This is the content of
\cite[Lemma 4.5]{Mo56}. See also \cite{Mt01}. The Laplacians associated to
these closed quadratic forms will be used in Section \ref{secST}.
}
\end{remark}

\section{Sobolev inequalities for solutions} \label{secSobsol}

Throughout this section we will assume that
    $A \in C^\infty((0,T) \times M:\L^1\otimes \frak k)$, with $T \le \infty$,  and  satisfies
\begin{align}
          A'(s) = - \delta_{A(s)} B(s) \ \  \text{on}\ \ (0,T),    \label{B8}
 \end{align}
 where $\delta_A$, defined in \eref{C7} and \eref{C11}, is to be interpreted as
 a differential operator without boundary conditions.
 We will also assume that either
  \begin{align}
 (D)\ \ \ \ \, \ \ A(s)_{tan} &=0\ \ \  \text{for}\ \  0 < s <T       \label{B9}\\
\text{or}\ \  (M)\ \ \ \  B(s)_{norm}&=0  \ \ \  \text{for}\ \  0 < s <T   \label{B10}
 \end{align}

      We are going to establish apriori estimates for solutions to
 the Yang-Mills heat equation  \eref{B8} over $(0,T)$. It will be necessary to
 integrate by parts in Lemma \ref{lemId2} and the use of the maximal or minimal operators $D_A$ or $d_A$  and their Hilbert space adjoints
 will be a very useful bookkeeping tool for this.
 The gauge invariant Sobolev inequalities in Hodge format,
 established in Section \ref{secGFS},
 simplify when  applied to a form $\w$  which  is annihilated by any one of these four operators.
        In particular, when $A(\cdot)$ is a solution to \eref{B8},
 all Sobolev estimates can be conveniently expressed in terms
 of  the time derivatives $A^{(n)}$ or $ B^{(n)}$.


\subsection{Pointwise and integral identities} \label{secSobsol1}

                 \begin{lemma} \label{lemId1} $($Pointwise Identities.$)$
    Suppose that $A(\cdot)$ is a smooth solution to the differential equation \eref{B8}
  and satisfies either \eref{B9} or \eref{B10}.  Then the following identities
  hold, wherein the symbol $d_A$ is  the minimal operator
  in case the Dirichlet boundary condition \eref{B9} is assumed,
   or represents the maximal operator $D_A$
  in case the Marini boundary condition \eref{B10} is assumed.
\begin{align}
B' &= d_A A'        \label{B31}\\
A'' + d_A^* B' &= -[A'\lrc B]  \label{B32}\\
d_A^* A' &= d_A^* A''  = 0\label{B36}
\end{align}
\end{lemma}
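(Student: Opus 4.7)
The three identities are in essence pointwise consequences of the definitions together with the Bianchi-type facts from Section \ref{secDN}; the role of the boundary conditions \eref{B9} or \eref{B10} is to ensure that each application of $d_A$ or $d_A^*$ occurring in the statement really takes place in the domain of the closed operator being named, rather than merely as a formal differential expression. I will treat all four assertions in turn, and then return to the domain bookkeeping.

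For \eref{B31}, differentiate the definition $B = dA + (1/2)[A\wedge A]$ in $t$. Since $d$ has time-independent coefficients and the bracket is bilinear and antisymmetric in a form-degree-compatible way,
\[
B' = dA' + (1/2)\bigl([A'\wedge A] + [A\wedge A']\bigr) = dA' + [A\wedge A'] = d_A A',
\]
which is \eref{B31}. For \eref{B32}, differentiate \eref{B8}. Writing $d_A^* B = d^* B + [A\lrc B]$ and using the bilinearity of $[\cdot\lrc\cdot]$,
\[
-A'' = (d_A^* B)' = d^* B' + [A\lrc B'] + [A'\lrc B] = d_A^* B' + [A'\lrc B],
\]
which rearranges to \eref{B32}. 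For the first half of \eref{B36}, substitute $A' = -d_A^* B$ to obtain $d_A^* A' = -(d_A^*)^2 B$, and invoke Corollary \ref{corDN9}, which gives $(d_A^*)^2 B = 0$ (the relevant hypothesis $B\in W_1$ is automatic for smooth $A(\cdot)$). For the second half, differentiate the identity $0 = d_A^* A' = d^* A' + [A\lrc A']$ in $t$ to get
\[
0 = d^* A'' + [A'\lrc A'] + [A\lrc A''] = d_A^* A'' + [A'\lrc A'],
\]
and observe that $[A'\lrc A'] = -\sum_j [A'_j,A'_j] = 0$ by the last line of the paragraph following \eref{C9}.

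It remains to verify, in each of the two boundary-condition scenarios, that the symbols $d_A$ and $d_A^*$ appearing above can legitimately be read as the closed operators on the domains indicated in the lemma's statement. In the Dirichlet case \eref{B9}, $A(s)_{tan}=0$ puts $A(s)$ in $\D(d_A)$ by \eref{C19D}; Corollary \ref{corfbi} then gives $B(s)_{tan}=0$ and $B(s)\in\D(d_A)$, and differentiating $A(s)_{tan}=0$ in $s$ yields $A'(s)_{tan}=0$, so $A'\in\D(d_A)$ as well. Hence each appearance of $d_A$ above is the minimal operator applied within its domain, and each $d_A^*$ is its Hilbert-space adjoint. In the Marini case \eref{B10}, there is no tangential condition on $A$, so $d_A$ is to be read as the maximal operator $D_A$ (no boundary restriction), and Corollary \ref{corDN9} applies in its second form: $B\in\D(D_A^*)$ and $(D_A^*)^2 B = 0$, which is precisely what is needed to justify the step $d_A^* A' = -(d_A^*)^2 B = 0$. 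The remaining identities \eref{B31}, \eref{B32} and the second half of \eref{B36} involve no boundary hypothesis in their verification beyond the smoothness of $A(\cdot)$.

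The only mildly delicate point is the last one: the need to recognize $(d_A^*)^2 B$ as literally zero rather than as a formal expression. This is exactly what Corollary \ref{corDN9} supplies in both boundary regimes, so no further argument is needed.
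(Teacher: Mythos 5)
Your computational derivation of \eref{B31}, \eref{B32} and \eref{B36}, and the Dirichlet-side bookkeeping, match the paper's proof. The gap is in the last sentence about the Marini case. You write that, once $d_A^* A' = 0$ is established via Corollary \ref{corDN9}, ``the remaining identities \eref{B31}, \eref{B32} and the second half of \eref{B36} involve no boundary hypothesis.'' This is backwards for the operator $d_A^*$: in the Marini case $d_A$ stands for the maximal operator $D_A$, so $d_A^* = D_A^*$ is the \emph{minimal} coderivative, whose domain is precisely the forms with vanishing normal component by \eref{C19N}. Consequently \eref{B32} requires $B'(s) \in \D(D_A^*)$, i.e.\ $B'(s)_{norm}=0$, and the second half of \eref{B36} requires $A''(s) \in \D(D_A^*)$, i.e.\ $A''(s)_{norm}=0$ — both genuine boundary hypotheses that must be checked.

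The paper supplies these checks: differentiating the Marini condition $B(s)_{norm}=0$ in $s$ gives $B'(s)_{norm}=0$, which handles \eref{B32}; and $B\in\D((D_A^*)^2)$ (from \eref{DN53}, equivalently your Corollary \ref{corDN9}) gives $A' = -D_A^* B \in\D(D_A^*)$, hence $A'(s)_{norm}=0$ by \eref{C19N}, and differentiating that in $s$ gives $A''(s)_{norm}=0$, which handles the second half of \eref{B36}. Your proof already contains the ingredient $A'\in\D(D_A^*)$ implicitly through Corollary \ref{corDN9}; you just need to extract the normal-component conclusions and differentiate them, rather than asserting no boundary hypothesis is involved. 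So the correction is short, but the sentence as written is wrong and leaves two of the three assertions unjustified in the Marini regime.
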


              \begin{proof}
Let us first compute the derivatives in all cases, ignoring
boundary conditions, but recalling that $\delta_A = d_A^*$
  in all cases, aside from boundary conditions.  \eref{B31} follows from the definition of $B$.
Differentiate \eref{B8} with respect to  $s$ to derive \eref{B32}.
 By \eref{B8} we have $d_A^* A' = -(d_A^*)^2 B =
-[B\lrc B] = 0$, which is half of \eref{B36}. Differentiate this identity
 with respect to $s$ to find
$0=(\p /\p s)(d_A^*A') = d_A^* A'' +[A'\lrc A'] =  d_A^* A'' - [A'\cdot A'] =
 d_A^* A'' $, since $[A'\cdot A'] = 0$. This proves  \eref{B36}.

        Concerning the boundary conditions, consider first the Dirichlet case,
  \eref{B9}. Since $A(s)_{tan} = 0$ for all $s \in (0, T)$ we may differentiate
  this equation   with respect to $s$ at a point on $\p M$ and find $A'(s)_{tan}=0$.
  Thus the application of the minimal operator $d_A$ in \eref{B31} is justified.
  Since $d_A^*$ is a maximal operator there is no boundary issue in
  \eref{B32} or \eref{B36}.

  In the Marini case $d_A$ is now the maximal operator $D_A$.
   So there is no domain issue in \eref{B31}.
   We may differentiate the equation
  \eref{B10} with respect to time to find $B'(s)_{norm} = 0$. By \eref{C19N}
  $B(s)$ and $B'(s)$ are therefore both in the domain of the minimal
  operator $D_A^*$.   Thus all the terms in \eref{B32} are well defined.
  Moreover \eref{DN53} shows that $D_A^* B$ is again in the domain of  the minimal operator $D_A^*$.  From this and \eref{B8}  it follows that
  $A'$ is in the domain of $D_A^*$ and  from \eref{C19N}  it now follows
   that $A'(s)_{norm} =0$.  Of course then  $A''(s)_{norm}=0$ also and so
   $A''(s) \in \D(D_A^*)$.
   This justifies the identities in \eref{B36}.
 \end{proof}

             \begin{lemma} \label{lemId2} $($Integral Identities.$)$
  Suppose that    $A(\cdot)$ is a smooth solution to
   the differential equation  \eref{B8} and satisfies
    either \eref{B9} or  \eref{B10}. Then
\begin{align}
  (d/ds) \|B(s) \|_2^2
                             & = -2 \|A'(s)\|_2^2,      \label{5.21} \\
  (d/ds) \|A'(s)\|_2^2
                             &= - 2\|B'(s) \|_2^2   -2 ([A'(s) \wedge A'(s)], B(s)). \label{5.22}
\end{align}
\end{lemma}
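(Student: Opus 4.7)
Both identities are obtained by differentiating the squared $L^2$ norms, substituting the evolution equations from Lemma \ref{lemId1}, and performing a single integration by parts. The only delicate point is that the integration by parts must respect the boundary conditions, but Lemma \ref{lemId1} together with the conventions in Lemma \ref{lemDN1} and Proposition \ref{propDN4} have been arranged precisely so that the correct pairing (minimal on one side, maximal on the other) is available.

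For the first identity, I would start from
\[
(d/ds)\|B(s)\|_2^2 = 2(B(s), B'(s)) = 2(B(s), d_{A(s)} A'(s))
\]
using \eqref{B31}. I then want to move $d_A$ across the inner product. In the Dirichlet case, \eqref{B9} gives $A'(s)_{tan}=0$ (by differentiating in $s$), so $A'(s)$ lies in the domain of the minimal $d_{A(s)}$; the adjoint is the maximal $d_{A(s)}^*=\delta_{A(s)}$, so $(d_A A', B)=(A', \delta_A B)$ with no boundary conditions needed on $B$. In the Marini case $d_{A}$ is interpreted as the maximal $D_{A}$, while $B(s)_{norm}=0$ by \eqref{B10} puts $B(s)$ in the domain of the minimal $D_{A}^*=\delta_A$, giving the same pairing. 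In either case, substituting $\delta_{A(s)} B(s)=-A'(s)$ from \eqref{B8} yields $(d/ds)\|B\|_2^2 = -2\|A'\|_2^2$.

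For the second identity, differentiate and use \eqref{B32}:
\[
(d/ds)\|A'(s)\|_2^2 = 2(A'(s), A''(s)) = -2(A'(s), d_{A(s)}^* B'(s)) -2(A'(s), [A'(s)\lrc B(s)]).
\]
I integrate by parts in the first term: by the same minimal/maximal matching as above (now with $B'$ replacing $B$, where $B'_{norm}=0$ in the Marini case follows by differentiating \eqref{B10}), one has $(A', d_A^* B') = (d_A A', B') = (B', B') = \|B'\|_2^2$ by \eqref{B31}. For the second term, the definition \eqref{C9} of the interior product gives $\langle A', [A'\lrc B]\rangle = \langle [A'\wedge A'], B\rangle$ pointwise, so this term contributes $-2([A'\wedge A'], B)$. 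Assembling yields \eqref{5.22}.

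The only real obstacle is bookkeeping the boundary conditions so that every integration by parts is legitimate, but Lemma \ref{lemId1} has already verified the necessary membership statements ($A'_{tan}=0$ under (D), $A'_{norm}=0$ and $B'_{norm}=0$ under (M)). Since $A(\cdot)$ is assumed smooth on $(0,T)\times M$, all pointwise and integrated manipulations are justified, and the commutator identities $[A'\cdot A']=0$ and $[B\lrc B]=0$ are not needed here; the only algebraic identity invoked is \eqref{C9}.
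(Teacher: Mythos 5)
Your proof is correct and follows essentially the same route as the paper's: differentiate the squared $L^2$ norms, substitute the pointwise identities \eqref{B31} and \eqref{B32} from Lemma \ref{lemId1}, integrate by parts once using the minimal/maximal pairing already validated by Lemma \ref{lemId1}, and invoke \eqref{C9} to identify the remaining term as $([A'\wedge A'],B)$. The only differences from the paper's write-up are expository: you spell out which operator is minimal and which is maximal in each of the two boundary-condition cases, whereas the paper simply states the proof for the Dirichlet case and notes that the Marini case is obtained by swapping $d_A$ for $D_A$.
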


                   \begin{proof}
   It was emphasized in Lemma \ref{lemId1} that, whether one assumes
  Dirichlet or Marini  boundary conditions, $B$ and its time derivatives
  as well as $A'$ and its time derivatives all lie in the domain of the corresponding minimal operators $d_A$ or $D_A^*$, respectively, and
  of course in the domain of the corresponding maximal
   operators $d_A^*$ or $D_A$. All of the integrations by parts implicit in the following computations are thereby justified under either boundary condition \eref{B9} or \eref{B10}. We will write the proof for the Dirichlet
    boundary condition. This uses the minimal operator $d_A$.
  But the proof is identical for the Marini boundary
   condition \eref{B10}. One need only replace $d_A$ by the
    maximal operator $D_A$.
  \begin{align*}
 (1/2)(d/ds) \|B(s)\|_{L^2}^2 &= (B', B)\\
 & = (d_A A', B)\\
 &= (A', d_A^* B)\\
 &= -\|A'(s)\|_{L^2}^2.
 \end{align*}
 This proves  \eref{5.21}.
 In view of \eref{B32} and \eref{B31} we have
  \begin{align*}
 (1/2)(d/ds) \|A'(s)\|^2 & = (A''(s), A'(s)) \\
      &= (-d_A^* B' - [A' \lrc B], A')\\
      &=- (B', d_A A') -([A' \lrc B], A'),
      \end{align*}
 which proves \eref{5.22}.
\end{proof}



\subsection{Sobolev inequalities for smooth solutions} \label{secSobsol2}

The derivation of the Sobolev inequalities \eref{Sob1} and \eref{Sob2} relies on use of more differentiability
than is available from the definition of strong solution. We will assume therefore that $A(\cdot) $ is
a smooth solution. But it will be shown in Corollary \ref{corapstrong}, by an approximation
 procedure for strong solutions, that \eref{Sob1} holds for all strong solutions.
 It can also be shown that
  \eref{Sob2}  holds for  strong solutions. But the proof relies
  on higher order apriori estimates which will not be needed in this paper.

\begin{lemma}   \label{lemSob}
$($Sobolev inequalities for smooth solutions.$)$
Suppose that $A(\cdot)$  is a smooth solution to   \eref{B8}
and satisfies  either \eref{B9} or \eref{B10}.
       Then there is a Sobolev constant $\kappa$
   and,  for each $s \in (0, T)$, a constant $\lambda(s)$,
 depending only on $\| B(s)\|_3$,  $($cf. \eref{gaf50} and \eref{gaf51}$)$,
 or on $\| B(s)\|_2$, $($cf. \eref{gaf50'} and \eref{gaf71'}$)$,
  such that, suppressing  $s$,
    \begin{align}
\|B\|_6^2 &\le \kappa^2( \|A'\|_2^2 +  \l \|B\|_2^2)             \label{Sob1}\\
\|A'\|_6^2& \le \kappa^2 ( \|B'\|_2^2 +\l \|A'\|_2^2)             \label{Sob2}
\end{align}
\end{lemma}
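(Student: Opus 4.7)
The plan is to apply the Gauge invariant Gaffney–Friedrichs–Sobolev inequality from Corollary \ref{corGFS} (or equivalently Corollary \ref{corGFS'}) to the forms $\w = B(s)$ and $\w = A'(s)$, and then read off the right-hand sides using the pointwise identities established in Lemma \ref{lemId1} together with the functional Bianchi identity of Corollary \ref{corfbi}. Throughout, the symbol $d_A$ means the minimal operator in the Dirichlet case \eref{B9} and the maximal operator $D_A$ in the Marini case \eref{B10}; in either case $d_A^*$ is the $L^2$-adjoint, so Corollary \ref{corGFS} is applicable.

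For \eref{Sob1}, I first verify the boundary hypothesis needed to invoke Corollary \ref{corGFS} on $\w = B(s)$. Under \eref{B10} this is immediate since $B_{norm} = 0$; under \eref{B9}, Corollary \ref{corfbi} (equation \eref{DN72}) gives $B_{tan} = 0$ from $A_{tan} = 0$ and $B \in W_1$. Next, by Corollary \ref{corfbi} we have $d_A B = 0$ (read as $D_A B = 0$ in the Marini case, $d_A B = 0$ after the upgrade to the minimal operator in the Dirichlet case), and the flow equation \eref{B8} gives $d_A^* B = -A'$. Substituting these into \eref{gaf68} yields
\begin{equation*}
\|B\|_6^2 \le \kappa^2\bigl(\|A'\|_2^2 + \lambda\|B\|_2^2\bigr),
\end{equation*}
which is \eref{Sob1}.

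For \eref{Sob2}, I apply Corollary \ref{corGFS} to $\w = A'(s)$. The boundary hypothesis here is supplied by Lemma \ref{lemId1}: in the Dirichlet case, differentiating $A(s)_{tan} = 0$ in $s$ gives $A'(s)_{tan} = 0$; in the Marini case the lemma shows $A'(s)_{norm} = 0$. The identity \eref{B31} gives $d_A A' = B'$, and \eref{B36} gives $d_A^* A' = 0$. Substituting into \eref{gaf68} yields
\begin{equation*}
\|A'\|_6^2 \le \kappa^2\bigl(\|B'\|_2^2 + \lambda\|A'\|_2^2\bigr),
\end{equation*}
which is \eref{Sob2}. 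In both estimates the constant $\lambda$ is $\lambda_3$ of \eref{gaf51} if one uses $\|B(s)\|_3$, or $\lambda_2$ of \eref{gaf71'} if one uses $\|B(s)\|_2$; this matches the two options stated in the lemma.

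There is no substantial obstacle, since all the analytic work has been done in Sections \ref{secDN} and \ref{secGFS}. The only delicate bookkeeping is to check that in each of the two boundary regimes the relevant form ($B$ or $A'$) satisfies exactly one of the two boundary conditions $\w_{tan}=0$ or $\w_{norm}=0$ required by Corollary \ref{corGFS}, and that the operator $d_A$ being applied is of the correct (minimal vs.\ maximal) type so that $d_A B = 0$ and $d_A A' = B'$ are genuine identities of \emph{closed} operators on the appropriate domains. Both verifications were already prepared in Lemma \ref{lemId1} and Corollary \ref{corfbi}, so the proof amounts to quoting them.
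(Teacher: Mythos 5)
Your proof is correct and follows essentially the same route as the paper: both apply the Gaffney--Friedrichs--Sobolev inequality \eref{gaf68} (or \eref{gaf68'}) to $\w = B$ and $\w = A'$, and then simplify using $d_A B = 0$, $d_A^* B = -A'$, $d_A A' = B'$, and $d_A^* A' = 0$. Your explicit verification of the boundary hypotheses ($B_{tan}=0$ or $B_{norm}=0$, and $A'_{tan}=0$ or $A'_{norm}=0$) is a welcome elaboration of what the paper handles implicitly via Lemma \ref{lemId1} and Corollary \ref{corfbi}.
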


           \begin{proof}
  All of these inequalities follow from the inequality
\beq
\|\omega\|_6^2 \le
\kappa^2 (\|d_A \omega\|_2^2
         +\|d_A^* \omega\|_2^2  + \l \|\omega\|_2^2)             \label{Sob5}
\eeq
  in the presence of an identity that simplifies one of the terms.
The inequality \eref{Sob5} itself, which is valid for
 $A\in W_1$ and $\w \in W_1$, follows from \eref{gaf68}, with
$\lambda(s) = \lambda_3$,  or, respectively, \eref{gaf68'},
with $\lambda(s) = \lambda_2$. As in the preceding subsection, the symbol $d_A$ represents the minimal operator in the case of Dirichlet boundary conditions, \eref{B9}, or the maximal operator $D_A$ in the case of
 Marini boundary conditions, \eref{B10}.

Thus, in order to derive \eref{Sob1} set $\w = B$ in \eref{Sob5}
and  observe that $d_A B= 0$ by
 Bianchi's identity  \eref{DN72} (for Dirichlet case) or \eref{DN70}
  (for the Marini case),  while $d_A^* B = -A'$.
  Similarly, in order to derive \eref{Sob2} from \eref{Sob5} choose
  $\w = A'(s)$ in \eref{Sob5} and  observe that $d_A^* A' = 0$
   by \eref{B36} while $d_A A' = B'$.
  \end{proof}


\section{Apriori estimates for finite action} \label{secfa}
Throughout this section $A(\cdot)$ will again denote a solution to
 the Yang-Mills heat equation \eref{B8} which is
in $C^\infty((0,T) \times M)$. We impose in advance no restriction on the behavior
of $A(t)$ as $t\downarrow 0$.

   \begin{definition}\label{def-fa}
{\rm  We say that a solution $A(\cdot)$ to \eref{B8} satisfying either
Dirichlet or Marini boundary condtions, \eref{B9}, \eref{B10},
respectively, has {\it finite action} if
   \beq
   \alpha(t) \equiv \int_0^t s^{-1/2} \| B(s) \|_2^2\ ds < \infty\
                            \text{for some}\ t >0.                         \label{fa6}
   \eeq
}
\end{definition}
Observe that if $\alpha(t) < \infty$ for some $t >0$, then $\alpha(t) < \infty$
for all $t <T$ because, by  \eref{5.21},
 $\|B(s)\|_2^2$ is non-increasing in $s$.

\subsection{Order 0}    \label{secfa1}

     \begin{proposition}\label{fa-ord0} $($Order zero in $B$.$)$ If $A(\cdot)$ has finite action then
\beq
 t^{1/2} \| B(t) \|_2^2 + 2 \int_0^t s^{1/2}  \| A'(s) \|_2^2 ds
 = (1/2)\alpha(t)                                                                \label{fa10}
 \eeq
 \end{proposition}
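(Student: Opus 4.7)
The plan is to take the identity \eqref{5.21}, namely $(d/ds)\|B(s)\|_2^2 = -2\|A'(s)\|_2^2$, multiply through by the weight $s^{1/2}$, and integrate. Concretely, applying the product rule to $s^{1/2}\|B(s)\|_2^2$ gives
\begin{equation*}
\frac{d}{ds}\bigl[s^{1/2}\|B(s)\|_2^2\bigr] = \tfrac{1}{2}s^{-1/2}\|B(s)\|_2^2 - 2 s^{1/2}\|A'(s)\|_2^2,
\end{equation*}
and integrating from $\epsilon$ to $t$ yields
\begin{equation*}
t^{1/2}\|B(t)\|_2^2 - \epsilon^{1/2}\|B(\epsilon)\|_2^2 + 2\int_\epsilon^t s^{1/2}\|A'(s)\|_2^2\,ds = \tfrac{1}{2}\int_\epsilon^t s^{-1/2}\|B(s)\|_2^2\,ds.
\end{equation*}
The smoothness of $A(\cdot)$ on $(0,T)\times M$ makes this integration on $[\epsilon,t]$ entirely routine.

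The substantive step is to show that $\epsilon^{1/2}\|B(\epsilon)\|_2^2 \to 0$ as $\epsilon \downarrow 0$; this is where the finite action hypothesis is used. The key ingredient is the monotonicity of $\|B(s)\|_2^2$, which is immediate from \eqref{5.21} (the right-hand side there is non-positive). Consequently, for any $\epsilon \in (0,t)$,
\begin{equation*}
\int_0^\epsilon s^{-1/2}\|B(s)\|_2^2\,ds \ \ge\ \|B(\epsilon)\|_2^2 \int_0^\epsilon s^{-1/2}\,ds \ =\ 2\epsilon^{1/2}\|B(\epsilon)\|_2^2,
\end{equation*}
so $\epsilon^{1/2}\|B(\epsilon)\|_2^2 \le (1/2)\int_0^\epsilon s^{-1/2}\|B(s)\|_2^2 ds$, and the right side tends to $0$ as $\epsilon \downarrow 0$ because the finite action hypothesis $\alpha(t) < \infty$ says precisely that $s\mapsto s^{-1/2}\|B(s)\|_2^2$ is integrable near $0$.

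With this boundary contribution eliminated, sending $\epsilon \downarrow 0$ in the integrated identity and using monotone convergence on both integral terms produces
\begin{equation*}
t^{1/2}\|B(t)\|_2^2 + 2\int_0^t s^{1/2}\|A'(s)\|_2^2\,ds = \tfrac{1}{2}\alpha(t),
\end{equation*}
which is \eqref{fa10}. In particular the left-hand integral is finite, and the identity simultaneously delivers the desired energy bound and shows the vanishing of the $\epsilon$-boundary term was the only nontrivial point. I expect the monotonicity trick bounding $\epsilon^{1/2}\|B(\epsilon)\|_2^2$ by the tail of $\alpha$ to be the one place the argument does real work; everything else is differentiation, integration by parts in $s$, and bookkeeping.
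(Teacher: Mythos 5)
Your proof is correct, and it takes a genuinely different route from the paper's. The paper multiplies \eqref{5.21} by $s^{-1/2}$ (not $s^{1/2}$) to get
\begin{equation*}
(d/ds)\bigl(s^{-1/2}\|B(s)\|_2^2\bigr) = -\tfrac12 s^{-3/2}\|B(s)\|_2^2 - 2 s^{-1/2}\|A'(s)\|_2^2,
\end{equation*}
integrates this over $(\sigma,t)$, and then integrates the resulting identity \emph{again} over $\sigma\in(0,t)$, interchanging the order of integration (Tonelli) in the double integrals $\int_0^t\!\int_\sigma^t\cdots\,ds\,d\sigma = \int_0^t s\,(\cdots)\,ds$. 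The boundary term at $\sigma$ in that version, $\sigma^{-1/2}\|B(\sigma)\|_2^2$, actually blows up as $\sigma\downarrow 0$, so the single-pass argument cannot work directly there; the second $\sigma$-integration is precisely what absorbs it into $\alpha(t)$. Your version avoids the Fubini step entirely by working with the weight $s^{1/2}$, for which the boundary term $\epsilon^{1/2}\|B(\epsilon)\|_2^2$ is subcritical, and you kill it with the clean observation that monotonicity of $\|B(s)\|_2$ gives $2\epsilon^{1/2}\|B(\epsilon)\|_2^2 \le \int_0^\epsilon s^{-1/2}\|B(s)\|_2^2\,ds$, a tail of the finite quantity $\alpha(t)$. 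What the paper's approach buys is that it never needs the monotonicity of $\|B\|_2$; what yours buys is a more transparent account of exactly where finite action is used (to make the boundary contribution vanish), and a shorter computation. Both are elementary and both deliver the finiteness of $\int_0^t s^{1/2}\|A'(s)\|_2^2\,ds$ as a by-product of the equality.
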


              \begin{proof}
   For $s > 0$ multiply the identity
   $(d/ds) \| B(s)\|_2^2 = - 2 \| A'(s) \|_2^2$, from \eref{5.21},
    by $ s^{-1/2}$    to obtain
   $$
   (d/ds) (s^{-1/2} \|B(s) \|_2^2)
          = -(1/2) s^{-3/2} \| B(s) \|_2^2 - 2 s^{-1/2} \|A'(s) \|_2^2.
   $$
   Let $ 0 <\sigma <t$. Integrate the last identity over $(\sigma, t)$
   to find
   \begin{align*}
   t^{-1/2}& \| B(t) \|_2^2   + 2 \int_\sigma^t s^{-1/2}  \| A'(s) \|_2^2 ds \\
   &=\sigma^{-1/2} \| B(\sigma) \|_2^2
   -(1/2) \int_\sigma^t s^{-3/2} \|B(s) \|_2^2 ds
     \end{align*}
   We can now integrate this identity with respect to $\sigma$ over the interval $(0, t)$
   and reverse the order of the $s$ and $\sigma$ integrals to arrive at \eref{fa10}.
  \end{proof}

             \begin{lemma}\label{fa-ord0lem} If $A(\cdot)$ has finite action then
  \beq
  \int_0^t \| B(s)\|_3^2 ds < \infty \ \ \text{for all} \ \ t >0
  \eeq
  Explicitly,
  \beq
  \int_0^t \| B(s) \|_3^2 ds \le \alpha(t)
  \{  \alpha(t) \kappa^3 c^2 +(1 + 2 t \lambda_M)^{1/2}  \}
         (\kappa/2),                                                            \label{fa21}
  \eeq
  where $\lambda_M$ is defined in \eref{gaf67} and
  depends only on the geometry of $M$.
  \end{lemma}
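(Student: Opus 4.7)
The plan is to combine the Sobolev inequality \eref{Sob1} from Lemma \ref{lemSob} with the order-zero identity \eref{fa10} from Proposition \ref{fa-ord0}, via an $L^2$--$L^6$ interpolation, and then resolve the resulting circular dependence on $\|B\|_3^2$ by solving a quadratic inequality in $I := \int_0^t \|B(s)\|_3^2\,ds$.

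First I would apply the H\"older interpolation $\|B\|_3^2 \le \|B\|_2\|B\|_6$ (with $\theta = 1/2$ between $L^2$ and $L^6$), then split the integrand as $(s^{-1/4}\|B\|_2)\cdot(s^{1/4}\|B\|_6)$ and invoke Cauchy--Schwarz on $[0,t]$ to obtain
\[
I \;\le\; \alpha(t)^{1/2}\left(\int_0^t s^{1/2}\|B(s)\|_6^2\,ds\right)^{1/2}.
\]
Into the remaining integral I would substitute \eref{Sob1} with $\lambda(s) = \lambda_M + (\kappa c)^2\|B(s)\|_3^2$, producing three pieces. The $\|A'\|_2^2$ piece is controlled by the second term of \eref{fa10}, yielding $\kappa^2\alpha(t)/4$. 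The $\lambda_M\|B\|_2^2$ piece is handled using the pointwise bound $s^{1/2}\|B(s)\|_2^2 \le \alpha(s)/2 \le \alpha(t)/2$, which is a direct consequence of \eref{fa10}, and gives $\kappa^2\lambda_M t\alpha(t)/2$. Finally the self-referential piece $(\kappa c)^2\|B\|_3^2\|B\|_2^2$, using the same pointwise bound on $s^{1/2}\|B\|_2^2$, yields $(\kappa^4 c^2/2)\alpha(t)\,I$.

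Collecting everything produces the quadratic inequality
\[
I^2 \;\le\; \tfrac{\kappa^2}{4}\bigl(1+2t\lambda_M\bigr)\alpha(t)^2 \;+\; \tfrac{\kappa^4 c^2}{2}\alpha(t)^2\,I.
\]
Solving via the quadratic formula and the elementary bound $\sqrt{A+B}\le\sqrt{A}+\sqrt{B}$, I would obtain $I \le (\kappa^4 c^2/2)\alpha(t)^2 + (\kappa/2)(1+2t\lambda_M)^{1/2}\alpha(t)$, which is precisely the asserted bound \eref{fa21}.

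The only real obstacle is the self-reference: $\lambda_3(s)$ in \eref{Sob1} itself contains $\|B(s)\|_3^2$, the very quantity being estimated. This is what forces the argument to close as a quadratic rather than linear inequality in $I$, and it is resolved simply by keeping $I$ as an unknown throughout. A minor bookkeeping point is that \eref{Sob1} requires $B$ to satisfy $B_{tan}=0$ or $B_{norm}=0$; in the Dirichlet case \eref{B9} the first holds by \eref{DN72}, while \eref{B10} is the second directly, so either boundary hypothesis suffices.
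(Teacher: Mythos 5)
Your proposal is correct and follows essentially the same route as the paper's own proof: interpolate $\|B\|_3^2 \le \|B\|_2\|B\|_6$, Cauchy--Schwarz against the weight $s^{\pm 1/4}$, insert the Gaffney--Friedrichs--Sobolev bound \eref{Sob1} together with \eref{fa10}, and close the self-referential term by solving the resulting quadratic inequality in $I=\int_0^t\|B(s)\|_3^2\,ds$ via $y^2\le by+c\Rightarrow y\le b+\sqrt{c}$. The coefficients you obtain match \eref{fa21} exactly, so there is nothing to add.
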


                    \begin{proof}
 Interpolation shows that
      $\|B(s) \|_3^2 \le \|B(s)\|_2 \| B(s) \|_6.$
 Hence
 \begin{align}
 \int_0^t \| B(s) \|_3^2 ds
 &\le \int_0^t \{ s^{-1/4} \|B(s)\|_2 \} \{ s^{1/4} \|B(s) \|_6 \} ds \notag\\
 & \le \{ \int_0^t s^{-1/2} \|B(s) \|_2^2 ds \}^{1/2}
       \{ \int_0^t s^{1/2} \|B(s) \|_6^2 ds \}^{1/2} .            \label{fa22}
 \end{align}
 By \eref{Sob1}, with
 $\lambda = \lambda_3(s) =\lambda_M + (\kappa c)^2 \|B(s)\|_3^2$,
  we have
 \beq
 \int_0^t s^{1/2} \| B(s) \|_6^2 ds \le \kappa^2 \int_0^t s^{1/2} \{\|A'(s) \|_2^2
        + \lambda_3(s) \| B(s) \|_2^2 \} ds.       \label{fa25}
 \eeq
 The inequality \eref{fa10} shows that
 $\int_0^t s^{1/2} \| A'(s) \|_2^2 ds \le (1/4) \alpha(t)$ and also \linebreak
 $s^{1/2} \|B(s) \|_2^2 \le (1/2) \alpha(s) \le (1/2) \alpha(t)$,
   which, in view of \eref{fa25}, yields
 \begin{align*}
 \int_0^t s^{1/2} \| B(s)\|_6^2 ds
 & \le  \kappa^2\{ (1/4)\alpha(t) + \int_0^t (1/2) \alpha(t) \lambda_3(s) ds \}\\
 & = (\kappa^2 \alpha(t)/4)\{ 1 + 2 \int_0^t \lambda_3(s) ds \}\\
 & = (\kappa^2 \alpha(t)/4)
     \{1 + 2t\lambda_M + 2(\kappa c)^2 \int_0^t \|B(s) \|_3^2 ds \}.
 \end{align*}
 Inserting this into \eref{fa22}, and mindful of \eref{fa6}, we find
 \beq
 \int_0^t \|B(s) \|_3^2 ds \le (\kappa \alpha(t)/2)
  \{ 1 + 2t\lambda_M + 2(\kappa c)^2 \int_0^t \|B(s)\|_3^2 ds \}^{1/2}.   \label{fa28}
 \eeq
 But if $b,c$ and $y$ are nonnegative numbers and $y^2 \le by +c$ then
 $(y -b/2)^2 \le (b/2)^2 + c \le (b/2 + c^{1/2})^2$, showing that $y \le b + c^{1/2}$.

  Let $y = \int_0^t \|B(s)\|_3^2 ds$ and square the inequality
 \eref{fa28} to find
 $y^2 \le by + c$ with $b = (\kappa \alpha/2)^2 2(\kappa c)^2$ and
  $c = (\kappa \alpha/2)^2 (1 + 2t\lambda_M)$. The inequality $y \le b + c^{1/2}$
  is then \eref{fa21}.
 \end{proof}

        \begin{notation}\label{notpsi}
{\rm  For a solution $A(\cdot)$ of \eref{B8} of finite action let
 \beq
 \psi_s^t = (t-s) \lambda_M
 + 2 (\kappa c)^2\int_s^t \| B(\sigma) \|_3^2 d\sigma              \label{fa31}
 \eeq
 for $0 \le s \le t < T$ and let $\psi(t) = \psi_0^t$.
      By Lemma \ref{fa-ord0lem} we know that $\psi_s^t < \infty$ for
     $0 \le s \le t < T $.
 }
 \end{notation}

 \begin{remark}{\rm
 If $A(\cdot)$ has finite energy, i.e., $ \|B_0\|_2 < \infty$, then \linebreak
  $ \alpha(t) \le \int_0^t s^{-1/2}  \|B_0\|_2^2 = 2t^{1/2} \| B_0\|_2^2$ because
 $\|B(s)\|_2^2$ is non-increasing, by \eref{5.21}. It follows, therefore, from \eref{fa21} that
 $\int_0^t \| B(s) \|_3^2 ds$ is bounded by a function of $t$ and $\| B_0\|_2$.
In particular
\beq
\psi(t) \le \beta(t, \|B_0\|_2)                                  \label{fa33}
\eeq
 for some jointly increasing continuous function
 $\beta$ on $[0,\infty)^2$ that depends
 only  on the geometry of $M$.

 It seems worth mentioning here that
 one can also prove   $ \int_0^t \| B(s)\|_2^4 ds \le (1/2) \alpha(t)^2$
 under the assumption  of finite action, i.e., when $\alpha(t) <\infty$.
 This can provide an alternative approach to some estimates because
 of the quartic appearance of $\|B\|_2$ in  \eref{gaf71'}.
 }
 \end{remark}

\subsection{Order 1}     \label{secfa2}

       \begin{proposition}\label{fa-ord1} $($Order one in $B$.$)$
 Suppose that $A(\cdot)$  is a solution of  finite action.   Then
  \beq
  t^{3/2} \| A'(t) \|_2^2 + \int_0^t e^{\psi_s^t} s^{3/2} \| B'(s) \|_2^2 ds
  \le (3/8) e^{\psi(t)} \alpha(t)   \ \ \ \text{for}\ 0 < t < T  .               \label{fa32}
  \eeq
  \end{proposition}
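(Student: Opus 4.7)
\textbf{Proof plan for Proposition \ref{fa-ord1}.}

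The plan is to derive a differential inequality for $\|A'(s)\|_2^2$ in which the curvature-dependent coefficient matches the integrand of $\psi_s^t$, then apply Gronwall with integrating factor $s^{3/2} e^{-\psi_0^s}$ and finally close the argument using the order-zero bound \eqref{fa10}.

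First, I would start from the integral identity \eqref{5.22}, which gives
\[
(d/ds)\|A'\|_2^2 + 2\|B'\|_2^2 = -2([A'\wedge A'],B).
\]
The only nontrivial term is the cross term, which I would bound by H\"older's inequality in the form $|([A'\wedge A'],B)| \le c\,\|A'\|_2\|A'\|_6\|B\|_3$ (using $\tfrac12+\tfrac16+\tfrac13=1$), then invoke the smooth-solution Sobolev inequality \eqref{Sob2}, namely $\|A'\|_6^2 \le \kappa^2(\|B'\|_2^2 + \lambda_3\|A'\|_2^2)$ with $\lambda_3 = \lambda_M + (\kappa c)^2\|B\|_3^2$. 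The AM-GM split $2ab \le \varepsilon^{-1}a^2 + \varepsilon b^2$ with $a = c\|A'\|_2\|B\|_3$, $b = \|A'\|_6$ and the choice $\varepsilon = 1/\kappa^2$ absorbs one copy of $\|B'\|_2^2$ cleanly:
\[
2c\|A'\|_2\|A'\|_6\|B\|_3 \le (\kappa c)^2\|B\|_3^2\|A'\|_2^2 + \|B'\|_2^2 + \lambda_3\|A'\|_2^2.
\]
Substituting back and using the explicit form of $\lambda_3$ collapses things to
\[
(d/ds)\|A'\|_2^2 + \|B'\|_2^2 \le \mu(s)\|A'(s)\|_2^2, \qquad \mu(s):= \lambda_M + 2(\kappa c)^2\|B(s)\|_3^2.
\]
The key observation is that $\int_s^t \mu(\sigma)\,d\sigma = \psi_s^t$ by the very definition \eqref{fa31}, so $\mu$ is designed to match Gronwall.

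Next I would multiply the differential inequality by $s^{3/2}$, rewrite the left side as $(d/ds)(s^{3/2}\|A'\|_2^2) + s^{3/2}\|B'\|_2^2 - (3/2)s^{1/2}\|A'\|_2^2$, and then apply the integrating factor $e^{-\psi_0^s}$. Letting $G(s) = e^{-\psi_0^s} s^{3/2}\|A'(s)\|_2^2$, one has $G(0)=0$ (since $s^{3/2}\|A'\|_2^2 \to 0$ is not yet known, but $G(\sigma)\to 0$ as $\sigma\downarrow 0$ needs justification: it follows because $A$ is smooth on $(0,T)\times M$ so $\|A'(s)\|_2^2$ is locally bounded near any $s>0$, and we integrate from a small $\sigma$ and let $\sigma\downarrow 0$ afterwards). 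Integration from $0$ to $t$ then gives
\[
e^{-\psi(t)} t^{3/2}\|A'(t)\|_2^2 + \int_0^t e^{-\psi_0^s} s^{3/2}\|B'(s)\|_2^2\, ds \le (3/2)\int_0^t e^{-\psi_0^s} s^{1/2}\|A'(s)\|_2^2\, ds.
\]
Multiplying through by $e^{\psi(t)}$ and using $\psi(t) - \psi_0^s = \psi_s^t$ rewrites the left side as the left side of \eqref{fa32}, while on the right side $e^{\psi(t)}e^{-\psi_0^s} = e^{\psi_s^t} \le e^{\psi(t)}$.

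Finally, I would close with the order-zero identity \eqref{fa10}, which gives $\int_0^t s^{1/2}\|A'(s)\|_2^2\, ds \le \alpha(t)/4$, yielding the constant $(3/2)(1/4) = 3/8$ in \eqref{fa32}. The main technical obstacle is the cross-term bookkeeping in the second paragraph: getting Hölder and Sobolev to conspire so that the residual coefficient is exactly $2(\kappa c)^2\|B\|_3^2 + \lambda_M$ (matching the definition of $\psi_s^t$) rather than something larger that would not integrate to $\psi$; the $\varepsilon = 1/\kappa^2$ split is what makes the constants line up. The justification that the boundary term $G(\sigma) \to 0$ as $\sigma\downarrow 0$ (needed for the Gronwall integration to start at $0$) is routine from smoothness on $(0,T)\times M$ combined with the factor $\sigma^{3/2}$.
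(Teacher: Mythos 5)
Your derivation of the differential inequality is correct and matches the paper: the H\"older plus Sobolev plus AM-GM split is arranged so that the coefficient of $\|A'(s)\|_2^2$ is exactly $\psi'(s) = \lambda_M + 2(\kappa c)^2\|B(s)\|_3^2$, yielding precisely the paper's \eqref{fa36}. The divergence is in how you close the Gronwall step, and there is a genuine gap there.

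You multiply by $s^{3/2}$ and plan to integrate from $\sigma$ to $t$, then send $\sigma\downarrow 0$, which requires $G(\sigma) = e^{-\psi_0^\sigma}\sigma^{3/2}\|A'(\sigma)\|_2^2 \to 0$. Your stated justification --- smoothness of $A(\cdot)$ on the \emph{open} set $(0,T)\times M$ --- does not deliver this: it gives no control of $\|A'(\sigma)\|_2$ as $\sigma\downarrow 0$, and under the finite-action hypothesis one generally expects $\|A'(\sigma)\|_2$ to blow up there. What \emph{is} true, and would save your route, is that $\liminf_{\sigma\downarrow 0}\sigma^{3/2}\|A'(\sigma)\|_2^2 = 0$: since $\int_0^t s^{1/2}\|A'(s)\|_2^2\,ds < \infty$ by \eqref{fa10}, a standard argument shows $\liminf_{\sigma\to 0}\sigma\cdot\bigl(\sigma^{1/2}\|A'(\sigma)\|_2^2\bigr) = 0$, so you may pass to a subsequence $\sigma_n\downarrow 0$ with $G(\sigma_n)\to 0$ and use monotone convergence on the $\|B'\|_2^2$ integral. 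The paper sidesteps the boundary term entirely with different bookkeeping: it multiplies \eqref{fa36} by $s^{1/2}$ (not $s^{3/2}$), integrates from $\sigma$ to $t$, and then integrates the resulting \emph{inequality} in $\sigma$ over $(0,t)$ and interchanges the double integral. The troublesome term $\sigma^{1/2}e^{-\psi(\sigma)}\|A'(\sigma)\|_2^2$ then appears only under $\int_0^t d\sigma$, which is finite by \eqref{fa10}, so no pointwise limit at $\sigma = 0$ is needed. This is the same device used for \eqref{fa10} itself and for \eqref{fe6}; it is cleaner than the subsequence argument and is worth adopting.
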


               \begin{proof}
      H\"older's inequality  and \eref{Sob2} give
        \begin{align*}
       2 |( [ A'(s) \wedge A'(s) ], B(s))|
&\le 2c \| B(s)\|_3 \|    A'(s) \|_2 \| A'(s) \|_6\\
&\le   \kappa^2c^2 \|B(s) \|_3^2 \| A'(s) \|_2^2 + \kappa^{-2} \| A'(s)\|_6^2 \\
&\le (\kappa c)^2 \| B(s)\|_3^2 \|A'(s)\|_2^2
       + \lambda_3(s) \| A'(s)\|_2^2   + \|B'(s)\|_2^2,
        \end{align*}
   where
 $ \lambda_3(s) = \lambda_M +(\kappa c)^2 \|B(s)\|_3^2$.
    The total coefficient of $\|A'(s)\|_2^2$ on the right is therefore at most
    $\psi'(s)$, by the definition \eref{fa31}.Thus
  \[
  -\| B'(s) \|_2^2 - 2 ( [ A'(s) \wedge A'(s) ], B(s)) \le \psi'(s) \| A'(s)\|_2^2.
  \]
  Hence \eref{5.22} now yields
 $ (d/ds) \| A'(s) \|_2^2  \le - \| B'(s)\|_2^2  + \psi'(s) \| A'(s)\|_2^2,$
and therefore
\beq
    (d/ds) \Big( e^{-\psi(s)} \| A'(s) \|_2^2 \Big )
                           \le - e^{-\psi(s)} \| B'(s) \|_2^2.                  \label{fa36}
    \eeq
  Consequently
     $$(d/ds) \Big( s^{1/2} e^{-\psi(s)} \| A'(s) \|_2^2 \Big )
     + s^{1/2} e^{-\psi(s)} \| B'(s) \|_2^2
     \le (1/2) s^{-1/2} e^{-\psi(s)} \| A'(s) \|_2^2.
    $$
 Upon integrating this inequality  from $\sigma >0$ to $t$ we find
  \begin{align*}
  t^{1/2} e^{-\psi(t)} \| A'(t)\|_2^2 &
  + \int_\sigma^t s^{1/2} e^{-\psi(s)} \| B'(s) \|_2^2 ds\\
  & \le (1/2) \int_\sigma^t s^{-1/2} e^{-\psi(s)} \| A'(s) \|_2^2 ds.
  +\sigma^{1/2} e^{-\psi(\sigma)} \| A'(\sigma) \|_2^2.
  \end{align*}
  We may now integrate this inequality over $(0, t)$ with respect to $\sigma$
  and interchange the $\sigma$ and $s$ integrals  to deduce
\begin{align*}
  &t^{3/2} e^{-\psi(t)} \| A'(t)\|_2^2
  + \int_0^t s^{3/2} e^{-\psi(s)} \| B'(s) \|_2^2 ds \\
  &\le (3/2) \int_0^t s^{1/2} e^{-\psi(s)} \| A'(s) \|_2^2 ds\\
  &\le (3/8) \alpha(t),
  \end{align*}
  wherein we have used \eref{fa10} in the last line. Multiply by $e^{\psi(t)}$ to arrive at \eref{fa32}.
 \end{proof}


\section{Apriori estimates for finite energy}    \label{secfe}

We will need  to understand the nature of the singularities of
the various gauge covariant spatial derivatives  of $B(t)$ as $ t\downarrow 0$
under the sole assumption of finite initial energy.
The word ``order'', below,  refers to the number of spatial derivatives
    of $B$ involved in the inequalities. For example
    $A'$ involves one spatial derivative of $B$ by virtue of
    the equation $A' = - d_A^* B$.

    All of the estimates in Sections \ref{secSobsol} and \ref{secfa} as well as those
    in the next theorem and corollary require use of Dirichlet (D) or Marini (M)
     boundary conditions. But in Section \ref{secfe4} we will have to replace Marini boundary conditions
     by Neumann boundary conditions, which are stronger.

    \begin{theorem} \label{thmfe}  Let $ 0 < T \le \infty$. Suppose that
$A(\cdot)$ is a smooth solution to \eref{B8} and  satisfies
either \eref{B9} or \eref{B10}. Assume further that $\|B_0\|_2 < \infty$.
Then $\|B(t)\|_2$ is non-increasing and there exist continuous
non-decreasong functions $C_j :[0,\infty)^2 \rightarrow [0,\infty)$,
for $j = 1,2,3$, such that
\begin{align}
\| B(t)\|_2^2 + 2 \int_0^t \| A'(s)\|_2^2 ds
        &= \|B_0\|_2^2,\  \qquad  \qquad      \,      \text{Order}\  0   \label{fe5}\\
t \| A'(t) \|_2^2 +  \int_0^t e^{\psi_s^t} s \|B'(s) \|_2^2ds
        &\le C_1( t, \| B_0\|_2)    \qquad     \       \text{Order}\ 1    \label{fe6}
\end{align}
and
\begin{align}
\int_0^t \| B(s)\|_6^2 ds
          & \le C_2(t, \|B_0\|_2),   \qquad           \text{Order}\  0           \label{fe11}   \\
t \|B(t)\|_6^2 + \int_0^t e^{\psi_s^t} s \| A'(s)\|_6^2 ds
         &\le C_3(t, \|B_0\|_2),    \qquad             \text{Order}\ 1            \label{fe12}
\end{align}
where $\psi_s^t$ is defined in \eref{fa31}.
\end{theorem}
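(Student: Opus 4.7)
The plan is to derive the four estimates in order, using as principal input the identities \eref{5.21} and \eref{5.22}, the Sobolev inequalities \eref{Sob1} and \eref{Sob2}, and the finite-action apriori estimates of Section \ref{secfa}. The key initial observation is that once $\|B(\cdot)\|_2$ is known non-increasing, the finite-energy hypothesis implies finite action via $\alpha(t) \le 2\sqrt{t}\,\|B_0\|_2^2$, so every estimate of Section \ref{secfa} becomes a bound depending only on $t$ and $\|B_0\|_2$; in particular $\psi(t) \le \beta(t,\|B_0\|_2)$ by \eref{fa33}. I will exploit two versions of \eref{Sob1}: one with $\lambda=\lambda_3(s)$ quadratic in $\|B(s)\|_3$, and one with $\lambda=\lambda_2(s)$ quartic in $\|B(s)\|_2 \le \|B_0\|_2$, hence uniformly bounded in terms of $\|B_0\|_2$ and the geometry of $M$ alone.

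First, \eref{fe5} follows directly by integrating \eref{5.21} on $(0,t)$, which simultaneously yields the monotonicity of $\|B(t)\|_2$. For \eref{fe6} I start from the already established intermediate estimate \eref{fa36}, multiply by $s$, and rewrite as
\begin{equation*}
\frac{d}{ds}\bigl(s\,e^{-\psi(s)}\|A'(s)\|_2^2\bigr) + s\,e^{-\psi(s)}\|B'(s)\|_2^2 \;\le\; e^{-\psi(s)}\|A'(s)\|_2^2.
\end{equation*}
Integrating from $\epsilon$ to $t$, the right side is bounded uniformly in $\epsilon$ by $(1/2)\|B_0\|_2^2$ thanks to \eref{fe5}; the only obstacle is the boundary term $\epsilon\|A'(\epsilon)\|_2^2$. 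Here I use the standard fact that $L^1$-integrability of $\|A'(\cdot)\|_2^2$ on $(0,t)$ forces $\liminf_{\epsilon\downarrow 0}\epsilon\|A'(\epsilon)\|_2^2 = 0$, since otherwise $\|A'(s)\|_2^2 \ge c/s$ near zero, contradicting integrability. Passing to the limit along such a sequence and multiplying by $e^{\psi(t)}$ yields \eref{fe6}, upon noting $\psi(t)-\psi(s)=\psi_s^t$.

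For \eref{fe11} I integrate \eref{Sob1} with $\lambda = \lambda_3(s)$: \eref{fe5} controls the $\|A'\|_2^2$ piece, $\|B(s)\|_2 \le \|B_0\|_2$ handles the $\|B\|_2^2$ factor, and Lemma \ref{fa-ord0lem} (fed with $\alpha(t) \le 2\sqrt{t}\|B_0\|_2^2$) controls $\int_0^t \|B(s)\|_3^2\,ds$. For \eref{fe12} I switch to \eref{Sob1} with $\lambda = \lambda_2(t)$, giving $t\|B(t)\|_6^2 \le \kappa^2\bigl(t\|A'(t)\|_2^2 + t\lambda_2\|B_0\|_2^2\bigr)$, whose first summand is controlled by \eref{fe6}. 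The integral piece of \eref{fe12} comes from \eref{Sob2} with $\lambda_2$: the $\|B'\|_2^2$ contribution is absorbed by \eref{fe6}, and the residual $\int_0^t e^{\psi_s^t} s\|A'(s)\|_2^2\,ds$ is dealt with by one integration by parts against \eref{5.21}, yielding
\begin{equation*}
\int_0^t s\|A'(s)\|_2^2\,ds \;=\; -\tfrac{1}{2}\bigl[s\|B(s)\|_2^2\bigr]_0^t + \tfrac{1}{2}\int_0^t\|B(s)\|_2^2\,ds \;\le\; (t/2)\|B_0\|_2^2.
\end{equation*}

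The only genuinely delicate point is the disposal of $\epsilon\|A'(\epsilon)\|_2^2$ in the derivation of \eref{fe6}: under the finite-energy hypothesis alone there is no reason $\|A'(s)\|_2$ should remain bounded as $s\downarrow 0$, and the $\liminf$ extraction via integrability is exactly what allows the weight-$s$ identity to be integrated down to the origin. Everything else is bookkeeping combining the two formulations of the Gaffney-Friedrichs-Sobolev inequality (the $\|B\|_3$-form, which couples to the action estimate $\alpha(t)\le 2\sqrt{t}\|B_0\|_2^2$; and the $\|B\|_2^4$-form, which closes off the constants in terms of $\|B_0\|_2$ and $M$ alone), together with \eref{fe5} and the apriori estimates of Section \ref{secfa}.
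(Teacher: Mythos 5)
Your proposal is correct, and its skeleton coincides with the paper's: \eref{fe5} from integrating \eref{5.21}; \eref{fe6} from the differential inequality \eref{fa36}; \eref{fe11} and \eref{fe12} from the Gaffney--Friedrichs--Sobolev inequalities \eref{Sob1}, \eref{Sob2}; and the reduction of finite energy to finite action via $\alpha(t)\le 2t^{1/2}\|B_0\|_2^2$ together with \eref{fa33} to tame $e^{\psi(t)}$. The one genuine divergence is how you dispose of the possible singularity of $\|A'(s)\|_2$ at $s=0$: you multiply \eref{fa36} by $s$ first and then kill the boundary term $\epsilon\|A'(\epsilon)\|_2^2$ by extracting a sequence along which it tends to zero (legitimate, since $\liminf_{\epsilon\downarrow 0}\epsilon\|A'(\epsilon)\|_2^2>0$ would contradict the integrability of $\|A'\|_2^2$ guaranteed by \eref{fe5}), whereas the paper integrates the $\sigma$-parametrized inequality \eref{110} with respect to the initial time $\sigma$ over $(0,t)$ and swaps the order of integration -- the same averaging device it uses throughout Sections \ref{secfa} and \ref{secfe}. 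Your route is marginally more elementary (no Fubini step) at the cost of a measure-theoretic extraction; the paper's is uniform across all its weighted estimates. The remaining variations -- using the $\lambda_3$ form of \eref{Sob1} plus Lemma \ref{fa-ord0lem} for \eref{fe11} where the paper uses the $\lambda_2$ form directly, and deducing the pointwise part of \eref{fe12} from the already-established \eref{fe6} plus the elementary bound $\int_0^t s\|A'(s)\|_2^2\,ds\le (t/2)\|B_0\|_2^2$ rather than re-running the $\sigma$-average -- are cosmetic and both close correctly to functions of $(t,\|B_0\|_2)$ alone.
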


\begin{corollary}\label{corfe45} For each $p \in [2,6)$,
 there is a continuous, non-decreasing
function $C_4:[0,\infty)^2 \rightarrow [0,\infty)$ such that
\beq
\int_0^t \| A'(s)\|_p ds \le C_4(t, \|B_0\|_2).\qquad \qquad  \text{Order}\ 1    \label{fe80}
\eeq
\end{corollary}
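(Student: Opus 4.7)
The plan is to interpolate between the two $L^p$ norms already controlled by Theorem \ref{thmfe}: the time-integrated $L^2$ bound $\int_0^t\|A'(s)\|_2^2\,ds \le \tfrac12\|B_0\|_2^2$ from \eref{fe5}, and the weighted $L^6$ bound $\int_0^t s\|A'(s)\|_6^2\,ds \le C_3(t,\|B_0\|_2)$ obtained by dropping the harmless factor $e^{\psi_s^t}\ge 1$ in \eref{fe12}.

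First I would fix $p\in[2,6)$ and use the standard Lebesgue interpolation
\[
\|A'(s)\|_p \;\le\; \|A'(s)\|_2^{\theta}\,\|A'(s)\|_6^{1-\theta},
\qquad \theta = \frac{6-p}{2p}\in(0,1],
\]
and then introduce the weight $s^{1/2}$ on the sixth-norm factor, writing
\[
\|A'(s)\|_p \;\le\; \|A'(s)\|_2^{\theta}\; s^{-(1-\theta)/2}\;\bigl(s^{1/2}\|A'(s)\|_6\bigr)^{1-\theta}.
\]
Next I would apply three-factor H\"older on $(0,t)$ with exponents $a=2/\theta$, $b=2$, $c=2/(1-\theta)$, which are non-negative and satisfy $1/a+1/b+1/c = \theta/2 + 1/2 + (1-\theta)/2 = 1$. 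This yields
\[
\int_0^t\!\|A'(s)\|_p\,ds \;\le\; \Bigl(\int_0^t\!\|A'(s)\|_2^2\,ds\Bigr)^{\theta/2} \Bigl(\int_0^t\! s^{-(1-\theta)}\,ds\Bigr)^{1/2} \Bigl(\int_0^t\! s\,\|A'(s)\|_6^2\,ds\Bigr)^{(1-\theta)/2}.
\]

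The first factor is controlled by \eref{fe5}, giving at most $(\|B_0\|_2^2/2)^{\theta/2}$. The middle factor equals $(t^{\theta}/\theta)^{1/2}$, which is finite precisely because $\theta>0$, i.e.\ because $p<6$; this is the only step where the restriction $p\neq 6$ enters and it is the main (very mild) obstacle, indicating that the argument genuinely breaks at the endpoint. The third factor is bounded by $C_3(t,\|B_0\|_2)^{(1-\theta)/2}$ from \eref{fe12}. Defining
\[
C_4(t,r) \;=\; \bigl(r^2/2\bigr)^{\theta/2}\bigl(t^{\theta}/\theta\bigr)^{1/2}\, C_3(t,r)^{(1-\theta)/2}
\]
gives the desired bound \eref{fe80}. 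Since $C_3$ is continuous and non-decreasing in each argument (Theorem \ref{thmfe}) and the remaining factors are elementary monotone continuous functions of $t$ and $r=\|B_0\|_2$, $C_4$ inherits continuity and monotonicity. The endpoint $p=2$ reduces to Cauchy--Schwarz ($\theta=1$) and is consistent with the formula. No new estimates beyond those already established in Theorem \ref{thmfe} are required.
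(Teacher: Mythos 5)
Your proof is correct and follows essentially the same route as the paper: the paper states Lemma \ref{lem23.1} as a standalone three-factor-H\"older interpolation and then applies it to $f = |A'|$, which is exactly what you do, with your $\theta = (6-p)/(2p)$ matching the paper's exponents $2\alpha/p$ and $6\beta/p$ and your middle factor $\int_0^t s^{-(1-\theta)}ds$ matching $\int_0^t s^{3/p - 3/2}ds$. The only cosmetic difference is that the paper splits the interpolation out as a separate lemma, while you fold it into a single computation.
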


The proofs will be given in the next subsection.

\subsection{Proofs of finite energy apriori estimates.}   \label{secfe1}

 Throughout these proofs we will use the Sobolev inequalities
 \eref{Sob1}, \eref{Sob2} with $\lambda = \lambda(s) = \lambda_2(s)$,
 which is given by  \eref{gaf71'}.
 $\lambda_2(s)$ depends only on $\|B(s)\|_2$ and
  is non-increasing in  $s$ because $\|B(s)\|_2$ is non-increasing, by virtue of
  \eref{5.21}.
   Since, in this section, we are  interested  only in finite energy
  initial data we will simplify inequalities by using
 \beq
 \lambda_2(s) \le \lambda_2(0) \equiv \lambda_0         \label{gfs2}
 \eeq
 where
 $\lambda_0 = 1 +\|W\|_\infty +(c\|B_0\|_2 \kappa^{3/2}
 +\tau^2 \| K \|_\infty )^4$
  in general and is given by
  $\lambda_0 = 1 +(c\|B_0\|_2)^4 \kappa^6$
  if $M$ is a convex subset of $\R^3$.
  In particular, making the choice $\lambda(s) = \lambda_2(s)$ in
  \eref{Sob1} and \eref{Sob2} we will use these inequalities in the form
  \begin{align}
\|B(s)\|_6^2 &\le \kappa^2( \|A'(s)\|_2^2 + \lambda_0 \|B_0\|_2^2),   \label{005}\\
\|A'(s)\|_6^2 & \le \kappa^2 ( \|B'(s)\|_2^2 + \lambda_0 \|A'(s)\|_2^2) \label{006}
\end{align}
for any smooth solution over $(0,T)$.

 \bigskip


\begin{proof}[Proof of \eref{fe5} and \eref{fe11}] \label{ord0prop}
         Integrate the identity \eref{5.21} over $(0,t)$ to find
$
\int_0^t \| A'(s)\|_2^2 ds = (1/2) (\|B_0 \|_2^2 - \|B(t) \|_2^2),
$
 which is \eref{fe5}.
 Taking the integral of the inequality \eref{005} over $(0,t)$ and using  \eref{fe5}
 we find
\beq
\int_0^t\|B(s)\|_6^2 ds
            \le  \kappa^2\|B_0\|_2^2 \{ 1/2 +  t\l_0\},             \label{002}
\eeq
which proves \eref{fe11}.
 \end{proof}

\bigskip

\begin{proof}[Proof of \eref{fe6}] \label{ord1prop}
   Integrate \eref{fa36} over $(\sigma,t)$ and multiply by $e^{\psi(t)}$ to find
 \beq
\| A'(t) \|_2^2 + \int_\sigma^t  e^{\psi_s^t}  \|B'(s)\|_2^2 d s
           \le e^{\psi_\sigma^t} \| A'(\sigma)\|_2^2  \le e^{\psi(t)} \| A'(\sigma)\|_2^2.                 \label{110}
\eeq
   Integrate this inequality with respect to $\sigma$ over $(0, t)$,
     reverse the order
    of integration in the double integral and then apply \eref{fe5}
     to deduce
    \beq
t\|A'(t)\|_2^2  + \int_0^t e^{\psi_s^t} s\|B'(s) \|_2^2 ds
              \le e^{\psi(t)} \|B_0\|_2^2/2,      \label{111}
\eeq
which implies \eref{fe6}.
 \end{proof}

 \bigskip

  \begin{proof}[Proof of \eref{fe12}]       \label{ord1c1}

  Add $\lambda_0 \|B_0\|_2^2
  + \int_\sigma^t e^{\psi_s^t} \lambda_0 \|A'(s)\|_2^2 ds$
  to both sides of \eref{110} and use \eref{005} and \eref{006} to find
   \begin{align}
  \kappa^{-2} \Big\{\|B(t)\|_6^2 &+ \int_\sigma^t e^{\psi_s^t} \| A'(s)\|_6^2 ds\Big\} \notag\\
  &\le  e^{\psi(t)} \|A'(\sigma)\|_2^2 + \lambda_0 \|B_0\|_2^2
  + \int_\sigma^t e^{\psi_s^t} \lambda_0 \|A'(s) \|_2^2 ds \label{fe21}\\
  &\le  e^{\psi(t)} \|A'(\sigma)\|_2^2 + \lambda_0 \|B_0\|_2^2
  + e^{\psi(t)} \lambda_0 \|B_0\|_2^2/2,  \notag
  \end{align}
  wherein we have used $e^{\psi_s^t} \le e^{\psi(t)}$ and
   \eref{fe5} for the last term.
      Integrate with respect to $\sigma$ over $(0, t)$,
   reverse the $\sigma$ and $s$ integrals on the left
   and  apply \eref{fe5} to the first term on the right, to arrive at
      \begin{align*}
  t \| B(t)\|_6^2 &+ \int_0^t e^{\psi_s^t} s \| A'(s)\|_6^2 ds\\
  &\le \kappa^2\Big\{ e^{\psi(t)} \| B_0\|_2^2 /2 + \lambda_0 t \| B_0\|_2^2 + e^{\psi(t)} \lambda_0 (t/2) \|B_0\|_2^2\Big \},
  \end{align*}
  which proves \eref{fe12}.
 \end{proof}

 The proof of Corollary \ref{corfe45} depends on the following
  interpolation lemma.

\begin{lemma} \label{lem23.1} (Interpolation.)
 Let $0\le a < b <\infty$ and let $ 2 \le p <6$. Suppose that
 $f:(a,b) \rightarrow L^2(M)\cap L^6(M)$ is continuous.
 Then
 \beq
 \int_a^b \| f(s)\|_p ds   \le \Big( \int_a^b s^{\frac{3}{p} - \frac{3}{2}} ds\Big)^{1/2}
     \Big(      \int_a^b \| f(s)\|_2^2 ds \Big)^{\alpha/p}
            \Big(  \int_a^b s \|f(s)\|_6^2 \Big)^{3\beta/p}                 \label{sth90}
\eeq
where $ p = 2\alpha + 6 \beta$, $\alpha + \beta = 1$ and $0\le \beta <1$.
\end{lemma}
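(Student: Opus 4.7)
The plan is to combine the elementary pointwise log-convexity inequality for $L^p$ norms with a carefully weighted three-term Hölder inequality.

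First I would verify the exponent arithmetic. Writing $p = 2\alpha + 6\beta$ with $\alpha + \beta = 1$, one gets $\beta = (p-2)/4$ and $\alpha + 3\beta = p/2$. The standard interpolation inequality (from Hölder applied to $|f|^p = |f|^{2\alpha} |f|^{6\beta}$) yields the pointwise bound
\begin{equation*}
\| f(s)\|_p \le \| f(s)\|_2^{2\alpha/p}\,\| f(s)\|_6^{6\beta/p}.
\end{equation*}
To match the weights in the target inequality, I would extract a factor $s^{1/2}$ from the $L^6$ piece and write
\begin{equation*}
\| f(s)\|_p \le s^{-3\beta/p}\, \| f(s)\|_2^{2\alpha/p}\, \bigl( s\| f(s)\|_6^2\bigr)^{3\beta/p}.
\end{equation*}

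Next I would integrate this over $(a,b)$ and apply Hölder's inequality with three exponents $q_1 = 2$, $q_2 = p/\alpha$, $q_3 = p/(3\beta)$ (with the usual convention that the $q_3$ factor is dropped when $\beta = 0$). The reciprocals sum correctly since
\begin{equation*}
\frac{1}{2} + \frac{\alpha}{p} + \frac{3\beta}{p} = \frac{1}{2} + \frac{\alpha+3\beta}{p} = \frac{1}{2} + \frac{1}{2} = 1.
\end{equation*}
The $L^2$ and weighted $L^6$ factors immediately give the two norm integrals on the right side of \eqref{sth90} with the correct exponents $\alpha/p$ and $3\beta/p$. The $s$-integral picks up the exponent $-3\beta/p \cdot q_1 = -6\beta/p = 3/p - 3/2$ under the $1/2$ power, which is exactly what appears in \eqref{sth90}.

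There is no real obstacle; the only point requiring attention is the constraint $p < 6$, equivalently $\beta < 1$, which is needed for $q_3 = p/(3\beta) > 1$ whenever $\beta > 0$ so that Hölder applies. (When $\beta = 0$, $p = 2$, and the statement reduces to Cauchy--Schwarz.) Continuity of $f$ into $L^2(M) \cap L^6(M)$ ensures the integrands are measurable and the integrals are well defined, so the argument is complete after these routine verifications.
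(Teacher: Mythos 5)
Your proof is correct and follows essentially the same route as the paper: the same pointwise log-convexity bound, the same extraction of the $s^{1/2}$ weight, and the same three-exponent Hölder application with $(2,\,p/\alpha,\,p/(3\beta))$. The exponent arithmetic and the remark about the degenerate case $\beta=0$ are accurate.
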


                      \begin{proof}
  By interpolation $ \|f(s)\|_p \le \|f(s)\|_2^{2\alpha/p} \|f(s)\|_6^{6\beta/p}$.
   Hence
$$
     \int_a^b \| f(s)\|_p ds
     \le \int_a^b \{ s^{-3\beta/p}\}\{ \|f(s)\|_2^{2\alpha/p} \}
              \{ (s^{1/2} \| f(s) \|_6)^{6\beta/p} \} ds.
     $$
  Apply H\"older's inequality  to the product of the three functions
      in braces to find
\begin{align*}
  &   \int_a^b \| f(s)\|_p ds\\
     & \le \Big(\int_a^b\{s^{-3\beta/p}\}^q ds \Big)^{1/q}
       \Big(\int_a^b\| f(s)\|_2^{2\alpha r/p}ds\Big)^{1/r}
        \Big(\int_a^b\{s^{1/2} \|f(s)\|_6\}^{6\beta m/p} ds \Big)^{1/m}
\end{align*}
     provided $q,r,m$ are nonnegative and $q^{-1} + r^{-1} + m^{-1} = 1$.
     Choose $ q=2$, $r= p/\alpha$ and $m = p/(3\beta)$  and observe that
     $6\beta/p = (3/2) - (3/p)$ to arrive at
     \eref{sth90}.
\end{proof}

\begin{proof}[Proof of Corollary \ref{corfe45}] Choose $(a,b) = (0,t)$ and
 $f(s, x) = |A'(s,x)|_{\Lambda^1\otimes \frak k}$ in Lemma \ref{lem23.1}.
 Since $p<6$ the exponent in the first factor is $(3/p) -( 3/2)  > -1$.
  Therefore the first factor on the  right in       \eref{sth90} is finite.
  The second and third factors on the right are also finite, by \eref{fe5}
  and \eref{fe12} respectively.
    \end{proof}

\subsection{Growth of $\| A(t)\|_{W_1(M)}$.}    \label{secfe4}

In the previous sections all apriori estimates were gauge invariant.
However for our proof of long time existence of solutions
 we will need estimates that depend on $A_0$ itself, not just on
its gauge equivalence class.
Correspondingly,  we will have to  replace Marini boundary conditions by the stronger
 Neumann boundary conditions in order to get estimates on $A(t)$ itself, not just
 on  certain of its derivatives.

 The smoothness hypothesis in the following
theorem, that $A(\cdot) \in C^\infty((0,T) \times M)$, will be removed in Section \ref{secLTE}, Corollary \ref{growthstrong}.

  \begin{theorem} \label{thmM6} There is a continuous increasing function
$C_5: [0,\infty)^2 \rightarrow [0, \infty)$,
depending only on the geometry of $M$,
 such that, for any strong solution
to the Yang-Mills heat equation satisfying Neumann, \eref{N1}, \eref{N2},
or Dirichlet, \eref{D1}, boundary conditions on an interval  $[0,T)$,
with $0 < T \le \infty$,  there holds
\beq
\| A(t)\|_{W_1(M)} \le C_5(t, \| A_0\|_{W_1(M)}), \ 0 \le t <T,      \label{M30}
\eeq
under the additional hypothesis that $ A(\cdot) \in C^\infty((0,T) \times M)$.
\end{theorem}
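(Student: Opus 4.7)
Writing $\|A(t)\|_{W_1}^2 = \|\nabla A(t)\|_2^2 + \|A(t)\|_2^2$, the strategy is to control the Hodge pieces $\|dA(t)\|_2$, $\|d^*A(t)\|_2$ and $\|A(t)\|_2$ separately, then apply the Gaffney--Friedrichs inequality of Theorem \ref{thmGF} with \emph{zero} auxiliary connection form (so $\lambda_3 = \lambda_M$) to the form $\w = A(t)$, which is legitimate because $A(t)_{tan}=0$ (Dirichlet) or $A(t)_{norm}=0$ (Neumann). The backbone is the integrated form $A(t) = A_0 + \int_0^t A'(s)\,ds$, which converges in $L^p$ for $2\le p<6$ since Corollary \ref{corfe45} gives $\int_0^t \|A'(s)\|_p\,ds \le C_4(t,\|B_0\|_2)$. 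Combined with $\|A_0\|_p \le \kappa\|A_0\|_{W_1}$ (Sobolev in 3D) and the bound $\|B_0\|_2 \le \|A_0\|_{W_1} + (c/2)\kappa^2\|A_0\|_{W_1}^2$ coming from $B_0 = dA_0 + (1/2)[A_0\wedge A_0]$, this controls $\sup_{0\le s\le t}\|A(s)\|_p$ by a continuous function of $(t,\|A_0\|_{W_1})$.

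For $\|dA(t)\|_2$, the identity $dA = B - (1/2)[A\wedge A]$ together with $\|B(t)\|_2 \le \|B_0\|_2$ from \eref{fe5} yields $\|dA(t)\|_2 \le \|B_0\|_2 + (c/2)\|A(t)\|_4^2$, both factors already controlled. For $\|d^*A(t)\|_2$ I will exploit the identity $d_A^* A' = 0$ of \eref{B36}: since $[A\lrc A']$ for a pair of 1-forms reduces to $-\sum_j[A_j, A'_j]$, this gives the pointwise identity
\[
d^*A'(s,x) = \sum_j [A_j(s,x), A'_j(s,x)], \qquad |d^*A'(s,x)| \le c\,|A(s,x)|\,|A'(s,x)|,
\]
hence $\|d^*A'(s)\|_2 \le c\|A(s)\|_4\|A'(s)\|_4$ by H\"older. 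Because $d^*$ is a closed operator on $L^2$ and $A_0\in W_1\subset\D(d^*)$ (by \eref{C18}), one concludes
\[
d^*A(t) = d^*A_0 + \int_0^t d^*A'(s)\,ds \quad \text{in } L^2,
\]
and therefore
\[
\|d^*A(t)\|_2 \le \|A_0\|_{W_1} + c\Big(\sup_{0\le s\le t}\|A(s)\|_4\Big) \int_0^t \|A'(s)\|_4\,ds,
\]
again controlled by $(t, \|A_0\|_{W_1})$ via Step 1 and Corollary \ref{corfe45}.

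Finally, applying Theorem \ref{thmGF} to $\w = A(t)$ with the ambient connection replaced by $0$ (whose curvature vanishes, collapsing $\lambda_3$ to $\lambda_M$) yields
\[
\tfrac12\big(\|\nabla A(t)\|_2^2 + \|A(t)\|_2^2\big) \le \|dA(t)\|_2^2 + \|d^*A(t)\|_2^2 + \lambda_M \|A(t)\|_2^2,
\]
and each of the three terms on the right is bounded by the previous steps. Assembling these bounds produces a continuous non-decreasing $C_5(t,\|A_0\|_{W_1})$ with $\|A(t)\|_{W_1}\le C_5(t,\|A_0\|_{W_1})$, as required. The main conceptual obstacle is that we have no direct a priori bound on $\nabla A'(s) = -\nabla d_{A(s)}^* B(s)$, which would naively require second-order estimates beyond Section \ref{secfe}; the identity \eref{B36} is what saves us by turning $d^*A'$ into a pointwise algebraic expression in $A$ and $A'$, both of which are already controlled in $L^4$ by the finite-energy estimates. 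A minor technical point is the extension of boundary conditions down to $t=0$, which follows from the continuity of $A(\cdot):[0,T)\to W_1$ built into the definition of strong solution.
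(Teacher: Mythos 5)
Your proposal is correct and follows essentially the same route as the paper: you control $\|A(t)\|_2$, $\|dA(t)\|_2$ and $\|d^*A(t)\|_2$ via the integrated equation, the identity $dA=B-\tfrac12[A\wedge A]$, and the algebraic identity $d^*A'=[A\cdot A']$ coming from $d_A^*A'=0$, then close with the Gaffney--Friedrichs inequality \eref{gaf50} applied with zero connection to $\w=A(t)$, exactly as in Lemma \ref{lemM5} and the paper's proof of Theorem \ref{thmM6}. Your closing observation that \eref{B36} is what circumvents the need for second-order estimates is precisely the point the paper makes in the note following Lemma \ref{lemM5}.
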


The proof depends on the following estimates, which will be derived for
smooth $A(\cdot)$. But the smoothness requirement will be removed in
Corollary \ref{corapstrong}, thereby proving the following four inequalities for any
strong solution satisfying Neumann or Dirichlet boundary conditions.

\begin{lemma}\label{lemM5} Suppose that $A(\cdot)$ is a strong solution
to  the Yang-Mills heat equation satisfying Neumann, \eref{N1}, \eref{N2},
or Dirichlet, \eref{D1}, boundary conditions on an interval  $[0,T)$,
with $0 < T \le \infty$. Assume also that $A \in C^\infty((0,T))$. Then
\begin{align}
\| A(t) \|_2
          &    \le \| A_0\|_2 + t^{1/2} \|B_0\|_2 ,                            \label{M81'}\\
\|A(s)\|_4
        &  \le  \| A_0\|_4  +C_4(t, \|B_0\|_2), \ 0 <s \le t,               \label{M78'}\\
\| dA(t)\|_2
       &\le \| B_0\|_2 +(c/2) \Big( \|A_0\|_4 + C_4(t, \|B_0\|_2)\Big)^2 \label{M79'}\\
 \text{and}\ \ \
 \| d^* A(t) \|_2
 &\le \| d^* A_0\|_2
   +c\Big( \|A_0\|_4 + C_4(t, \|B_0\|_2)\Big) C_4(t, \|B_0\|_2),        \label{M80'}
\end{align}
where $C_4$ is defined by \eref{fe80} for $p =4$.
\end{lemma}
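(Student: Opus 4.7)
My plan is to handle the four bounds in turn. The first three follow directly from the Yang--Mills heat equation $A'(s) = -d_{A(s)}^*B(s)$ together with the apriori estimates already in Theorem \ref{thmfe} and Corollary \ref{corfe45}; the fourth needs one small algebraic manipulation of the equation in order to avoid any second-order spatial derivative in the argument.

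For \eref{M81'} and \eref{M78'} I would write $A(t) - A_0 = \int_0^t A'(s)\,ds$ in the strong $L^2$ sense and apply the triangle inequality. For \eref{M81'}, Cauchy--Schwarz followed by the order-zero energy identity \eref{fe5} gives $\int_0^t \|A'(s)\|_2\,ds \le t^{1/2}(\|B_0\|_2^2/2)^{1/2} \le t^{1/2}\|B_0\|_2$. For \eref{M78'}, the analogous time-integration in $L^4$ combined with Corollary \ref{corfe45} applied at $p = 4 \in [2,6)$ bounds $\int_0^s \|A'(\sigma)\|_4\,d\sigma$ by $C_4(t, \|B_0\|_2)$. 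For \eref{M79'}, I would use the pointwise-in-$t$ identity $dA(t) = B(t) - (1/2)[A(t) \wedge A(t)]$ coming from the definition of curvature, together with the pointwise bound $|[A \wedge A]| \le c|A|^2$ (so $\|[A\wedge A]\|_2 \le c\|A\|_4^2$), the monotonicity $\|B(t)\|_2 \le \|B_0\|_2$ from \eref{fe5}, and the bound \eref{M78'} just established.

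The main obstacle is \eref{M80'}, because the naive expression $\|d^*A'(s)\|_2 = \|d^*d_{A(s)}^*B(s)\|_2$ looks like it would require control of a second spatial derivative of $B$. The key observation is that $d^* = d_A^* - [A \lrc \cdot\,]$ on smooth forms (merely the definition $d_A^* = d^* + [A \lrc \cdot\,]$ rearranged), so that
\[
d^*(d_A^*B) \;=\; (d_A^*)^2 B \;-\; [A \lrc d_A^*B] \;=\; [B \lrc B] \;-\; [A \lrc (-A')] \;=\; [A \lrc A'],
\]
where $(d_A^*)^2 B = [B \lrc B]$ comes from Proposition \ref{propDN4} (equation \eref{DN52} in the Dirichlet case, \eref{DN53} in the Neumann case) and $[B \lrc B] = 0$ because $B$ is a $2$-form. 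This yields the clean identity $d^*A'(s) = -[A(s) \lrc A'(s)]$. The pointwise bound $|[A \lrc A']| \le c|A||A'|$ and H\"older's inequality then give $\|d^*A'(s)\|_2 \le c\|A(s)\|_4\|A'(s)\|_4$. Integrating in $s$, pulling $\sup_{s \le t}\|A(s)\|_4$ outside and estimating it by \eref{M78'}, and using Corollary \ref{corfe45} to bound $\int_0^t \|A'(s)\|_4\,ds$ by $C_4(t, \|B_0\|_2)$ produces
\[
\|d^*A(t) - d^*A_0\|_2 \;\le\; \int_0^t \|d^*A'(s)\|_2\,ds \;\le\; c\bigl(\|A_0\|_4 + C_4(t, \|B_0\|_2)\bigr)C_4(t,\|B_0\|_2),
\]
which is \eref{M80'} after the triangle inequality. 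The smoothness hypothesis $A(\cdot) \in C^\infty((0,T) \times M)$ together with the $W_1$-continuity of a strong solution at $t = 0$ (so that $d^*A(\epsilon) \to d^*A_0$ in $L^2$ as $\epsilon \downarrow 0$) justifies both the pointwise manipulations and the $L^2$ integral representation of $d^*A(t) - d^*A_0$.
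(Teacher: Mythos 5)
Your proposal is correct and follows essentially the same route as the paper's proof: time-integration of $A'$ together with the order-zero and order-one apriori estimates for the first two bounds, the curvature identity $dA = B - \tfrac12[A\wedge A]$ for the third, and the cancellation $d^*A' = [A\cdot A']$ (equivalently $-[A\lrc A']$) for the fourth. The only cosmetic difference is that you re-derive this last identity from $(d_A^*)^2 B = [B\lrc B] = 0$ via Proposition~\ref{propDN4}, whereas the paper simply rewrites the first identity in \eref{B36}, which encodes the same fact.
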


        \begin{proof}
The identity
\beq
 A(s) = A_0 + \int_0^s A'(\sigma) d \sigma,                \label{M77'}
 \eeq
 is valid for any strong solution, even if not smooth on $(0,T)$.
  We may take the $L^2$ norm  in  \eref{M77'} (with $s =t$)
  to  find $ \| A(t) \|_2
  \le \| A_0 \|_2 + \int_0^t \| A'(\sigma) \|_2 d\sigma
                 \le \| A_0\|_2 + t^{1/2} (\int_0^t \|A'(\sigma)\|_2^2 )^{1/2} $.
  \eref{M81'} now follows from \eref{fe5}.

   The rest of the proof hinges on the estimate \eref{fe80} for $p=4$,
  which asserts
\beq
   \int_0^t \| A'(\sigma)\|_4 d\sigma \le C_4( t, \|B_0\|_2)      \label{M55'}
   \eeq
   for some non-decreasing continuous function
   $C_4:[0, \infty)^2 \rightarrow [0,\infty)$.
   Now \eref{M77'} implies that
 $  \|A(s)\|_4  \le \| A_0\|_4 + \int_0^s \| A'(\sigma)\|_4 d\sigma$,
 which proves \eref{M78'} in view of \eref{M55'}.
          Observe next the identities
   \begin{align}
   dA(t) &= B(t) - (1/2) [A(t) \wedge A(t)]                   \label{M72'}\\
   d^* A'(s)& = [ A(s)\cdot A'(s)].                                \label{M73'}
  \end{align}
      The first just rewrites the definition of curvature \eref{ymh3},
 while the second just rewrites the first identity in \eref{B36}.
 It follows that
 \begin{align}
 \| dA(t)\|_2 &\le \| B(t)\|_2 +(c/2) \| A(t)\|_4^2,  \label{M74'}
 \end{align}
 which yields \eref{M79'} upon insertion of \eref{M78'},
 given that $\|B(t)\|_2$ is non-increasing.
  Finally, the identity \eref{M73'} gives
\beq
  d^* A(t) = d^* A_0 + \int_0^t [ A(s)\cdot A'(s)] ds, \label{M75'}\\
\eeq
and therefore
\begin{align}
 \| d^* A(t) \|_2 &\le \| d^* A_0\|_2
          + c \int_0^t \| A(s) \|_4 \| A'(s)\|_4 ds                                \label{M76'}\\
&\le \| d^* A_0\|_2 + c\sup_{0<s \le t} \|A(s)\|_4 \int_0^t\| A'(s)\|_4 ds.\notag
\end{align}
\eref{M80'} now follows from \eref{M78'} and \eref{M55'}. Notice that
$\|d^* A_0\|_2 < \infty$  because, by assumption,
$A(\cdot)$ maps $[0, T)$ into $W_1$.
\end{proof}

  Note: The proof of \eref{M55'}  relies on
   use of third spatial derivatives of $A$ in the identity \eref{5.22}, and therefore is
   not immediately applicable to a strong solution.
   Moreover the identity \eref{M73'} and its consequences,
   \eref{M75'} and \eref{M76'}, also uses the third spatial derivatives of $A$.
   However we will construct
   in Section \ref{secLTE} an approximation method  that allows us to prove
   \eref{fe80}, and in particular \eref{M55'}, as well as \eref{M76'},  for all strong solutions and indeed
   with the same function $C_4(\cdot, \cdot)$.
   This entire proof   will  then apply to all strong solutions without the additional hypothesis that $A(\cdot) \in C^\infty((0,T)) \times M)$.

\begin{proof}[Proof of Theorem \ref{thmM6}]
We are going to make use of the Gaffney-Friedrichs inequality \eref{gaf50}
  with $A=0$ in that inequality and $\w$ chosen to be the form $A(t)$ of the
  present theorem, with $t >0$. In this case \eref{gaf50} asserts that
  \beq
  (1/2) \| A(t)\|_{W_1}^2
  \le \| d A(t) \|_2^2 + \| d^* A(t) \|_2^2 + \lambda_M \| A(t) \|_2^2 \label{M70}
  \eeq
  This is applicable because $A(t) \in W_1(M)$ and either
  $A(t)_{norm} =0$ or $A(t)_{tan}=0$.
   The three terms on the right may be estimated by \eref{M79'}, \eref{M80'}
   and \eref{M81'} respectively. The theorem now follows
   if one takes into account
   that $\|A_0\|_2, \|A_0\|_4, \|B_0\|_2$ and $\|d^* A_0\|_2$ are all dominated by
   a linear or quadratic polynomial in $\|A_0\|_{W_1}$, given the definition
   \eref{ymh2} of the $W_1$ norm.
  \end{proof}


\section{Short time existence and uniqueness for the parabolic equation} \label{secST}

 In this section we will prove Theorem \ref{thmpara} for both sets
  of boundary conditions \eref{ST11N} and \eref{ST11D} simultaneously
  by encoding the boundary conditions into appropriate Sobolev spaces
  and then using a common approach.The Sobolev spaces
 will be the quadratic form domains of the absolute and relative Laplacians,
 \cite{Co}, \cite{RaS},
 as described in Remark \ref{remcoerc}.

 \begin{notation}\label{notSob}{\rm
  Define
\begin{align}
(N)\ \ \ \ \Delta_N &=-(D^*D + D D^*) \label{ST17}\\
\text{or}\ \  (D)\ \ \ \  \Delta_D & = -(d^* d + d d^*)      \label{ST18}
\end{align}
Here $D$ and $d$ are the maximal and minimal exterior derivative operators, respectively, discussed in Section \ref{secDN}. They act on p-forms.

     For both kinds of boundary conditions we are going to write
   simply $H_1(M)$ (or $H_1(M;\L^1\otimes \frak k)$ when clarity demands)
     for the form domain of $\Delta_N$ or $\Delta_D$, namely the domains,
     respectively,  of the quadratic forms $Q_N$ or $Q_D$ in
      \eref{GF83'} and \eref{GF84'}.
      This defines two distinct notions of $H_1$.
       Thus, writing $\Delta$ for either the absolute or relative Laplacian
     $\Delta_N$ or $\Delta_D$,
         Remark \ref{remcoerc}
         allows us to write the Sobolev norm as
     \beq
     \|\w\|_{H_1} = \| (1- \Delta)^{1/2} \|_{L^2(M)}          \label{ST19}
     \eeq
     in both cases.
            We remind the reader that Remark \ref{remcoerc}
   shows that
      a form $\w \in W_1(M)$ is in the Neumann version of $H_1(M)$
      if and only if $\w_{norm}=0$ and is in the Dirichlet version of $H_1(M)$
      if and only if $\w_{tan}=0$.

     Throughout this section we will write $d$ for the exterior derivative
     with the understanding that this represents the maximal or minimal
     version, in agreement with the boundary conditions.
}
\end{notation}

Recall that we are dealing with a product bundle and may therefore
apply this definition to $A$ itself.

In the next section we will separate out the non-linear terms in
 the parabolic equation \eref{ST11} and reformulate it as an
 integral equation in a more or less standard way.
        A natural abstract setting for producing solutions to the
   integral equation may be found, for example,  in  \cite[Chapter 15]{Tay3}.
 But we are going to use the following modified path  space within which
 to seek solutions in order to get some precise regularity at the same time.

            \begin{notation} \label{notST7}{\rm (Path space.)
 Let $0<T <\infty$. Denote by  $\P_T$  the set of continuous functions
 $$
 C:[0,T] \rightarrow H_1(M)
 $$
 such that $\|C(t)\|_\infty $, $\|dC(t)\|_\infty$ and  $\|d^*C(t)\|_\infty$
 are finite for each $t >0$ and
  \begin{align}
 \infty > \|C\|_{\P_T}
    \equiv \sup_{0 < t \le T} \Big\{&\|C(t)\|_{H_1(M)}
     +t^{1/4} \|C(t)\|_{L^\infty(M)} \notag \\
     &+t^{3/4}\Big(\|d C(t)\|_\infty + \| d^* C(t)\|_\infty\Big) \Big\}.    \label{ST23}
 \end{align}
 }
      \end{notation}
      Notice that the last two terms are well defined because the
  boundary conditions on $C(t)$ agree with the choice of $d$ as a
 minimal or maximal operator.


    \begin{theorem} \label{thmEUP} Let $A_0 \in H_1(M)$ and suppose that
    $\beta \ge \|A_0\|_{H_1(M)}$. Then there exists
$T >0$ depending only on $\beta$ such that the integral equation
  \eref{ST25} has a solution in $\P_T$.
  The solution is unique in $\P_T$.
Moreover  the solution is strongly differentiable for $t >0$ as a function into $L^2(M)$.
For $t >0$, $C(t)$ is in $\D(\Delta)$ and
\eref{ST11}  holds.
Further,  the solution lies in $C^\infty((0,T)\times M; \L^1\otimes \frak k)$.
\end{theorem}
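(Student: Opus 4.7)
The plan is to rewrite \eqref{ST11} as a semilinear parabolic equation driven by the Hodge Laplacian $\Delta = \Delta_N$ or $\Delta_D$ and solve the resulting integral equation by contraction mapping in $\P_T$. Expanding the right side of \eqref{ST11} and using $d_C^* = d^* + [C\lrc \cdot]$, $d_C = d + [C\wedge \cdot]$, one obtains
\begin{equation*}
(\partial/\partial t)C \;=\; \Delta C \,+\, F(C),
\end{equation*}
where $F(C) = -[C\lrc dC] - \tfrac12\,d^*[C\wedge C] - [C\wedge d^*C] - \tfrac12[C\lrc[C\wedge C]]$ collects all the quadratic and cubic terms. By Duhamel's formula the fixed-point equation \eqref{ST25} takes the form
\begin{equation*}
C(t) \;=\; e^{t\Delta} A_0 \,+\, \int_0^t e^{(t-s)\Delta} F(C(s))\,ds,
\end{equation*}
which I would treat as a map $\Phi$ on a closed ball $\{C \in \P_T : \|C\|_{\P_T} \le R\}$ for a radius $R$ depending on $\beta$ and a time $T$ to be chosen small. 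The encoding of the boundary conditions \eqref{ST11N} or \eqref{ST11D} into the choice of $\Delta$ (cf.\ Remark \ref{remcoerc}) ensures that $e^{t\Delta}$ preserves them automatically, and that $d$ and $d^*$ commute with $e^{t\Delta}$ on the appropriate form domains.

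The linear term $e^{t\Delta}A_0$ is controlled in every component of the $\P_T$-norm by the standard heat semigroup estimates $\|e^{t\Delta}\|_{H_1 \to H_1} \le 1$, the three-dimensional smoothing bounds $\|e^{t\Delta}\|_{L^6 \to L^\infty} \lesssim t^{-1/4}$ and $\|\,d\,e^{t\Delta}\|_{L^6 \to L^\infty},\ \|d^* e^{t\Delta}\|_{L^6\to L^\infty} \lesssim t^{-3/4}$, combined with the Sobolev embedding $H_1(M) \hookrightarrow L^6(M)$. These bounds are precisely what makes the weights $t^{1/4}$ and $t^{3/4}$ in \eqref{ST23} natural: after multiplication by the weights, each quantity is dominated by $\|A_0\|_{H_1} \le \beta$.

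For the nonlinear term I would use bilinear estimates matched to the $\P_T$-weights. Writing typical cases: $\|[C\lrc dC](s)\|_2 \le \|C(s)\|_\infty \|dC(s)\|_2 \lesssim s^{-1/4}\|C\|_{\P_T}^2$, so that $\int_0^t \|e^{(t-s)\Delta}[C\lrc dC](s)\|_{H_1}\,ds \lesssim \int_0^t (t-s)^{-1/2} s^{-1/4}\,ds = c\,t^{1/4}\|C\|_{\P_T}^2$; similarly $\|d^*[C\wedge C]\|_2$ and $\|[C\wedge d^*C]\|_2$ carry rates $s^{-1/4}$ and $s^{-3/4}$, balanced in the integral by further $(t-s)^{-1/2}$ gain (moving a derivative onto the semigroup via commutation of $d$, $d^*$ with $\Delta$), while the cubic $[C\lrc[C\wedge C]]$ is harmless. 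The corresponding $L^\infty$ and derivative-$L^\infty$ components of $\|\Phi(C)\|_{\P_T}$ are controlled the same way using the stronger smoothing bounds above, yielding $\|\Phi(C)\|_{\P_T} \le \beta + C T^\alpha(R^2 + R^3)$ and, analogously, Lipschitz estimates $\|\Phi(C_1) - \Phi(C_2)\|_{\P_T} \le C T^\alpha (R + R^2)\|C_1 - C_2\|_{\P_T}$ for some $\alpha > 0$. Choosing $R \sim \beta$ and $T$ small gives a contraction and hence a unique fixed point.

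Strong $L^2$-differentiability and membership in $\D(\Delta)$ for $t>0$ follow from the analyticity of the semigroup $e^{t\Delta}$ together with the estimates above, once one knows $F(C(s))$ lies in $L^2$ uniformly on $[\delta,T]$. The final $C^\infty$ regularity is obtained by the usual parabolic bootstrap: the equation $\partial_t C - \Delta C = F(C)$ with smooth coefficients in $(0,T)\times M$ and boundary conditions built into $\Delta$ is strictly parabolic and coercive, so the solution gains derivatives in $t$ and $x$ iteratively, starting from the $\P_T$-control and feeding the improved $C$ back into $F(C)$. The chief obstacle I anticipate is verifying the semigroup smoothing bounds together with the commutation of $d$, $d^*$ with $e^{t\Delta}$ under the chosen boundary conditions; once those are established, the contraction and bootstrap are routine.
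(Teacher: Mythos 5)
Your plan follows the same overall route as the paper: rewrite \eqref{ST11} as $C' = \Delta C + F(C)$ with $\Delta$ the absolute or relative Hodge Laplacian, pass to the Duhamel/integral equation \eqref{ST25}, and run a contraction argument on a ball in $\P_T$ using $L^q\to L^p$ smoothing bounds for $e^{t\Delta}$ together with bilinear estimates calibrated to the $t^{1/4}$, $t^{3/4}$ weights. Those ingredients are indeed the core of the paper's Lemmas \ref{lemST3b} and \ref{lemST7} and the contraction estimates \eqref{ST105}--\eqref{ST110}. One small stylistic difference: you propose to handle the derivative factor in $d^*[C\wedge C]$ by commuting $d^*$ past $e^{t\Delta}$ and using $\|d^* e^{t\Delta}\|_{q\to p}$, whereas the paper never needs such commutation identities --- it estimates $\|\partial C(\sigma)\|_q$ directly from the $\P_T$-norm (which already controls $t^{3/4}\|dC\|_\infty + t^{3/4}\|d^*C\|_\infty$) and uses the smoothing bounds \eqref{hk1}, \eqref{hk4} on the \emph{output} side. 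Commuting $d,d^*$ past $e^{t\Delta}$ on a manifold with boundary is an intertwining of Laplacians on forms of different degree with their matched (absolute/relative) boundary conditions; it is true but requires care, and the paper's formulation sidesteps it entirely.

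The genuine gap is in your passage from the fixed point to a classical solution. You write that strong $L^2$-differentiability and $C(t)\in\D(\Delta)$ ``follow from the analyticity of the semigroup $\ldots$ once one knows $F(C(s))$ lies in $L^2$ uniformly on $[\delta,T]$.'' That is not sufficient. For an abstract parabolic equation with analytic generator, boundedness (or even continuity) of the inhomogeneity $F$ into $L^2$ does not in general put $\rho(t)=\int_0^t e^{(t-s)\Delta}F(s)\,ds$ into $\D(\Delta)$, nor give strong differentiability of $\rho$; the standard sufficient condition is H\"older continuity of $s\mapsto F(s)$ into $L^2$ on $[\delta,T]$. This is exactly what the paper supplies and why it cannot skip it: Lemma \ref{lemST8a} gives quantitative estimates for $(e^{\epsilon\Delta}-1)e^{s\Delta}$, Lemma \ref{lemST8b} then shows that any fixed point in $\P_T$ is H\"older continuous into $L^\infty\cap H_1$ on $[a,T)$ for each $a>0$, and hence that $F(C(\cdot))$ is H\"older continuous into $L^2$, and only then does Lemma \ref{lemST9} conclude $\rho(t)\in\D(\Delta)$ and $\rho'(t)=\Delta\rho(t)+F(t)$. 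So before invoking the parabolic bootstrap for $C^\infty$ regularity you must first establish this interior H\"older regularity of $C$; as written, your argument skips the step that makes the bootstrap legitimate.
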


\subsection{The integral equation and strong solutions}\label{secST1}

We will prove Theorems \ref{thmEUP} and \ref{thmpara} in this section.

     We are going to operate mostly with the integral form of the equation
     \eref{ST11} as follows.
Writing $ B \equiv B_C = dC +(1/2) [C\wedge C]$, we can compute that
\beq
d_C^* B + d_Cd^*C = (d^*d + d d^*)C - X(C),        \label{ST13}
\eeq
where $X$ is the first order nonlinear differential operator
 on $\frak k$ valued
1-forms defined by
\beq
-X(C) = - [C\lrc B] +(1/2) d^*[C\wedge C] + [ C, d^*C], \ \ \ \
       C:M \rightarrow \Lambda^1\otimes \frak k                       \label{ST14}
\eeq
   The term $d_C d^*C$ in \eref{ST13} contributes
the term $dd^*$ to the second order operator on the right, thereby making
the operator on the right elliptic. Without this term the equation \eref{ST11}
would be only weakly parabolic.

The terms in $X(C)$ which are cubic in $C$ involve no derivatives  of $C$
while the terms which are quadratic all involve a factor of one spatial
derivative of $C$.
We may write this symbolically as
\beq
X(C) = C^3 + C \cdot \p C.                                     \label{ST14.1}
\eeq
$X(C)$ contains all the non-linear terms in Eq.\ \eref{ST11}, which can now be rewritten as
\beq
C'(t) = \Delta C(t) + X(C(t)), \ \ C(0) = A_0,     \label{ST15}
\eeq
wherein $\Delta$ is given by \eref{ST17} or \eref{ST18}.

Informally, the equation \eref{ST15} is equivalent to the integral equation
\beq
C(t) = e^{t\Delta} A_0
+ \int_0^t e^{(t-\sigma)\Delta} X(C(\sigma)) d\sigma.   \label{ST25}
\eeq
The regularity lemma, Lemma \ref{lemST9}, will show that \eref{ST25} implies \eref{ST15}.

       \begin{lemma}\label{lemST3b}
Let  $C(\cdot) \in \P_T$. Define
\beq
F(\sigma) = C(\sigma)^3 + C(\sigma)\cdot \p C(\sigma).    \label{ST46}
\eeq
and  define $F_j(\sigma)$ similarly for paths $C_j$, $ j = 1,2$.
Let $2 \le q \le \infty$.
Suppose that $\|C\|_{\P_T} \le R$ and $\|C_j\|_{\P_T} \le R$.
Then there are  constants $a_k$ independent of
$C(\cdot)$, $q$, $R$ and $T$  such that, for $0 < \sigma <T$,
\begin{align}
\| F(\sigma)\|_q &\le \sigma^{-(3/2)(\frac{1}{2} - \frac{1}{q})}
\{( R^3a_1) + \sigma^{-1/4} (R^2a_2)\}   \label{ST47}\\
\|F_1(\sigma) - F_2(\sigma)\|_q
&\le  \sigma^{-(3/2)(\frac{1}{2} - \frac{1}{q})}
        \|C_1 - C_2\|_{\P_T}\{( R^2a_3) + \sigma^{-1/4}(Ra_4)\}  \label{ST48}
\end{align}
\end{lemma}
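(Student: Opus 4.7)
The plan is to bound $F(\sigma) = C(\sigma)^3 + C(\sigma)\cdot \p C(\sigma)$ summand-by-summand in $L^q$, interpolating between the $L^2$ endpoint—where Corollary \ref{corGFS} (with $A=0$) applied to $C(\sigma)\in H_1$ supplies no $\sigma$-singularity—and the $L^\infty$ endpoint, where the $\P_T$ norm forces the blow-ups $\|C(\sigma)\|_\infty\le R\sigma^{-1/4}$ and $\|dC(\sigma)\|_\infty+\|d^*C(\sigma)\|_\infty\le R\sigma^{-3/4}$. From $\|C\|_{\P_T}\le R$ I extract, in addition, $\|C(\sigma)\|_6\le c_0 R$ (by Corollary \ref{corGFS}) and $\|dC(\sigma)\|_2+\|d^*C(\sigma)\|_2\le R$, where $c_0$ is a geometric constant. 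The pointwise structure of $X$ in \eref{ST14} gives
$|F(\sigma)|\le c_1|C(\sigma)|^3+c_2|C(\sigma)|\bigl(|dC(\sigma)|+|d^*C(\sigma)|\bigr)$
with $c_1,c_2$ depending only on the commutator constant of $\kf$.

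For the cubic summand, $\||C|^3\|_q=\|C\|_{3q}^3$ and the standard interpolation $\|C\|_{3q}\le\|C\|_6^{2/q}\|C\|_\infty^{1-2/q}$ (valid for $q\in[2,\infty]$) yield
\[
\|C(\sigma)^3\|_q\;\le\;c_1(c_0 R)^{6/q}(R\sigma^{-1/4})^{3-6/q}\;=\;\bigl(c_1 c_0^{6/q}\bigr)\,R^3\,\sigma^{-(3/2)(1/2-1/q)}.
\]
For the mixed summand, Hölder and interpolation give $\|C\cdot\p C\|_q\le c_2\|C\|_\infty\|\p C\|_q$ with $\|\p C\|_q\le\|\p C\|_2^{2/q}\|\p C\|_\infty^{1-2/q}$, whence
\[
\|C(\sigma)\cdot\p C(\sigma)\|_q\;\le\;c_2(R\sigma^{-1/4})R^{2/q}(R\sigma^{-3/4})^{1-2/q}\;=\;c_2 R^2\,\sigma^{-1/4}\,\sigma^{-(3/2)(1/2-1/q)}.
\]
Adding and choosing $a_1=c_1\max(1,c_0^3)$, $a_2=c_2$ (both uniform in $q\in[2,\infty]$) produces \eref{ST47}.

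For \eref{ST48} I expand telescopically:
\[
C_1^3-C_2^3=(C_1-C_2)C_1C_1+C_2(C_1-C_2)C_1+C_2C_2(C_1-C_2),
\]
together with $C_1\cdot\p C_1-C_2\cdot\p C_2=(C_1-C_2)\cdot\p C_1+C_2\cdot\p(C_1-C_2)$. Each resulting monomial is estimated exactly as above, with one factor $C$ or $\p C$ replaced by $C_1-C_2$ or $\p(C_1-C_2)$; since $\|C_1-C_2\|_{\P_T}$ dominates all four of the inputs above with the same $\sigma$-weights, the interpolation produces identical $\sigma$-exponents but trades one factor of $R$ for $\|C_1-C_2\|_{\P_T}$. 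This yields \eref{ST48}.

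Conceptually there is no deep obstacle; the real work is organisational bookkeeping, to verify that the interpolation exponents in the two summands conspire to produce the common prefactor $\sigma^{-(3/2)(1/2-1/q)}$, with the extra $\sigma^{-1/4}$ on the mixed term coming precisely from the $t^{1/4}\|C(t)\|_\infty$ weight in the definition \eref{ST23} of the $\P_T$ norm.
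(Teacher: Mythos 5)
Your proof is correct and follows essentially the same route as the paper's: split $F$ into the cubic part and the $C\cdot\partial C$ part, interpolate $\|\cdot\|_{3q}$ between $L^6$ and $L^\infty$ for the cubic term and $\|\partial C\|_q$ between $L^2$ and $L^\infty$ for the mixed term, then read off the $\sigma$-weights from the $\P_T$ norm, handling \eqref{ST48} by telescoping. The only point where you go slightly beyond what the paper records explicitly is in noting that the interpolation leaves a $q$-dependent factor $c_0^{6/q}$ (the paper's $\kappa^{6/q}$) in the cubic bound and absorbing it via $\max(1,c_0^3)$ to make $a_1$ genuinely $q$-independent as the lemma asserts; the paper's displayed chain leaves this absorption implicit.
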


                \begin{proof}
In the interpolation inequality $\| f\|_b \le \|f\|_a^{a/b} \|f\|_\infty^{1 - (a/b)}$
for $ 1 \le a \le b$ chose $a =2, b = q$ to find
$\|f\|_q \le \|f\|_2^{2/q} \|f\|_\infty^{1 - (2/q)}$. Take $f = | \p C(\sigma)|$
to deduce
\begin{align*}
\| \p C(\sigma) \|_q
   &\le \| \p C(\sigma) \|_2^{(2/q)} \| \p C(\sigma) \|_\infty^{1 - (2/q)} \\
&\le \| C(\sigma) \|_{H_1}^{(2/q)} \Big( \sigma^{-3/4} \| C\|_{\P_T}\Big)^{1- (2/q)} \\
&\le \sigma^{-(3/4)(1 - (2/q))} \|C\|_{\P_T},
\end{align*}
from which follows
        \begin{align*}
\| C(\sigma) \cdot \p C(\sigma) \|_q
  \le c\| C(\sigma)\|_\infty \| \p C(\sigma) \|_q
\le \sigma^{-1/4} \sigma^{-(3/4)(1 - (2/q))} c\|C\|_{\P_T}^2.
\end{align*}
Thus the term $C(\sigma)\cdot \p C(\sigma)$ in $F(\sigma)$ is correctly estimated  by the  second term on the right of  \eref{ST47}.
Now choose $a =6, b = 3q$ to find
$ \| f\|_{3q} \le \|f\|_6^{2/q} \| f\|_\infty^{1 - (2/q)}$
and take $f = |C(\sigma)|$ to deduce
        \begin{align*}
\| C(\sigma)^3\|_q &\le c^2 \{\| C(\sigma) \|_{3q}\}^3 \\
&\le c^2 \{ \| C(\sigma) \|_6^{(2/q)} \| C(\sigma) \|_\infty^{ 1- (2/q)} \}^3\\
&\le c^2 \{ (\kappa \| C(\sigma)\|_{H_1} )^{(2/q)}
 ( \sigma^{-1/4} \| C\|_{\P_T})^{1-(2/q)} \}^3 \\
 &= c^2 \{ \kappa^{(2/q)} \sigma^{-(1/4)(1 - (2/q))} \| C\|_{\P_T}\}^3,
  \end{align*}
  which completes the verification of \eref{ST47}.
  The proof of \eref{ST48} proceeds the same way but for differences
  in this cubic polynomial.
\end{proof}

\begin{remark} {\rm We will need to use some heat kernel estimates for the
absolute and relative Laplacians on a compact n-dimensional Riemannian
manifold with smooth boundary.  If $\Delta$ denotes either of these
Laplacians then  $e^{t\Delta}$ is given by  an integral kernel  $K_t(x,y)$,
and
$t^{n/2}|K_t(x,y)|+t^{(n+1)/2}| \text{grad}_x K_t(x,y)|$ is  bounded
on any interval $0<t \le T$.
See \cite[Proposition 5.3]{RaS} for a proof.

     It follows by interpolation that, in three dimensions, given $T_0 \in (0, \infty)$,
     there is a constant $c_1$ depending only on $T_0$ such that,
     for $ 1 \le q \le p \le \infty$ and $0 < t \le T_0$,
\begin{align}
\|e^{t\Delta}\|_{q\rightarrow p}
       & \le c_1 t^{-(3/2)((1/q)- (1/p))}  ,                                   \label{hk1}\\
 \|\p e^{t\Delta}\|_{q\rightarrow p}
      &\le c_1t^{-1/2} t^{-(3/2)((1/q)- (1/p))}  ,\ \
                          \text{with}\ \p = d\  \text{or}\ \p = d^*,         \label{hk4}\\
 \| e^{t\Delta}\|_{L^2\rightarrow H_1} &\le c_1 t^{-1/2}.         \label{hk2}
\end{align}
(\eref{hk2} actually follows directly from the spectral theorem.)
In particular,  each of the following are bounded by $c_1= c_1(T_0)$
 for $0<t \le T_0$.
\beq
t^{3/4} \|e^{t\Delta}\|_{2\rightarrow \infty}, \ \qquad
t^{1/4} \| e^{t\Delta} \|_{6\rightarrow \infty},\qquad\
t^{1/4} \|e^{t\Delta} \|_{3/2 \rightarrow 2}   . \label{hk3}
\eeq
}
\end{remark}


             \begin{lemma}\label{lemST7} Let $0 < T_0 <\infty$.
   There is a constant $c_0$ depending on $T_0$ such that
     for any $ T \in (0, T_0]$ and any
$A_0 \in H_1(M)$ the path
$ [0,T] \ni t \mapsto C_0(t)\equiv e^{t\Delta} A_0$
lies in $\P_T$ and
\beq
\| C_0(\cdot) \|_{\P_T} \le c_0 \| A_0\|_{H_1}     \label{ST101}
\eeq
\end{lemma}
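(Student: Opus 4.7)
\textbf{Proof plan for Lemma \ref{lemST7}.}
Since $\Delta$ is self-adjoint and negative, $e^{t\Delta}$ is a strongly continuous contraction semigroup on $L^2(M)$ that commutes with $(1-\Delta)^{1/2}$. Consequently $e^{t\Delta}$ restricts to a strongly continuous contraction semigroup on the form domain $H_1(M) = \mathcal{D}((1-\Delta)^{1/2})$ equipped with the norm \eqref{ST19}. This immediately gives continuity of $t\mapsto C_0(t)$ from $[0,T]$ into $H_1(M)$ (including at $t=0$, since $A_0\in H_1$) and the bound $\|C_0(t)\|_{H_1}\le \|A_0\|_{H_1}$ for all $t\ge 0$, which controls the first term in the supremum defining $\|\cdot\|_{\P_T}$.

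For the pointwise term $t^{1/4}\|C_0(t)\|_\infty$ I would first use the Sobolev embedding $H_1(M)\hookrightarrow L^6(M)$, i.e.\ $\|A_0\|_6\le \kappa \|A_0\|_{H_1}$, and then the heat-kernel bound from \eqref{hk3}: $\|e^{t\Delta}\|_{6\to\infty}\le c_1 t^{-1/4}$. Composing,
\[
t^{1/4}\|C_0(t)\|_\infty \le c_1 \|e^{t\Delta}A_0\|_6 \cdot (\text{trivial})\le c_1\kappa\|A_0\|_{H_1}.
\]
For the derivative terms with $\partial \in\{d,d^*\}$, I would use the semigroup identity $e^{t\Delta}=e^{(t/2)\Delta}e^{(t/2)\Delta}$ together with the gradient bound \eqref{hk4}. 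Explicitly, since $e^{(t/2)\Delta}$ preserves the boundary conditions built into the definition of $\partial$, one has $\partial C_0(t)=\bigl(\partial e^{(t/2)\Delta}\bigr)\,e^{(t/2)\Delta}A_0$, so by \eqref{hk4} with $q=6$, $p=\infty$,
\[
\|\partial C_0(t)\|_\infty \le c_1 (t/2)^{-1/2}(t/2)^{-1/4}\,\|e^{(t/2)\Delta}A_0\|_6 \le c_1 (t/2)^{-3/4}\kappa\|A_0\|_{H_1},
\]
which gives $t^{3/4}\|\partial C_0(t)\|_\infty \le 2^{3/4}c_1\kappa\|A_0\|_{H_1}$.

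Summing the four contributions yields \eqref{ST101} with $c_0 = 1 + c_1\kappa + 2\cdot 2^{3/4}c_1\kappa$, which depends only on $T_0$ through $c_1$ and on the geometry of $M$ through $\kappa$. The main point — and the only place any care is needed — is that the boundary conditions encoded by $H_1(M)$ are exactly those under which $d$ (minimal) or $D$ (maximal) in Notation \ref{notSob} commutes appropriately with $e^{t\Delta}$ so that heat-kernel estimates \eqref{hk4} apply; no further obstacle arises since the estimates and the Sobolev embedding are already in hand.
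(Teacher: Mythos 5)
Your proof is correct and has the same term-by-term structure as the paper's: $H_1$-contraction for the first term of \eqref{ST23}, the $L^6\to L^\infty$ heat-kernel bound from \eqref{hk3} for the second, and a $t^{-3/4}$ decay for the derivative terms. The one place you diverge is in that last step: the paper commutes $\partial$ through $e^{t\Delta}$ (so $\partial e^{t\Delta}A_0 = e^{t\Delta}\partial A_0$ with $\|\partial A_0\|_2 \le \|A_0\|_{H_1}$) and then applies $\|e^{t\Delta}\|_{2\to\infty}\le c_1 t^{-3/4}$, whereas you split $e^{t\Delta}=e^{(t/2)\Delta}e^{(t/2)\Delta}$ and apply the gradient bound \eqref{hk4} at $L^6\to L^\infty$ together with $\|e^{(t/2)\Delta}A_0\|_6\le\kappa\|A_0\|_{H_1}$. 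Your route sidesteps the commutation $de^{t\Delta}=e^{t\Delta}d$ (which, strictly, involves Laplacians on different form degrees, since $d$ shifts degree); the paper's route is shorter. Both yield the same power of $t$. One small note: your remark that the decomposition $\partial C_0(t)=(\partial e^{(t/2)\Delta})e^{(t/2)\Delta}A_0$ rests on $e^{(t/2)\Delta}$ ``preserving the boundary conditions'' is a red herring --- that identity is mere associativity of operator composition, and the boundary conditions enter only because they are built into the definition of $\Delta$ for which \eqref{hk4} is asserted. Also, in your display for the $L^\infty$ term you wrote $\|e^{t\Delta}A_0\|_6$ where the intended quantity is $\|A_0\|_6$ (or, with the split semigroup, $\|e^{(t/2)\Delta}A_0\|_6\le\kappa\|A_0\|_{H_1}$); the conclusion is unaffected.
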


                         \begin{proof}
  Since $e^{t \Delta}$ is a contraction in the $H_1$ norm \eref{ST19},
   we have   $\|C_0(t)\|_{H_1} \le \| A_0\|_{H_1}$. Furthermore, by \eref{hk3},
  $$
  t^{1/4} \| C_0(t) \|_\infty = t^{1/4} \| e^{t\Delta} A_0\|_\infty
  \le t^{1/4} \| e^{t\Delta}\|_{6\rightarrow \infty} \|A_0\|_6
  \le  c_1 \kappa \| A_0\|_{H_1}
$$
  Writing $\p = d$ or $d^*$, the last two terms in \eref{ST23}
  are dominated  for the path
  $C_0(\cdot)$ in accordance with   the inequalities
   $$
   \|\p C_0(t) \|_\infty
  = \| \p e^{t \Delta} A_0\|_\infty
  \le \| e^{t \Delta} \|_{2\rightarrow \infty} \|A_0 \|_{H_1}
  \le   c_1t^{-3/4} \|A_0\|_{H_1}.
 $$
      Multiply by $t^{3/4}$ and add to the previous two inequalities
  to arrive at   \eref{ST101}.
  \end{proof}

               \begin{lemma}\label{lemST8a}
               Let $0<T <\infty$ and $0 < \alpha <1$.
      There is a constant $c_{T,\alpha}$ such that, for all $\epsilon >0$,
    \begin{align}
      \| (e^{\epsilon\Delta} -1) e^{s\Delta} \|_{L^2 \rightarrow H_1}
 &\le \epsilon^\alpha s^{-\frac{1}{2} - \alpha} c_{T,\alpha}\ \
           \text{for}\ \ 0 <s \le T,            \label{ST118}\\
    \| (e^{\epsilon\Delta} -1) e^{s\Delta} \|_{L^2 \rightarrow L^\infty}
&\le \epsilon^\alpha s^{-\frac{3}{4} - \alpha} c_{T,\alpha}\  \
         \text{for}\ \ 0 <s \le T.       \label{ST119}
    \end{align}
  \end{lemma}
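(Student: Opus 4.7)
The plan is to reduce both inequalities to a single spectral $L^2 \to L^2$ estimate for $(e^{\epsilon\Delta}-1)e^{s\Delta}$ and then recover the remaining regularity loss from one extra factor of the heat semigroup, for which the kernel bounds \eqref{hk2} and \eqref{hk3} are already available.

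First I would observe that by \eqref{ST17}--\eqref{ST18} and Remark \ref{remcoerc} the operator $-\Delta$ is non-negative and self-adjoint on $L^2(M;\Lambda^1\otimes\kf)$, so the spectral theorem applies directly. Combining it with the elementary inequality $|1-e^{-x}|\le x^{\alpha}$, valid for all $x\ge 0$ and $\alpha\in(0,1)$ (a two-case check on $[0,1]$ and $[1,\infty)$), yields
\[
\|(e^{\epsilon\Delta}-1)e^{s\Delta}\|_{L^2\to L^2}
=\sup_{\lambda\ge 0}|1-e^{-\epsilon\lambda}|\,e^{-s\lambda}
\le \epsilon^{\alpha}\sup_{\lambda\ge 0}\lambda^{\alpha} e^{-s\lambda}
=(\alpha/e)^{\alpha}\,\epsilon^{\alpha}\,s^{-\alpha}.
\]

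Next I would exploit the fact that functions of the same self-adjoint operator commute to split
\[
(e^{\epsilon\Delta}-1)e^{s\Delta}=e^{(s/2)\Delta}\circ\bigl[(e^{\epsilon\Delta}-1)e^{(s/2)\Delta}\bigr].
\]
For \eqref{ST118}, I would estimate the left factor in $L^2\to H_1$ norm by $c_{1}(s/2)^{-1/2}$ via \eqref{hk2} and the bracket in $L^2\to L^2$ norm by a constant times $\epsilon^{\alpha}(s/2)^{-\alpha}$ from the spectral step above; the product yields the required $\epsilon^{\alpha}s^{-1/2-\alpha}$ bound. For \eqref{ST119}, I would instead bound the left factor in $L^2\to L^\infty$ norm by $c_{1}(s/2)^{-3/4}$ via the third inequality in \eqref{hk3}, producing the required $\epsilon^{\alpha}s^{-3/4-\alpha}$ bound.

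No substantial obstacle is anticipated. The only care needed is to apply \eqref{hk2} and \eqref{hk3} with $T_{0}=T$ so that $c_{1}$ is uniform on $(0,T]$, and to absorb the numerical factors $2^{1/2+\alpha}$ and $2^{3/4+\alpha}$, the spectral constant $(\alpha/e)^{\alpha}$, and $c_{1}(T)$ into a single constant $c_{T,\alpha}$. The argument is insensitive to whether $\Delta$ denotes the absolute or the relative Laplacian, since both are non-negative self-adjoint modulo a sign, and in particular there is no need to worry about a kernel of $\Delta$: on the kernel the spectral factor $|1-e^{-\epsilon\lambda}|e^{-s\lambda}$ simply vanishes.
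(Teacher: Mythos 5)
Your proof is correct. The underlying idea is the same as the paper's — factor the operator into two pieces, each controlled via the spectral theorem for $-\Delta$ — but the decomposition is organized differently. The paper writes the $H_1$ estimate as $\|E(e^{\epsilon\Delta}-1)e^{s\Delta}\|_{L^2\to L^2}$ with $E=(1-\Delta)^{1/2}$, and then inserts fractional powers of $E$ to split this as $\|E^{-2\alpha}(e^{\epsilon\Delta}-1)\|_{2\to 2}\,\|E^{1+2\alpha}e^{s\Delta}\|_{2\to 2}$, bounding the first factor via the elementary inequality $(1+x)^{-\alpha}(1-e^{-\epsilon x})\le\epsilon^\alpha c_\alpha$ and the second via $(1+x)^be^{-sx}\le s^{-b}\hat c_{b,T}$; for the $L^\infty$ bound it peels off an additional $e^{(s/2)\Delta}$ factor to invoke \eqref{hk3}. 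You instead split the heat semigroup into two halves and write $(e^{\epsilon\Delta}-1)e^{s\Delta}=e^{(s/2)\Delta}\,\bigl[(e^{\epsilon\Delta}-1)e^{(s/2)\Delta}\bigr]$, so that the bracket needs only the pure spectral $L^2\to L^2$ bound $\sup_{\lambda\ge 0}|1-e^{-\epsilon\lambda}|e^{-(s/2)\lambda}\le(\alpha/e)^\alpha\epsilon^\alpha(s/2)^{-\alpha}$ (using $1-e^{-x}\le x^\alpha$), while the left factor is handled directly by the already-recorded bounds \eqref{hk2} and \eqref{hk3}. The gain is that you never introduce the negative fractional power $E^{-2\alpha}$ or the $T$-dependent spectral constant $\hat c_{b,T}$ — the $T$-dependence is absorbed into the heat kernel constant $c_1$ already present in \eqref{hk2}--\eqref{hk3} — and the two inequalities \eqref{ST118} and \eqref{ST119} are treated completely in parallel. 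Both proofs are essentially equally short.
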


                       \begin{proof}
            Let $E = (1 -\Delta)^{1/2}$ and let $b>0$.
           We assert that  there are constants $c_\alpha$ and $\hat c_{b,T}$
            such that
    \beq
  \| E^{-2\alpha} (1 -e^{\epsilon \Delta}) \|_{2\rightarrow 2}
    \le \epsilon^\alpha c_\alpha\ \ \text{and}\ \
    \| E^{2b} e^{s\Delta} \| _{2\rightarrow 2}
   \le s^{-b} \hat c_{ b,T}
   \eeq
for all $\epsilon >0$ and the specified ranges of $\alpha$ and $s$.
The first follows from the spectral theorem for $-\Delta$ and the inequalities
$ (1+x)^{-\alpha} (1 - e^{-\epsilon x}) = (1 +\epsilon^{-1} y)^{-\alpha}(1 - e^{-y})
  \le \epsilon^{\alpha} c_\alpha$, which holds for all $x >0$, wherein we have put
  $y = \epsilon x$.
   The second follows similarly from the inequalities
   $(1+x)^b e^{-sx} = (1+ s^{-1} y)^b e^{-y} \le s^{-b} \hat c_{b,T}$,
    wherein we have put $y = sx$.

 Defining  $2b = 1 +2\alpha$ in the second line below, we see that
    \begin{align*}
 \| (e^{\epsilon\Delta} -1) e^{s\Delta} \|_{L^2 \rightarrow H_1}
& = \| E(e^{\epsilon\Delta} -1) e^{s\Delta} \|_{L^2 \rightarrow L^2}\\
&\le \| E^{-2\alpha} (e^{\epsilon \Delta} -1) \| _{2\rightarrow 2}
      \| E^{1+2\alpha} e^{s\Delta}\|_{2\rightarrow 2}\\
&\le \{\epsilon^\alpha c_\alpha\} \{ s^{ -\frac{1}{2} - \alpha} \hat c_{b, T}\},
\end{align*}
which proves \eref{ST118}. Choosing next $b =\alpha$, we see that
\begin{align*}
    \| (e^{\epsilon \Delta} - 1) e^{s\Delta} \|_{L^2 \rightarrow L^\infty}
&\le \| e^{(s/2)\Delta} \|_{2\rightarrow \infty}
\| E^{-2\alpha} (e^{\epsilon  \Delta} - 1)\|_{2\rightarrow 2}
   \| E^{2\alpha} e^{(s/2) \Delta}\|_{2\rightarrow 2} \\
&\le \{c_T s^{-3/4}\}          \{\epsilon^\alpha c_\alpha\}
          \{ (s/2)^{-\alpha} \hat c_{\alpha, T} \},
\end{align*}
where
$c_T = \sup_{0<s \le T} s^{3/4} \|e^{(s/2) \Delta}\|_{2\rightarrow \infty} < \infty$
in three dimensions by \eref{hk3}.
\end{proof}

      \begin{lemma}\label{lemST8b} (H\"older continuity.)
  Suppose that $C(\cdot) \in \P_T$ and $\|C\|_{\P_T} \le R$.
   Let $ 0 < \alpha < 1/4$
   and let $0 < a <T$. Define
  \beq
  \rho(t) = \int_0^t e^{(t-\sigma)\Delta} F(\sigma) d\sigma.     \label{ST120}
   \eeq
   Then there is a constant $c_2$ depending only on  $a$ and $\alpha$ and on
   $R$ and $T$ such that
   \beq
   \| \rho(t) - \rho(r) \|_\infty + \| \rho(t) - \rho(r) \|_{H_1} \le c_2 (t-r)^\alpha \ \text{for}\  a \le r <t < T.       \label{ST121}
   \eeq
   If, moreover, $C(\cdot)$ is a solution to the integral equation \eref{ST25}, then
   $[a, T) \ni \sigma \mapsto F(\sigma) \in L^2(M)$ is H\"older continuous
   of order $\alpha$.
      \end{lemma}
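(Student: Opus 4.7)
I would split the increment in the standard semigroup manner,
\[
\rho(t)-\rho(r) = \int_r^t e^{(t-\sigma)\Delta}F(\sigma)\,d\sigma
 + \bigl(e^{(t-r)\Delta}-1\bigr)\int_0^r e^{(r-\sigma)\Delta}F(\sigma)\,d\sigma
 \equiv I_1(t,r)+I_2(t,r),
\]
and estimate each of $I_1,I_2$ in both $L^\infty$ and $H_1$. The $F$-bounds I use are those of Lemma \ref{lemST3b} with $q=2$, namely
$\|F(\sigma)\|_2 \le R^3a_1+\sigma^{-1/4}R^2a_2$; the semigroup bounds are \eref{hk1}, \eref{hk2}, \eref{hk3}; and for the operator difference in $I_2$ I invoke Lemma \ref{lemST8a}.

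For the near piece $I_1$, on the interval of integration $\sigma\in[r,t]$ we have $\sigma\ge a$, so $\|F(\sigma)\|_2$ is bounded uniformly by a constant $M=M(R,a)$. Then \eref{hk3} gives $\|I_1\|_\infty\le c_1 M\int_r^t(t-\sigma)^{-3/4}d\sigma = 4c_1M(t-r)^{1/4}$, and \eref{hk2} gives $\|I_1\|_{H_1}\le 2c_1M(t-r)^{1/2}$. Since $\alpha<1/4$ and $t-r\le T$, both are $\le c(t-r)^\alpha$ with $c$ depending only on $R,T,a,\alpha$.

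For the far piece $I_2$, applying Lemma \ref{lemST8a} with $\epsilon=t-r$ inside the integral yields
\[
\|I_2\|_{H_1} \le (t-r)^\alpha c_{T,\alpha}\int_0^r(r-\sigma)^{-\frac12-\alpha}\|F(\sigma)\|_2\,d\sigma,
\qquad
\|I_2\|_\infty \le (t-r)^\alpha c_{T,\alpha}\int_0^r(r-\sigma)^{-\frac34-\alpha}\|F(\sigma)\|_2\,d\sigma.
\]
Inserting the bound $\|F(\sigma)\|_2\le R^3a_1+\sigma^{-1/4}R^2a_2$ reduces each right-hand side to a sum of Beta integrals of the form $\int_0^r(r-\sigma)^{-\mu}\sigma^{-\beta}\,d\sigma$ with $\beta\in\{0,1/4\}$. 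These converge precisely when $\mu<1$; the binding case is $\mu=3/4+\alpha$ in the $L^\infty$ estimate, which converges iff $\alpha<1/4$. This is the sole place where the full strength of the hypothesis $\alpha<1/4$ is needed, and it pins down the allowed H\"older order. All resulting constants depend only on $R,T,a,\alpha$, proving \eref{ST121}.

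For the final assertion, when $C(\cdot)$ solves \eref{ST25} we have $C(\sigma)=e^{\sigma\Delta}A_0+\rho(\sigma)$. The term $e^{\sigma\Delta}A_0$ is smooth into $H_1\cap L^\infty$ on $(0,T]$ by the analyticity of the semigroup together with \eref{hk1}--\eref{hk2}, so in particular it is H\"older-$\alpha$ in both norms on $[a,T]$; combined with the first part, $C(\cdot)$ is H\"older-$\alpha$ in both $L^\infty$ and $H_1$ on $[a,T]$. To pass this to $F$, I write
\[
F(C_1)-F(C_2) = (C_1^3-C_2^3) + (C_1-C_2)\cdot\partial C_1 + C_2\cdot(\partial C_1-\partial C_2),
\]
and take $L^2$ norms, using $\|C_1^3-C_2^3\|_2\lesssim \max(\|C_i\|_\infty)^2\|C_1-C_2\|_2$, $\|(C_1-C_2)\partial C_1\|_2\le\|C_1-C_2\|_\infty\|C_1\|_{H_1}$, and $\|C_2(\partial C_1-\partial C_2)\|_2\le\|C_2\|_\infty\|C_1-C_2\|_{H_1}$. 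Since $\|C(\sigma)\|_\infty$ and $\|C(\sigma)\|_{H_1}$ are uniformly bounded on $[a,T]$ (via $\|C\|_{\P_T}\le R$ and $\sigma\ge a$), the H\"older-$\alpha$ control of $C$ in $L^\infty$ and $H_1$ transfers to H\"older-$\alpha$ control of $F$ in $L^2$. The main obstacle is the singular behavior of $F$ at $\sigma=0$, and the crux of the argument is exactly the Beta-integral convergence that forces $\alpha<1/4$.
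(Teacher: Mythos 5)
Your proof is correct and follows essentially the same route as the paper: the same near/far splitting of $\rho(t)-\rho(r)$, the same use of Lemma \ref{lemST8a} with $\epsilon=t-r$ on the far piece, the bound \eref{ST47} with $q=2$ for $F$, and the same identification of $\alpha<1/4$ as the binding constraint coming from the $(r-\sigma)^{-3/4-\alpha}$ singularity in the $L^\infty$ estimate. The transfer of H\"older continuity from $C$ to $F$ at the end also matches the paper's argument, just written out more explicitly.
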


                         \begin{proof}
   Taking $ 0 < a \le r <t <T$, we may write
   \beq
   \rho(t) - \rho(r) = \int_0^r \Big( e^{(t-r)\Delta} - 1\Big)
    e^{(r-\sigma)\Delta} F(\sigma) d\sigma
         + \int_r^t e^{(t-\sigma)\Delta} F(\sigma) d\sigma.
   \eeq
   We need to estimate the $H_1$ norm and $L^\infty$ norm of each of
   these two integrals. In all four integrals we will use
    \eref{ST47} with $q=2$, namely
   \beq
   \| F(\sigma)\|_2 \le (R^3 a_1) + \sigma^{-1/4} (R^2 a_2),
                     \ 0<\sigma <T,                                           \label{ST124}
   \eeq
    from which follows,
    $ c_{a,T} \equiv \sup_{ a \le \sigma <T} \| F(\sigma) \|_2 < \infty.$
            Using \eref{ST118} and \eref{ST119}
 with $ \epsilon = t-r$ and $s = r-\sigma$, as well as \eref{hk2}, we find
\begin{align*}
\| \rho(t) - &\rho(r) \|_{H_1} \le \int_0^r \| ( e^{(t-r)  \Delta} -1)e^{(r-\sigma)\Delta} \|_{L^2 \rightarrow H_1} \| F(\sigma)\|_2 d\sigma\\
& \ \ \ \   \qquad    + \int_r^t \| e^{(t-\sigma)\Delta} \|_{L^2\rightarrow H_1}d\sigma
       \sup_{a \le \sigma <T} \|F(\sigma)\|_2 \\
 &      \le (t-r)^\alpha c_{T, \alpha} \int_0^r (r-\sigma)^{-\frac{1}{2} - \alpha} \| F(\sigma)\|_2 d\sigma + \int_r^t (t-\sigma)^{-1/2} d\sigma\  c_{a,T} \\
 &\le (t-r)^\alpha c_3 + (t-r)^{1/2} c_4.
\end{align*}
\eref{ST124} shows that $c_3 <\infty$ if $\alpha <1/2$.
Similarly, by \eref{ST119} and \eref{hk3},
\begin{align*}
\| \rho(t) - &\rho(r)\|_\infty
\le \int_0^r \| (e^{(t-r)\Delta} -1) e^{(r-\sigma) \Delta } \|_{2\rightarrow \infty}
            \| F(\sigma)\|_2 d\sigma \\
&\ \ \ \ \ \ \ \ \ \  + \int_r^t  \| e^{(t-\sigma)\Delta} \|_{2\rightarrow \infty}
         \sup_{a \le \sigma <T} \|F(\sigma)\|_2\\
&\le (t-r)^\alpha c_{T,\alpha} \int_0^r (r-\sigma)^{-\frac{3}{4} - \alpha} \| F(\sigma)\|_2 d\sigma + \int_r^t (t-\sigma)^{-3/4} c d \sigma\  c_{a,T}\\
&\le  (t-r)^\alpha c_5 +  (t-r)^{1/4} c_6.
\end{align*}
In view of \eref{ST124}, the constant $c_5 < \infty$ if $\alpha < 1/4$.
This proves \eref{ST121}.

   Now \eref{ST121} shows that the integral term in \eref{ST25}
      is H\"older continuous on $[a, T)$
      into $L^\infty\cap H_1$ in the sum norm.
          So is the term $e^{t\Delta} A_0$ as one sees from the inequalities
 $ \| (e^{t\Delta} - e^{r\Delta}) A_0 \| _{H_1}
           \le (t-r)^\alpha r^{-\frac{1}{2} - \alpha} C_{T, \alpha} \|A_0\|_2$ and
     $ \| (e^{t\Delta} - e^{r\Delta}) A_0 \| _\infty
      \le (t-r)^\alpha r^{-\frac{3}{4} - \alpha} C_{T, \alpha} \|A_0\|_2$,
     which follow from \eref{ST118} and \eref{ST119}, respectively.
     Hence $ [a,T) \ni \sigma \mapsto C(\sigma) \in L^\infty\cap H_1(M)$ is bounded
      and H\"older continuous of order $\alpha$. Therefore the
     term $C(\sigma)\cdot \p C(\sigma)$ in $F(\sigma)$ is H\"older continuous
      into $L^2(M)$  while the term $C(\sigma)^3$ is Holder continuous into
      $L^\infty(M)$   and therefore into $L^2(M)$.
     \end{proof}

    \begin{lemma}\label{lemST9} $($Strong solution.$)$
 Suppose that $C(\cdot)$ is a solution to the integral equation
  \eref{ST25} lying in $\P_T$. Define $\rho(t)$ by \eref{ST120}.
  Then, for $t >0$,   $\rho(t) \in \D(\Delta)$ and is strongly
  differentiable as a function into $L^2(M)$. Moreover
  \beq
  \rho'(t) = \Delta \rho(t) + F(t)                    \label{ST131}
  \eeq
  In particular $C(t) \in \D(\Delta)$ for $t >0$. $C(\cdot)$  is strongly differentiable
 on $(0,T)$ into $L^2(M)$, and the differential equations \eref{ST15}
 and \eref{ST11} both hold.
  \end{lemma}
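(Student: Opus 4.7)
The plan is to handle the two summands on the right of the integral equation \eqref{ST25} separately: $u_0(t) := e^{t\Delta}A_0$ and $\rho(t)$. For $u_0$, since $-\Delta$ is self-adjoint and nonnegative, the spectral theorem gives strong $L^2(M)$-differentiability on $(0,T)$ with $u_0'(t) = \Delta u_0(t)$ and $u_0(t) \in \D(\Delta)$ for $t > 0$. This is standard analytic semigroup theory and needs only $A_0 \in L^2(M)$. So the real content of the lemma concerns $\rho$.

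The key input is the last sentence of Lemma \ref{lemST8b}: for every $a \in (0,T)$ there exist $\alpha \in (0,1/4)$ and $L < \infty$ with $\|F(\sigma_1)-F(\sigma_2)\|_2 \le L|\sigma_1-\sigma_2|^\alpha$ on $[a,T)$. Fix $t_0 \in (0,T)$, pick $a \in (0,t_0)$, and for $t$ near $t_0$ use the semigroup property to decompose
\begin{equation*}
\rho(t) = e^{(t-a)\Delta}\rho(a) + \rho_2(t), \qquad \rho_2(t) := \int_a^t e^{(t-\sigma)\Delta}F(\sigma)\,d\sigma.
\end{equation*}
The bound \eqref{ST124} shows $\rho(a) \in L^2(M)$, so $e^{(t-a)\Delta}\rho(a)$ is handled by the same analytic semigroup argument as $u_0$. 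It remains to analyze $\rho_2$.

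For $\rho_2$ I would use the classical device of subtracting the forcing at the endpoint:
\begin{equation*}
\rho_2(t) = \int_a^t e^{(t-\sigma)\Delta}\bigl(F(\sigma)-F(t)\bigr)d\sigma + \int_a^t e^{(t-\sigma)\Delta}F(t)\,d\sigma.
\end{equation*}
In the first integral the H\"older bound combined with $\|\Delta e^{s\Delta}\|_{2\to 2} \le c/s$ on $(0,T]$ (immediate from the spectral theorem) makes the termwise $\Delta$-derivative integrable of size $(t-\sigma)^{\alpha-1}$, so one may pull $\Delta$ inside. In the second integral the spectral calculus shows that $\int_a^t e^{(t-\sigma)\Delta}F(t)\,d\sigma$ lies in $\D(\Delta)$ with $\Delta$-image $(e^{(t-a)\Delta}-I)F(t)$. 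Thus $\rho_2(t) \in \D(\Delta)$, and hence so does $\rho(t)$. Strong differentiability and the formula $\rho'(t_0) = \Delta\rho(t_0) + F(t_0)$ then follow by computing the difference quotient of $\rho_2$ and splitting it the same way; H\"older continuity is exactly what lets the limit pass through the singular contribution near $\sigma = t_0$. Adding back the $u_0$ contribution yields \eqref{ST131}, and since $F(t) = X(C(t))$ and \eqref{ST13} rewrites the PDE \eqref{ST11} as $C'(t) = \Delta C(t) + X(C(t))$, this is simultaneously \eqref{ST15} and \eqref{ST11}. The main obstacle is justifying the interchange of $\Delta$ with the integral defining $\rho_2$; this rests on the strict positivity of the H\"older exponent $\alpha$, since without it the integral $\int_a^t (t-\sigma)^{-1}d\sigma$ would diverge.
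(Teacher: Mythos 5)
Your proposal is correct, and it rests on the same two technical pillars as the paper's proof: the H\"older continuity of $\sigma\mapsto F(\sigma)\in L^2(M)$ on $[a,T)$ established in the last part of Lemma~\ref{lemST8b}, and the analytic-smoothing bound $\|s\Delta e^{s\Delta}\|_{2\to 2}\le c$ from the spectral theorem. But you organize them differently. The paper truncates: it sets $\rho_s(t)=\int_0^s e^{(t-\sigma)\Delta}F(\sigma)\,d\sigma$ for $s<t$ (which is trivially in $\D(\Delta)$ since the integrand stays off the singular diagonal), shows $\Delta\rho_s(t)$ is Cauchy as $s\uparrow t$ by exactly the endpoint-subtraction trick you describe, and then invokes closedness of $\Delta$. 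You instead split at a fixed time $a<t_0$ via the semigroup law $\rho(t)=e^{(t-a)\Delta}\rho(a)+\rho_2(t)$, absorb the first term into analytic-semigroup theory, and apply the endpoint-subtraction device directly to $\rho_2$. Both routes work, and the estimates are interchangeable; the paper's truncation argument has the modest side benefit of exhibiting $t\mapsto\Delta\rho(t)$ as a uniform limit of continuous functions on compacta, which it then uses explicitly to justify $(t-r)^{-1}(e^{(t-r)\Delta}-I)\rho(r)\to\Delta\rho(t_0)$ in the difference-quotient step. Your decomposition delivers the same continuity implicitly, so the distinction is bookkeeping rather than substance. One small labeling slip: you say adding back the $u_0$ contribution yields \eref{ST131}, but \eref{ST131} concerns only $\rho$ and is obtained from the two pieces of $\rho$ alone; $u_0$ enters only in the final passage from \eref{ST131} to \eref{ST15} and \eref{ST11}.
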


                       \begin{proof}
         For $a \le s <t$ define
 \beq
 \rho_s(t) = \int_0^s e^{(t-\sigma)\Delta} F(\sigma) d\sigma.   \notag
 \eeq
  Since $t -\sigma \ge t- s >0$ for all $\sigma$ in the integrand, $\rho(t)$ is in $\D(\Delta)$ and
  \beq
  \Delta\rho_s(t) = \int_0^s \Delta e^{(t-\sigma) \Delta} F(\sigma) d\sigma
    \ \ \             \text{for}\ \ a \le s <t                \notag
  \eeq
  We are going to show that  $\Delta\rho_s(t)$ converges in $L^2(M)$
  as $\epsilon \equiv t-s \downarrow 0$ and in fact uniformly for
  $ t \in [a,b] \subset (0,T)$.
  Observe first that if $ a \le s_1 <s_2 < t$ then, for $0< \alpha < 1/4$,
  and with $c_7$ denoting the H\"older constant for $F(\sigma)$ on $[a, T)$
  into $L^2(M)$,
 \begin{align*}
  \| \Delta \int_{s_1}^{s_2} e^{(t-\sigma)\Delta} (F(\sigma) - &F(t))d\sigma \|_2
   \le \int_{s_1}^{s_2} \| (t-\sigma) \Delta
        e^{(t-\sigma)\Delta} \frac{F(\sigma) - F(t)}{t-\sigma} \| d\sigma\\
   &\le \int_{s_1}^{s_2} \| (t-\sigma) \Delta e^{(t-\sigma)\Delta} \|_{2\rightarrow 2} \frac{\| F(\sigma) - F(t)\|_2}{t-\sigma} d\sigma \\
   &     \le \int_{s_1}^{s_2} c_7 (t -\sigma)^{\alpha -1} d\sigma \rightarrow  0
   \end{align*}
   as $ s_1 < s_2$ both increase to $t$, and uniformly  for
   $t \in [a,b]\subset (0,T)$.    Therefore,
   \begin{align*}
   \| \Delta ( \rho_{s_2}(t) - &\rho_{s_1}(t) ) \|_2 \\
   &= \| \int_{s_1}^{s_2} \Delta e^{(t-\sigma)\Delta} F(t) d\sigma + \int_{s_1}^ {s_2} \Delta e^{(t-\sigma)\Delta}(F(\sigma) - F(t)) d\sigma \|_2 \\
   &\le \| e^{(t-\sigma)\Delta}|_{s_1}^{s_2} F(t) \|_2 +o(1) \rightarrow 0
   \end{align*}
   as $s_1 < s_2 \uparrow t$. Moreover, since $F(t)$ is continuous
    on $[a,T)$ into $L^2(M)$, we may conclude that
  $e^{\epsilon \Delta} F(t) - F(t) \rightarrow 0$ uniformly for $t \in [a,b]$.
  Clearly $\rho_s(t) \rightarrow \rho(t)$ in $L^2(M)$ as $s\uparrow t$ and uniformly for $t \in [a,b]$. Since $\Delta$ is a closed operator it now follows that
  $\rho(t) \in \D(\Delta)$ and $t \mapsto \Delta \rho(t)$ is continuous into $L^2$.

    To prove \eref{ST131} observe that for  $ 0 <r \le t_0  \le t$ we have
  \begin{align*}
  & \rho(t) - \rho(r) = \int_r^t e^{(t-\sigma)\Delta} F(\sigma) d\sigma + \int_0^r (e^{(t-\sigma)\Delta} - e^{(r-\sigma)\Delta}) F(\sigma)d\sigma \\
  & = \int_r^t e^{(t-\sigma)\Delta} F(t_0) d\sigma
             + \int_r^t e^{(t-\sigma)\Delta}(F(\sigma) - F(t_0)) d\sigma
             + (e^{(t-r)\Delta} - 1) \rho(r) \\
   &= \Delta^{-1} ( e^{(t-r) \Delta} -1) F(t_0)
             +\int_r^t e^{(t-\sigma)\Delta}(F(\sigma) - F(t_0)) d\sigma
              + (e^{(t-r)\Delta} - 1) \rho(r).
   \end{align*}
   Divide by $t-r$ and note that as $t-r \downarrow 0$ one has
    $$
   (t-r)^{-1} \Delta^{-1} (e^{(t-r)\Delta} -1) F(t_0) \rightarrow F(t_0),
   $$
   while
   $$ (t-r)^{-1} \| \int_r^t e^{(t-\sigma)\Delta} (F(\sigma) - F(t_0))d\sigma \|_2
   \le (t-r)^{-1} \int_r^t \| F(\sigma) - F(t_0)\|_2 d\sigma \rightarrow 0.
   $$
      Moreover
   $$
    (t-r)^{-1} (e^{(t-r)\Delta} -1) \rho(r) = (t-r)^{-1} \Delta^{-1} ( e^{(t-r)\Delta} -1) \Delta \rho(r) \rightarrow \Delta \rho(t_0)
    $$
     because $ r \mapsto \Delta \rho(r)$ is      continuous into $L^2$.
     This proves \eref{ST131}.

     Now $C(t) = e^{t\Delta}A_0 + \rho(t)$ by \eref{ST25} and \eref{ST120}.
   Both terms are in the domain of $\Delta$  for $t>0$ and both are differentiable
   on $(0,T)$ into $L^2(M)$. The equation $C'(t) = \Delta C(t) + F(t)$ now follows
   from \eref{ST131}.
       We may rearrange the terms in  \eref{ST15} to deduce that
  the differential equation \eref{ST11} holds. We will show explicitly in the next
  corollary that $B_{C(t)} \in W_1(M)$, which is implicit in \eref{ST11},
  the rearranged   version of \eref{ST15}.
   \end{proof}

     \begin{corollary}\label{corST10} $($Boundary conditions.$)$
Under the hypotheses of Lemma \ref{lemST9}, $DC(t)$ and $D^*C(t)$,
resp. $dC(t)$ and $d^*C(t)$,
are in $W_1(M)$ for $t >0$
in the Neumann, resp. Dirichlet cases, as is also $B_{C(t)}$.
 Moreover,
$C$ satisfies the following respective boundary conditions for $t >0$.
\begin{align}
&(N)\  C(t)_{norm}=0,\  (DC(t))_{norm} =0, \  (B_{C(t)})_{norm} =0. \label{ST140}\\
&(D)\  C(t)_{tan} =0, \ \ \ (dC(t))_{tan} =0,\  (B_{C(t)})_{tan} =0,\
                   (d^*C(t))_{tan}=0.                                                   \label{ST141}
\end{align}
\end{corollary}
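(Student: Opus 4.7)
The plan is to combine the membership $C(t) \in \D(\Delta)$ from Lemma \ref{lemST9} with the classical boundary-condition characterization of the absolute and relative Hodge Laplacians, Lemma \ref{lemDN1}, Corollary \ref{corDN5}, and a short algebraic argument on wedge products. First I would invoke the classical (elliptic-regularity-based) characterization of the domains of $\Delta_N = -(D^*D + DD^*)$ and $\Delta_D = -(d^*d + dd^*)$ on a compact Riemannian manifold with smooth boundary: each is contained in $W_2(M)$, and the membership $\w \in \D(\Delta_N)$ (resp.\ $\w \in \D(\Delta_D)$) is equivalent to $\w \in W_2(M)$ together with the absolute boundary conditions $\w_{norm}=0$, $(D\w)_{norm}=0$ (resp.\ the relative boundary conditions $\w_{tan}=0$, $(d^*\w)_{tan}=0$). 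Applied to $\w = C(t)$, this yields at once the regularity claims $DC(t), D^*C(t) \in W_1(M)$ (resp.\ $dC(t), d^*C(t) \in W_1(M)$) and the first two boundary conditions in each case: $C(t)_{norm}=0$ and $(DC(t))_{norm}=0$ in (N), and $C(t)_{tan}=0$ and $(d^*C(t))_{tan}=0$ in (D). The remaining Dirichlet condition $(dC(t))_{tan}=0$ is then an immediate consequence of Corollary \ref{corDN5} applied with $A=0$, whose hypotheses $C(t)_{tan}=0$ and $dC(t) \in W_1$ have both just been verified. (As an alternative route one could appeal to the smoothness of $C$ on $(0,T)\times M$ asserted in Theorem \ref{thmEUP} and then read off the boundary conditions by direct application of Lemma \ref{lemDN1}.)

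It remains to treat $B_{C(t)} = dC(t) + \tfrac{1}{2}[C(t)\wedge C(t)]$ (with $d$ read as $D$ in the Neumann regime, per the convention opening this section). The first summand is in $W_1$ by the preceding step. For the second, $C(t)$ lies in $W_1 \cap L^\infty$---the $L^\infty$ bound is built into $\|\cdot\|_{\P_T}$ in Notation \ref{notST7}---so its distributional derivative $[\n C(t)\wedge C(t)]+[C(t)\wedge \n C(t)]$ is a product of an $L^2$ factor and an $L^\infty$ factor, hence in $L^2$; thus $[C(t)\wedge C(t)] \in W_1$ and consequently $B_{C(t)} \in W_1$. For the boundary behavior, in the (N) case $C(t)_{norm}=0$ means $C(t)$ is a tangential covector on $\p M$, and the wedge of two tangential $1$-forms is again tangential, so $[C(t)\wedge C(t)]_{norm}=0$; combined with $(DC(t))_{norm}=0$ this yields $(B_{C(t)})_{norm}=0$. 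In the (D) case, $C(t)_{tan}=0$ implies that at each boundary point $C(t)=C(t)_n\,\nu$ for some $\frak k$-valued scalar $C(t)_n$, whence $[C(t)\wedge C(t)]=[C(t)_n,C(t)_n]\,\nu\wedge\nu=0$ on $\p M$; combined with $(dC(t))_{tan}=0$ this gives $(B_{C(t)})_{tan}=0$.

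The main obstacle is the classical characterization of $\D(\Delta_N)$ and $\D(\Delta_D)$ via the respective absolute and relative boundary conditions together with $W_2$-regularity for the Hodge Laplacian on forms over a manifold with smooth boundary. Once this input is granted---or supplanted by the smoothness conclusion of Theorem \ref{thmEUP}---the remainder of the argument is a clean unpacking using Lemma \ref{lemDN1} and Corollary \ref{corDN5}, plus the elementary observation that wedge products of tangential $1$-forms stay tangential while $\nu \wedge \nu = 0$.
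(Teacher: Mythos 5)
Your argument is correct, but it takes a genuinely different route from the paper's. Your main step is to invoke the classical elliptic-regularity characterization $\D(\Delta_N)=\{\w\in W_2(M):\ \w_{norm}=0,\ (D\w)_{norm}=0\}$ (and its relative analogue for $\Delta_D$), which immediately yields both the $W_1$ membership of $DC(t),D^*C(t)$ (resp.\ $dC(t),d^*C(t)$) and the first two boundary conditions in each regime. You correctly flag this $W_2$ characterization as the outside input; it is indeed a standard consequence of elliptic boundary regularity for the Hodge Laplacian with absolute or relative boundary conditions. The paper, however, deliberately avoids $W_2$ regularity and stays entirely inside its own $W_1$ machinery: from $C(t)\in\D(d^*d)\cap\D(dd^*)$ it extracts $dC(t)\in\D(d^*)$ and, via Proposition \ref{propDN4} with $A=0$ (equations \eref{DN50}/\eref{DN51}), also $dC(t)\in\D(d)$; then the Gaffney--Friedrichs inequality \eref{gaf50} upgrades $\D(d)\cap\D(d^*)$ to $W_1(M)$, and Lemma \ref{lemDN1} reads off the boundary conditions from the domain memberships. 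The same scheme handles $d^*C(t)$ via \eref{DN52}/\eref{DN53}. What you gain with the classical input is brevity and a stronger ($W_2$) conclusion; what the paper's route gains is self-containment and a tighter fit with the rest of its development, which turns on exactly these $\D(d)\cap\D(d^*)$-to-$W_1$ coercivity arguments. Your treatment of $[C(t)\wedge C(t)]$ — $W_1\cap L^\infty$ closed under products, wedges of tangential $1$-forms are tangential, and $\nu\wedge\nu=0$ — coincides in substance with the paper's. One minor caveat: your suggested alternative of quoting the $C^\infty$ smoothness from Theorem \ref{thmEUP} is logically delicate here, since in the paper's ordering that smoothness is established only in the proof of Theorem \ref{thmEUP}, which comes after Corollary \ref{corST10} and uses it; your primary route does not have this circularity issue.
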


                \begin{proof}
Writing $d$ for both the minimal and maximal operators, we see that
  in both cases $C(t) \in \D(d^*d) \cap \D(dd^*)$ for $t >0$ by Lemma
   \ref{lemST9}. We may apply Proposition \ref{propDN4} with
    $A=0$ and therefore    $B=0$.
    Take $\w = C(t)$ in that proposition. Since $C(t) \in \D(d^*d)$
we have $C(t) \in \D(d)$ while  $dC(t) \in \D(d^*)$. But  also $dC(t) \in \D(d)$
by \eref{DN50} in case (N) or by \eref{DN51} in case (D). Therefore
$dC(t) \in \D(d^*)\cap \D(d)$. By the Gaffney Friedrichs inequality \eref{gaf50}
it now follows that $dC(t) \in W_1(M)$.
    The same argument applies to $d^*C(t)$, upon use of \eref{DN52}  and \eref{DN53} since $C(t) \in \D(dd^*)$. Thus $d^* C(t) \in W_1$ also.
    Further, since $C(t)$ is bounded for each $t>0$ and in $W_1$,
     it follows that $[ C(t)\wedge C(t) ]$ is in $W_1$ and so,
     therefore, is $B_{C(t)}$.
 This proves the first assertion of the corollary.

   Concerning the boundary conditions \eref{ST140} and \eref{ST141},
   there is a slight difference in the two cases and we will therefore
   distinguish between the minimal and maximal operators $d$ and $D$
   in a repeated application of Lemma \ref{lemDN1}.

      In case  (N), since
    $C(t) \in \D(D^*) \cap W_1$, \eref{C19N} shows that $C(t)_{norm} = 0$.
   Since also $DC(t) \in \D(D^*) \cap W_1$, \eref{C19N} also
   shows that $(D C(t))_{norm} =0$.  But
   $(B_{C(t)})_{norm} = (DC(t))_{norm} +(1/2) [ C(t) \wedge C(t) ]_{norm}
   = 0 + [C(t)_{norm} \wedge C(t)] = 0$. This establishes \eref{ST140}.

   In case (D), since $C(t) \in \D(d)\cap W_1$, \eref{C19D} shows that
   $C(t)_{tan}=0$. And, since $d^*C(t) \in \D(d)\cap W_1$, \eref{C19D} also show that $(d^*C(t))_{tan} =0$. This proves two of the equalities in \eref{ST141}.
   Taking now $\w = C(t)$ in Proposition \ref{propDN4} we see that \eref{DN51}
    implies $dC(t) \in \D(d)$ and, since $dC(t) \in W_1$, \eref{C19D}
     shows that     $(dC(t))_{tan} =0$.
   Finally,
   $(B_{C(t)})_{tan}= (dC(t))_{tan} +(1/2) [ C(t)_{tan} \wedge C(t)_{tan} ] = 0$.
    \end{proof}

\bigskip
  \begin{proof}[Proof of Theorem \ref{thmEUP}]
  Let $A_0 \in H_1(M)$, choose  $T_0 =1$ in Lemma \ref{lemST7},
  and let  $c_0$ be as described in that lemma.
  Choose $R > 2c_0 \|A_0\|_{H_1}$.
  For $C(\cdot) \in \P_T$ define
  \beq
  W(C)( t) = e^{t\Delta} A_0 + \int_0^t e^{(t-\sigma)\Delta} F(\sigma) d\sigma,
         \ \ 0 \le t \le T.
  \eeq
  We will show that for $T$ sufficiently small $W$ takes
  \beq
  \P_{T,R} \equiv \{ C \in \P_T: \|C\|_{\P_T} \le R\}
  \eeq
  into itself and is a strict contraction on this  set. $\P_{T,R}$ is non-empty by
  Lemma \ref{lemST7} for any $T \le 1$.
  Observe first that, by \eref{ST47} with $q=2$, we have, for $0 \le t \le T \le 1$,
  \begin{align}
  \int_0^t \| e^{(t-\sigma)\Delta} F(\sigma) \|_{H_1} d\sigma
  &\le \int_0^t \| e^{(t-\sigma)\Delta}\|_{L^2\rightarrow H_1}
                  \| F(\sigma)\|_2 d\sigma          \notag\\
  &\le \int_0^t (t-\sigma)^{-1/2}c_1
              \{ (R^3 a_1) +\sigma^{-1/4} (R^2 a_2)\} d\sigma ,     \label{ST105}
  \end{align}
  while, for any $q \in [2,\infty]$,
   \begin{align}
  &\int_0^t \| e^{(t-\sigma)\Delta} F(\sigma) \|_\infty d\sigma
          \le \int_0^t \| e^{(t-\sigma)\Delta}\|_{q\rightarrow \infty}
                     \| F(\sigma)\|_q d\sigma      \notag\\
  &\le \int_0^t (t-\sigma)^{-(3/2q)}c_1
         \sigma^{-(3/2)( \frac{1}{2} - \frac{1}{q})}
                    \{ (R^3 a_1) +\sigma^{-1/4} (R^2 a_2)\} d\sigma    \label{ST106}
  \end{align}
  by \eref{hk1} and
  \begin{align}
  \int_0^t &\|\p e^{(t-\sigma)\Delta} F(\sigma)\|_\infty d\sigma
  \le \int_0^t \| \p e^{(t-\sigma)\Delta} \|_{q\rightarrow \infty}
                 \| F(\sigma) \|_q d\sigma     \notag\\
  &\le  c_{q,\infty} \int_0^t (t-\sigma)^{-(3/2q) - (1/2)} \sigma^{-(3/2)( \frac{1}{2} - \frac{1}{q})} \{ (R^3 a_1) +\sigma^{-1/4} (R^2 a_2)\}  \label{ST107}
  \end{align}
  by \eref{hk4}.
       Although these inequalities are valid  for any $ q \in [2,\infty]$,
 nevertheless,  for $q=3$, the last integrand has a non-integrable singularity,
 $(t-\sigma)^{-1}$, and for $q \le 3$ it is even worse.
           Moreover for $q = \infty$ two of the four integrands
           in \eref{ST106} and \eref{ST107} contain
     the non-integrable singularity  $\sigma^{-1}$. But use of any
     $q \in (3, \infty)$ will yield usable estimates and in fact will yield
     the   same $t$ dependence of the integrals.
   For simplicity  we will use $q=6$ in these estimates.

  The six explicit $\sigma$ integrals in \eref{ST105} - \eref{ST107}
  may all be done by substituting
  $\sigma =t r$. Choosing $q =6$ in \eref{ST106} and \eref{ST107},
  so that $\sigma^{-(3/2)( \frac{1}{2} - \frac{1}{q})} = \sigma^{-1/2}$,
  and keeping in mind the three different powers of  $t$ dictated by
  the definition \eref{ST23},
  one arrives at six integrals $t^\delta \int_0^t (t-\sigma)^{-\beta} \sigma^{-\gamma}d\sigma = c_{\delta,\beta,\gamma} t^{1+\delta - \beta -\gamma}$
  which are all finite with the choice $q =6$. Choosing $\delta =0$ for
  \eref{ST105}, $\delta = 1/4$ for \eref{ST106} and $\delta = 3/4$ for
  \eref{ST107} and adding, we find
   \begin{align*}
  &\int_0^t \| e^{(t-\sigma)\Delta} F(\sigma) \|_{H_1} d\sigma
  +  t^{1/4}\int_0^t \| e^{(t-\sigma)\Delta} F(\sigma) \|_\infty d\sigma \\
   &+t^{3/4} \int_0^t \| \p e^{(t-\sigma)\Delta} F(\sigma) \|_\infty d\sigma
   \le t^{1/2}\{ c_8 a_1 R^3 \}
   + t^{1/4}\{ c_9 a_2 R^2\}.
   \end{align*}
   Hence, in view of \eref{ST101}, and taking the supremum over $t \in [0,T]$,
   we find
   \beq
   \|W(C)\|_{\P_T} \le c_0 \| A_0\|_{H_1} + T^{1/2} (c_{10} R^3)  \label{ST109}
   + T^{1/4}(c_{11} R^2).
   \eeq
   Thus for $T$ sufficiently small, depending on $R$, the second and third terms on the right add to at most $ R - c_0 \| A_0\|_{H_1}$. Therefore
   $W$ takes $\P_{T,R}$ into itself.

   For two elements $C_1$ and $C_2$ in $\P_{T,R}$ the estimate
    \eref{ST48} yields, just as in the preceding estimates,
   \beq
   \| W(C_1) - W(C_2) \|_{P_T}
   \le \| C_1- C_2 \|_{\P_T} \{ T^{1/2} (c_{16} R^2)
                                  + T^{1/4} (c_{17} R) \} ,                 \label{ST110}
   \eeq
   since the term $e^{t\Delta} A_0$ cancels in the difference.
   The coefficient of $\| C_1- C_2\|_{\P_t}$ may be made less than $1/2$
   by choosing $T$ sufficiently small, depending on $R$. The map
   $W$ has therefore a unique fixed point in $\P_{T,R}$.

      Suppose now that $\hat C$ is another  solution to
   \eref{ST25}
  in $\P_T$.
Let $R_1 = \|\hat C\|_{\P_T}$. Then $R_1 > R$. Choose $T_1 \le T$
corresponding to $R_1$ as in the argument following \eref{ST110} with
 $R$ replaced by $R_1$. By what has just been proven we have uniqueness
 of solutions to \eref{ST25}
 in $\P_{T_1,R_1}$. Since $C$, restricted to $[0,T_1]$, is in $\P_{T_1, R_1}$
 it follows that  $\hat C$ and $C$
 coincide on $[0, T_1]$. We may now apply the same argument
 on the interval $[T_1, 2T_1]$ (using the same $R_1$) to conclude that
 $\hat C$ coincides with $C$ on the entire interval $[0, 2T_1]$. And so on.
 This proves uniqueness of solutions to \eref{ST25}   in $\P_T$.

       Now Lemma \ref{lemST9} shows that the solution $C(t)$
  to the  integral equation \eref{ST25} is actually a solution to
  the differential equation \eref{ST15}.
        We may therefore apply  \cite[Proposition 3.2, page 289]{Tay3} to conclude
 that the solution $C(\cdot)$ is in
 $C^\infty((0,T)\times M; \L^1\otimes \kf)$. Rearranging the terms gives \eref{ST11}.
 \end{proof}

 \bigskip

 \begin{proof}[Proof of Theorem \ref{thmpara}]
  Choose $T\in (0,\infty)$ as in Theorem \ref{thmEUP} and denote by $C(\cdot)$
  the solution to the integral equation \eref{ST25}.
     Then $C(\cdot)$ lies in $\P_T$ and is therefore a continuous function
     from $[0, T)$ into $W_1$. Equation \eref{ST25} shows that $C(0) = A_0$.
     Corollary \ref{corST10} proves that $B_{C(t)}$ and $d^* C(t)$ are
      in $W_1$ for $t >0$, which is
     the claim a) in Theorem \ref{thmpara}, and proves as well that $C(\cdot)$
     satisfies all  the required boundary conditions,
      \eref{ST11N}, resp. \eref{ST11D}.
      For $t \in (0, T)$ Lemma
     \ref{lemST9} shows that $C(t)$ is strongly differentiable into $L^2(M)$
     and that the differential equation \eref{ST11} holds. The smoothness
     of $C(\cdot)$ is proved in Theorem \ref{thmEUP}.
     The boundedness of  $t^{3/4}\|B_{C(t)}\|_\infty$, required in condition
     f) of Theorem \ref{thmpara}, follows from the fact that $C(\cdot)$ lies
     in $\P_T$. Indeed, the norm definition \eref{ST23} shows that,
     for $t \in (0,T)$,
    \begin{align}
  t^{3/4}\| B_{C(t)}\|_\infty
 &\le t^{3/4}\{ \| d C(t) \|_\infty + (c/2) \| C(t) \|_\infty^2 \}      \notag\\
 &\le t^{3/4} \| d C(t) \|_\infty + t^{1/4} (c/2)( t^{1/4} \|C(t)\|_\infty)^2 \notag \\
 &\le \| C\|_{\P_T} +  t^{1/4}(c/2) \| C\|_{\P_T}^2.         \label{ST150}
 \end{align}

   {\bf Uniqueness for the parabolic equation.}
  A standard proof of existence and uniqueness for a semilinear parabolic equation  may be found in \cite[Chapter 15, Section 1]{Tay3}.
  It is based on a simpler path space
  than the space $\P_T$ (see \eref{ST23}) that we have been
   using and relies on simpler estimates:
  Let $\hat \P_T = \{ C(\cdot) \in C( [0,T); H_1(M)):
   \sup_{0 \le t <T} \|C(t) \|_{H_1(M)} < \infty\}$ .
   If $C(\cdot) \in \hat \P_T$ then
   $F(\sigma)$
   (see \eref{ST46}) is a continuous function into $L^{3/2}(M)$ because
   $C(\sigma)^3 \in L^6 \cdot L^6\cdot L^6 \subset L^2(M)$ while
   $C(\sigma)\cdot \p C(\sigma) \in L^6 \cdot L^2 \subset L^{3/2}(M)$.
   Moreover $\| e^{t\Delta} \|_{L^{3/2} \rightarrow H_1}
   \le \| e^{(t/2)\Delta}\|_{L^2 \rightarrow H_1}
   \| e^{(t/2)\Delta}\|_{L^{3/2}\rightarrow L^2} = O(t^{-1/2} t^{-1/4})$
       by \eref{hk2} and \eref{hk3}.
   Since $ 3/4 <1$ the integral equation \eref{ST25} has a unique solution
   in $\hat \P_T$ for a given $A_0 \in H_1$ and small enough $T$.

         Thus if $C(\cdot) \in \hat \P_T$ and is in addition strongly
    differentiable into $L^2(M)$ and satisfies \eref{ST15} then the identity
   $$
    C(t) - e^{t\Delta} C(0) = \int_0^t (d/d\sigma)\Big( e^{(t-\sigma)\Delta} C(\sigma)\Big) d \sigma = \int_0^t e^{(t-\sigma)\Delta} F(\sigma) d\sigma
    $$
   shows that $C(\cdot)$ satisfies the integral equation \eref{ST25} and uniqueness then follows.
   The last integrand is an integrable function into
   $H_1$ because $F:[0,t] \rightarrow L^{3/2}(M)$ is continuous,
    as we have seen above.
   The solution to \eref{ST15} is unique, therefore, under the hypothesis that it is
  continuous and bounded  on $[0,T)$ into $H_1$ and strongly differentiable on $(0,T)$ into $L^2$.
\end{proof}

   At the price of a more complicated proof we have used the
        smaller space $\P_T$ in our existence proof in order to derive
     the regularity properties implicit in the norm \eref{ST23}.
   These regularity properties will transfer over to the induced
  solution $A$, (see Lemma \ref{lemDS} or \ref{lem3a}) of the
  Yang-Mills heat equation and will be important ingredients
  in our uniqueness proof for that weakly parabolic equation.


\subsection{An apriori estimate for the parabolic equation} \label{secST2}

The apriori estimates in Section \ref{secfe} have parallels for the
 parabolic equation.
 But they get rapidly more complicated for the parabolic equation
 as the order of the inequality increases.
 We will need the following lowest order estimate.  It will not artificially
 decompose the nonlinear terms in \eref{ST11}, as does the method of the
 the previous subsection.

            \begin{lemma}\label{lemST31}    Assume that $C(\cdot)$ satisfies the conclusions of Theorem \ref{thmEUP}. Then
  $\| B_{C(t)}\|_2$  is non-increasing on $[0, T)$ and in fact
 \beq
 \|B_{C(t)}\|_2^2 + 2 \int_0^t \| d_{C(s)}^* B_{C(s)}\|_2^2ds
                            = \|B_0\|_2^2.                \label{ST202}
 \eeq
 In particular,
 \beq
 \| B_{C(t)}\|_2 \le \| B_0\|_2.                                         \label{ST203}
 \eeq
 \end{lemma}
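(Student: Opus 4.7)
The plan is a curvature energy computation for the parabolic flow \eref{ST11}, adapted to absorb the extra $d_C d^*C$ term that distinguishes it from the true Yang-Mills heat flow. Write $B = B_{C(s)}$ throughout, and suppress $s$.

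First I would differentiate $B = dC + \tfrac{1}{2}[C\wedge C]$ in time. Smoothness of $C$ on $(0,T)\times M$ from Theorem \ref{thmEUP} gives $B' = dC' + [C\wedge C'] = d_C C'$. Substituting \eref{ST11} and invoking the curvature identity $d_C^2 \omega = [B\wedge \omega]$ from Proposition \ref{propDN4} yields
\[
B' = -d_C d_C^* B - [B\wedge d^*C].
\]
Then I would pair with $B$ and integrate by parts to obtain
\[
\tfrac{1}{2}\tfrac{d}{ds}\|B\|_2^2 = (B, B') = -\|d_C^* B\|_2^2 - (B, [B\wedge d^*C]),
\]
and observe that the cross term vanishes by the defining property \eref{C9} of $\lrc$: $(B, [B\wedge d^*C]) = ([B\lrc B], d^*C) = 0$, since $[B\lrc B] = 0$ for the 2-form $B$.

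Finally I would integrate the resulting ODE $(d/ds)\|B\|_2^2 = -2\|d_C^* B\|_2^2$ over $[\sigma,t]$ and let $\sigma \downarrow 0$. Continuity of $C:[0,T)\to H_1(M)$ together with the Sobolev embedding $H_1\hookrightarrow L^4$ applied to the quadratic piece of $B$ gives $\|B_{C(\sigma)}\|_2 \to \|B_0\|_2$; monotone convergence then delivers \eref{ST202}, from which \eref{ST203} and the claimed monotonicity are immediate.

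The main obstacle is justifying the integration by parts $(B, d_C d_C^* B) = \|d_C^* B\|_2^2$ simultaneously under the Neumann and Dirichlet boundary conditions \eref{ST11N}, \eref{ST11D}. This is handled by the minimal/maximal operator calculus of Section \ref{secDN} together with Corollary \ref{corST10}: in the Neumann case $(B)_{norm} = 0$ places $B \in \D(D_C^*)$ while $D_C$ is maximal, so no boundary condition is needed on $d_C^* B$; in the Dirichlet case $d_C$ is minimal, and differentiating $C_{tan} = 0$ in time together with $(d^*C)|_{\partial M} = 0$ gives $(C')_{tan} = 0 = (d_C d^*C)_{tan}$, so that $d_C^* B = -C' - d_C d^*C$ also satisfies $(d_C^* B)_{tan} = 0$ and lies in $\D(d_C)$ as required.
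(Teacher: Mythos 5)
Your proposal is correct and follows essentially the same route as the paper: differentiate the curvature energy, use $B' = d_C C'$, substitute the parabolic equation, integrate by parts under the minimal/maximal operator calculus, and kill the cross term via $[B\lrc B]=0$. The only cosmetic difference is that you apply the Bianchi identity upstream to write $B' = -d_Cd_C^*B - [B\wedge d^*C]$ before pairing with $B$, whereas the paper pairs first and moves $d_C$ back onto $\beta$ afterward — the two cross terms $(B,[B\wedge d^*C])$ and $(d^*C,(d_C^*)^2B)$ are identical by the adjointness \eref{C9}.
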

            \begin{proof}  For ease in reading define $\beta(t) = B_{C(t)}$.
 For $t >0$, $\beta(t)$ is in the domain of $d_{C(t)}^*$ and therefore
  in the domain of the square of this operator,
  by \eref{DN52} and \eref{DN53}, since $[ B\lrc B] =0$.
 In fact these identities show that
 $(d_{C(t)}^*)^2 \beta(t) = \beta(t)\cdot \beta(t) = 0$ for both (N) and (D)
 boundary conditions. Moreover $\beta(\cdot)$ is smooth on $(0,T)\times M$
 by Theorem \ref{thmEUP}.
  The following computation is therefore justified for $t >0$.
  \begin{align*}
  (1/2) (d/dt) \| \beta(t)\|_2^2 &= ( \beta'(t), \beta(t))\\
  &= (d_{C(t)} C'(t), \beta(t)) \\
  &= (C'(t),  d_{C(t)}^* \beta(t))\\
  &= - ( d_{C(t)}^* \beta(t) + d_{C(t)} d^*C(t), d_{C(t)}^* \beta(t))\\
  &= - \| d_{C(t)}^* \beta(t)\|_2^2 - (d^* C(t), (d_{C(t)}^*)^2 \beta(t))\\
 & = - \| d_{C(t)}^* \beta(t)\|_2^2
  \end{align*}
  Since $C(\cdot)$ is continuous on $[0, T)$ into $W_1 \cap L^4(M)$,
  $B_{C(t)}$ is continuous into $L^2(M)$ on $[0,T)$.
  We may therefore integrate the last equality over $[0, t]$ to
  deduce \eref{ST202} and \eref{ST203}.
 \end{proof}

\section{Short time existence and uniqueness for the Yang-Mills heat equation} \label{secST3}

 In this section we will prove the short time existence portions
  of Theorems \ref{thm1N}  and \ref{thm1D} along with uniqueness.
The space $H_1$ refers to either of the quadratic form domains
defined in  Remark \ref{remcoerc}
 and used in Section \ref{secST},
with  the $H_1$ norm given by \eref{ST19}. $d_A$ represents
 the minimal or maximal
operator, in agreement with the boundary conditions.


       \begin{theorem} \label{thmSTE}
Let $A_0 \in H_1(M)$ and suppose that $\beta \ge \|A_0\|_{H_1(M)}$.
Then there exists $T >0$, depending only on $\beta$,  and a
continuous function
\beq
A(\cdot) : [0,T) \rightarrow H_1(M)\ \ \text{with}\ \ A(0) = A_0
\eeq
such that

a$)$  $B(t) \in H_1(M)$ for each $t \in (0, T)$,

b$)$  $A(t)$ is a strongly differentiable function into $L^2(M)$ on $(0,T)$,

c$)$  $ A'(t) = - d_{A(t)}^* B(t)$.

\noindent
Moreover, $A(\cdot)$  satisfies the regularity condition

f$)$ \     $t^{3/4} \| B(t)\|_\infty$   is bounded on $(0,T)$.
 \end{theorem}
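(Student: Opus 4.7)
The plan is to apply the regularized Donaldson--Sadun gauge transform to the solution of the parabolic equation \eref{ST11} provided by Theorem \ref{thmpara}. Given $A_0 \in H_1(M)$ with $\|A_0\|_{H_1}\le \beta$, I first invoke Theorem \ref{thmpara} to produce $C\in \P_T$ on some interval $[0,T)$ with $T = T(\beta) > 0$, solving \eref{ST11} together with the boundary conditions \eref{ST11N} or \eref{ST11D}, and smooth on $(0,T)\times M$. The definition \eref{ST23} of the $\P_T$ norm supplies in particular
\beq
\|C(s)\|_\infty \le s^{-1/4}\|C\|_{\P_T},\qquad \|d^*C(s)\|_\infty \le s^{-3/4}\|C\|_{\P_T},  \label{proppose3}
\eeq
both of which are integrable in $s$ on $(0,T)$.

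For each $\epsilon \in (0,T)$ I would solve, pointwise in $x$, the matrix ODE
\beq
g_\epsilon'(t,x)\,g_\epsilon(t,x)^{-1} = d^*C(t,x),\qquad g_\epsilon(\epsilon,x) = I_\V,
\eeq
obtaining $g_\epsilon \in C^\infty([\epsilon,T)\times M; K)$, and set $A_\epsilon(t) := C(t)^{g_\epsilon(t)}$. Lemma \ref{lemDS} implies that each $A_\epsilon$ is a smooth solution of the Yang--Mills heat equation on $(\epsilon,T)$, and the auxiliary boundary data in \eref{ST11N}, \eref{ST11D} ensure that $A_\epsilon$ inherits the corresponding Dirichlet or Neumann boundary conditions (for instance, in the Dirichlet case $(d^*C)|_{\p M}=0$ forces $g_\epsilon|_{\p M}\equiv I_\V$, so that $A_\epsilon|_{\p M}$ has vanishing tangential part).

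The main obstacle is the limit $\epsilon\downarrow 0$, since $d^*C(s)$ is singular at $s=0$. The integrability built into \eref{proppose3} is precisely what rescues the construction: a Gr\"onwall estimate applied to the ODE yields a bound on $\sup_{\epsilon \le t \le T'}\|g_\epsilon(t)\|_\infty$ uniform in $\epsilon$, and one obtains uniform $C^1(M)$ bounds on $g_\epsilon(t,\cdot)$ by differentiating the ODE spatially and invoking the $\P_T$ control on $\|d C(t)\|_\infty$ together with interior regularity of $C$ on sets $[\epsilon_0,T']\times M$ with $\epsilon_0>0$. A standard Cauchy comparison of $g_{\epsilon_1}$ and $g_{\epsilon_2}$ then produces a limit $g\in C([0,T); C^1(M;K))$ with $g(0,\cdot)\equiv I_\V$, and I define $A(t) := C(t)^{g(t)}$.

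Finally I would verify the claims. Continuity $A\in C([0,T); H_1)$ with $A(0)=A_0$ follows from the representation $A = g^{-1}Cg + g^{-1}dg$, the continuity of $C$ into $H_1$ (Theorem \ref{thmpara}), the $C^1$ continuity of $g$, and the fact that $g^{-1}dg\to 0$ in $H_1$ as $t\downarrow 0$ because $g(t)\to I_\V$ in $C^1$. Property $(a)$ follows from $B_{C(t)}\in H_1$ (Corollary \ref{corST10}) together with the gauge covariance $B_{A(t)} = g(t)^{-1}B_{C(t)}g(t)$. Property $(b)$ is obtained by differentiating $A = C^g$ in $t$ using the strong $L^2$ differentiability of $C$ and the smoothness of $g$ for $t>0$. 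Property $(c)$ is the content of Lemma \ref{lemDS}. Property $(f)$ is immediate from the gauge invariance of the pointwise curvature norm, $|B_{A(t)}(x)| = |B_{C(t)}(x)|$, combined with the bound
\[
t^{3/4}\|B_{C(t)}\|_\infty \le \|C\|_{\P_T} + (c T^{1/4}/2)\|C\|_{\P_T}^2
\]
already established in \eref{ST150}.
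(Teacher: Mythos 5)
Your overall scheme — regularize the Donaldson--Sadun gauge transform by starting the ODE at time $\epsilon$ and passing to the limit — is the paper's scheme, and your verifications of parts (c) and (f) are sound. The genuine gap is in the step where you claim to get \emph{uniform} $C^1(M)$ bounds on $g_\epsilon(t,\cdot)$ by ``differentiating the ODE spatially and invoking the $\P_T$ control on $\|dC(t)\|_\infty$.'' Differentiating $g_\epsilon' = (d^*C)\,g_\epsilon$ in $x$ produces
\begin{equation*}
\partial_j g_\epsilon'(t,x) = \bigl(\partial_j d^*C(t,x)\bigr)\,g_\epsilon(t,x) + \bigl(d^*C(t,x)\bigr)\,\partial_j g_\epsilon(t,x),
\end{equation*}
so the source term involves \emph{second} spatial derivatives of $C$. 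The $\P_T$ norm \eref{ST23} controls $\|C(t)\|_\infty$, $\|dC(t)\|_\infty$, $\|d^*C(t)\|_\infty$ with prefactors $t^{1/4}$, $t^{3/4}$, $t^{3/4}$, but says nothing about $\|\partial d^*C(t)\|_\infty$; the natural parabolic scaling would give $\|\partial^2 C(t)\|_\infty = O(t^{-5/4})$, which is not integrable near $t=0$, so the Duhamel/Gr\"onwall bound on $\partial_j g_\epsilon$ does not close uniformly as $\epsilon\downarrow 0$. Your appeal to ``interior regularity of $C$ on $[\epsilon_0,T']\times M$'' gives only bounds that may blow up as $\epsilon_0\downarrow 0$, not uniform ones. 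The authors are explicit about this: immediately before Lemma \ref{lem20''} they note that although it would seem natural to prove convergence of $g_\epsilon$ and of $g_\epsilon^{-1}dg_\epsilon$ directly from the ODE, they ``have not been able to find a direct proof of the estimates on $g_\epsilon(t)^{-1}dg_\epsilon(t)$'' and instead argue indirectly.

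What the paper actually does is circumvent this entirely by working with the gauge-transformed fields $A_\epsilon = C^{g_\epsilon}$ directly, using the \emph{gauge-invariant} finite-energy apriori estimates of Section \ref{secfe} (notably \eref{fe6}, \eref{fe12}, \eref{fe80}) together with the key identity $A_\delta'(s) = (\mathrm{Ad}\,g_\delta(\epsilon)^{-1})A_\epsilon'(s)$ (equation \eref{sth102a}) to show that $\{A_\epsilon\}$, $\{A_\epsilon'\}$, $\{B_\epsilon\}$ are all Cauchy in the appropriate norms (Lemmas \ref{lemA55'}--\ref{lemB58'}, Theorem \ref{thm9.A}), then invokes the Gaffney--Friedrichs inequality \eref{gaf50} with $A=0$ to package the pieces into $H_1$ convergence. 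Only afterwards, in Corollary \ref{corSTE3}, is convergence of $h_\epsilon = g_\epsilon^{-1}dg_\epsilon$ extracted --- and only in $W_1$, not in $C^1$ as you assert. You would need to replace your ``Gr\"onwall plus interior regularity'' step with this indirect, estimate-driven argument (or supply a genuinely new bound on the singular rate of $\partial d^*C$ near $t=0$, which the $\P_T$ contraction argument does not deliver).
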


The previous theorem will be deduced from the following, which
 makes precise the informal procedure described
 in Lemma \ref{lemDS}.

           \begin{theorem}\label{thmSTE3}
   Suppose that $C(\cdot)$ is a solution to \eref{ST11} satisfying
  conditions a$)$,  b$)$, c$)$ and f$)$ of Theorem \ref{thmpara} with $T <\infty$.
 Let $0 < \epsilon <T$ and, for each $x \in M$, denote by $g_\epsilon(t, x)$ the solution to the ordinary differential equation
\beq
 (d/dt) g_\epsilon(t, x) = (d^* C(t,x)) g_\epsilon(t,x),\ \ \epsilon \le t <T,\ \
      g_\epsilon(\epsilon) = I_{\mathcal V}.                                      \label{sth5}
 \eeq
 Then $g_\epsilon \in C^\infty ( [\epsilon, T) \times M; K)$.
 Define
 \beq
 A_\epsilon(t) = C(t)^{g_\epsilon(t)} =  g_\epsilon(t)^{-1} C(t) g_\epsilon(t)
  +  g_\epsilon(t)^{-1} d g_\epsilon(t), \ \ \epsilon \le t <T .               \label{sth1}
 \eeq
 Then
 $A_\epsilon \in C^\infty ( [\epsilon , T) \times M ; \Lambda^1 \otimes \frak k)
 \cap  H_1(M)$
  for $\epsilon \le t <T$.
 There exists a continuous function
 \beq
 A(\cdot) : [0,T) \rightarrow H_1(M; \Lambda\otimes \frak k)
 \eeq
  such that  the curvature $B(t)$ of $A(t)$ is in $H_1$ for $t >0$ and
  the strong $L^2$ derivative $A'(t)$ exists for all $t >0$.
  Furthermore
 \begin{align}
  &\sup_{\epsilon \le t <T} \| A(t) - A_\epsilon(t) \|_{H_1} \rightarrow 0
           \ \  \ \    \ \ \text{as}\ \epsilon \downarrow 0,                         \label{sth2} \\
   &\sup_{\epsilon \le t <T}t^{1/2} \|A'(t) - A_\epsilon'(t)\|_{L^2} \rightarrow 0
   \ \ \text{as}\ \epsilon \downarrow 0,                           \label{sth2a} \\
&\sup_{\epsilon \le t <T} t^{1/2} \| B(t) - B_\epsilon(t)\|_{H_1} \rightarrow 0
                        \   \ \text{as}\ \epsilon \downarrow 0     \label{sth3}      \\
\text{and} \ \ \ \ \ \ \ \  &\sup_{\epsilon \le t <T} t^{3/4}  \| B(t) - B_\epsilon(t)\|_\infty \rightarrow 0 \ \  \ \text{as}\ \epsilon \downarrow 0.           \ \ \ \ \ \   \label{sth4}
\end{align}
 Moreover $A(\cdot)$ satisfies all the conditions of Theorem \ref{thmSTE}.
 \end{theorem}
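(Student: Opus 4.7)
The plan is to implement the procedure of Lemma~\ref{lemDS} in a regularized form that avoids the singular behavior of $d^* C(s)$ as $s \downarrow 0$. The crucial observation is that, since $C(\cdot) \in \P_T$, the norm \eref{ST23} gives $t^{3/4}\|d^*C(t)\|_\infty$ bounded, so $\|d^*C(s)\|_\infty \lesssim s^{-3/4}$ is integrable near $s=0$. This integrability is what will allow the limiting gauge transformation $g_0(t) := \lim_{\epsilon \downarrow 0} g_\epsilon(t)$ to be well defined and will enable the passage from the parabolic flow $C(\cdot)$ to the Yang-Mills heat flow $A(\cdot)$.

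First I would verify that $g_\epsilon \in C^\infty([\epsilon, T) \times M; K)$ by standard ODE theory: $d^*C \in C^\infty([\epsilon, T) \times M; \kf)$ by Theorem~\ref{thmpara}, \eref{sth5} is a linear ODE on the Lie group $K$ with smooth parameters, and the values remain in $K$ because the right side is $\kf$-valued. Next, to verify that $A_\epsilon = C^{g_\epsilon}$ solves the Yang-Mills heat equation on $[\epsilon, T)$, I would set $\phi(t) = g_\epsilon(t)^{-1}\, g_\epsilon'(t) = g_\epsilon(t)^{-1}(d^*C(t))\, g_\epsilon(t)$ and compute the time derivative of $g_\epsilon^{-1} C g_\epsilon + g_\epsilon^{-1} dg_\epsilon$ directly, obtaining
\beq
(C^{g_\epsilon})' = g_\epsilon^{-1} C' g_\epsilon + d_{A_\epsilon}\phi. \label{plan1}
\eeq
Substituting $C' = -d_C^* B_C - d_C d^*C$ and invoking gauge equivariance, $g_\epsilon^{-1}(d_C^* B_C)g_\epsilon = d_{A_\epsilon}^* B_{A_\epsilon}$ and $g_\epsilon^{-1}(d_C d^*C)g_\epsilon = d_{A_\epsilon}\phi$, the two $d_{A_\epsilon}\phi$ contributions cancel and leave $A_\epsilon' = -d_{A_\epsilon}^* B_{A_\epsilon}$.

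The hard part will be establishing the uniform convergences \eref{sth2}--\eref{sth4}. Writing $g_\epsilon(t) = U(t,\epsilon)$ for the propagator of \eref{sth5}, the composition law $g_\epsilon(t) = g_{\epsilon'}(t)\, g_\epsilon(\epsilon')$ for $\epsilon < \epsilon' \le t$, combined with the estimate
\beq
\|g_\epsilon(\epsilon') - I_\V\|_\infty \le \int_\epsilon^{\epsilon'} \|d^*C(s)\|_\infty\, \|g_\epsilon(s)\|_\infty\, ds \le c\bigl((\epsilon')^{1/4} - \epsilon^{1/4}\bigr), \label{plan2}
\eeq
shows that $\{g_\epsilon\}$ is Cauchy in $C(M;K)$ as $\epsilon \downarrow 0$ and produces a limit $g_0 : [0,T) \to C(M;K)$ with $g_0(0) = I_\V$ and $g_0(t) \to I_\V$ uniformly as $t \downarrow 0$. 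To upgrade this to $H_1$-convergence of $A_\epsilon$, I would differentiate the integral form of \eref{sth5} in space and estimate $dg_\epsilon$ and $g_\epsilon^{-1} dg_\epsilon$ in weighted norms, using the smoothness of $C$ on $(0,T) \times M$ together with the full strength of the $\P_T$-bounds on $\|C(t)\|_{H_1}$, $\|C(t)\|_\infty$ and $\|dC(t)\|_\infty$; the weights $t^{1/2}$ in \eref{sth2a}, \eref{sth3} and $t^{3/4}$ in \eref{sth4} are dictated precisely by these singularity rates.

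Finally I would define $A(t) = C(t)^{g_0(t)}$ for $t > 0$ and $A(0) = A_0$. The strong differentiability $A'(t) = -d_{A(t)}^* B_{A(t)}$, the regularity $B(t) \in H_1$ for $t > 0$, and the bound $t^{3/4}\|B(t)\|_\infty < \infty$ will transfer from $A_\epsilon$ via the convergences above together with the gauge equivariance $B_A = g_0^{-1} B_C g_0$ and the corresponding bound for $B_C$ recorded in \eref{ST150}. The boundary conditions implicit in $H_1(M)$ must be handled case by case: in case~(D), the condition $d^*C|_{\p M}=0$ forces $g_\epsilon \equiv I_\V$ on $\p M$, so $A_\epsilon$ inherits $A_{tan}=0$ from $C_{tan}=0$; in case~(N), the gauge-invariant Marini condition $(B_A)_{norm}=0$ follows from $(B_C)_{norm}=0$ by conjugation, with the normal-component condition on $A_\epsilon$ requiring the boundary behavior of the solution of \eref{sth5} to be analyzed via the boundary data of $d^*C$. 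Continuity $A(t) \to A_0$ in $H_1$ as $t \downarrow 0$, which in particular gives $A(0) = A_0$, will use $g_0(t) \to I_\V$ together with the continuity of $C$ into $H_1$ at $t=0$.
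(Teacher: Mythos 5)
Your overall architecture is right: define $g_\epsilon$ via \eref{sth5}, verify $A_\epsilon=C^{g_\epsilon}$ solves the Yang--Mills heat equation on $[\epsilon,T)$, establish Cauchy convergence as $\epsilon\downarrow 0$, and transfer regularity to the limit $A$. The pointwise estimate \eref{plan2} showing $g_\epsilon$ Cauchy in $C(M;K)$ is exactly what the paper does in Lemma~\ref{lem20''}, using $\|d^*C(s)\|_\infty\lesssim s^{-3/4}$ and the composition law $g_\delta(t)=g_\epsilon(t)g_\delta(\epsilon)$.

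However, the critical step --- upgrading to $H_1$-convergence of $A_\epsilon$ --- is where your plan has a genuine gap. You propose to ``differentiate the integral form of \eref{sth5} in space and estimate $dg_\epsilon$ and $g_\epsilon^{-1}dg_\epsilon$ in weighted norms, using the smoothness of $C$ on $(0,T)\times M$ together with the $\P_T$-bounds.'' This is precisely what the authors remark they were unable to make work (see the remark immediately following Corollary~\ref{corSTE3}): a direct estimate of $g_\epsilon^{-1}dg_\epsilon$ is not accessible because spatial differentiation of \eref{sth5} produces a linear system $(\nabla g_\epsilon)' = (\nabla d^*C)g_\epsilon + (d^*C)\nabla g_\epsilon$ whose source term $\nabla d^* C(s)$ involves \emph{second} spatial derivatives of $C$. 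These are not controlled by the $\P_T$ norm \eref{ST23} (which only bounds $\|C(t)\|_{H_1}$, $t^{1/4}\|C(t)\|_\infty$ and $t^{3/4}\|\partial C(t)\|_\infty$) and blow up at a non-integrable rate near $s=0$, so the natural Gronwall-type argument does not close.

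The paper's actual route sidesteps any estimate on $dg$ entirely. Since $A_\epsilon$ is a genuine smooth Yang--Mills flow on $[\epsilon,T)$, the apriori estimates of Sections~\ref{secSobsol}, \ref{secfa} and \ref{secfe} apply to it directly. The key structural observations (Lemmas \ref{lemA55'}--\ref{lemB58'}) are: (i) the time derivatives are gauge equivariant as \emph{1-forms}, so $A_\delta'(s)=(Ad\ g_\delta(\epsilon)^{-1})A_\epsilon'(s)$ with $g_\delta(\epsilon)\to I_\V$ --- no $dg$ appears; (ii) $dA_\epsilon$ is recovered from $B_\epsilon-(1/2)[A_\epsilon\wedge A_\epsilon]$ and $d^*A_\epsilon(t)=d^*C(\epsilon)+\int_\epsilon^t[A_\epsilon(s)\cdot A_\epsilon'(s)]ds$, so $\|dA_\epsilon\|_2$, $\|d^*A_\epsilon\|_2$, $\|A_\epsilon\|_2$ are all controlled by the apriori bounds on $A_\epsilon'$, $A_\epsilon$ and $B_\epsilon$ in $L^2$, $L^4$, $L^6$; (iii) the Gaffney--Friedrichs inequality \eref{gaf50} with $A=0$, applied to $\w=A_\delta(t)-A_\epsilon(t)$, then converts these $L^2$ Cauchy estimates into the $H_1$ Cauchy estimate \eref{sth82a'}. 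Only \emph{after} $A$ is constructed do the authors deduce (Corollary~\ref{corSTE3}) that $g_\epsilon^{-1}dg_\epsilon$ converges in $W_1$, as a by-product of $h_\epsilon=A_\epsilon-C_\epsilon$. So the logical order in your plan should be reversed, and the gauge-covariant apriori machinery from Sections~\ref{secfa}--\ref{secfe}, which your proposal does not invoke, is what makes the $H_1$ step go through.
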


   \begin{notation} {\rm If $u(x) \in End\ \mathcal V$ for each $x \in M$ we will
write $\|u\|_\infty = \sup_{x \in M} \|u(x)\|_{op}$, where the subscript
op denotes the operator norm on  the finite dimensional inner product
 space $\mathcal V$.
 In case $u$ is a function into $\kf \subset End\ \mathcal V$ the operator
 norm and the $\kf$ norm are equivalent and we will not
  distinguish between them. Compare Notation \ref{not2.1}.
  Although all products of $\frak k$ valued forms
  have been, until now,  commutator products, as e.g. in \eref{ST14},
   we will need to estimate  more   general operators on $\V$ in the following.
 }
 \end{notation}

      \begin{corollary}  \label{corSTE3}
 The functions $g_\epsilon$ converge to a continuous function
  $g:[0, T) \times M \rightarrow K \subset End\ \V$ in the sense that
 \begin{align}
 &\sup_{\epsilon \le t <T}\| g(t) - g_\epsilon(t)\|_\infty
       \rightarrow 0   \ \ \ \ \ \text{as}\ \    \epsilon \downarrow 0       \label{sth9}\\
 \text{and}\ \  & \sup_{\epsilon \le t <T}
 \| h(t) - g_\epsilon(t)^{-1} d g_\epsilon(t) \|_{W_1(M)} \rightarrow 0
            \ \ \  \text{as}\ \   \epsilon \downarrow 0,                      \label{sth10}
 \end{align}
 for some continuous function
 $h : [0, T) \rightarrow W_1(M ; \Lambda^1 \otimes \frak k)$.
  Here $g(0) = I_\V$  and $h(0) = 0$.   $A$ is given by
 \beq
 A(t) = g(t)^{-1} C(t) g(t) + h(t)   .                         \label{sth12}
 \eeq
 \end{corollary}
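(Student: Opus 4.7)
The plan is to construct $g$ on $[0,T)\times M$ as the natural limit candidate and derive \eref{sth9} from an ODE argument, then to define $h$ and derive \eref{sth10} from the $H_1$ convergence $A_\epsilon\to A$ already supplied by \eref{sth2}.

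First, I would set up, for each $x\in M$, the linear time-dependent ODE $(d/dt)g(t,x)=d^*C(t,x)\,g(t,x)$ with $g(0,x)=I_\V$. The definition of $\P_T$ in \eref{ST23} gives the bound $\|d^*C(s)\|_\infty\le\|C\|_{\P_T}\,s^{-3/4}$, which is integrable near $s=0$, so Picard iteration applied to the integral equation $g(t)=I_\V+\int_0^t d^*C(s)g(s)\,ds$ yields a unique continuous $g:[0,T)\to C(M;\text{End}\,\V)$. Skew-adjointness of $d^*C(s,x)\in\kf$ on $\V$ forces $g^*g=I_\V$, and connectedness of $K$ then promotes this to $g(t,x)\in K$. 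The pointwise fundamental-solution identity yields the group relation
\[
g(t,x)=g_\epsilon(t,x)\,g(\epsilon,x)\quad\text{for }t\ge\epsilon,
\]
so $g(t)-g_\epsilon(t)=g_\epsilon(t)(g(\epsilon)-I_\V)$ and, using $\|g_\epsilon\|_\infty\le1$,
\[
\sup_{\epsilon\le t<T}\|g(t)-g_\epsilon(t)\|_\infty\le\|g(\epsilon)-I_\V\|_\infty\le\int_0^\epsilon\|d^*C(s)\|_\infty\,ds=O(\epsilon^{1/4}),
\]
which is \eref{sth9} and gives $g(0)=I_\V$.

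Second, I would define $h(t):=A(t)-g(t)^{-1}C(t)g(t)$ after verifying that $g(t)^{-1}C(t)g(t)$ lies in $W_1(M;\Lambda^1\otimes\kf)$ for $t>0$ and extends continuously to $t=0$ (at which point it equals $A_0$, so $h(0)=A_0-A_0=0$). The gauge transformation identity $A_\epsilon=g_\epsilon^{-1}Cg_\epsilon+g_\epsilon^{-1}dg_\epsilon$ yields
\[
h(t)-g_\epsilon(t)^{-1}dg_\epsilon(t)=\bigl(A(t)-A_\epsilon(t)\bigr)+\bigl(g_\epsilon(t)^{-1}C(t)g_\epsilon(t)-g(t)^{-1}C(t)g(t)\bigr),
\]
in which the first term tends to zero in $H_1\subset W_1$ uniformly on $[\epsilon,T)$ by \eref{sth2}. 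Writing $G(t):=g(t)^{-1}C(t)g(t)$ and using the group relation from the first step, the second term equals $g(\epsilon)G(t)g(\epsilon)^{-1}-G(t)$, so \eref{sth10} reduces to showing that conjugation by $g(\epsilon)$ tends to the identity map on $W_1(M;\Lambda^1\otimes\kf)$ when applied to $G(t)$, uniformly in $t\ge\epsilon$. The $L^2$ portion is immediate from $\|g(\epsilon)-I_\V\|_\infty=O(\epsilon^{1/4})$ and $\|G(t)\|_2\le\|C(t)\|_2\le\|C\|_{\P_T}$, while \eref{sth12} is automatic from the definition of $h$.

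The main obstacle is the gradient portion of the last convergence: expanding $\nabla(g(\epsilon)G(t)g(\epsilon)^{-1}-G(t))$ by the product rule leaves a term of size $\|g(\epsilon)-I_\V\|_\infty\|\nabla G(t)\|_2$ (easily controlled) together with terms proportional to $\nabla g(\epsilon)\cdot G(t)$, whose $L^2$ norm is bounded by $\|\nabla g(\epsilon)\|_2\|G(t)\|_\infty$. Since $\|G(t)\|_\infty\le\|C(t)\|_\infty\le\|C\|_{\P_T}\,t^{-1/4}\le\|C\|_{\P_T}\,\epsilon^{-1/4}$ in the relevant range $t\ge\epsilon$, the desired bound requires $\epsilon^{-1/4}\|\nabla g(\epsilon)\|_2\to0$. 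Differentiating the integral equation for $g$ in $x$ and applying Gronwall with the integrable majorant $s^{-3/4}$ reduces this to integrability of $\|\nabla d^*C(s)\|_2$ on $(0,\epsilon)$. Lemma \ref{lemST9} gives $C(s)\in\D(\Delta)$ for $s>0$, Corollary \ref{corST10} then places $d^*C(s)$ in $W_1(M)$, and the Gaffney--Friedrichs inequality \eref{gaf50} (with $A=0$) bounds $\|\nabla d^*C(s)\|_2$ by a constant multiple of $\|\Delta C(s)\|_2+\|d^*C(s)\|_2$; the standard parabolic smoothing $\|\Delta C(s)\|_2=O(s^{-1/2})$ underlying Theorem \ref{thmEUP} then provides the required integrable decay, yielding $\|\nabla g(\epsilon)\|_2=O(\epsilon^{1/2})=o(\epsilon^{1/4})$ and hence \eref{sth10}.
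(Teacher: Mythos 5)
Your proposal takes the ``direct'' route to \eref{sth10}: solve the ODE for $g$ down to $t=0$, estimate $\nabla g$, and control $g^{-1}Cg$ in $W_1$ directly. This is not what the paper does, and, more importantly, it is precisely the approach the authors say they could not make work. The Remark immediately following Corollary \ref{corSTE3} states explicitly: ``it would seem natural to prove \eref{sth9} and \eref{sth10} first, from which \eref{sth2} would follow easily. But we have not been able to find a direct proof of the estimates on $g_\epsilon(t)^{-1}dg_\epsilon(t)$ needed for proving \eref{sth10}.'' The paper instead proves \eref{sth2}--\eref{sth4} first via the apriori machinery of Section~\ref{secfe}, and then derives \eref{sth10} by writing $h_\epsilon = A_\epsilon - C_\epsilon$ with $C_\epsilon = g_\epsilon^{-1}Cg_\epsilon$; the boundedness of $\|h_\epsilon\|_6$ then comes for free from the $H_1$ bounds on $A_\epsilon$ and $C$ (via $H_1\hookrightarrow L^6$), rather than from any estimate on $\nabla g_\epsilon$. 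This lets the authors show $C_\epsilon$ is $H_1$-Cauchy without ever differentiating $g_\epsilon$.

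The concrete gap in your argument is the step ``the standard parabolic smoothing $\|\Delta C(s)\|_2 = O(s^{-1/2})$ underlying Theorem~\ref{thmEUP}.'' No such estimate is established in the paper. The path space norm $\|\cdot\|_{\P_T}$ defined in \eref{ST23} controls $\|C(t)\|_{H_1}$, $t^{1/4}\|C(t)\|_\infty$, $t^{3/4}\|dC(t)\|_\infty$ and $t^{3/4}\|d^*C(t)\|_\infty$, but nothing of the type $t^{1/2}\|\Delta C(t)\|_2$. The only place $\Delta C(t)$ is shown to exist is Lemma~\ref{lemST9}, which invokes H\"older continuity of $F$ on $[a,T)$ from Lemma~\ref{lemST8b} --- and there the H\"older constant $c_2$ depends on $a$ and blows up as $a\downarrow 0$ (it is dominated by $\sup_{a\le\sigma<T}\|F(\sigma)\|_2\sim a^{-1/4}$). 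So there is no uniform-in-$t$ quantitative bound on $\|\Delta C(t)\|_2$ in the paper's toolbox, and obtaining one would require strengthening the contraction-mapping space and the heat-kernel estimates --- real additional work, not a citation. Without $\|\nabla d^*C(s)\|_2\in L^1(0,\epsilon)$ you do not get $\|\nabla g(\epsilon)\|_2 = o(\epsilon^{1/4})$, and without that, the gradient part of $g(\epsilon)G(t)g(\epsilon)^{-1} - G(t)$ is not shown to vanish in $L^2$. Relatedly, the assertion that $G(t)=g(t)^{-1}C(t)g(t)\in W_1$ ``after verifying'' is never verified; it hinges on the same unproven $\nabla g$ bound, so the step $\|\nabla G(t)\|_2$ ``easily controlled'' is circular as written. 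Your treatment of \eref{sth9} and the group relation $g(t)=g_\epsilon(t)g(\epsilon)$ is fine (it matches Lemma~\ref{lem20''}); the difficulty is entirely concentrated in \eref{sth10}, and your route through it is the one the paper deliberately circumvents.
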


         The proofs of theses two theorems and corollary will be given at the
  end of this section.

 \begin{remark} {\rm Since $g_\epsilon(t)$ is given fairly explicitly by
 \eref{sth5} in terms of the solution $C(\cdot)$ to the parabolic equation \eref{ST11},  it would seem natural to prove \eref{sth9} and \eref{sth10} first,
 from which \eref{sth2} would follow easily. But we have not been able to
 find a direct proof of the estimates on $g_\epsilon(t)^{-1} d g_\epsilon(t)$
  needed for proving \eref{sth10}.  Instead we will prove \eref{sth9}
  and \eref{sth2} first, using apriori estimates from Section \ref{secfe}.
  }
  \end{remark}

\subsection{g estimates} \label{secST3.1}

The following computations underlie the procedure described in
 Lemma \ref{lemDS}.
       Throughout this subsection $d$ and $d^*$ act on all smooth forms on $M$.
       Boundary conditions on forms will be described explicitly when appropriate.

            \begin{lemma}\label{lem3a}
Let $C \in C^\infty ((a,b) \times M; \L^1\otimes \frak k)$.
 For each $x \in  M$, let $g(t,x)$ be a solution to the ordinary differential equation
\beq
g'(t, x) g(t,x)^{-1} = d^* C(t,x),\ \  t \in (a,b).                             \label{st2}
\eeq
Define
\beq
  C^g(t,x) =  g(t,x)^{-1} C(t,x) g(t,x) + g(t,x)^{-1} dg(t, x).      \label{a21}
\eeq
Then
\begin{align}
(g^{-1} dg)'  &= g^{-1} (dd^*C) g \qquad \qquad  \text{and}   \label{st3}\\
(C^g)' &= g^{-1} (C' + d_Cd^*C) g.                           \label{a20}
\end{align}
Let  $A(t,x) =  C^g(t,x)$ and assume that $C$ satisfies \eref{ST11} over the interval $(a,b)$. Then
\beq
A'(t) + d_{A(t)}^*B_{A(t)} = 0 \ \  \text{on}\ \ (a,b),                    \label{a30}
\eeq
and further,
\beq
A'(t) = - g(t)^{-1} \{ d_{C(t)}^* B_{C(t)}\} g(t).                            \label{a31}
\eeq
\end{lemma}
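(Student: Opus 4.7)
The proof is essentially a direct computation with the gauge-transform formula, organized so that one never has to touch second spatial derivatives in awkward places. The plan is to derive the two pointwise identities \eqref{st3} and \eqref{a20} by careful product-rule manipulations, then combine \eqref{a20} with \eqref{ST11} and standard gauge covariance of $B$ and $d_{(\cdot)}^*$ to obtain \eqref{a30} and \eqref{a31}.

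First I would extract from the ODE \eqref{st2} the two equivalent forms $g'=(d^*C)g$ and $(g^{-1})'=-g^{-1}g'g^{-1}=-g^{-1}(d^*C)$. Both sides make sense pointwise in $x$, and the Leibniz rule for the exterior derivative on $M$ applied to matrix-valued products reads $d(fh)=(df)h+f\,dh$ for any $\mathrm{End}\,\V$-valued function $f$ and any $\mathrm{End}\,\V$-valued form $h$ (with the matrix product taken in the indicated order). With these in hand, the first identity is a cancellation:
\begin{align*}
(g^{-1}dg)'
&=(g^{-1})'\,dg+g^{-1}d(g')\\
&=-g^{-1}(d^*C)(dg)+g^{-1}d\bigl((d^*C)\,g\bigr)\\
&=-g^{-1}(d^*C)(dg)+g^{-1}(dd^*C)\,g+g^{-1}(d^*C)(dg)\\
&=g^{-1}(dd^*C)g,
\end{align*}
which is \eqref{st3}.

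Next I would compute $(g^{-1}Cg)'$ by the same method. The two terms with $(d^*C)$ no longer cancel but combine into a commutator:
\begin{align*}
(g^{-1}Cg)'
&=-g^{-1}(d^*C)Cg+g^{-1}C'g+g^{-1}C(d^*C)g\\
&=g^{-1}C'g+g^{-1}[C,d^*C]g.
\end{align*}
Adding \eqref{st3} and recognizing $d_Cd^*C=d(d^*C)+[C\wedge d^*C]$ (here $d^*C$ is a $0$-form, so $[C\wedge d^*C]$ is exactly the pointwise commutator appearing above) yields
\[
(C^g)'=g^{-1}\bigl(C'+d_Cd^*C\bigr)g,
\]
which is \eqref{a20}.

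With $C$ satisfying \eqref{ST11}, the bracket $C'+d_Cd^*C$ simplifies to $-d_C^*B_C$, so \eqref{a20} immediately gives \eqref{a31}. Finally, for \eqref{a30} I would invoke the standard gauge covariance identities, valid for smooth gauge transformations: $B_{C^g}=g^{-1}B_C\,g$ and, for any $\kf$-valued $2$-form $\omega$, $d_{C^g}^*(g^{-1}\omega g)=g^{-1}(d_C^*\omega)g$. Combining these gives $d_A^*B_A=g^{-1}(d_C^*B_C)g$, which matches $-A'(t)$ by \eqref{a31}, establishing \eqref{a30}.

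The only real subtlety is bookkeeping: $d^*C$ must be treated as a matrix-valued $0$-form so that the noncommutative Leibniz rule is applied in the right order, and one must be careful that the bracket produced by sliding $(d^*C)$ past $C$ is exactly the one contained in $d_C$. The gauge covariance of $B$ and $d^*$ under smooth $g$ is standard and is what packages \eqref{a31} into the gauge-invariant equation \eqref{a30}.
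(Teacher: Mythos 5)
Your proof is correct and follows essentially the same route as the paper's: both compute $(g^{-1}dg)'$ and $(g^{-1}Cg)'$ separately by the Leibniz rule, identify $d(d^*C)+[C,d^*C]=d_Cd^*C$, substitute the parabolic equation to get \eqref{a31}, and then invoke gauge covariance of $B$ and $d^*_{(\cdot)}$ to obtain \eqref{a30}. The only difference is cosmetic: the paper states $(g^{-1}dg)'=g^{-1}\{d(g'g^{-1})\}g$ as an "easily verifiable identity" and works with $V=g'g^{-1}$ abstractly, whereas you substitute $g'g^{-1}=d^*C$ up front and verify the cancellation explicitly.
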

          \begin{proof}
    The easily verifiable identity $ (g^{-1}dg)' = g^{-1}\{ d(g' g^{-1}) \}g$
    proves \eref{st3}, given \eref{st2}.
  Writing $V = g'g^{-1}$ we can compute
\begin{align*}
 (C^g)'  &= g^{-1}\{C' + [C, g'g^{-1}] \}g  +(g^{-1}d g)'  \notag \\
& = g^{-1}\{C' + [C,V] +dV\}g   \notag
\end{align*}
which is \eref{a20} when \eref{st2} holds.
    In particular, if $C' + d_C d^*C = - d_C^* B_C$ over $(a,b)$ then
    \eref{a20} shows that
    $ A' = g^{-1} (- d_C^* B_C)g = - d_A^* B_A$.
    Here we have used the usual gauge transformation identities,
    $B_A = g^{-1}(B_C) g$ and $ d_A^* B_A = g^{-1}( d_C^* B_C) g$
    when $A =C^g$.
    \end{proof}

      \begin{lemma}\label{lemg4} {\rm (Boundary conditions for $g$.)}
  Suppose that
$C(\cdot) \in C^\infty((0,T))$ and satisfies the differential equation
$(d/dt) C = - (d_C^* B_C + d_C d^*C)$ on $(0,T)$
along with one of the two boundary conditions
 \eref{ST11N} or \eref{ST11D}. Define $g_\epsilon$ by \eref{sth5} and let
 \beq
 h_\epsilon(t) = g_\epsilon(t)^{-1} d g_\epsilon(t),\ \ \ \epsilon \le t < T. \label{g21}
 \eeq
 If $C$ satisfies the Neumann boundary contition \eref{ST11N}, then
 \beq
 (N)\ \ \ \ \ \ \ \ h_\epsilon(t)_{norm} =0 , \ \ \ \ \ \epsilon \le t <T,             \label{g22}
 \eeq
 and if $C$ satisfies the Dirichlet boundary  condition \eref{ST11D}, then
 \beq
 (D)\ \ \ \ \ \ \ \ h_\epsilon(t)_{tan} =0, \ \ \ \ \ \ \         \epsilon \le t <T .     \label{g23}
 \eeq
 In particular, $h_\epsilon(t) \in H_1$ in both cases.
 \end{lemma}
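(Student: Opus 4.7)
My plan is to use the identity \eref{st3} to convert the question into one about the boundary values of $dd^*C(s)$, and then to combine the parabolic equation with Proposition \ref{propDN4} in the Neumann case. Integrating \eref{st3} against the initial condition $h_\epsilon(\epsilon) = g_\epsilon(\epsilon)^{-1} dg_\epsilon(\epsilon) = 0$ yields
\[
h_\epsilon(t) \;=\; \int_\epsilon^t g_\epsilon(s)^{-1}\bigl(dd^*C(s)\bigr)\, g_\epsilon(s)\, ds,\qquad \epsilon \le t < T.
\]
The pointwise $(\cdot)_{norm}$ and $(\cdot)_{tan}$ decompositions of $\kf$-valued forms commute with the $\mathrm{Ad}\,g_\epsilon^{-1}$ action, so it suffices to show that $(dd^*C(s))_{tan}|_{\partial M} = 0$ in case (D), and $(dd^*C(s))_{norm}|_{\partial M} = 0$ in case (N), for each $s \in (0,T)$.

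Case (D) is immediate: \eref{ST11D} gives $d^*C(s)|_{\partial M}=0$, so the tangential derivatives of this $\kf$-valued function on $\partial M$ all vanish, yielding $(d(d^*C(s)))_{tan}|_{\partial M}=0$ and hence \eref{g23}. For (N), the assertion reduces to $\partial_n(d^*C(s))|_{\partial M}=0$, where $\partial_n$ is the outward normal derivative. Because $C(t)_{norm}\equiv 0$, the normal component of $\partial_t C$ vanishes on $\partial M$, and the parabolic equation yields
\[
(d_C^*B_C)_n|_{\partial M} + (d_Cd^*C)_n|_{\partial M} = 0.
\]
Using $C_n|_{\partial M}=0$, the commutator $[C_n, d^*C]$ in the second term vanishes, reducing it to $\partial_n(d^*C)|_{\partial M}$. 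Expanding $d_C^*B_C = D^*B_C + [C\lrc B_C]$ pointwise, the commutator contribution $\sum_k[C^k,(B_C)_{kn}]|_{\partial M}$ vanishes because $C^n|_{\partial M}=0$ (from $C_{norm}=0$) and $(B_C)_{kn}|_{\partial M}=0$ for $k<n$ (from $(B_C)_{norm}=0$). The remaining linear term $(D^*B_C)_n|_{\partial M}$ is handled by applying Proposition \ref{propDN4}\,\eref{DN53} with $A=0$ to the 2-form $B_C \in \D(D^*)$: the conclusion $(D^*)^2 B_C = 0$ places $D^*B_C$ in $\D(D^*)$, equivalently $(D^*B_C)_{norm}|_{\partial M} = 0$. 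Combining these three observations gives $\partial_n(d^*C(s))|_{\partial M}=0$, proving \eref{g22}. Finally, since $g_\epsilon$ is smooth on $[\epsilon,T)\times M$, so is $h_\epsilon(t)$; in particular $h_\epsilon(t) \in W_1(M)$, and together with the proven boundary conditions this places $h_\epsilon(t)$ in $H_1(M)$ in each case.

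The principal obstacle is the Neumann case: a direct coordinate computation of $\partial_n(d^*C)|_{\partial M}$ in adapted coordinates produces the term $\partial_n^2 C_n|_{\partial M}$, which is not controlled by the Neumann boundary conditions on $C$ alone. Using the PDE to couple this term with $(d_C^*B_C)_n|_{\partial M}$, and then invoking Proposition \ref{propDN4} to dispose of the latter without a direct second-fundamental-form calculation, is what allows the argument to close cleanly.
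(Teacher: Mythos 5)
Your proposal is correct and follows essentially the same strategy as the paper's proof: reduce via \eref{st3} and $h_\epsilon(\epsilon)=0$ to showing the appropriate component of $dd^*C(s)$ vanishes on $\p M$, handle (D) directly from $d^*C|_{\p M}=0$, and in (N) differentiate $C_{norm}\equiv 0$ in $t$ and use the PDE to trade the normal component of $dd^*C$ for that of $d_C^*B_C$. The only divergence is in the last step of (N): you split $d_C^*B_C = D^*B_C + [C\lrc B_C]$, kill the commutator by a coordinate computation using $(B_C)_{norm}=0$, and dispose of $D^*B_C$ via \eref{DN53} with $A=0$; the paper instead applies Corollary \ref{corDN5}\,\eref{DN61} with the gauge connection $A=C$, which yields $(d_C^*B_C)_{norm}=0$ in one stroke and avoids the coordinate-by-coordinate commutator argument. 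Both are correct; the paper's route is a little shorter because the machinery of Section \ref{secDN} was set up precisely so the covariant operator could be used directly.
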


                \begin{proof}
     Since $g_\epsilon(\epsilon) = I_{\V}$ it follows that $h_\epsilon(\epsilon)=0$.
 It suffices, therefore, to show that the normal, respectively tangential, component
 of $h_\epsilon'(t)$ is zero on $[\epsilon, T)$. The identity  \eref{st3}
 shows that $h_\epsilon'(t) = g_\epsilon(t)^{-1} \{dd^* C(t)\} g_\epsilon(t)$
 and therefore it suffices to show that the normal, respectively tangential,
 component of $dd^*C(t)$ is zero for $\epsilon \le t < T$.

        In case (N) we have, by \eref{ST11N},  $C(t)_{norm} = 0$  and $(B_{C(t)})_{norm}=0$. From the first equality it follows that $C'(t)_{norm} =0$
        and from the second equality it follows, with the help of \eref{DN61},
        that $(d_{C(t)}^* B_{C(t)})_{norm} =0$. Therefore \eref{ST11} shows that
        $(d_{C(t)} d^*C(t))_{norm} =0$.
      Hence $(dd^*C(t))_{norm} = - [C(t), d^*C(t)]_{norm}
      = - [ C(t)_{norm}, d^*C(t)] = 0$. This proves case (N).

 In case (D), we have
 $(d^*C(t))_{tan} \equiv (d^* C(t))|_{\p M} =0$  by \eref{ST11D}.
 Therefore $(dd^* C(t))_{tan} =0$ by \eref{DN60} (with $A =0$).
 This proves case (D).

       Since $h_\epsilon(t) \in C^\infty(M)$ and satisfies the right boundary conditions it is in $H_1$ in both cases.

     Although this proves the lemma, it may be worth noting that in case (D)
   the defining   equation \eref{sth5} shows directly that
  $g_\epsilon'(t)|_{\p M} = 0$ because   $d^* C(t)|_{\p M} =0$.
  Hence $g_\epsilon(t) =I_{\V}$ on $\p M$ and therefore its tangential derivative, $h_\epsilon(t)_{\tan}$ is zero.
 \end{proof}

      \begin{corollary}\label{corg5}
      {\rm (Boundary conditions for $A_\epsilon $.)}
       Define $A_\epsilon(t)$ by \eref{sth1}.
 Then $ A_\epsilon(t) \in H_1(M)$  in both Neumann and Dirichlet cases,
 for $\epsilon \le t <T$.
 \end{corollary}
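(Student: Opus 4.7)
The plan is to decompose $A_\epsilon(t)$ into its two defining pieces in \eref{sth1} and verify that each piece separately lies in $W_1(M)$ and meets the required boundary condition; membership in $H_1$ then follows from the characterization in Notation \ref{notSob} (i.e., $\w \in W_1$ is in the Neumann, resp.\ Dirichlet, version of $H_1$ iff $\w_{norm}=0$, resp.\ $\w_{tan}=0$).

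First I would observe that for $t \in [\epsilon,T)$, Theorem \ref{thmEUP} gives $C(\cdot) \in C^\infty((0,T)\times M)$, so $d^*C(t,x)$ is smooth in $(t,x)$ on $[\epsilon,T)\times M$. Standard ODE theory applied to \eref{sth5} then yields $g_\epsilon \in C^\infty([\epsilon,T)\times M;K)$, with $g_\epsilon(t,x)$ pointwise an element of the compact group $K$, so bounded with bounded derivatives. Combining this with $C(t) \in W_1$ (from Theorem \ref{thmpara}) by the product/chain rule shows that the conjugate $g_\epsilon(t)^{-1} C(t) g_\epsilon(t)$ lies in $W_1$, and the logarithmic derivative $h_\epsilon(t) = g_\epsilon(t)^{-1} dg_\epsilon(t)$ is smooth on $M$, hence in $W_1$. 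Thus $A_\epsilon(t) \in W_1$.

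For the boundary conditions, the key point is that $g_\epsilon(t)$ is a zero-form, so pointwise conjugation $\w \mapsto g_\epsilon(t,x)^{-1} \w(x) g_\epsilon(t,x)$ commutes with the tangential/normal decomposition of a $\kf$-valued $p$-form at $x \in \p M$. In the Neumann case, $C(t)_{norm}=0$ (from \eref{ST11N}) yields $(g_\epsilon^{-1} C(t) g_\epsilon)_{norm}=0$, while Lemma \ref{lemg4} supplies $h_\epsilon(t)_{norm}=0$; adding gives $A_\epsilon(t)_{norm}=0$. In the Dirichlet case, $C(t)_{tan}=0$ together with $h_\epsilon(t)_{tan}=0$ from Lemma \ref{lemg4} gives $A_\epsilon(t)_{tan}=0$ similarly.

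I do not anticipate a genuine obstacle here: the result is essentially a bookkeeping consequence of Lemma \ref{lemg4} plus the invariance of the normal/tangential decomposition under pointwise conjugation by a $K$-valued function. The only point that deserves care is confirming that $g_\epsilon(t,x)$ really is smooth up to $t=\epsilon$ and up to the boundary of $M$, so that the product $g_\epsilon^{-1} C g_\epsilon$ is controlled in $W_1$; this is guaranteed by the smoothness of $C$ on $(0,T)\times M$ established in Theorem \ref{thmEUP} together with the fact that \eref{sth5} is a parameter-dependent linear ODE in $\text{End}\,\V$ whose right-hand side is smooth in $(t,x)$ on $[\epsilon,T)\times M$.
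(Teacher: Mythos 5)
Your proof is correct and follows essentially the same route as the paper's: split $A_\epsilon(t)$ via \eref{sth1}, note that conjugation by the $K$-valued zero-form $g_\epsilon(t)$ preserves the normal/tangential decomposition so the boundary condition on $C(t)$ passes to $g_\epsilon^{-1}C(t)g_\epsilon$, and invoke Lemma \ref{lemg4} for $h_\epsilon(t)=g_\epsilon(t)^{-1}dg_\epsilon(t)$. The paper's own proof is just a terser version of this, relying on the smoothness $A_\epsilon\in C^\infty([\epsilon,T)\times M)$ already recorded in Theorem \ref{thmSTE3}.
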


                   \begin{proof}
    Since $A_\epsilon(t)$ is in $C^\infty(M)$ and $C(t)$ satisfies the
 right boundary conditions, the definition \eref{sth1} shows that we need only prove that $g_\epsilon(t)^{-1} d g_\epsilon(t)$ satisfies the correct boundary conditions. But this is the assertion of Lemma \ref{lemg4}.
    \end{proof}

          \begin{lemma}\label{lem20''}
 Define $g_\epsilon: [\epsilon, T) \rightarrow K$ as in \eref{sth5}. Then
 \beq
 \sup_{\epsilon \le t <T} \| g_\delta(t) - g_\epsilon(t)\|_\infty \rightarrow 0
  \ \text{as}       \ 0 <\delta <\epsilon \downarrow 0.   \label{sth17''}
 \eeq
 Moreover there is a unique function
 $g \in C([0, T) \times M; K)$ such that $g(0) = I_\V$ and such that, for each
 $a \in (0, T)$, $g_\epsilon$ converges to $g$ uniformly on $[a,T)\times M$.
\end{lemma}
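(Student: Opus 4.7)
The plan is to regard the ODE \eref{sth5} as a flow on $K\subset\text{End}\,\V$ driven by the time-dependent vector field $\xi(t,x) := d^*C(t,x)\in\kf$, and exploit the fact that although $\xi$ is singular as $t\downarrow 0$, its $L^\infty$ norm is only mildly singular: by the path space bound \eref{ST23} applied to the solution $C$ provided by Theorem \ref{thmpara}, there is a constant $M<\infty$ with
\beq
\|d^*C(t)\|_\infty \le M\, t^{-3/4},\qquad 0<t<T,      \label{plan1}
\eeq
which is integrable on $(0,T)$, with $\int_0^T\|d^*C(s)\|_\infty\,ds\le 4MT^{1/4}$. Since $\xi(t,x)\in\kf$ for every $(t,x)$ and $g_\epsilon(\epsilon)=I_\V\in K$, a standard ODE/Lie-group argument shows $g_\epsilon(t,x)\in K$ for all $t\in[\epsilon,T)$, so $\|g_\epsilon(t,x)\|_{op}=1$ uniformly.

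The heart of the argument is a Gronwall estimate for the difference. Fix $0<\delta<\epsilon<T$. At $t=\epsilon$, we have
\beq
g_\delta(\epsilon)-I_\V \;=\; \int_\delta^\epsilon \xi(s)\,g_\delta(s)\,ds,
\eeq
so by \eref{plan1} and $\|g_\delta(s)\|_{op}=1$,
\beq
\|g_\delta(\epsilon)-g_\epsilon(\epsilon)\|_\infty \;=\; \|g_\delta(\epsilon)-I_\V\|_\infty \;\le\; 4M(\epsilon^{1/4}-\delta^{1/4}).
\eeq
For $t\ge\epsilon$, the difference $u(t):=g_\delta(t)-g_\epsilon(t)$ satisfies $u'(t)=\xi(t)\,u(t)$, so
\beq
\|u(t)\|_\infty \;\le\; \|u(\epsilon)\|_\infty \,\exp\!\Bigl(\int_\epsilon^t\|\xi(s)\|_\infty\,ds\Bigr) \;\le\; e^{4MT^{1/4}}\cdot 4M\,\epsilon^{1/4}.
\eeq
Taking the supremum over $t\in[\epsilon,T)$ gives \eref{sth17''}.

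From this Cauchy property, for each $a\in(0,T)$ the sequence $\{g_\epsilon\}_{\epsilon<a}$ is uniformly Cauchy on $[a,T)\times M$, so it converges uniformly to a continuous $K$-valued function $g$ on $(0,T)\times M$. To extend $g$ continuously to $t=0$, I will use the bound
\beq
\|g_\epsilon(t)-I_\V\|_\infty \;\le\; \int_\epsilon^t\|\xi(s)\|_\infty\,ds \;\le\; 4Mt^{1/4},\qquad \epsilon\le t<T,
\eeq
which passes to the limit to yield $\|g(t)-I_\V\|_\infty\le 4Mt^{1/4}$; setting $g(0,x)=I_\V$ makes $g$ continuous on $[0,T)\times M$. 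Uniqueness of $g$ with the stated property is immediate: any two candidates coincide on each slab $[a,T)\times M$ as the common uniform limit, and both equal $I_\V$ at $t=0$.

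I do not expect a serious obstacle: the only subtle point is the integrability of $\|d^*C(s)\|_\infty$ near $s=0$, which is precisely what was built into the path space $\P_T$ via the exponent $3/4$ in \eref{ST23}. Everything else is classical ODE theory on the compact Lie group $K$.
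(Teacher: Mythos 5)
Your proof is correct and rests on the same essential fact as the paper's: the integrability of $\|d^*C(s)\|_\infty$ near $s=0$, encoded in the $t^{3/4}$ weight in the path-space norm \eref{ST23}. The route differs in one spot. For $t\ge\epsilon$ you set $u=g_\delta-g_\epsilon$, observe $u'=\xi\,u$, and invoke Gronwall, picking up the factor $e^{4MT^{1/4}}$. The paper instead exploits the cocycle identity: the function $t\mapsto g_\delta(t)g_\delta(\epsilon)^{-1}$ solves the same initial value problem as $g_\epsilon$ on $[\epsilon,T)$, so by uniqueness $g_\delta(t)=g_\epsilon(t)\,g_\delta(\epsilon)$, hence $g_\delta(t)-g_\epsilon(t)=g_\epsilon(t)\bigl(g_\delta(\epsilon)-I_\V\bigr)$ and, by unitarity of $g_\epsilon(t)$, $\|g_\delta(t)-g_\epsilon(t)\|_{op}=\|g_\delta(\epsilon)-I_\V\|_{op}$ exactly, no exponential loss. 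Note that your own setup already contains this sharpening: since $u$ and $g_\epsilon$ solve the same linear ODE on $[\epsilon,T)$ with $g_\epsilon(\epsilon)=I_\V$, one has $u(t)=g_\epsilon(t)u(\epsilon)$ by uniqueness, so $\|u(t)\|_{op}=\|u(\epsilon)\|_{op}$ and Gronwall is unnecessary. Both versions yield \eref{sth17''} and the continuous extension by $g(0)=I_\V$; yours is merely a slightly blunter estimate, not an error.
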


               \begin{proof}
 For ease in reading let $ V(t,x) = d^* C(t,x)$. All the estimates that need to be made are pointwise in $x$. For each $x \in M$, $V(t,x)$ is a continuous  function
 on $(0, T)$ into $\frak k$ and $\int_0^T \|V(s)\|_\infty ds < \infty$ by \eref{ST23}.
 We will suppress the  $x$ dependence in the following.
 If $0 < \delta <\epsilon$ then the function $[\epsilon, T)\ni  t \mapsto g_\delta(t) g_\delta(\epsilon)^{-1}$ satisfies the initial value problem \eref{sth5},
 and consequently,
 \beq
 g_\delta(t) = g_\epsilon(t) g_\delta(\epsilon), \ \ \epsilon \le t <T. \label{sth18''}
 \eeq
 Since $g_\epsilon(t)$ is unitary it follows that
 \beq
 \|g_\delta(t) - g_\epsilon(t)\|_{op}
 = \| g_\delta(\epsilon) - I_\V\|_{op},\            \epsilon \le t <T.  \label{sth19''}
 \eeq
 But
 \beq
 \| g_\delta(\epsilon) - I_\V \|_{op}  = \|\int_\delta^\epsilon g_\delta'(s) ds \|_{op}
 \le \int_\delta^\epsilon \| V(s)\|_\infty ds \rightarrow 0 \ \text{as}\ \epsilon\downarrow 0.   \label{sth20''}
 \eeq
 This proves \eref{sth17''}.
 The existence of a uniform limit $g$ over each set $[a,T) \times M$ now follows
 and the limit is clearly independent of $a$. Moreover letting $\delta\downarrow 0$ in \eref{sth20''} shows that
 $\| g(\epsilon)- I_\V\|_\infty \le \int_0^\epsilon \|V(s)\|_\infty ds$,
  and therefore  $g$ is continuous on all of $[0, T) \times M$ if defined to
 be $I_\V$ at $t =0$.
 \end{proof}

\subsection{$A$ estimates}   \label{secSTE3.2}

Our goal in this section is to show that the smooth forms $A_\epsilon(t)$ and
$B_\epsilon(t)$ converge in strong senses as $\epsilon\downarrow 0$.
Since $A_\epsilon(\cdot)$ is in $C^\infty((\epsilon, T)) \times M)$,
 all of the apriori estimates derived in Sections \ref{secSobsol}, \ref{secfa} and  \ref{secfe}      are
 applicable in this subsection.

 With a view toward applying the Gaffney-Friedrichs inequality   \eref{gaf50}
 (with $A=0$ in that inequality),
 we are going to make estimates in the next few lemmas of $\|d\w\|_2$
 and $\|d^*\w\|_2$ for  several different choices of $\w$.

         All four lemmas in this section depend on the apriori estimates of order
         one in Section \ref{secfe}.

   \begin{lemma}\label{lemA55'}
\begin{align}
\int_\epsilon^t \| A_\epsilon'(s)\|_4 ds
        &\le C_4(t, \| B_0\|_2),\  \epsilon \le t <T  ,               \label{sth100a'}\\
\|A_\epsilon(t)\|_4
&\le    \|C(\epsilon)\|_4 + C_4(t, \|B_0\|_2), \ \epsilon \le t <T ,   \label{sth105a'}\\
 \text{and}\qquad   \qquad      \|B_\epsilon(t)\|_2 &\le \| B_0\|_2, \       \epsilon \le t <T.  \label{sth27a'}
\end{align}
\end{lemma}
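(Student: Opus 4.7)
The plan is to recognize that $A_\epsilon(\cdot)$ is itself a smooth solution of the Yang-Mills heat equation on $[\epsilon,T)$ with initial data $C(\epsilon)$, and then apply the finite-energy apriori estimates of Section~\ref{secfe} directly to $A_\epsilon$. The key preliminary observation is that $g_\epsilon(\epsilon,x)=I_\V$ is constant in $x$, so $dg_\epsilon(\epsilon)=0$, and the gauge-transform formula \eref{sth1} gives $A_\epsilon(\epsilon)=C(\epsilon)$ and hence $B_\epsilon(\epsilon)=B_{C(\epsilon)}$. By Lemma \ref{lem3a}, equation \eref{a30}, $A_\epsilon$ satisfies $A_\epsilon'=-d_{A_\epsilon}^*B_\epsilon$ on $(\epsilon,T)$, and Corollary \ref{corg5} yields the required Neumann or Dirichlet boundary conditions. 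The form $A_\epsilon$ inherits smoothness on $[\epsilon,T)\times M$ from that of $C$ on $(0,T)\times M$ (Theorem \ref{thmpara}) together with the smoothness of the solution $g_\epsilon$ to the ODE \eref{sth5}.

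For \eref{sth27a'} I would use that $g_\epsilon$ takes values in $K$ and the inner product on $\frak k$ is $Ad\,K$-invariant, so $|B_\epsilon(t,x)|_{\frak k}=|B_{C(t)}(t,x)|_{\frak k}$ pointwise and therefore $\|B_\epsilon(t)\|_p=\|B_{C(t)}\|_p$ for every $p$. Combined with the monotonicity $\|B_{C(t)}\|_2\le\|B_0\|_2$ from Lemma \ref{lemST31}, this gives \eref{sth27a'} at once.

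For \eref{sth100a'} I would apply Corollary \ref{corfe45} with $p=4$ to the time-shifted smooth solution $\tilde A(\tau):=A_\epsilon(\tau+\epsilon)$ on $[0,T-\epsilon)$, whose initial curvature satisfies $\|\tilde B(0)\|_2=\|B_{C(\epsilon)}\|_2\le\|B_0\|_2$. The corollary gives $\int_0^{t-\epsilon}\|\tilde A'(\sigma)\|_4\,d\sigma\le C_4(t-\epsilon,\|B_{C(\epsilon)}\|_2)\le C_4(t,\|B_0\|_2)$, where the last step uses the monotonicity of $C_4$ and of $\|B_{C(\cdot)}\|_2$; after change of variables this is \eref{sth100a'}. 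Then \eref{sth105a'} follows from integrating the strong $L^2$ identity $A_\epsilon(t)=A_\epsilon(\epsilon)+\int_\epsilon^t A_\epsilon'(s)\,ds$, taking $L^4$ norms, and using $A_\epsilon(\epsilon)=C(\epsilon)$ together with the bound just derived.

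The only real point to verify is that Corollary \ref{corfe45} is legitimately applicable, since as stated it presumes the solution is smooth on $(0,T)\times M$. This is in fact the whole motivation for the $\epsilon$-regularization: $A_\epsilon$ is smooth on $[\epsilon,T)\times M$, and the shift $\tilde A(\tau)=A_\epsilon(\tau+\epsilon)$ converts the problem into one with smooth initial data $B_{C(\epsilon)}$ whose $L^2$ norm is controlled uniformly in $\epsilon$ by the gauge-invariant energy identity of Lemma \ref{lemST31}; no term appears on the right of \eref{sth100a'} or \eref{sth105a'} that depends on $\epsilon$ except through $\|C(\epsilon)\|_4$ in \eref{sth105a'}, which reflects the fact that $A_\epsilon(\epsilon)=C(\epsilon)$.
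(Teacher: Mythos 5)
Your proposal is correct and follows essentially the same route as the paper: both apply the finite-energy estimate \eref{fe80}/\eref{M55'} (Corollary \ref{corfe45}) to the smooth solution $A_\epsilon$ on $[\epsilon,T)$ with $A_\epsilon(\epsilon)=C(\epsilon)$, use monotonicity of $C_4$ together with $\|B_{C(\epsilon)}\|_2\le\|B_0\|_2$ from Lemma \ref{lemST31}, derive \eref{sth105a'} by integrating $A_\epsilon'$ in $L^4$, and obtain \eref{sth27a'} from the $Ad\,K$-invariance of the norm plus \eref{ST203}. Your explicit verification of the boundary conditions and of the applicability of the smooth apriori estimates is a welcome elaboration of what the paper leaves implicit.
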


                           \begin{proof}
Since $A_\epsilon$ is a solution to \eref{ymh10}
     over the interval  $(\epsilon, T)$, we may apply  \eref{M55'} over the
      interval $[\epsilon,T)$ to find
$ \int_\epsilon^t \| A_\epsilon'(s)\|_4 ds
        \le C_4(t-\epsilon, \| B_{A_\epsilon(\epsilon)}\|_2)$
        for $ \epsilon \le t <T $. Since $C_4$ is monotone in both arguments
        and    $\| B_{A_\epsilon(\epsilon)}\|_2 = \|B_{C(\epsilon)}\|_2 \le \|B_0\|_2$,
        \eref{sth100a'} follows. The derivation of \eref{sth105a'} from
    \eref{sth100a'} is similar to the derivation of  \eref{M78'}, considering that
    $A_\epsilon(\epsilon) = C(\epsilon)$.
            Further,
  $\|B_\epsilon(t) \|_2 = \| g_\epsilon(t)^{-1} B_{C(t)} g_\epsilon(t)\|_2
 = \| B_{C(t)}\|_2 \le \|B_0\|_2$ by \eref{ST203}, proving \eref{sth27a'}.
\end{proof}

\begin{lemma}\label{lemA56}
As $0 < \delta \le \epsilon \downarrow 0$ the following limits hold.
\begin{align}
 \int_\epsilon^T \| A_\delta'(s) - A_\epsilon'(s)\|_4 ds &\rightarrow0.\
                     \label{sth101a} \\
 \sup_{\epsilon \le t < T} \|A_\delta(t) - A_\epsilon (t) \|_4 &\rightarrow 0.
                    \label{sth106a}      \\
\sup_{\epsilon \le t < T} \|A_\delta(t) - A_\epsilon (t) \|_2 &\rightarrow 0.
                     \label{sth107a} \\
 \sup_{\epsilon \le t < T}\| B_\delta(t) - B_\epsilon(t)\|_2 &\rightarrow 0.
                    \label{sth28a}
 \end{align}
 \end{lemma}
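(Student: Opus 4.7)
Plan: The argument rests on the multiplicative identity $g_\delta(t) = g_\epsilon(t)\, g_\delta(\epsilon)$, valid for $t \ge \epsilon$ by \eref{sth18''}. Substituting into \eref{sth1} and applying the product rule to $dg_\delta(t)$ yields the conjugation identities
\[
A_\delta(t) = g_\delta(\epsilon)^{-1} A_\epsilon(t) g_\delta(\epsilon) + h_\delta(\epsilon), \qquad B_\delta(t) = g_\delta(\epsilon)^{-1} B_\epsilon(t) g_\delta(\epsilon),
\]
together with $A_\delta'(t) = g_\delta(\epsilon)^{-1} A_\epsilon'(t) g_\delta(\epsilon)$ (via \eref{a31}), all for $t \ge \epsilon$, where $h_\delta(\epsilon) = g_\delta(\epsilon)^{-1} dg_\delta(\epsilon)$ is independent of $t$. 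Since $\mathrm{Ad}_{g_\delta(\epsilon)^{-1}}$ is a pointwise isometry of $\kf$ satisfying $|\mathrm{Ad}_{g_\delta(\epsilon)^{-1}} X - X|_{\kf} \le 2\|g_\delta(\epsilon) - I\|_\infty |X|_{\kf}$ pointwise, and since $\|g_\delta(\epsilon) - I\|_\infty \to 0$ by \eref{sth17''}, the estimate \eref{sth28a} follows immediately from the uniform bound $\|B_\epsilon(t)\|_2 \le \|B_0\|_2$ in \eref{sth27a'}, while \eref{sth101a} follows from $\int_\epsilon^T \|A_\epsilon'(s)\|_4\,ds \le C_4(T, \|B_0\|_2)$ in \eref{sth100a'}.

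The remaining estimates \eref{sth106a} and \eref{sth107a} reduce, by the same triangle inequality, to showing $\|h_\delta(\epsilon)\|_p \to 0$ in $L^p(M)$ for $p = 2, 4$. The key algebraic observation is that evaluating the first conjugation identity at $t = \epsilon$, using $A_\epsilon(\epsilon) = C(\epsilon)$ and $A_\delta(\delta) = C(\delta)$, produces the decomposition
\[
h_\delta(\epsilon) = \bigl(A_\delta(\epsilon) - A_\delta(\delta)\bigr) + \bigl(C(\delta) - C(\epsilon)\bigr) - \bigl(g_\delta(\epsilon)^{-1} C(\epsilon) g_\delta(\epsilon) - C(\epsilon)\bigr).
\]
The third bracket is bounded pointwise by $2\|g_\delta(\epsilon) - I\|_\infty |C(\epsilon)|_{\kf}$, so its $L^p$ norm vanishes because $\|C(\epsilon)\|_p \le c \|C\|_{\P_T}$ via the Sobolev embedding $H_1(M) \hookrightarrow L^p(M)$ (valid for $p \le 6$) together with \eref{ST23}. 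The second bracket tends to zero by continuity of $C : [0, T] \to H_1(M)$ at $t = 0$, combined with the same embedding. For the first bracket, writing $A_\delta(\epsilon) - A_\delta(\delta) = \int_\delta^\epsilon A_\delta'(s)\,ds$ via \eref{M77'} applied to the smooth solution $A_\delta$ on $[\delta, T)$: the a priori estimate \eref{sth100a'} gives $\int_\delta^\epsilon \|A_\delta'(s)\|_4\,ds \le C_4(\epsilon, \|B_0\|_2) \to 0$ as $\epsilon \to 0$ (since the bound in Corollary \ref{corfe45} vanishes at $t = 0$); for $p = 2$, \eref{fe6} gives $(s - \delta)\|A_\delta'(s)\|_2^2 \le C_1(T, \|B_0\|_2)$, whence $\int_\delta^\epsilon \|A_\delta'(s)\|_2\,ds \le 2\sqrt{\epsilon - \delta}\, C_1(T, \|B_0\|_2)^{1/2} \to 0$.

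Once $\|h_\delta(\epsilon)\|_p \to 0$ is in hand, \eref{sth106a} and \eref{sth107a} follow from
\[
\sup_{\epsilon \le t < T} \|A_\delta(t) - A_\epsilon(t)\|_p \le 2\|g_\delta(\epsilon) - I\|_\infty \sup_{\epsilon \le t < T} \|A_\epsilon(t)\|_p + \|h_\delta(\epsilon)\|_p,
\]
where the supremum of $\|A_\epsilon(t)\|_p$ is uniformly bounded in $\epsilon$: by \eref{sth105a'} for $p = 4$, and by \eref{M81'} combined with $\|B_{C(\epsilon)}\|_2 \le \|B_0\|_2$ and the $\P_T$-bound on $\|C(\epsilon)\|_2$ for $p = 2$. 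The main obstacle is the step $h_\delta(\epsilon) \to 0$: a direct attack through the integral representation $h_\delta(\epsilon) = \int_\delta^\epsilon g_\delta(s)^{-1} (dd^* C(s))\, g_\delta(s)\,ds$ coming from \eref{st3} would demand control of $\|dd^* C(s)\|_p$ as $s \downarrow 0$, a second-derivative regularity of $C$ not encoded in the $\P_T$ norm. The algebraic identity above sidesteps this entirely by expressing $h_\delta(\epsilon)$ in terms of the already-controlled quantities $A_\delta(\epsilon)$, $C(\delta)$, and $C(\epsilon)$.
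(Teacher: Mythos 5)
Your proposal is correct, and for \eref{sth28a} and \eref{sth101a} it is identical to the paper's argument. For \eref{sth106a} and \eref{sth107a}, you take a modest algebraic variant of the paper's route. The paper writes $A_\delta(t) - A_\epsilon(t) = (C(\delta)-C(\epsilon)) + \int_\delta^\epsilon A_\delta'(s)\,ds + \int_\epsilon^t (A_\delta'(s)-A_\epsilon'(s))\,ds$ directly from the fundamental theorem of calculus and bounds the last term uniformly in $t$ via \eref{sth101a}. You instead observe $A_\delta(t) = \mathrm{Ad}_{g_\delta(\epsilon)^{-1}} A_\epsilon(t) + h_\delta(\epsilon)$, reduce the uniform estimate to $\|h_\delta(\epsilon)\|_p \to 0$, and unwind $h_\delta(\epsilon)$ into the three controlled pieces $A_\delta(\epsilon)-A_\delta(\delta)$, $C(\delta)-C(\epsilon)$, and $(\mathrm{Ad}_{g_\delta(\epsilon)^{-1}}-I)C(\epsilon)$. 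In fact these two decompositions are term-by-term the same object, since $\int_\epsilon^t (A_\delta'-A_\epsilon')\,ds = (\mathrm{Ad}_{g_\delta(\epsilon)^{-1}}-I)(A_\epsilon(t)-C(\epsilon))$; what you gain is that the $t$-dependence is isolated in the single term $(\mathrm{Ad}_{g_\delta(\epsilon)^{-1}}-I)A_\epsilon(t)$, for which you need the uniform $L^p$ bounds $\sup_{\epsilon\le t<T}\|A_\epsilon(t)\|_p < \infty$ (which you correctly extract from \eref{sth105a'} and \eref{M81'}), whereas the paper leans on the already-established integral estimate \eref{sth101a}. Your closing observation — that estimating $h_\delta(\epsilon)$ directly from $\int_\delta^\epsilon g_\delta^{-1}(dd^*C)g_\delta\,ds$ would require second-derivative control on $C$ not encoded in the $\P_T$ norm — is exactly the difficulty the paper flags in the remark following Corollary \ref{corSTE3}, and your $h_\delta(\epsilon)$ identity is a valid way to sidestep it. Two small technical points to keep in mind: the constant $2$ in $|\mathrm{Ad}_g X - X|_\kf \le 2\|g-I\|_\infty|X|_\kf$ is only exact when $|\cdot|_\kf$ is the operator norm and should be replaced by a norm-equivalence constant in general; and your use of $C_4(\epsilon,\|B_0\|_2)\to 0$ as $\epsilon\to 0$ is justified by the $t^{1/8}$ factor visible in Lemma \ref{lem23.1}, though the paper's simpler route notes that the integrand $\|d_{C(s)}^*B_{C(s)}\|_4$ is $\delta$-independent and integrable on $(0,T)$, so the tail integral vanishes by dominated convergence.
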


                   \begin{proof}
         To prove \eref{sth101a} observe that gauge transformations relate well to
  the forms $A'$ in that
  \beq
  A_\delta'(s) = (Ad\ g_\delta(\epsilon)^{-1}) A_\epsilon'(s) ,
               \ \ \ s \ge \epsilon                               \label{sth102a}
  \eeq
  because $-A_\delta'(s) = d_{A_\delta(s)}^*B_{A_\delta(s)}
  = (Ad\ g_\delta(\epsilon)^{-1} )d_{A_\epsilon(s)}^*B_{A_\epsilon(s)} $.
  Therefore
    \begin{align*}\int_\epsilon^t \| A_\delta'(s) - A_\epsilon'(s)\|_p ds
      &=\int_\epsilon^t \| (Ad\ g_\delta(\epsilon)^{-1} - I_{\frak k})
   A_\epsilon'(s)\|_pds\\
 &\le \| g_\delta(\epsilon) - I_\V\|_\infty    C_8(t, \| B_0\|_2),
  \end{align*}
   from which \eref{sth101a} follows.

           To prove \eref{sth106a} we may again use the identity
   $A_\epsilon(t) = C(\epsilon) +\int_\epsilon^t A_\epsilon'(s) ds$  to find
 \begin{align*}
 \| A_\delta(t) - A_\epsilon(t) \|_4 =\|C(\delta) - C(\epsilon)
   + \int_\delta^t A_\delta'(s) ds - \int_\epsilon^t A_\epsilon'(s) ds \|_4\\
 \le \| C(\delta) - C(\epsilon) \|_4
  +\int_\delta^\epsilon \| A_\delta'(s) \|_4 ds
  + \int_\epsilon^t \| A_\delta'(s) - A_\epsilon'(s)\|_4 ds.
 \end{align*}
 The first term goes to zero as $\delta < \epsilon\downarrow 0$ because
 $C(\cdot)$ is continuous into $H_1$ and therefore into $L^4(M)$.
  The third term goes to zero uniformly
 for $t \in [\epsilon, T)$ by \eref{sth101a}. The middle term is equal to
 $\int_\delta^\epsilon \| d_{C(s)}^* B_{C(s)}\|_4ds$ by \eref{a31} and
  goes to zero  because the integrand is integrable
   over $[0, T)$ by \eref{sth100a'}.
    Replace $L^4$   by $L^2$ in this proof to arrive at  \eref{sth107a}.

    Now
  $$ \| B_\delta(t) - B_\epsilon(t) \|_2
  =\|( Ad\ g_\delta(t)^{-1} -  Ad g_\epsilon(t)^{-1})B_{C(t)}\|_2
 \le \|Ad\ g_\delta(\epsilon) - I\|_\infty\|B_0\|_2$$ by \eref{sth19''}.
  Thus \eref{sth28a} now follows from \eref{sth20''}.
   \end{proof}

\begin{lemma} \label{lemA57}
As $0<\delta \le \epsilon \downarrow 0$ the following limits hold.
\begin{align}
\sup_{\epsilon \le t <T} t^{1/2} \|A_\delta'(t) - A_\epsilon'(t) \|_2
                  \rightarrow 0.                                                 \label{sth111a}\\
\sup_{\epsilon \le t <T}t^{3/8} \| B_\delta(t) - B_\epsilon(t) \|_4
                  \rightarrow 0 .                                               \label{sth36ba}\\
\sup_{\epsilon \le t <T} \|d^*( A_\delta(t) - A_\epsilon(t))\|_2
                 \rightarrow 0   .                                               \label{sth62a}\\
\sup_{\epsilon \le t <T} \|d( A_\delta(t) - A_\epsilon(t))\|_2
                  \rightarrow 0   .                                               \label{sth72a}
\end{align}
\end{lemma}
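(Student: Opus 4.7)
The plan is to exploit the cocycle identity $g_\delta(t) = g_\epsilon(t)\, g_\delta(\epsilon)$ from \eref{sth18''}, which by functoriality of gauge transformation yields $A_\delta(t) = A_\epsilon(t)^{g_\delta(\epsilon)}$ for $\epsilon \le t < T$. Since $g_\delta(\epsilon)$ does not depend on $t$, differentiating in $t$ gives the clean pointwise formula
\[
A_\delta'(t) - A_\epsilon'(t) = \bigl(Ad\, g_\delta(\epsilon)^{-1} - I\bigr)\, A_\epsilon'(t),
\]
and similarly $B_\delta(t) - B_\epsilon(t) = (Ad\, g_\delta(\epsilon)^{-1} - I)\, B_\epsilon(t)$. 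The operator norms of the factors $Ad\, g_\delta(\epsilon)^{-1} - I$ are controlled by $\|g_\delta(\epsilon) - I\|_\infty$, which tends to zero by Lemma \ref{lem20''}. Everything therefore reduces to suitable weighted uniform bounds on $\|A_\epsilon'(t)\|_p$ and $\|B_\epsilon(t)\|_p$ valid for all $t\ge \epsilon$.

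The key observation is that, by \eref{a31} and the fact that $Ad\, g_\epsilon(t)^{-1}$ acts as a pointwise isometry of $\kf$, one has $\|A_\epsilon'(t)\|_p = \|d_{C(t)}^* B_{C(t)}\|_p$ and $\|B_\epsilon(t)\|_p = \|B_{C(t)}\|_p$, so these quantities are actually independent of $\epsilon$. I would apply the finite-energy apriori estimates \eref{fe6} and \eref{fe12} to the smooth Yang-Mills solution $A_{\epsilon'}$ on $[\epsilon', T)$ (which satisfies the requisite boundary conditions by the analysis of Lemma \ref{lemg4}), whose initial curvature norm is bounded by $\|B_0\|_2$, obtaining $(t-\epsilon')\|A_{\epsilon'}'(t)\|_2^2 \le C_1(T, \|B_0\|_2)$ and $(t-\epsilon')\|B_{A_{\epsilon'}}(t)\|_6^2 \le C_3(T, \|B_0\|_2)$. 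Passing $\epsilon' \downarrow 0$ and using the $\epsilon'$-independence just noted sharpens these to $t\,\|d_C^* B_C(t)\|_2^2 \le C_1(T, \|B_0\|_2)$ and $t\,\|B_{C(t)}\|_6^2 \le C_3(T, \|B_0\|_2)$ on $(0,T)$.

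With these in hand, \eref{sth111a} is immediate: $t^{1/2}\|A_\delta'(t) - A_\epsilon'(t)\|_2 \le c\|g_\delta(\epsilon) - I\|_\infty\, t^{1/2}\|d_C^* B_C(t)\|_2$. For \eref{sth36ba}, I would combine $\|B_\delta(t) - B_\epsilon(t)\|_4 \le c\|g_\delta(\epsilon) - I\|_\infty\,\|B_{C(t)}\|_4$ with the interpolation $\|B_{C(t)}\|_4 \le \|B_{C(t)}\|_2^{1/4}\|B_{C(t)}\|_6^{3/4}$ and the standing bound $\|B_{C(t)}\|_2 \le \|B_0\|_2$ from Lemma \ref{lemST31}, so that $t^{3/8}\|B_{C(t)}\|_4$ is uniformly bounded on $(0,T)$.

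For \eref{sth62a} and \eref{sth72a} I would invoke the pointwise identities \eref{M72'} and \eref{M73'}, valid for the smooth Yang-Mills solutions $A_\delta, A_\epsilon$. Integrating $d^* A'(s) = [A(s) \cdot A'(s)]$ over $[\epsilon, t]$ and splitting the boundary term at $\epsilon$ via a further integration over $[\delta, \epsilon]$ decomposes $d^* A_\delta(t) - d^* A_\epsilon(t)$ into three pieces: $d^* C(\delta) - d^* C(\epsilon)$, which vanishes since $C \in C([0,T); H_1)$; $\int_\delta^\epsilon [A_\delta \cdot A_\delta']\,ds$, which vanishes by absolute continuity of the finite integral $\int_0^T \|d_C^* B_C(s)\|_4\, ds$ (finiteness from \eref{sth100a'} and passage $\epsilon' \downarrow 0$) together with the $L^4$ bound on $A_\delta$; and $\int_\epsilon^t \bigl([(A_\delta - A_\epsilon) \cdot A_\delta'] + [A_\epsilon \cdot (A_\delta' - A_\epsilon')]\bigr)ds$, which vanishes by the previously-established uniform limits \eref{sth106a}, \eref{sth101a} together with the uniform bounds \eref{sth100a'}, \eref{sth105a'}. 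Finally, \eref{sth72a} follows from $dA = B - (1/2)[A\wedge A]$, giving $\|d(A_\delta - A_\epsilon)\|_2 \le \|B_\delta - B_\epsilon\|_2 + c\|A_\delta - A_\epsilon\|_4(\|A_\delta\|_4 + \|A_\epsilon\|_4) \to 0$ by \eref{sth28a}, \eref{sth106a}, and \eref{sth105a'}. The main obstacle throughout is the singular weight $(t-\epsilon)^{-1/2}$ that a naive application of \eref{fe6} would produce near $t = \epsilon$; this is circumvented by the gauge-invariance trick of comparing with $A_{\epsilon'}$ for arbitrary $\epsilon' < \epsilon$ and passing $\epsilon'\downarrow 0$.
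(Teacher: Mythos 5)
Your proposal is correct and follows essentially the same route as the paper's proof: the cocycle identity $A_\delta = A_\epsilon^{g_\delta(\epsilon)}$ reduces \eref{sth111a} and \eref{sth36ba} to $\epsilon$-independent weighted bounds on $\|d_C^*B_C(t)\|_2$ and $\|B_{C(t)}\|_6$, obtained by applying the finite-energy apriori estimates \eref{fe6}, \eref{fe12} to the smooth solution started at a shrinking initial time (the paper reuses $\delta$ for this auxiliary parameter, where you introduce $\epsilon'$, a cosmetic and arguably clearer relabeling), and then interpolating for the $L^4$ bound on $B$. The arguments for \eref{sth62a} and \eref{sth72a} via the integrated identity $d^*A_\epsilon(t) = d^*C(\epsilon) + \int_\epsilon^t[A_\epsilon\cdot A_\epsilon']\,ds$ and the relation $dA = B - \tfrac12[A\wedge A]$, with the same three-way splitting of the difference, are likewise the paper's.
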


                \begin{proof}
Since $A_\delta(\delta) = C(\delta)$ we may apply the apriori estimate \eref{fe6} to $A_\delta(t)$ on the interval $[\delta, T)$ to find
      $(t -\delta) \| A_\delta'(t)\|_2^2 \le C_1( t -\delta, \| B_{C(\delta)}\|_2)$.
      By \eref{sth102a} $\| A_\delta'(t)\|_2 = \| A_\epsilon'(t)\|_2$  for
       $0 < \delta \le \epsilon\le t $
       while $\|B_{C(\delta)}\|_2 \le \|B_0\|_2$ by \eref{ST203}.
      Since $C_1(\cdot, \cdot)$ is nondecreasing in both arguments
       we find $(t -\delta) \| A_\epsilon'(t)\|_2^2 \le C_1(t, \|B_0\|_2)$.
       We may now let $\delta \downarrow 0$ to find
       \beq
       t^{1/2} \| A_\epsilon'(t) \|_2 \le C_1(t, \|B_0\|_2)^{1/2},
          \epsilon \le t <T                                                     \label{sth110a}
       \eeq
            The assertion \eref{sth111a} now follows from the inequality
       $t^{1/2}\| A_\delta'(t) - A_\epsilon'(t)\|_2
   \le \| Ad\ g_\delta(\epsilon)^{-1} - I_{\frak k} \|_\infty t^{1/2} \| A_\epsilon'(t)\|_2
 \le \| Ad\ g_\delta(\epsilon)^{-1} - I_{\frak k} \|_\infty  C_1(T, \|B_0\|_2)^{1/2}$.

         To prove \eref{sth36ba} observe that
    $(t-\delta)\|B_\delta(t)\|_6^2 \le C_3(t-\delta, \| B_{C(\delta)}\|_2)
    \le C_3(t, \|B_0\|_2)$ by \eref{fe12} applied over the interval $[\delta, T)$.
    Since
    $\|B_\delta(t)\|_6
    =\|Ad\ g_\delta(\epsilon)^{-1})B_\epsilon(t)\|_6
    =\|B_\epsilon(t)\|_6$ we can let $\delta\downarrow 0$ to find
    $t\|B_\epsilon(t)\|_6^2 \le$                    \linebreak
    $  C_3(t, \|B_0\|_2)$.
     Interpolation between $L^2$ and $L^4$ now gives, in view of \eref{sth27a'},
     \beq
      t^{3/8}\|B_\epsilon(t)\|_4 \le \|B_\epsilon(t)\|_2^{1/4}
      (t^{3/8})\|B_\epsilon(t)\|_6^{3/4}
    \le \|B_0\|_2^{1/4} C_3(t, \|B_0\|_2)^{3/8} .                        \label{sth112a}
    \eeq
    Hence
    \beq
    t^{3/8}\| B_\delta(t) - B_\epsilon(t)\|_4 \le \| Ad\ g_\delta(\epsilon)^{-1} - I_{\frak k} \|_\infty \|B_0\|_2^{1/4} C_3(t, \|B_0\|_2)^{3/8},
    \eeq
 which proves \eref{sth36ba}.

        To prove \eref{sth62a} observe that,
 since $A_\epsilon( s)$ is a $C^\infty$ solution to the Yang-Mills heat equation \eref{B8}, the argument giving the identity \eref{M75'} gives
 \beq
    d^* A_\epsilon(t)      = d^*C(\epsilon)
    + \int_\epsilon^t [ A_\epsilon(s) \cdot A_\epsilon'(s)] ds.      \label{sth65a}
    \eeq
    because $A_\epsilon(\epsilon) = C(\epsilon)$.
                 Using \eref{sth65a} for both $\epsilon$     and $\delta$ we find
        \begin{align*}
        \| d^*\{ A_\delta(t) - A_\epsilon(t) \} \|_2
        & \le \| d^* \{ C(\delta) - C(\epsilon)\} \|_2
        + \int_\delta^\epsilon \| [ A_\delta(s) \cdot A_\delta'(s) ] \|_2 \\
        &+ \int_\epsilon^t \| [A_\delta(s)\cdot A_\delta'(s) ]
            - [A_\epsilon(s) \cdot A_\epsilon'(s) ] \|_2 ds.
        \end{align*}
        The first term on the right goes to zero as
         $0 < \delta < \epsilon \downarrow 0$ because $C(\cdot)$
          is continuous into $H_1$.
     The second term goes to zero because
     $\|A_\delta(s)\|_4$ is bounded,   by \eref{sth105a'} while
   $\|A_\delta'(s)\|_4 = \| d_{C(s)}^* B_{C(s)}\|_4$,
   which is integrable over $(0,T)$ by \eref{sth100a'}.
    The third term goes to zero  as $0 < \delta < \epsilon \downarrow 0$ in view
    of \eref{sth105a'}, \eref{sth106a}, \eref{sth100a'} and \eref{sth101a},
     which show that  $\|A_\delta(s)\|_4$ is bounded, that
    $\| A_\delta(s) - A_\epsilon(s) \|_4 $ goes to zero uniformly in $s$ over $[\epsilon, T)$, while $\|A_\epsilon'(s)\|_4$ is bounded in $L^1(\epsilon,T)$ and
    $\| A_\delta'(s) - A_\epsilon'(s) \|_4$ goes to zero in $L^1(\epsilon, T)$.

 To prove \eref{sth72a} we use again the identity
 $dA_\epsilon =B_\epsilon - (1/2) [ A_\epsilon \wedge A_\epsilon]$
 to arrive at
  \beq
 \| d\{ A_\delta(t) - A_\epsilon(t) \} \|_2 \le \| B_\delta(t) - B_\epsilon(t) \|_2
    +(1/2) \| [ A_\delta(t) \wedge A_\delta(t)]
    - [A_\epsilon(t) \wedge A_\epsilon(t)] \|_2.               \notag
    \eeq
    The first term on the right goes to zero uniformly for $ t \in [\epsilon,T]$ by \eref{sth28a} while the second term goes similarly
    to zero in virtue of \eref{sth105a'} and \eref{sth106a}.
  \end{proof}

\begin{lemma} \label{lemB58'}
There is a non-decreasing continuous function
$C_6: [0,\infty)^2 \rightarrow [0,\infty)$, depending only on the
geometry of $M$, such that
\beq
t^{1/2} \big( \| d B_\epsilon(t)\|_2 + \| d^*B_\epsilon(t)\|_2\big)
            \le C_6(t, \|B_0\|_2, \| C(\epsilon)\|_{W_1}) \ \ \epsilon \le t < T.    \label{sth38a'}
\eeq
Moreover, as $0 < \delta \le \epsilon \downarrow 0$ the following limits hold.
   \begin{align}
 \sup_{\epsilon \le t <T} t^{1/2} \| d(B_\delta(t) - B_\epsilon(t)) \|_2 \rightarrow 0.
          \label{sth39a}\\
 \sup_{\epsilon \le t <T} t^{1/2} \| d^*(B_\delta(t) - B_\epsilon(t)) \|_2 \rightarrow 0.
    \label{sth39a'}
 \end{align}
\end{lemma}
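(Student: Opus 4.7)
The starting point is to convert $d$ and $d^*$ acting on $B$ into lower--order gauge--covariant expressions by subtracting off the connection terms. Since $A_\epsilon(\cdot)$ is a smooth solution of the Yang--Mills heat equation on $[\epsilon,T)$ with $A_\epsilon(\epsilon)=C(\epsilon)$, the Bianchi identity (Corollary \ref{corfbi}) gives $d_{A_\epsilon}B_\epsilon=0$ and the equation itself gives $d_{A_\epsilon}^*B_\epsilon=-A_\epsilon'$. Hence
\begin{align*}
dB_\epsilon   &= -[A_\epsilon\wedge B_\epsilon],\\
d^*B_\epsilon &= -A_\epsilon' - [A_\epsilon\lrc B_\epsilon].
\end{align*}
By H\"older's inequality in $L^4\cdot L^4\subset L^2$, both $\|dB_\epsilon\|_2$ and $\|d^*B_\epsilon\|_2$ are controlled by $\|A_\epsilon'\|_2$ and $c\|A_\epsilon\|_4\|B_\epsilon\|_4$.

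The two factors on the right will be bounded by the finite--energy apriori estimates of Section \ref{secfe}, applied to $A_\epsilon(\cdot)$ on the interval $[\epsilon,T)$. From \eref{fe6}, $(t-\epsilon)\|A_\epsilon'(t)\|_2^2\le C_1(t,\|B_0\|_2)$, giving the bound $t^{1/2}\|A_\epsilon'(t)\|_2\le C_1(t,\|B_0\|_2)^{1/2}$ after monotonicity and letting inessential dependences fall into $C_1$. The factor $\|B_\epsilon\|_4$ will be interpolated between $L^2$ and $L^6$: using $\|B_\epsilon\|_4\le\|B_\epsilon\|_2^{1/4}\|B_\epsilon\|_6^{3/4}$ together with $\|B_\epsilon(t)\|_2\le\|B_0\|_2$ (from \eref{sth27a'}) and $t\|B_\epsilon(t)\|_6^2\le C_3(t,\|B_0\|_2)$ (from \eref{fe12} applied on $[\epsilon,T)$ exactly as in the derivation of \eref{sth112a}), one gets $t^{3/8}\|B_\epsilon(t)\|_4$ bounded in terms of $\|B_0\|_2$ and $C_3$. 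Finally, \eref{sth105a'} bounds $\|A_\epsilon(t)\|_4$ by $\|C(\epsilon)\|_4+C_4(t,\|B_0\|_2)$, and Sobolev embedding gives $\|C(\epsilon)\|_4\le \kappa\|C(\epsilon)\|_{W_1}$. Putting these pieces together and absorbing the harmless factor $t^{1/8}\le T^{1/8}$ yields \eref{sth38a'} with $C_6$ depending only on $t$, $\|B_0\|_2$ and $\|C(\epsilon)\|_{W_1}$.

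For the convergence statements \eref{sth39a}--\eref{sth39a'}, I would apply the same identities to the difference:
\begin{align*}
d(B_\delta-B_\epsilon)   &= -[(A_\delta-A_\epsilon)\wedge B_\delta]-[A_\epsilon\wedge(B_\delta-B_\epsilon)],\\
d^*(B_\delta-B_\epsilon) &= -(A_\delta'-A_\epsilon')-[(A_\delta-A_\epsilon)\lrc B_\delta]-[A_\epsilon\lrc(B_\delta-B_\epsilon)].
\end{align*}
After multiplying by $t^{1/2}$, the first term on the right of the $d^*$--identity is handled by \eref{sth111a}. The $[(A_\delta-A_\epsilon)\wedge B_\delta]$ term is estimated by $c\|A_\delta-A_\epsilon\|_4\cdot t^{1/2}\|B_\delta\|_4$; the first factor tends to zero by \eref{sth106a} and the second is already known to be bounded from the first part. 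The term $[A_\epsilon\wedge(B_\delta-B_\epsilon)]$ is estimated by $c\|A_\epsilon\|_4\cdot t^{1/2}\|B_\delta-B_\epsilon\|_4$; the first factor is uniformly bounded and the second is $\le T^{1/8}\cdot t^{3/8}\|B_\delta-B_\epsilon\|_4$, which tends to zero uniformly in $t$ by \eref{sth36ba}.

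The main thing to watch, rather than an obstacle, is simply that every constant that enters the final estimate is tracked as being monotone in its arguments and, for the convergence part, that the $t^{1/2}$ weight is absorbed by the slightly stronger $t^{3/8}$ weight available on $\|B_\delta-B_\epsilon\|_4$ via the Sobolev--interpolation argument used to derive \eref{sth112a}. Once that bookkeeping is in place the lemma follows by pure H\"older--interpolation from the apriori bounds already established.
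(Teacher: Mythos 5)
Your proposal is correct and follows essentially the same route as the paper: you use the Bianchi identity and the Yang--Mills heat equation (via \eref{a30}) to rewrite $dB_\epsilon$ and $d^*B_\epsilon$ as $-[A_\epsilon\wedge B_\epsilon]$ and $-A_\epsilon'-[A_\epsilon\lrc B_\epsilon]$, bound the quadratic terms in $L^4\cdot L^4$ using \eref{sth105a'} and the interpolation bound \eref{sth112a} (absorbing the leftover $t^{1/8}$ into $T^{1/8}$), bound $t^{1/2}\|A_\epsilon'(t)\|_2$ by \eref{sth110a}, and for the convergence statements split the differences of the quadratic terms into two pieces controlled by \eref{sth106a}, \eref{sth111a}, \eref{sth36ba} and \eref{sth112a}. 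The only difference is cosmetic: you write out the difference decomposition for $d^*(B_\delta-B_\epsilon)$ and $d(B_\delta-B_\epsilon)$ explicitly, whereas the paper compresses that bookkeeping into a reference to the relevant earlier estimates.
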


                        \begin{proof} The Bianchi identity and \eref{a30} yield,
   respectively,
 \begin{align}
 dB_\epsilon(t)&= -[ A_\epsilon(t) \wedge B_\epsilon(t)],        \label{sth40} \\
  d^* B_\epsilon(t)& = -A_\epsilon'(t) - [ A_\epsilon(t) \lrc B_\epsilon(t)].
                                                                                                \label{sth41}
 \end{align}
  Therefore,
 \begin{align}
 &t^{1/2} \{ \|d B_\epsilon(t)\|_2 + \| d^* B_\epsilon(t)\|_2 \}  \notag\\
 &\le t^{1/2}\{\| [ A_\epsilon(t) \wedge B_\epsilon(t)]\|_2 + \|A_\epsilon'(t)\|_2
 + \| [A_\epsilon(t) \lrc B_\epsilon(t)] \|_2  \}                    \notag\\
 &\le t^{1/2} \{\|A_\epsilon'(t)\|_2
           +2c \|A_\epsilon(t)\|_4 \| B_\epsilon(t) \|_4\}               \notag \\
 &\le C_1( t, \|B_0\|_2)^{1/2}
+ 2 c \big(\|A_\epsilon(t)\|_4\big)
      t^{1/2} \| B_\epsilon(t) \|_4    \notag\\
      &  \le C_6(t, \|B_0\|_2, \|C(\epsilon)\|_{H_1})    \label{sth41a}
 \end{align}
    for some continuous function $C_6$, by virtue of
     \eref{sth110a}, \eref{sth105a'} and\eref{sth112a}.

          Using the identity \eref{sth41} for $\epsilon$ and $\delta$ we may write
    \begin{align*}
  t^{1/2} \| d^*( B_\delta(t) - B_\epsilon(t)) \|_2
  &\le t^{1/2} \| A_\delta'(t) - A_\epsilon'(t)\|_2 \\
  &+ t^{1/2} \| [A_\delta(t) \lrc B_\delta(t)] - [A_\epsilon(t) \lrc B_\epsilon(t)]\|_2.
  \end{align*}
  The first term goes to zero uniformly for $\epsilon \le t <T$ by \eref{sth111a}.
  The second term goes similarly to zero by combining
   \eref{sth105a'}, \eref{sth106a} with \eref{sth36ba} and \eref{sth112a}.
  A similar argument applies to $t^{1/2}\| d(B_\delta(t) - B_\epsilon(t))\|_2$
  by using \eref{sth40}.
  \end{proof}

\begin{theorem} \label{thm9.A}
There exist non-decreasing continuous functions $C_7, C_8, C_9$
from $[0,\infty)^2 \rightarrow [0,\infty)$ such that
\begin{align}
\|A_\epsilon(t) \|_{H_1} &\le C_7(t, \| C(\epsilon)\|_{H_1}),
                       \   \  \epsilon \le t < T,                                  \label{sth81a'}\\
t^{1/2} \| B_\epsilon(t)\|_{H_1}
&\le C_8 (t, \| C(\epsilon)\|_{H_1}),\ \epsilon \le t < T,            \label{sth93a'}\\
t^{3/4} \|B_\epsilon(t)\|_\infty & \le C_9(T, \|C\|_{\P_T} ) ,
                                \ \      \ \epsilon \le t < T.                   \label{sth35'}
\end{align}
Moreover, as $0 <\delta \le \epsilon \downarrow 0$ the following limits hold.
\begin{align}
\sup_{\epsilon \le t <T} \| A_\delta(t) - A_\epsilon(t) \|_{H_1}
        &\rightarrow 0.                                                        \label{sth82a'} \\
\sup_{\epsilon \le t <T}t^{1/2} \| B_\delta(t) - B_\epsilon(t)\|_{H_1}
       &\rightarrow 0.                                                                 \label{sth94a'}\\
\sup_{\epsilon \le t <T} t^{3/4} \|B_\delta(t) - B_\epsilon(t)\|_\infty
      & \rightarrow 0.                                                     \label{sth36'}
\end{align}
\end{theorem}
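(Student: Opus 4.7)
The plan is to reduce everything to the three basic quantities $\|d\,\cdot\|_2$, $\|d^*\cdot\|_2$, $\|\cdot\|_2$ via the Gaffney--Friedrichs inequality \eqref{gaf50} applied with trivial connection, and then to invoke the component-wise bounds already assembled in Lemmas \ref{lemA55'}--\ref{lemB58'}. By Corollary \ref{corg5} each $A_\epsilon(t)$ lies in $H_1(M)$; since $B_\epsilon(t) = g_\epsilon(t)^{-1} B_{C(t)} g_\epsilon(t)$ is a pointwise gauge rotation of $B_{C(t)}$, which satisfies the appropriate tangential or normal vanishing by Corollary \ref{corST10}, the same condition is inherited by $B_\epsilon(t)$, so $B_\epsilon(t)\in H_1(M)$ as well. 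Consequently Remark \ref{remcoerc} together with \eqref{gaf50} (taking $A=0$, so $\lambda_3=\lambda_M$) yields, for any $H_1$ form $\omega$,
\begin{equation}\label{pfc-gf0}
\|\omega\|_{H_1}^2 \;\le\; 2\bigl(\|d\omega\|_2^2 + \|d^*\omega\|_2^2 + \lambda_M\|\omega\|_2^2\bigr),
\end{equation}
where $d$ denotes the appropriate minimal or maximal operator. Since $H_1$ is a linear subspace, the estimate \eqref{pfc-gf0} also applies to the differences $A_\delta(t)-A_\epsilon(t)$ and $B_\delta(t)-B_\epsilon(t)$.

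Inserting $\omega = A_\epsilon(t)$ into \eqref{pfc-gf0} proves \eqref{sth81a'}: the $L^2$ norm of $A_\epsilon(t)$ is controlled as in \eqref{M81'}, while $\|dA_\epsilon(t)\|_2$ and $\|d^*A_\epsilon(t)\|_2$ are controlled by the arguments giving \eqref{M79'}--\eqref{M80'} applied on $[\epsilon,T)$ with initial datum $C(\epsilon)$, using $\|B_{C(\epsilon)}\|_2\le\|B_0\|_2$ and the $L^4$ bound \eqref{sth105a'} combined with the Sobolev embedding $H_1\hookrightarrow L^4$. Inserting $\omega = B_\epsilon(t)$ in \eqref{pfc-gf0} and multiplying by $t$ gives \eqref{sth93a'}, the first two terms on the right being handled by \eqref{sth38a'} and the last by \eqref{sth27a'} together with $t\le T$. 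For the $L^\infty$ bound \eqref{sth35'} I exploit gauge invariance: $\|B_\epsilon(t)\|_\infty = \|B_{C(t)}\|_\infty$ since $g_\epsilon(t)\in K$ acts pointwise by isometries, and then the inequality \eqref{ST150} already established in the proof of Theorem \ref{thmpara} delivers the claim with $C_9(T,r)\mathrel{:=} r + T^{1/4}(c/2)r^2$.

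The three convergence assertions follow from the same template. Applying \eqref{pfc-gf0} to $A_\delta(t)-A_\epsilon(t)$ yields \eqref{sth82a'} at once from \eqref{sth72a}, \eqref{sth62a} and \eqref{sth107a}. Applying \eqref{pfc-gf0} to $B_\delta(t)-B_\epsilon(t)$ and multiplying by $t$ yields \eqref{sth94a'}: the three terms vanish uniformly for $t\in[\epsilon,T)$ by \eqref{sth39a}, \eqref{sth39a'}, and \eqref{sth28a} (the $\lambda_M\|\cdot\|_2^2$ piece absorbing a harmless factor $t\le T$). For \eqref{sth36'} I use the composition law \eqref{sth18''}, $g_\delta(t)=g_\epsilon(t)g_\delta(\epsilon)$ for $t\ge\epsilon$, to write
\[
B_\delta(t)-B_\epsilon(t) \;=\; g_\epsilon(t)^{-1}\bigl((Ad\, g_\delta(\epsilon)^{-1} - I_\V)B_{C(t)}\bigr)g_\epsilon(t),
\]
so that $\|B_\delta(t)-B_\epsilon(t)\|_\infty \le \|Ad\, g_\delta(\epsilon)^{-1}-I_\V\|_\infty\,\|B_{C(t)}\|_\infty$; the first factor tends to zero by \eqref{sth20''}, while $t^{3/4}\|B_{C(t)}\|_\infty$ is bounded on $(0,T)$ by \eqref{ST150}.

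The only real subtlety is checking the boundary hypothesis \eqref{gaf49} for each of $A_\epsilon$, $B_\epsilon$, and their differences; once that is confirmed, the proof amounts to bookkeeping, feeding the component-wise estimates of Lemmas \ref{lemA55'}--\ref{lemB58'} into \eqref{pfc-gf0}.
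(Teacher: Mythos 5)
Your proposal is correct and follows essentially the same route as the paper's proof: the Gaffney--Friedrichs inequality \eqref{gaf50} with trivial connection applied to $A_\epsilon$, $B_\epsilon$, and their differences, fed by the component estimates of Lemmas \ref{lemA55'}--\ref{lemB58'}, together with gauge invariance of $\|B_\epsilon(t)\|_\infty$ and the composition law \eqref{sth18''} for the two sup-norm estimates. The only substantive cosmetic difference is that for \eqref{sth81a'} the paper simply cites Theorem \ref{thmM6} (i.e.\ \eqref{M30}) applied to the smooth solution $A_\epsilon$ on $[\epsilon,T)$ with initial datum $C(\epsilon)$, whereas you re-derive the same bound by unpacking \eqref{M79'}--\eqref{M81'}; this is the same argument one layer deeper (though you should bound $\|B_{C(\epsilon)}\|_2$ directly by a polynomial in $\|C(\epsilon)\|_{H_1}$ via the curvature formula and Sobolev, rather than by $\|B_0\|_2$, so the constant truly depends only on $\|C(\epsilon)\|_{H_1}$). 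One small slip: the identity you write for $B_\delta(t)-B_\epsilon(t)$ has the conjugations in the wrong order --- the correct expression is
\begin{equation*}
B_\delta(t)-B_\epsilon(t) = \bigl(Ad\,g_\delta(\epsilon)^{-1} - I_\V\bigr)B_\epsilon(t),
\end{equation*}
obtained by applying $g_\delta(t)=g_\epsilon(t)g_\delta(\epsilon)$ and \emph{then} conjugating --- but since unitary conjugation preserves the pointwise operator norm, the resulting bound $\|B_\delta(t)-B_\epsilon(t)\|_\infty \le \|Ad\,g_\delta(\epsilon)^{-1}-I_\V\|_\infty\,\|B_{C(t)}\|_\infty$ is unchanged and your conclusion stands.
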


       \begin{proof}
Apply \eref{M30} to the smooth solution $A_\epsilon$ over $[\epsilon, T)$ and
recall that $A_\epsilon(\epsilon) = C(\epsilon)$. We find that
$\|A_\epsilon(t) \|_{H_1(M)} \le C_5(t-\epsilon, \| C(\epsilon)\|_{H_1(M)}),
                           \  \epsilon \le t < T $.
     The monotonicity of $C_5$ in its first
  argument now yields \eref{sth81a'} with $C_7 = C_5$.

                 To prove \eref{sth82a'} apply the Gaffney-Friedrichs inequality
 \eref{gaf50} with $A=0$ and $\w = A_\delta - A_\epsilon$. The inequality
 \eref{sth82a'} then follows from \eref{sth107a}, \eref{sth62a} and \eref{sth72a}.

The Gaffney-Friedrichs inequality \eref{gaf50}, with $A=0$ and with
   $\w = B_\epsilon(t)$ gives
   $$
   (1/2)t \| B_\epsilon(t)\|_{H_1}^2 \le t\| dB_{\epsilon}(t)\|_2^2
   + t\| d^*B_{\epsilon}(t)\|_2^2 +\lambda_M t\|B_\epsilon(t)\|_2^2.
      $$
   This, along with the inequality \eref{sth38a'} and $\|B_\epsilon(t)\|_2 \le \|B_0\|_2$, proves \eref{sth93a'}.

   Similarly, the Gaffney-Friedrichs inequality \eref{gaf50}, with $A=0$ and
   with $\w = B_\delta - B_\epsilon$, proves \eref{sth94a'} in view of
   \eref{sth39a}, \eref{sth39a'} and \eref{sth28a}.

  Since $t^{3/4} \| B_\epsilon(t)\|_\infty = t^{3/4} \| B_{C(t)}\|_\infty$ the inequality
  \eref{ST150} proves \eref{sth35'}  with
  $C_9(T, \| C\|_{\P_T}) = \|C\|_{\P_T} + T^{1/4}(c/2) \| C\|_{\P_T}^2$.

 Finally, for $\epsilon \le t <T$,  we have
 $ t^{3/4} \| B_\delta(t) - B_\epsilon(t) \|_\infty
   \le  \| Ad\ g_\delta(\epsilon) - I\|_\infty C_9(T, \|C\|_{\P_T})$,
     which goes to  zero uniformly for $t \in [\epsilon, T)$ by
     \eref{sth20''}.
 \end{proof}

\subsection{Proof of Theorem \ref{thmSTE3}}

\indent

      If $ 0< a < T$ then \eref{sth82a'} shows that
 $A_\epsilon|_{[a,T)}$ is uniformly Cauchy in $H_1$ norm as
  $\epsilon \downarrow 0$. The limit is clearly independent of
   $a >0$ and defines a continuous function
 $ A: (0, t) \rightarrow H_1$, being a uniform limit of continuous (in fact $C^\infty$)
 functions on each interval $[a, T)$. Define $A(0) = A_0$.
  We need to show that
  the so extended function is continuous at $t =0$.
     Since $C(\cdot)$ is continuous on $[0, T)$ into $H_1$,
  given $\alpha >0$, there exists $\gamma >0$, such that
     a) $\sup_{0\le t \le \gamma} \|A_0 - C(t) \|_{H_1} < \alpha$ and,
             by \eref{sth82a'},
     b) $ \sup_{\epsilon \le t < T} \| A_\delta(t) - A_\epsilon(t) \|_{H_1} < \alpha$ if
      $0 < \delta \le \epsilon \le \gamma$.
      Suppose that $0 < t_0 \le \gamma$.
Then $\| A_0 - C(t_0)\|_{H_1} <\alpha$ by a).
Letting $\delta \downarrow 0$ in b) shows that $\|A(t_0) - A_\epsilon(t_0)\|_{H_1}
\le \alpha$ if $\epsilon \le t_0$. Take $\epsilon = t_0$. Then
$\| A(t_0) - A_0 \|_{H_1}
  \le \| A(t_0) - A_{t_0}(t_0)\|_{H_1} + \| C(t_0) - A_0\|_{H_1} <2\alpha$.
     This proves the existence of a
      continuous funtion $A: [0,T) \rightarrow H_1(M)$ taking the
   correct initial value, $A_0$, and defined as
  the limit, in the sense of \eref{sth2},
   of the $C^\infty$ functions
  $A_\epsilon : [\epsilon, T) \rightarrow H_1(M)$.

   Now \eref{sth111a} shows
    that, for each $a >0$, the derivatives $A_\epsilon'(t)$ converge uniformly
  on $[a,T)$, as functions into $L^2(M)$. It follows that $A(t)$ is a strongly
  differentiable function on $(0,T)$ into $L^2(M)$ and that $A'(t) = L^2$ limit
  of $A_\epsilon'(t)$  for each $t>0$. In fact, letting $\delta \downarrow 0$ in \eref{sth111a} proves \eref{sth2a}.

  The curvature $B(t)$ of $A(t)$ is well defined because $A(t) \in H_1(M)$.
  Since, for each $t>0$, $A_\epsilon(t)$ converges to $A(t)$ in $H_1$
  by \eref{sth82a'}
  it follows that $B_\epsilon(t)$ converges in $L^2$ to $B(t)$.
   But \eref{sth94a'}
       shows that, for each $t>0$,   $B_\epsilon(t)$
    is Cauchy in $H_1$ norm as $\epsilon \downarrow 0$.
    Hence $B_\epsilon(t)$ converges in $H_1$ norm to an element
      in $H_1$, which is also the $L^2$ limit, $B(t)$.
  Thus $B(t)$ is in $H_1$ for each $t>0$ and
  $\| B_\epsilon(t) - B(t)\|_{H_1} \rightarrow 0$ for each $t>0$. Therefore $d^*B_\epsilon(t)$ converges to $d^*B(t)$
  in $L^2$ while also $B_\epsilon(t)$ converges to $B(t)$ in $L^4$.
  Hence $ [A_\epsilon(t) \lrc B_\epsilon(t)]$ converges to $[A(t)\lrc B(t)]$ in $L^2$
  in view of \eref{sth106a}.
  Therefore $d_{A_\epsilon(t)}^* B_\epsilon(t)$
  converges in $L^2$ to $d_{A(t)}^* B(t)$ for each $t>0$.
  It now follows that $A'(t) = - d_{A(t)}^* B(t)$ for $0 < t <T$.
   Furthermore, taking the limit in \eref{sth94a'} as $\delta \downarrow 0$
   proves \eref{sth3}.

  A similar argument, based on  \eref{sth36'}, shows that,
  for each $t>0$,    one has $\| B_\epsilon(t) - B(t) \|_\infty \rightarrow 0$ as
    $\epsilon \downarrow 0$. In particular, condition f) of Theorem \ref{thmSTE}
   follows from \eref{sth35'}. Moreover, \eref{sth4} follows from \eref{sth36'}
   by letting $\delta \downarrow 0$.
 This proves Theorem \ref{thmSTE3}.

\subsection{Proof of Corollary \ref{corSTE3}}

\indent

For $\epsilon \le t <T$ define
$$
h_\epsilon(t) = g_\epsilon(t)^{-1} d g_\epsilon(t) \ \ \ \ \text{and}\ \ \ \   C_\epsilon(t) = g_\epsilon(t)^{-1} C(t) g_\epsilon(t).
$$
Since
\beq
h_\epsilon(t) = A_\epsilon(t) - C_\epsilon(t),      \label{h19}
\eeq
 we have
$\|h_\epsilon(t)\|_6 \le \|A_\epsilon(t)\|_6 + \| C(t)\|_6$. Hence
\beq
\|h_\epsilon(t)\|_6 \le \kappa(\|A_\epsilon(t)\|_{H_1} + \|C(t)\|_{H_1})\\
\le               \kappa \{C_7( t, \|C(\epsilon)\|_{H_1}) + \|C\|_{\P_T}\}   \notag
\eeq
by \eref{sth81a'} and \eref{ST23}.
Moreover, from \eref{sth82a'} and \eref{sth17''}  we find
\beq
\sup_{\epsilon\le t <T}\|h_\delta(t) - h_\epsilon(t) \|_6
\le \kappa \sup_{\epsilon\le t <T}\| A_\delta(t) - A_\epsilon(t) \|_{H_1}
+ \sup_{\epsilon\le t <T}\| C_\delta(t) - C_\epsilon(t) \|_6  \rightarrow 0,  \label{h20}
\eeq
as $0 <\delta \le \epsilon \downarrow 0$.
  We assert that
 \beq
 \sup_{\epsilon \le t <T} \| C_\delta(t) - C_\epsilon(t) \|_{H_1} \rightarrow 0
 \ \ \text{as} \ \ 0<\delta \le \epsilon \downarrow 0   \label{h21}
 \eeq
 It suffices to compute derivatives for some local orthonormal frame
  field $e_1, e_2, e_3$.
  We have
  $
  \n_j C_\delta(t) =  (Ad\ g_\delta(t)^{-1}) \n_j C(t)
  + [ C_\delta(t), \< h_\delta(t), e_j\>]     \notag
 $
 and therefore, denoting by $\| \cdot \|_2$ an $L^2$ norm
 over a coordinate patch,
  we find
 \begin{align*}
 \| \n_j \big( C_\delta(t) - C_\epsilon(t) \big) \|_2
& \le \| Ad\ g_\delta(t) - Ad\ g_\epsilon(t) \|_\infty \| \n_j C(t) \|_2\\
& + \| [ \{ C_\delta(t) - C_\epsilon(t) \}, h_\delta(t) \< e_j\> ] \|_2 \\
& +\| [C_\epsilon(t), \{ h_\delta(t) - h_\epsilon(t)\} \<e_j\> ]\|_2.
 \end{align*}
 As $0<\delta \le \epsilon \downarrow 0$, the first term goes to zero,
 uniformly for $\epsilon \le t <T$,   by \eref{sth17''},
 since $ \|\n_j C(t)\|_2 \le  \| C\|_{\P_T}$.
  Since $\| h_\delta(t) \< e_j\> \|_6\le \| h_\delta(t) \|_6$
  remains  bounded as $\delta \downarrow 0$
   and uniformly so over $t \in [ \epsilon, T)$, while
    $\|C_\delta(t) - C_\epsilon(t) \|_3
    \le \|Ad\ g_\delta(\epsilon) - I \|_\infty \| C(t)\|_3
    \rightarrow 0$ uniformly over $[\epsilon, T)$ because
     $\|C(t) \|_3$   is dominated by $\| C(t) \|_{H_1} \le \| C\|_{\P_T}$,
     the second term also goes to zero uniformly over $[\epsilon, T)$.
      The third term is dominated by
      $\|C(t) \|_3 \| h_\delta(t) - h_\epsilon(t) \|_6$,
   which goes to zero uniformly over $[\epsilon, T)$  by \eref{h20}.

   Upon adding the contributions to $\| C_\delta(t) - C_\epsilon(t) \|_{H_1}^2$
    from finitely many coordinate patches that cover $M$ the
    assertion \eref{h21} follows.
    From \eref{h21} and \eref{sth82a'}  we deduce that
   \begin{align}
   \sup_{\epsilon \le t <T} \| h_\delta(t) - h_\epsilon(t) \|_{H_1}
   &\le \sup_{\epsilon \le t <T} \| A_\delta(t) - A_\epsilon(t) \|_{H_1}
   + \sup_{\epsilon \le t <T}  \| C_\delta(t) - C_\epsilon(t)\|_{H_1}\notag \\
   &\rightarrow 0\ \ \text{as}\ \ 0 <\delta \le \epsilon \downarrow 0.   \label{sth125}
   \end{align}
   Thus, for each $a \in (0,T)$, the $h_\epsilon$ converge uniformly
    over $[a,T)$ in $H_1$ to a function $h$ which is clearly independent of $a$ and defines a continuous function on $(0, T)$ into $H_1$.

      Now \eref{h19} shows that, for $ 0 < \delta <\epsilon \le t$, we have
  \beq
   \| h_\delta(t) - h_\epsilon(t)\|_{H_1} \le \| A_\delta(t) - A_\epsilon(t) \|_{H_1}
            +\|C_\delta(t) - C_\epsilon(t)\|_{H_1}.  \notag
   \eeq
We have shown that all three differences  converge as $\delta\downarrow 0$
 and we may conclude that
   \beq
   \| h(t) - h_\epsilon(t)\|_{H_1} \le \| A(t) - A_\epsilon(t) \|_{H_1}
            +\|C(t) - C_\epsilon(t)\|_{H_1}.     \notag
   \eeq
   Take $t = \epsilon$.
   Since $g_\epsilon(\epsilon) = I_\V$ on the fiber $\mathcal V$ we have
   $h_\epsilon(\epsilon) =0$ and $C_\epsilon(\epsilon) = C(\epsilon)$
   and therefore $\|h(\epsilon)\|_{H_1} \le \|A(\epsilon) - C(\epsilon)\|_{H_1}$.
    But $A(\epsilon)$ and $C(\epsilon)$ both converge
   to $A_0$ in $H_1$.
     Hence $\|h(\epsilon)\|_{H_1} \rightarrow 0$ as $\epsilon\downarrow 0$.
      Thus $h$ is continuous on $[0,T)$ into $H_1$ if one defines $h(0) =0$.
      The identity \eref{sth12} now follows for each $t >0$
   by taking the $L^2(M)$ limit in \eref{sth1} as $\epsilon \downarrow 0$.
   At $t =0$ the equation \eref{sth12} just  asserts that $A_0 = A_0$
   because $h(0) =0$ and, by Lemma \ref{lem20''}, $g(0) = I_\V$.
    This completes the proof of Corollary \ref{corSTE3}.


\subsection{Uniqueness of solutions}
\label{secST4}

\begin{theorem} \label{thmunique}
Let $T \le \infty$.
Let $A_1(\cdot)$ and $A_2(\cdot)$ be two strong solutions
to \eref{ymh10}  on the interval $[0,T)$ and having the
 same initial data in $W_1(M)$.  Assume that either
  \begin{align}
(N)\ \ \ B_j(t)_{norm} &= 0 \ \text{for}\ j= 1,2 \
                                          \text{and}\ t >0                \label{U3N}\\
\text{or}\ \ \ \ \ \ (D)\ \ \  A_j(t)_{tan}\ \  &= 0 \ \text{for}\ j= 1,2\
                                     \text{and}\ t >0.                      \label{U3D}
\end{align}
Then $A_1(t) = A_2(t)$ on
$[0, T)$
\end{theorem}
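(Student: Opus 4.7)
Let $W(t) := A_1(t) - A_2(t)$. The plan is to derive a Gronwall-type inequality for an appropriate norm of $W$ and conclude $W \equiv 0$ from $W(0) = 0$. First I would subtract the two Yang-Mills heat equations. Using the identities $B_1 - B_2 = d_{A_2}W + \tfrac{1}{2}[W \wedge W]$ and $d_{A_1}^*\alpha - d_{A_2}^*\alpha = [W \lrc \alpha]$, one obtains
\begin{equation*}
W'(t) = -\,d_{A_2}^*\,d_{A_2}W \;-\; \tfrac{1}{2}\, d_{A_2}^*[W \wedge W] \;-\; [W \lrc B_1].
\end{equation*}
Pairing with $W$ in $L^2(M)$ and integrating by parts gives the energy identity
\begin{equation*}
\tfrac{1}{2}\,\tfrac{d}{dt}\|W\|_2^2 + \|d_{A_2} W\|_2^2
= -\tfrac{1}{2}\,(d_{A_2}W, [W \wedge W]) - (W,[W \lrc B_1]).
\end{equation*}
In the Dirichlet case the integrations by parts require no boundary terms because $(A_j)_{tan}=0$ forces $W_{tan}=0$, so $W \in \D(d_{A_2})$ by Lemma \ref{lemDN1}. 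In the Neumann case, where only $(B_j)_{norm}=0$ is imposed, one uses instead the maximal operator $D_{A_2}$ and its adjoint $D_{A_2}^*$, together with Corollary \ref{corDN9} applied to $B_1 - B_2$, whose normal component vanishes on $\partial M$.

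Next I would bound the right-hand side using the a priori regularity of strong solutions. For the curvature term, H\"older gives $|(W,[W \lrc B_1])| \le c\|B_1\|_\infty\|W\|_2^2$, and conditions d) and e) of Definition \ref{defstrsol} make $\|B_1(t)\|_\infty$ locally integrable on $(0,T)$: bounded on compact subintervals of $(0,T)$ and dominated by $Ct^{-3/4}$ on a right-neighborhood of $0$. For the cubic term, H\"older combined with the three-dimensional interpolation $\|W\|_4^2 \le C\|W\|_2^{1/2}\|W\|_6^{3/2}$ yields $|(d_{A_2}W,[W \wedge W])| \le c\|d_{A_2}W\|_2\|W\|_2^{1/2}\|W\|_6^{3/2}$. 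Here I would import the a priori $W_1$-bound of Theorem \ref{thmM6}, applied independently to each $A_j$, to control $\|W(t)\|_{W_1}$ and hence $\|W(t)\|_6$ on compact time intervals; continuity of each $A_j(\cdot)$ into $W_1$ also gives $\|W(t)\|_{W_1} \to 0$ as $t \downarrow 0$, supplying the smallness needed to absorb the cubic contribution. Young's inequality absorbs a small multiple of $\|d_{A_2}W\|_2^2$ on the left, producing a differential inequality for $\|W\|_2^2$ whose right-hand side is dominated by $\|W\|_2^2$ times a locally integrable factor. Since $W(0) = 0$, Gronwall's inequality then yields $W \equiv 0$ on $[0,T)$.

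The main obstacle is the weak parabolicity of the Yang-Mills heat equation: the energy identity produces $\|d_{A_2}W\|_2^2$ on the left but no control whatever on $\|d_{A_2}^*W\|_2^2$, so the gauge-invariant Gaffney-Friedrichs-Sobolev inequality of Corollary \ref{corGFS} cannot be applied directly to $W$ to bound $\|W\|_6$ from the evolution alone. The resolution is to feed in the $W_1$-bounds of Theorem \ref{thmM6} as an external input, this being possible because Theorem \ref{thmM6} is proved without invoking uniqueness, together with continuity of $W$ into $W_1$ at $t=0$ to cope with the sub-quadratic form into which the cubic interaction naturally interpolates. A secondary delicate point, present only in the Neumann case, is that $W$ itself carries no imposed boundary condition (only the combination $B_1 - B_2$ does), so all integrations by parts must be justified through the gauge-invariant consequences of the Marini-type hypothesis $(B_j)_{norm}=0$; this is the uniqueness-level manifestation of the observation in Remark \ref{remU1} that only two of the three components of $A(t)$ need carry boundary conditions.
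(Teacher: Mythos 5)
Your energy identity is correct and, after pairing with $W$, is algebraically equivalent to the paper's Lemma \ref{lemU1}: substituting $d_{A_2}W = (B_1 - B_2) - \tfrac{1}{2}[W\wedge W]$ into the pair of terms $-\|d_{A_2}W\|_2^2 - \tfrac{1}{2}(d_{A_2}W,[W\wedge W])$ and using $(W,[W\lrc B_1]) = ([W\wedge W], B_1)$ collapses your right-hand side to $-\|B_1 - B_2\|_2^2 - \tfrac{1}{2}(B_1+B_2,[W\wedge W])$. The gap is in the estimation. Your Young/interpolation step leaves, after absorbing $\epsilon\|d_{A_2}W\|_2^2$ into the left, a remainder of the form $C_\epsilon\|W\|_2\|W\|_6^3$. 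Even after bounding $\|W\|_6$ by the $W_1$ bounds from Theorem \ref{thmM6}, this is only of order $\|W\|_2$, not $\|W\|_2^2$. Writing $u(t) = \|W(t)\|_2^2$, you obtain $u' \le c_1 u^{1/2} + c_2(t)\,u$ rather than $u' \le c(t)\,u$. With $u(0)=0$ this does \emph{not} force $u\equiv 0$: the $u^{1/2}$ term violates the Osgood condition (for instance $u(t) = (c_1 t/2)^2$ solves $u' = c_1 u^{1/2}$, $u(0)=0$). Shrinking the coefficient by continuity of $W(\cdot)$ at $t=0$ does not help, because the obstruction is the exponent on $u$, not the size of the constant.

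The repair is exactly the rearrangement noted above, and it is the paper's proof. Once the cubic term carries $B_1+B_2$ rather than $d_{A_2}W$ as its coefficient, H\"older gives $|(B_1+B_2,[W\wedge W])| \le c\,(\|B_1\|_\infty + \|B_2\|_\infty)\,\|W\|_2^2$, genuinely quadratic in $\|W\|_2$, and the sign-definite $-\|B_1-B_2\|_2^2$ can simply be dropped. Conditions d) and e) of Definition \ref{defstrsol} then supply a locally integrable, at worst $t^{-3/4}$, bound on $\|B_j(t)\|_\infty$, so Gronwall closes on $(0,b]$, with a continuation argument using d) on $[b,T)$. In particular the import of Theorem \ref{thmM6} is unnecessary for uniqueness: the paper's proof uses only the pointwise curvature control built into the definition of strong solution. (The paper's integration by parts is also more direct: pairing $A_1'-A_2'$ with $W$ and moving $D_{A_j}^*$ across to $W$ needs only $B_j \in \D(D_{A_j}^*)$, i.e.\ $(B_j)_{norm}=0$, and $W\in W_1\subset \D(D_{A_j})$; routing through $B_1-B_2$ and Corollary \ref{corDN9} works but is a detour.)
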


The proof depends on  the next lemma.

\begin{lemma}\label{lemU1} $($An identity.$)$
  Suppose that $A_1$ and $A_2$
 are two strong solutions satisfying either \eref{U3N} or \eref{U3D}.
 $($See Definition {\rm \ref{defstrsol}}.$)$
Then, for $t >0$,
\begin{align}
(d/dt) &\| A_1(t)- A_2(t) \|_2^2
= -2\| B_1(t) - B_2(t)\|_2^2       \notag\\
&-  (B_1(t) + B_2(t),[(A_1(t) - A_2(t))\wedge (A_1(t) - A_2(t))])  \label{U5}
\end{align}
\end{lemma}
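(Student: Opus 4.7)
Set $\alpha(t)=A_1(t)-A_2(t)$ and $\beta(t)=B_1(t)-B_2(t)$. Because $A_1,A_2$ are strong solutions, both $A_j'(t)$ exist as strong $L^2$ derivatives for $t>0$, hence $\alpha$ is strongly $L^2$-differentiable on $(0,T)$ and
\[
(d/dt)\|\alpha(t)\|_2^2=2(\alpha'(t),\alpha(t))=-2\bigl(d_{A_1(t)}^*B_1(t)-d_{A_2(t)}^*B_2(t),\,\alpha(t)\bigr).
\]
The first step is the algebraic decomposition
\[
d_{A_1}^*B_1-d_{A_2}^*B_2 = d_{A_1}^*\beta+[\alpha\lrc B_2],
\]
which follows immediately from $d_{A_j}^*=d^*+[A_j\lrc \cdot]$ and bilinearity of the bracket.

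Next I would carry out an integration by parts on the first piece, writing $(d_{A_1}^*\beta,\alpha)=(\beta,d_{A_1}\alpha)$. This is exactly where the boundary conditions enter, and I would handle the two cases uniformly via Lemma~\ref{lemDN1}: in case (N), $B_j\in W_1$ with $(B_j)_{norm}=0$ put $\beta\in\D(D_{A_1}^*)$ by \eqref{C19N}, while $\alpha\in W_1\subset \D(D_{A_1})$ by \eqref{C18}; in case (D), $\alpha\in W_1$ with $\alpha_{tan}=0$ places $\alpha\in\D(d_{A_1})$ by \eqref{C19D}, while $\beta\in W_1\subset\D(d_{A_1}^*)$. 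In either case $d_{A_1}$ and $d_{A_1}^*$ are mutually adjoint on the relevant forms, and a straightforward computation using the symmetry $[A\wedge A_1]=[A_1\wedge A]$ for $\kf$-valued 1-forms gives the crucial geometric identity
\[
d_{A_1}\alpha \;=\; \beta + \tfrac12[\alpha\wedge\alpha],
\]
obtained by direct expansion of $d_{A_1}(A_1-A_2)=d(A_1-A_2)+[A_1\wedge(A_1-A_2)]$ and comparison with $B_1-B_2=d(A_1-A_2)+(1/2)\{[A_1\wedge A_1]-[A_2\wedge A_2]\}$.

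For the remaining term I would invoke the duality \eqref{C9} between $[\cdot\wedge\cdot]$ and $[\cdot\lrc\cdot]$, which gives
\[
([\alpha\lrc B_2],\alpha) \;=\; ([\alpha\wedge\alpha],B_2) \;=\; (B_2,[\alpha\wedge\alpha]).
\]
Assembling the pieces yields
\[
(d/dt)\|\alpha\|_2^2=-2\|\beta\|_2^2 -(\beta,[\alpha\wedge\alpha]) -2(B_2,[\alpha\wedge\alpha]) = -2\|\beta\|_2^2 -(B_1+B_2,[\alpha\wedge\alpha]),
\]
which is \eqref{U5}.

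The main obstacle is the integration by parts: one must verify that $\beta$ and $\alpha$ lie in the domains of genuinely adjoint closed operators so that no boundary terms appear. Everything else — the algebra with $[\cdot\wedge\cdot]$ and the duality with $[\cdot\lrc\cdot]$ — is a purely formal calculation once the correct domains are established from Lemma~\ref{lemDN1} and \eqref{C18}.
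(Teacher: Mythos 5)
Your argument is correct and reaches the same identity by a mild reorganization of the paper's algebra: you collapse $D_{A_1}^*B_1 - D_{A_2}^*B_2$ to $D_{A_1}^*\beta + [\alpha\lrc B_2]$ up front, integrate by parts once, and handle the leftover bracket by the $\wedge$--$\lrc$ duality \eqref{C9}; the paper instead integrates by parts in each of $(D_{A_j}^*B_j,\alpha)$ separately and computes $D_{A_j}\alpha$ for $j=1,2$ (getting $\beta\pm\tfrac12[\alpha\wedge\alpha]$). Both routes rely on the same two ingredients, the identity $D_{A_1}\alpha=\beta+\tfrac12[\alpha\wedge\alpha]$ and the duality \eqref{C9}, and the final assembly is identical.

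The one place that needs tightening is your appeal to Lemma~\ref{lemDN1} (and to \eqref{C18}) to place $\beta$ and $\alpha$ in the domains of $D_{A_1}^*$ and $D_{A_1}$. That lemma carries the standing hypothesis $A\in L^\infty(M)$, which is \emph{not} available for a strong solution: the definition gives only $A(t)\in W_1\subset L^6$. The paper avoids the lemma by making the integration by parts explicit at the level of the $A=0$ operators: one writes $D_{A_1}^*\beta = D^*\beta + [A_1\lrc\beta]$ and $D_{A_1}\alpha = D\alpha + [A_1\wedge\alpha]$, uses $\beta\in W_1$ with $\beta_{norm}=0$ (hence $\beta\in\D(D^*)$) and $\alpha\in W_1\subset\D(D)$ for the term $(D^*\beta,\alpha)=(\beta,D\alpha)$, and uses the pointwise duality \eqref{C9} together with H\"older (everything is in $L^6$ on a compact 3-manifold) for the bracket term $([A_1\lrc\beta],\alpha)=(\beta,[A_1\wedge\alpha])$. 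Replacing your citation of Lemma~\ref{lemDN1} with this direct verification closes the only gap; the rest of your computation is correct.
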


                   \begin{proof}
   Consider first the Neumann boundary condition \eref{U3N}. In this case the
  heat equation is $ A'(s) = -D_{A(s)}^* B(s)$ wherein $D$ denotes
  the maximal operator defined in Section \ref{secDN} and
  $D_{A(s)}^* B(s) =D^* B(s) + [ A(s) \lrc B(s)]$, which is in $L^2(M)$
  because $B(s)$ and $A(s)$ are both in $W_1$ and $B(s)_{norm} =0$.
  Since $\D(D) \supset W_1$ we may integrate by
  parts in the third line below.
   \begin{align}
(1/2) (d/dt)
 \| A_1(t)-& A_2(t) \|_2^2
=(A_1' -A_2', A_1- A_2)      \notag \\
&= (- D_{A_1}^* B_1 + D_{A_2}^* B_2, A_1 -A_2)      \notag \\
&= -(B_1, D_{A_1} (A_1 - A_2)) + ( B_2, D_{A_2}( A_1 -A_2)).    \label{U6}
\end{align}
But
\begin{align*}
D_{A_1}( A_1 -&A_2) = D(A_1- A_2) + [A_1 \wedge ( A_1-A_2) ]\\
&= B_1 -B_2 -(1/2) [ A_1\wedge A_1 ] + (1/2) [ A_2\wedge A_2 ]
+ [ A_1 \wedge ( A_1 - A_2) ]\\
& = B_1 - B_2 +(1/2) [ (A_1 -A_2) \wedge (A_1 - A_2) ].
\end{align*}
Defining $\alpha = (1/2) [(A_1-A_2)\wedge (A_1 - A_2)]$, we find, similarly,
that $ D_{A_2}(A_1 -A_2) = B_1 - B_2 - \alpha$. Hence
\begin{align*}
(1/2) (d/dt) \| A_1(t)- A_2(t) \|_2^2
&= -( B_1, B_1-B_2 + \alpha) + ( B_2, B_1-B_2 -\alpha)\\
&= - \| B_1 - B_2\|_2^2 - (B_1 + B_2, \alpha),
\end{align*}
which is \eref{U5}.

Next, consider the Dirichlet boundary condition \eref{U3D}. In this case
the heat equation is $A'(s) = -d_{A(s)}^* B(s)$, wherein $d$ is
the minimal covariant exterior derivative operator.
By \eref{U3D} we have $ (A_1(t) - A_2(t))_{tan} =0$. Since this difference is
also in $W_1$ the difference is in the domain of  $d$ by \eref{C19D}.
 We may therefore
integrate by parts, as in \eref{U6}, to find
$(1/2) (d/dt) \| A_1(t)- A_2(t) \|_2^2
 = -(B_1, d_{A_1} (A_1 - A_2)) + ( B_2, d_{A_2}( A_1 -A_2))$.
 The rest of the proof is the same as the Neumann case,
with $D$ replaced by $d$.
\end{proof}

\begin{proof}[Proof of Theorem \ref{thmunique}]
By \eref{U5} we have
\begin{align*}
(d/dt) \| A_1(t) - A_2(t) \|_2^2 &\le |(B_1 + B_2, ( A_1- A_2)\wedge (A_1 - A_2) )|\\
&\le ( \| B_1(t) \|_\infty + \|B_2(t) \|_\infty ) c \| A_1(t) - A_2(t) \|_2^2
\end{align*}
By  condition e) in  Definition \ref{defstrsol}
 we have, for some $b \in (0, T)$ and $a_5 <\infty$,
$$
 t^{3/4} \|B_j(t)\|_\infty \le a_5/2 \
                \text{for}\  0 < t \le b ,\  j =1,2.
 $$
Hence
\beq
(d/dt) \| A_1(t) - A_2(t) \|_2^2 \le a_5 t^{-3/4} \| A_1(t) - A_2(t) \|_2^2.      \label{U9}
\eeq
Since $\int_0^b t^{-3/4} dt < \infty$ and $\| A_1(0) - A_2(0) \|_2 = 0$,
Gronwall's lemma now shows that  $\| A_1(t) - A_2(t)\|_2 =0$
for $ 0 < t \le b$.
(For example \eref{U9} shows that
 $(d/dt) \{ e^{- 4a_5 t^{1/4}} \| A_1(t) - A_2(t) \|_2^2\} \le 0$.)
 Now if $[0,a]$ is a maximal interval of equality  and $a <T$ then, taking the origin
 now at $t =a$, condition d) in  Definition \ref{defstrsol} shows that
 $\| B_j(t)\|_\infty$ are both bounded on any finite interval $[a, b] \subset [a, T)$
 and therefore $ (t-a)^{3/4} \| B_j(t) \|_\infty$ is bounded on  $[a,b]$. The preceding step in the proof now shows that $A_1 = A_2$ on $[a, b]$ and
 therefore $a = T$.
\end{proof}

\begin{remark}\label{remU5}{\rm  It has already been pointed out in Remark \ref{remU1} that it is  the weak parabolicity of the Yang-Mills heat equation
 that is responsible for uniqueness under imposition of only two boundary conditions on the three component form $A(t)$.
       Although we have already   proven
 uniqueness of the parabolic equation \eref{ST11} under standard types of
 Dirichlet or Neumann boundary conditions it is potentially illuminating
 to see whether the previous proof of uniqueness for the weakly parabolic
 equation translates to the parabolic case and why it requires three boundary conditions on the three component form $C(t)$. It is indeed possible to carry out
 the preceding proof for the parabolic case, although it is a little more complicated,
 and does shed light on this comparison question for uniqueness.
 However we will not discuss it further.
}
\end{remark}


\section{Long time existence}    \label{secLTE}

   Here we complete the proof of Theorems \ref{thm1N},
   \ref{thm1D} and \ref{thm1M}.

   We will need the growth estimate  \eref{M30} for strong solutions. But
  the proof of \eref{M30} given in Section \ref{secfe4} relies on
  existence of derivatives, e.g., $B'(t)$, which
  have not been proven to exist for a strong solution.
  We are therefore going to construct approximations of a given strong
   solution by  a sequence of smooth solutions,
  locally in time, using the parabolic equation \eref{ST11} and its
  partial gauge transforms
  $A_\epsilon$, described in Section  \ref{secST3}.

\subsection{Regularization of strong solutions}

                 \begin{lemma} \label{lemlocreg}
                 $($Local regularization.$)$
      Suppose that $A$ is a strong solution
 over $[0, T)$ for some $T \le \infty$. Let $ 0 < t <T$ and define
 $\beta = \sup_{0\le s \le t} \|A(s)\|_{W_1}$. Then there exists $\tau >0$, depending only on $\beta$, such that, for any interval $[a, b] \subset (0, t]$
  of length  $b-a <\tau$,
  there exists a sequence $A_n$ of smooth solutions over
 $[a,b]$ such that
 \begin{align}
 \sup_{ a \le s \le b} \Big\{ \|A_n(s)& - A(s)\|_{W_1}
                                         + \|A_n'(s) - A'(s)\|_{L^2}                   \notag \\
 &+ \| B_n(s) - B(s)\|_{W_1}
    +\| B_n(s) - B(s)\|_\infty \Big\} \rightarrow 0                         \label{LT5}
 \end{align}
 as $n\rightarrow \infty$.
 \end{lemma}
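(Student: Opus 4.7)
The plan is to exploit the parabolic equation \eref{ST11} together with the gauge-transform procedure of Theorems \ref{thmpara} and \ref{thmSTE3} as a local regularization device, and then use uniqueness (Theorem \ref{thmunique}) to identify the regularization with the given strong solution (suitably shifted in time). Fix $\delta\in(0,a)$ (its size to be specified). The datum $A(a-\delta)\in W_1$ inherits the Neumann or Dirichlet boundary condition from $A$, so by Remark \ref{remcoerc} it lies in the appropriate space $H_1$ with $\|A(a-\delta)\|_{H_1}$ bounded by a constant multiple of $\beta$. Apply Theorem \ref{thmEUP} to obtain a solution $C:[0,\tau_0)\to H_1(M)$ of the parabolic equation \eref{ST11} with $C(0)=A(a-\delta)$, where $\tau_0$ depends only on $\beta$ and $C\in C^\infty((0,\tau_0)\times M)$. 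Set $\tau:=\tau_0/2$; under the hypothesis $b-a<\tau$ we may choose $\delta<\tau_0/4$ so that $b-a+\delta<\tau_0$, guaranteeing that $C$ exists and is smooth on the closed interval $[\delta,b-a+\delta]\subset(0,\tau_0)$.

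For each $\epsilon\in(0,\delta)$, form the smooth Yang-Mills solution $A_\epsilon=C^{g_\epsilon}$ on $[\epsilon,b-a+\delta]$ as in Theorem \ref{thmSTE3}. That theorem produces a strong Yang-Mills solution $A^\star$ on $[0,b-a+\delta]$ with $A^\star(0)=A(a-\delta)$ such that $A_\epsilon\to A^\star$, $A_\epsilon'\to (A^\star)'$, $B_\epsilon\to B^\star$ in the senses of \eref{sth2}--\eref{sth4}. The constructed $A^\star$ satisfies the Neumann condition $B^\star(s)_{norm}=0$, respectively the Dirichlet condition $A^\star(s)_{tan}=0$: this follows because Corollary \ref{corST10} provides the corresponding vanishing for $C$, Lemma \ref{lemg4} provides $(g_\epsilon^{-1}dg_\epsilon)_{tan}=0$ (Dirichlet) or $(g_\epsilon^{-1}dg_\epsilon)_{norm}=0$ (Neumann), and conjugation by the $K$-valued function $g_\epsilon$ preserves both $B_{norm}=0$ and $A_{tan}=0$ pointwise; the $H_1$ limit in \eref{sth2}--\eref{sth3} then transfers these boundary conditions to $A^\star$ and $B^\star$. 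Theorem \ref{thmunique} therefore applies to $A^\star$ and to the shifted strong solution $s\mapsto A(a-\delta+s)$, both of which have the same initial datum and satisfy the relevant boundary condition, and yields $A^\star(s)=A(a-\delta+s)$ for all $s\in[0,b-a+\delta]$.

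Now choose any sequence $\epsilon_n\downarrow 0$ with $\epsilon_n<\delta$ and set $A_n(s):=A_{\epsilon_n}\bigl(s-(a-\delta)\bigr)$ for $s\in[a,b]$. Each $A_n$ is a smooth Yang-Mills solution on $[a,b]$. Since $[a,b]$ corresponds to $[\delta,b-a+\delta]$ under the shift, the time parameter stays bounded below by $\delta>0$, so the $t^{1/2}$ and $t^{3/4}$ weights appearing in \eref{sth2a}, \eref{sth3}, \eref{sth4} are uniformly bounded below and the four convergence statements required in \eref{LT5} follow directly from \eref{sth2}--\eref{sth4}; the equivalence of $H_1$ and $W_1$ norms on boundary-adapted forms (Remark \ref{remcoerc}) converts the $H_1$ conclusion into the $W_1$ conclusion. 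The principal technical point is the uniqueness step, in which one must verify that the gauge-transformed limit $A^\star$ satisfies the appropriate boundary condition so that Theorem \ref{thmunique} applies; everything else is bookkeeping of convergence statements already established in Theorem \ref{thmSTE3}.
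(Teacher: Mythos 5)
Your proposal is correct and follows essentially the same route as the paper: apply the short-time existence theorem for the parabolic equation \eref{ST11} with initial data $A(t_0)$ at a time $t_0$ slightly to the left of $a$ (you take $t_0 = a-\delta$), build the smooth gauge-transformed solutions $A_\epsilon$ as in Theorem \ref{thmSTE3}, identify the limit with $A$ via the uniqueness theorem, and note that the weights $t^{1/2}$, $t^{3/4}$ in \eref{sth2a}--\eref{sth4} are harmless once the time parameter is bounded below by $\delta>0$. Your explicit verification that the limiting solution $A^\star$ inherits the Neumann/Dirichlet boundary condition, which Theorem \ref{thmunique} needs, is a detail the paper leaves implicit; otherwise the two arguments coincide.
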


                        \begin{proof}
  The constant $\beta$ is finite because $A:[0, T) \rightarrow W_1$
  is continuous.
  By Theorem \ref{thmpara}  there exists $\tau >0$ such that, for any
  $t_0 \in [0,T)$, a solution $C(\cdot)$ to \eref{ST11}
   with initial value $A(t_0)$,
  exists over $[t_0, t_0 +\tau)$.
  Suppose then that $[a,b] \subset (0, t]$ and that $b < a +\tau$.
  Choose $t_0 \in (0,a)$ with $b <  t_0 +\tau$.
  Then $[a,b] \subset (t_0, t_0 +\tau)$ and the solution $C(\cdot)$  to \eref{ST11}
  over $[t_0, t_0 +\tau)$, with $C(t_0) = A(t_0)$,
   exists over $[t_0, b]$, at least. Define the
  usual gauge transforms $A_\epsilon$ of $C$ over $[t_0 +\epsilon, b]$
  as in \eref{sth1}. By Theorem \ref{thmSTE3} the smooth solutions
  $A_\epsilon$ converge as $\epsilon\downarrow 0$ to a strong solution
  on $[t_0, b]$ with initial data $A(t_0)$. Therefore, by the uniqueness
   theorem  of Section \ref{secST4},  the  solutions $A_\epsilon$
   converge to $A$ itself. The sense of convergence is specified in
   Theorem \ref{thmSTE3}  in \eref{sth2}, \eref{sth2a}, \eref{sth3} and \eref{sth4}.
   In particular, choosing $\epsilon = 1/n$, it follows from these
    that  \eref{LT5}  holds because $a -t_0 >0$.
   \end{proof}

 \begin{corollary} \label{corapstrong}  For any strong solution $A(\cdot)$
 on $[0,T)$

 \noindent
 a$)$ $\|B(\cdot)\|_2$ is non-increasing on $[0,T)$ and
 the Sobolev inequalities \eref{Sob1} and \eref{005} hold for $s \in (0, T)$,

 \noindent
 b$)$ the apriori estimates \eref{fe5}, \eref{fe11}, \eref{fe12}
          and \eref{fe80} hold, and

 \noindent
 c$)$ the inequalities \eref{M81'} -- \eref{M80'} hold.
  \end{corollary}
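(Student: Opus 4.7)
The plan is to invoke Lemma \ref{lemlocreg}, which locally approximates any strong solution by smooth solutions, and then to transfer the identities and inequalities already established for smooth solutions in Sections \ref{secSobsol}--\ref{secfe} by passing to the limit. Fix $t_1 \in (0,T)$ and set $\beta = \sup_{0 \le s \le t_1}\|A(s)\|_{W_1}$, which is finite by continuity of $A(\cdot)$ into $W_1$. Lemma \ref{lemlocreg} then produces, for any interval $[a,b] \subset (0, t_1]$ of length less than $\tau(\beta)$, a sequence $A_n$ of smooth solutions on $[a,b]$ for which $A_n \to A$ in $W_1$, $A_n' \to A'$ in $L^2$, and $B_n \to B$ in $W_1 \cap L^\infty$, uniformly in $s \in [a,b]$.

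For part (a), the monotonicity $s \mapsto \|B_n(s)\|_2$ on $[a,b]$ (which is \eref{5.21} for smooth $A_n$) transfers to $\|B(s)\|_2$ by uniform $L^2$ convergence; piecing together a finite partition of $[\epsilon, t_1]$ into subintervals each shorter than $\tau(\beta)$ yields monotonicity on $[\epsilon, t_1]$, and since $B(s) \to B_0$ in $L^2$ as $s \downarrow 0$ by continuity of $A(\cdot)$ in $W_1$, non-increase extends to all of $[0, T)$. The pointwise-in-$s$ Sobolev inequalities \eref{Sob1} and \eref{005} for $A_n$ will pass to the limit since each term converges: $\|B_n(s)\|_6 \to \|B(s)\|_6$ via $W_1 \hookrightarrow L^6$, $\|A_n'(s)\|_2 \to \|A'(s)\|_2$, and the coefficients $\lambda_3(s)$, $\lambda_0$ depend continuously on $\|B_n(s)\|_3$, $\|B_n(s)\|_2$, which likewise converge.

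For part (b), each estimate is first derived for $A_n$ on $[a,b]$ and then passed to the limit using the uniform convergence in \eref{LT5}. The additive identity $\|B_n(t)\|_2^2 - \|B_n(a)\|_2^2 = -2\int_a^t \|A_n'(s)\|_2^2\,ds$ gives \eref{fe5} on $[a,t_1]$ after taking $n \to \infty$, and then on all of $[0, t_1]$ after letting $a \downarrow 0$ by monotone convergence and continuity at $0$. Similarly, \eref{fe11}, \eref{fe12}, and \eref{fe80} for $A_n$ are bounded by the continuous non-decreasing functions $C_2, C_3, C_4$ evaluated at $t - a$ and $\|B_n(a)\|_2$, and since $\|B_n(a)\|_2 \to \|B(a)\|_2 \le \|B_0\|_2$ (by part (a)), Fatou's lemma applied to the time integrals together with continuity of the $C_j$'s yields the same bounds for $A$ on $[0,t_1]$.

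Part (c) then follows by combining \eref{fe80} (now available for strong solutions) with the identities \eref{M77'}, \eref{M72'}, \eref{M75'} used in the proof of Lemma \ref{lemM5}: each such identity holds for the smooth $A_n$ on $[a,b]$, and passing to the limit with $A_n \to A$ in $W_1 \cap L^4$, $A_n' \to A'$ in $L^2$, $B_n \to B$ in $W_1$, together with \eref{fe80} to justify integrability of $[A \cdot A']$ in $s$, gives the identities for the strong solution on $[a,t_1]$; a final $a \downarrow 0$ limit, justified by continuity of $A$ at $t=0$ in $W_1$, replaces $A(a)$ and $B(a)$ by $A_0$ and $B_0$, and the algebra of Lemma \ref{lemM5} then yields \eref{M81'}--\eref{M80'}. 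The principal technical point is precisely this: the smooth approximations live on intervals bounded away from $0$, so any estimate whose constant involves $\|A_0\|_{W_1}$ or $\|B_0\|_2$ requires an additional passage to $t=0$, and the justification rests on continuity of the strong solution at $t=0$ combined with the monotonicity of $\|B(\cdot)\|_2$ from part (a).
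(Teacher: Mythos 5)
Your overall strategy (regularize via Lemma \ref{lemlocreg}, transfer estimates by passing to the limit, handle $t=0$ last) is the right one and matches the paper for part a). But part b) has a genuine gap, and it is precisely the one the paper flags at the start of its own proof: the estimates \eref{fe11}, \eref{fe12} and \eref{fe80} are global in time, involving integrals over $(0,t)$ and terms like $t\|B(t)\|_6^2$, while the smooth approximations $A_n$ exist only on intervals $[a,b]$ of length strictly less than $\tau$. Applying \eref{fe12} to $A_n$ on $[a,b]$ and passing to $n\to\infty$ gives you at most the shifted estimate
$$(t-a)\|B(t)\|_6^2 + \int_a^t e^{\psi_{a,s}^t}(s-a)\|A'(s)\|_6^2\,ds \le C_3(t-a,\|B(a)\|_2)$$
for $t\in(a,b)$ only, which you cannot promote to $t\in(a,t_1]$ when $t_1-a\ge\tau$. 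And unlike \eref{fe5} or the simple monotonicity of $\|B(\cdot)\|_2$, this inequality is not additive across a partition: the terms $(t-a)\|B(t)\|_6^2$, the factors $(s-a)$ and the exponential weights $e^{\psi_{a,s}^t}$ do not telescope, so summing the short-interval versions does not yield the statement on $[0,t_1]$.

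The paper's fix, which your proposal omits, is to descend from the endpoint estimate to the differential inequality \eref{fa36}. A new difficulty then appears -- the $\|B_n'(s)\|_2^2$ term, over which Lemma \ref{lemlocreg} gives no control -- and the paper removes it by multiplying by $\kappa^2$ and substituting the Sobolev inequality \eref{006}, producing \eref{LT10}. Only after this substitution can one safely let $n\to\infty$ (Fatou plus lower semicontinuity of the $L^6$ norm under $L^2$ convergence, with $\psi_n\to\psi$ uniformly). The resulting inequality \eref{LT12} has an additive structure in the exponential weights ($\psi_\sigma^a+\psi_a^s=\psi_\sigma^s$) and in the boundary terms, which is exactly what permits telescopic summation over a partition of $[\sigma,t]$ into subintervals of length $<\tau$, and then integration in $\sigma$ recovers \eref{fe12} with the same constant $C_3$. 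Your proposal for part c) is structurally sound (the identity \eref{M75'} is additive, so piecing together is routine), but it depends on \eref{fe80}, hence on \eref{fe12}, so the gap in b) propagates there as well.
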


           \begin{proof}
 Assume that $A(\cdot)$ is a strong solution on $[0,T)$.

 For the proof of a), if $0 <s < T$ pick $t \in (s,T)$ and choose $\tau >0$ as in Lemma \ref{lemlocreg}. Choose an interval $[a,b] \subset (0, t]$ of length
 $b -a < \tau$ with
 $a < s < b$,  and choose a sequence $A_n$ of smooth solutions
  over $[a,b]$ as in the lemma.
 By \eref{Sob1} there holds, for the given $s$,
 \beq
 \| B_n(s)\|_6^2 \le \kappa^2 \Big( \| A_n'(s)\|_2^2
                      + \lambda_n(s) \| B_n(s) \|_2^2 \Big),        \label{LT6}
 \eeq
 where $\lambda_n(s)$ is a polynomial in $\|B_n(s)\|_3$ or $\|B_n(s)\|_2$.
 By Lemma \ref{lemlocreg}, $A_n'(s)$ converges to $A'(s)$ in $L^2(M)$, while
 $B_n(s)$ converges to $B(s)$ in $W_1(M)$ and therefore in $L^2(M)$,
 $L^3(M)$ and $L^6(M)$. Letting $n\rightarrow \infty $ in \eref{LT6} proves
 \eref{Sob1} for strong solutions.
 Now $\|B_n(\cdot)\|_2$ is non-increasing on $[a,b]$ by Theorem \ref{thmfe}
 and therefore $\| B(\cdot)\|_2$ is non-increasing also on this interval.
 Thus $\| B(\sigma)\|_2$
  is non-increasing on any  interval    $[a,b] \subset (0, t]$ of length
   less than $\tau$, and, being continuous at $ \sigma =0$,
    is therefore non-increasing on $[0,t]$ for any $t <T$.
   Now \eref{005} follows from \eref{Sob1} and the monotonicity of $\lambda_2(s)$ as in the original proof of \eref{005}.
   This proves the assertions of Part a).

 For the proof of b) note first that, unlike the Sobolev inequality just proven
 for fixed $s$, all four of the inequalities in Theorem \ref{thmfe} are global,
  in the
 sense that they involve integrals over large intervals. To use Lemma \ref{lemlocreg} it will be necessary to partition the large intervals into small intervals  of length less than $\tau$ and establish inequalities in each interval
 which can be added up with appropriate cancelation of boundary terms.
 We will illustrate the method by deriving the most complicated estimate,
  \eref{fe12}.
  Given a strong solution $A$ over $[0, T)$  and, given $t \in (0,T)$, pick
  $\tau$ as in Lemma \ref{lemlocreg}. Suppose that
  $[a,b] \subset (0, t]$ with $b-a <\tau$. Denote by $A_n$ a sequence of
  smooth solutions as prescribed in Lemma \ref{lemlocreg}.
  We may apply the inequality \eref{fa36} to $A_n$ over the interval $[a,b]$
  by taking the origin to be at $a$. Integrating \eref{fa36} over $[a,b]$ we find
  \beq
  e^{-\psi_n(s)} \|A_n'(s)\|_2^2|_a^b
             + \int_a^b e^{-\psi_n(s)} \|B_n'(s)\|_2^2 ds \le 0.  \label{LT9}
  \eeq
  Here
  $\psi_n(s) = \int_a^s \{ \lambda_M + 2(\kappa c)^2 \|B_n(\sigma)\|_3^2 \} d\sigma$ as in \eref{fa31}.
   Before letting $n \rightarrow \infty$ we need to eliminate
  $\|B_n'(s)\|_2^2$, which we have no control over (at the present time.)
  To this end multiply  \eref{LT9} by $\kappa^2$  and use
  \eref{006} in the integrand   to find
  \beq
  \kappa^2 e^{-\psi_n(s)} \| A_n'(s)\|_2^2 |_a^b
     +\int_a^b e^{-\psi_n(s)} \| A_n'(s)\|_6^2 ds
  \le \kappa^2 \int_a^b e^{-\psi_n(s)} \lambda_n(a) \|A_n'(s) \|_2^2 ds, \label{LT10}
  \eeq
  where   $\lambda_n(a)$ is a fourth degree polynomial in $\|B_n(a)\|_2$
  (see \eref{gfs2}).
      By Lemma \ref{lemlocreg} $A_n'(s)\rightarrow A'(s)$ in $L^2(M)$
      uniformly in $s$ over $[a,b]$. It now follows from Fatou's lemma
      that $\|A'(s)\|_6^2 \le \liminf_{n\rightarrow \infty} \|A_n'(s)\|_6^2$ and
      the same argument applies to the entire integral on the left
      of \eref{LT10}, considering that
      $B_n(\sigma)$, which appears in $\psi_n(s)$,
       converges in $W_1$ to $B(\sigma)$ and
      therefore in $L^3$ also. In fact $\psi_n(s) \rightarrow \psi_a^s$ and
      $\lambda_n(a)$ converges to the corresponding 4th order polynomial in
      $\|B(a)\|_2$. Thus by Part a) and the argument after \eref{gfs2}, we
      may conclude that $\lim_{n\rightarrow \infty}\lambda_n(a) \le \lambda_0$.
        We may now let $n \rightarrow \infty$ to arrive at
     \beq
     \kappa^2 e^{-\psi_a^s} \|A'(s)\|_2^2|_a^b
     +
     \int_a^b e^{-\psi_a^s} \|A'(s)\|_6^2 ds
     \le \kappa^2\lambda_0 \int_a^b e^{-\psi_a^s} \|A'(s)\|_2^2 ds.      \notag
     \eeq

       Now let  $0<\sigma <t$.  If $[a,b] \subset [\sigma, t]$, then,
  using $\psi_\sigma^a + \psi_a^s = \psi_\sigma^s$ for $a \le s$, we
   can multiply the last inequality by $e^{-\psi_\sigma^a}$ to deduce that
   \beq
     \kappa^2 e^{-\psi_\sigma^s} \|A'(s)\|_2^2|_a^b
     +
     \int_a^b e^{-\psi_\sigma^s} \|A'(s)\|_6^2 ds
     \le \kappa^2\lambda_0 \int_a^b e^{-\psi_\sigma^s}
                                 \|A'(s)\|_2^2 ds.                                 \label{LT12}
     \eeq
     Since the exponential factors no longer depend on $a$, \eref{LT12}
     allows for cancellation of the boundary terms thus:  Partition  the interval $[\sigma, t]$
    into small intervals, choosing $\sigma = a_0 < a_1< \cdots < a_n = t$
    with each interval of length less than $\tau$. Taking $a = a_{j-1}$ and
    $b =a_j$ in \eref{LT12} and summing from $j=1$ to $n$ we get
    cancellation of differences on the left and arrive at
    \beq
    \kappa^2\Big\{ e^{-\psi_\sigma^t} \|A'(t)\|_2^2 - \|A'(\sigma)\|_2^2\Big\}
    + \int_\sigma^t e^{-\psi_\sigma^s}  \|A'(s)\|_6^2 ds
    \le \kappa^2\lambda_0 \int_\sigma^t e^{-\psi_\sigma^s}  \|A'(s)\|_2^2 ds, \notag
    \eeq
     which, upon multiplying by $e^{\psi_\sigma^t}$, gives
     \begin{align*}
     \kappa^2 \|A'(t)\|_2^2 + \int_\sigma^t e^{\psi_s^t} \|A'(s)\|_6^2 ds
     \le  \kappa^2\Big\{e^{\psi_\sigma^t} \|A'(\sigma)\|_2^2 +\lambda_0 \int_\sigma^t e^{\psi_s^t}  \|A'(s)\|_2^2 ds   \Big\}.
     \end{align*}
     Combining this with \eref{005},
     which we now know holds for strong solutions  by Part a) of this lemma,
   yields an inequality like \eref{fe21} which gives \eref{fe12} by integration
   with respect to $\sigma$ over $(0,t)$, just as in the original proof of \eref{fe12}.

  For the proof of Part c) observe that, among the inequalities \eref{M81'} - \eref{M80'}, the only one relying on more
  smoothness than is available from the definition of  strong solutions
   is \eref{M80'}, because of its
  dependence on \eref{M73'}, which contains third spatial derivatives of $A$
  on the left side. But the integrated identity \eref{M75'} is clearly derivable from
  Lemma \ref{lemlocreg}
  by adding finitely many identities of the form
  $d^*A(\sigma)|_a^b = \int_a^b [A(s)\cdot A'(s)] ds$
    to arrive at
   $d^*A(t) - d^*A(r) = \int_r^t [A(s)\cdot A'(s)] ds$, and then letting
   $r\downarrow 0$. The rest of the proof is the same as the earlier derivation
   of \eref{M80'}.
  \end{proof}

 \begin{corollary} \label{growthstrong} For any strong solution $A(\cdot)$ over an interval $[0, T)$ the growth estimate \eref{M30} holds.
 \end{corollary}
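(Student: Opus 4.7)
The plan is to essentially replay the proof of Theorem \ref{thmM6}, but to invoke Corollary \ref{corapstrong}(c) in place of Lemma \ref{lemM5}. Recall that the proof of Theorem \ref{thmM6} had exactly one obstacle to extending it to arbitrary strong solutions: the inequalities \eqref{M81'}--\eqref{M80'} were derived using third spatial derivatives of $A$ (via the identities \eqref{M73'} and \eqref{fe80}), and thus required the smoothness hypothesis $A(\cdot) \in C^\infty((0,T)\times M)$. Since Corollary \ref{corapstrong}(c) precisely asserts that these four inequalities remain valid for any strong solution, the only remaining task is to glue them together using the (purely Hilbert-space) Gaffney--Friedrichs estimate.

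Concretely, I would begin by noting that, since $A(t) \in W_1(M)$ and satisfies either $A(t)_{norm}=0$ or $A(t)_{tan}=0$ (these being encoded in the definition of strong solution via the boundary conditions of Theorems \ref{thm1N} and \ref{thm1D}), the scalar Gaffney--Friedrichs inequality \eqref{gaf50} applied with $A=0$ and $\w = A(t)$ yields
\begin{equation*}
(1/2)\|A(t)\|_{W_1(M)}^2 \le \|dA(t)\|_2^2 + \|d^*A(t)\|_2^2 + \lambda_M \|A(t)\|_2^2,
\end{equation*}
exactly as in \eqref{M70}. Next, I would apply Corollary \ref{corapstrong}(c) to bound the three terms on the right: $\|A(t)\|_2$ by \eqref{M81'}, $\|dA(t)\|_2$ by \eqref{M79'}, and $\|d^*A(t)\|_2$ by \eqref{M80'}. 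Each bound is a continuous non-decreasing function of $t$, $\|A_0\|_2$, $\|A_0\|_4$, $\|B_0\|_2$ and $\|d^*A_0\|_2$.

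Finally, I would dominate each of $\|A_0\|_2$, $\|A_0\|_4$, $\|B_0\|_2$ and $\|d^*A_0\|_2$ by a polynomial (linear or quadratic) expression in $\|A_0\|_{W_1(M)}$: the first is immediate from \eqref{ymh2}, the second from Sobolev embedding $W_1 \hookrightarrow L^4$ in three dimensions, the third from $B_0 = dA_0 + (1/2)[A_0\wedge A_0]$ together with Sobolev and $\|dA_0\|_2 \le \|A_0\|_{W_1}$, and the fourth from $\|d^*A_0\|_2 \le \|A_0\|_{W_1}$. Assembling these yields \eqref{M30} with a suitable continuous non-decreasing function $C_5$ depending only on the geometry of $M$. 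There is no genuine obstacle at this stage—all the substantive work has already been done in Corollary \ref{corapstrong}(c), whose proof in turn relied on the local regularization procedure of Lemma \ref{lemlocreg}.
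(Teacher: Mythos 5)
Your proposal matches the paper's proof: both invoke Corollary \ref{corapstrong}(c) to supply the inequalities \eqref{M81'}--\eqref{M80'} for strong solutions, then observe that the argument of Theorem \ref{thmM6} (Gaffney--Friedrichs with $A=0$ plus bounding each right-hand term by the $W_1$ norm of $A_0$) goes through unchanged. Your write-up simply spells out these details more explicitly.
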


             \begin{proof}
             The proof of the inequality \eref{M30} depends on the validity of
             the inequalities \eref{M81'} - \eref{M80'} for strong solutions.
             These have been proven for strong solutions in Corollary \ref{corapstrong},
             wherein the restriction that $A \in C^\infty((0,T))$ was removed. The proof given of
             Theorem \ref{thmM6} is now applicable to any strong solution.
             \end{proof}

\subsection{Dirichlet and Neumann boundary conditions}
 \bigskip
 \begin{proof}[ Proof of Theorems \ref{thm1N} and \ref{thm1D}]
     Suppose that  $A(\cdot)$ is a strong solution to \eref{ymh10}
  over $[0,T)$ satisfying either Neumann boundary conditions, \eref{N1}
  and \eref{N2} or Dirichlet boundary conditions, \eref{D1} and \eref{D2}.
  If $ T <\infty$ then by Theorem \ref{thmM6} there is a number $\beta < \infty$
  such that $\|A(t)\|_{H_1(M)} \le \beta$ for $0 \le t <T$.
        By Theorem \ref{thmSTE} there exists $\delta >0$ such that short time
  solutions exist over $[0, 2\delta)$ if $\| A_0\|_{H_1} \le \beta$.
  Apply this theorem     with $A_0 = A(T -\delta)$.
  Then we may conclude that there is a strong solution
   $\hat A(t)$ over $[ T- \delta, T +\delta)$ such that
    $\hat A(T- \delta) = A(T-\delta)$.
         By  uniqueness, Theorem \ref{thmunique},
    we see that $\hat A(t) = A(t)$ on $ [T - \delta, T)$.
     Hence $\hat A$ extends $A$  to the entire interval $[0, T+\delta)$.

        Since, for $a >0$, the condition  \eref{ymh11} shows that
         $\|B_{A(t)}\|_\infty$ is bounded on $[a,T)$,
         and, since
        $\| B_{\hat A(T-\delta + s)}\|_\infty s^{3/4} $ is bounded
         for $0<s < 2\delta$ by f) in Theorem \ref{thmSTE},
        it follows that, for the extension $A(\cdot)$ to $[0, T+\delta)$,
        one has $\sup_{a \le t <T+\delta} \| B_{A(t)}\|_\infty <\infty$.
        Therefore $A(\cdot)$ is a strong solution on $[0, T+\delta)$.
          Hence the maximal interval of existence of a strong solution is $[0, \infty)$.
     \end{proof}

\subsection{Marini boundary conditions}

      The following lemma will be used to deduce Theorem \ref{thm1M} from
  Theorem \ref{thm1N}.

\begin{lemma}\label{lemnormal}
Suppose that $A \in C^2(M; \L^1 \otimes \kf)$. Then there exists
a function $g\in C^2( M ; K)$ such that
\beq
(A^g)_{norm} =0
\eeq
\end{lemma}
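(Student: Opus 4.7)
The idea is to make $(A^g)_{norm}$ vanish on the boundary by solving an ordinary differential equation along the inward normal direction, and then to extend the resulting gauge transformation smoothly to all of $M$ using a cutoff in the Lie algebra.

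First I would work in an adapted coordinate system $(x^1,\dots,x^{n-1},x^n)$ on a tubular neighborhood $U$ of $\partial M$, as in Notation~\ref{notadc}, so that $\p_n = \p/\p x^n$ is a unit vector field along normal geodesics and $\p M\cap U = \{x^n=0\}$. Writing $A = \sum_i A_i\, dx^i$, the normal component of $A^g = g^{-1}Ag + g^{-1}dg$ is the $dx^n$-component, namely $g^{-1}A_n g + g^{-1}\p_n g$. Consequently $(A^g)_{norm}$ will vanish throughout $U$ precisely when
\begin{equation}
\p_n g(x',x^n) = -A_n(x',x^n)\, g(x',x^n),\qquad g(x',0) = I_{\V}. \notag
\end{equation}
For each fixed $x' \in \p M$ this is a linear ODE in $x^n$ on $\mathrm{End}\,\V$ with $C^2$ coefficients taking values in $\kf$. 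Standard ODE theory produces a unique $C^2$ solution $g$ on $U$; joint smoothness in the parameter $x'$ is automatic since $A_n \in C^2$. Moreover, because $-A_n \in \kf$, the left-invariant structure of the equation forces $g(x',\cdot) \in K$ for all $(x',x^n) \in U$ (one can verify this directly by computing $(d/dx^n)\langle g\xi, g\eta\rangle = 0$ for $\xi,\eta \in \V$, using that $A_n$ acts skew-symmetrically).

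Next I would extend $g$ to a $C^2$ map $M \to K$. Since $g(x',0) = I_\V$ and $g$ is continuous, by shrinking $U$ to a smaller tubular neighborhood $U_\delta = \{-\delta < x^n \le 0\}$, we may assume $g(U_\delta)$ lies inside a neighborhood $\mathcal{O}$ of $I_\V \in K$ on which $\exp:\kf \to K$ is a $C^\infty$ diffeomorphism onto its image. On $U_\delta$ write $g = \exp(\xi)$ for a unique $C^2$ map $\xi : U_\delta \to \kf$ with $\xi|_{\p M} = 0$. Let $\chi \in C^\infty(M;[0,1])$ be a cutoff function with $\chi \equiv 1$ on $U_{\delta/2}$ and support contained in $U_\delta$, and define
\begin{equation}
\tilde g(x) = \begin{cases} \exp(\chi(x)\xi(x)) & x\in U_\delta,\\ I_\V & x \in M\setminus U_\delta.\end{cases} \notag
\end{equation}
Then $\tilde g \in C^2(M;K)$, and $\tilde g$ agrees with $g$ on a neighborhood of $\p M$, so in particular $(A^{\tilde g})_{norm} = (A^g)_{norm} = 0$ on $\p M$.

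The only genuine subtlety is making sure the local solution can be extended globally while staying in $K$; the resolution via the Lie-algebra-valued logarithm $\xi$ and a cutoff depends crucially on the initial condition $g|_{\p M} = I_\V$ being close to a single point of $K$, which is precisely why we solve with $g(x',0)=I_\V$ rather than with more general boundary data. Everything else is routine ODE theory and tubular-neighborhood bookkeeping; no estimates of the type encountered earlier in the paper are required.
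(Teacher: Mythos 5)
Your proof is correct but takes a genuinely different route from the paper. The paper simply sets $g(x_P(s)) = e^{h(s)\langle A,\en\rangle_P}$, where $h$ is a cutoff with $h'(0)=1$: the exponent is linear in $s$ with coefficient frozen at the boundary point $P$, so differentiation at $s=0$ immediately gives $\langle dg,\en\rangle_P = -A_n(P)$, and the cutoff $h$ already handles the extension to $M$. You instead solve the full linear ODE $\p_n g = -A_n(x',x^n)\,g$, which kills $(A^g)_{norm}$ on an entire collar (not merely on $\p M$), and then extend via the $\log$--cutoff device. The trade is: your construction is more systematic and yields the stronger collar statement, but needs a separate extension step and a separate argument that $g$ stays in $K$; the paper's one-liner needs neither, because $\exp$ of a $\kf$-valued function is automatically in $K$ and the cutoff is baked into $h$. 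Since the lemma only asserts vanishing on $\p M$, the extra strength of your ODE solution is discarded by the cutoff anyway.

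One small imprecision: your verification that $g$ remains in $K$ by checking $(d/dx^n)\langle g\xi,g\eta\rangle=0$ only shows $g$ preserves the inner product, i.e.\ $g\in O(\V)$ (resp.\ $U(\V)$), not that $g\in K$ when $K$ is a proper subgroup. The correct reason is the one you gesture at with ``left-invariant structure'': $-A_n\in\kf$ defines a time-dependent left-invariant vector field on $GL(\V)$ that is tangent to the closed embedded subgroup $K$, so by uniqueness of ODE solutions the flow starting at $I_\V\in K$ remains in $K$. With that replacement the argument is complete.
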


          \begin{proof} For a point $P \in \p M$ let $x_P(s), 0\le s <\epsilon$
 be the geodesic in $M$ starting at $P$ and normal to $\p M$ at $P$.
Thus $x_P'(0) = -  \en$, where $\en$ is the outward drawn unit normal
at $P$.   We may choose $\epsilon >0$ so small that  the map
$ \p M \times [0,2\epsilon)\ni P,s \rightarrow x_P(s)$ is a diffeomorphism onto a collar neighborhood $U$ of $\p M$ in $M$. Choose a function $h \in C_c^\infty([0,2\epsilon))$ such that
$h(s) = s$ on $[0, \epsilon)$. Define
\beq
g(y) =\begin{cases} &e^{h(s) \< A, \en\>_P} \ \ \text{if}\ y = x_P(s) \in U\\
         &I_\V  \ \ \text{if}\ \ y \in M-U
         \end{cases}               \notag
  \eeq
  Then $g$ is $C^2$ in $U$ and, since $g(y) = I_\V \equiv e_K$ in a neighborhood of the inner boundary of $U$, it follows that $g \in C^2(M;K)$. Moreover
  $ dg(x_P(s))/ds |_{s=0} = h'(s)|_{s=0} \< A, \en\>_P = \< A, \en\>_P$.
  Therefore
  \begin{align*}
  (A^g)_{norm}(P) &= g(P)^{-1} A_{norm} g(P) + g(P)^{-1} \< dg(P), \en\>\\
  &= A_{norm}(P)  - dg(x_P(s))/ds|_{s=0}\\
  &= 0.
  \end{align*}
\end{proof}

\begin{remark}{\rm The preceding lemma has an imprecise analog for Dirichlet boundary conditions. Suppose that $A$ is in $C^\infty(M)$ and that $B_{tan}=0$.
 Then, given a point $P \in \p M$,
there is a  smooth function $g:M\rightarrow K$ such that
$(A^g)_{tan} =0$ in some
neighborhood of $P$ in $\p M$. Indeed, the connection form $A_{tan}$
 on $\p M$ has curvature  form $B_{tan}$, which is zero.
 So $A_{tan}$ is locally, on $\p M$, a pure gauge.
 That is, there exists a smooth function $\phi$ on a neighborhood
     of $P \in \p M$ such that $ A_{tan} = \phi^{-1} d\phi$ on this neighborhood.
     Extend $\phi$ smoothly to a neighborhood $U$ in $ M$ for which
     $P \in U\cap \p M \subset$ domain  $\phi$  and define
     $ g = \phi^{-1}$ there. It is straightforward to verify then that $(A^g)_{tan} =0$
  on $U \cap \p M$ as asserted.
             Moreover, choosing $\phi(P) = e_K$ and
   $U$ small, one can ensure that $\phi$ takes its values in a
   contractible neighborhood
    of $e_K$ in $K$ and therefore $g$ can be extended to all of $M$.

      For a  nontrivial bundle over $M$ the boundary conditions $B_{norm} =0$
      and $B_{tan}=0$ are both well defined,
      as opposed to $A_{norm} =0$ and $A_{tan}=0$. This has been observed
      and used by W. Gryc, \cite{Gry}, in his work extending the
       no-section theorem of
      Narasimhan and Ramadas, \cite{NR}, to manifolds with boundary.
    }
    \end{remark}

 \bigskip
  \begin{proof}[Proof of Theorem \ref{thm1M}]
  Suppose that $A_0 \in C^2(M)$. By Lemma \ref{lemnormal} there
  exists a function $g \in C^2(M; K)$ such that $ \hat A_0 \equiv A_0^g$
  has normal component zero. Clearly $\hat A_0 \in C^1(M) \subset W_1(M)$.
  By Theorem \ref{thm1N} there exists a unique strong solution
  $\hat A(\cdot)$   to \eref{ymh10} on $[0,\infty)$ such that
  $\hat A(s)_{norm}=0$ for $s \ge 0$  and $\hat B(s)_{norm}=0$ for $s > 0$.
    Define $A(s) = \hat A(s)^{g^{-1}}$ for $s\ge 0$. Since $g \in C^2(M;K)$,
  $A(s)$ is again a strong solution and
  $B(s)_{norm} = (\hat B(s)^{g^{-1}})_{norm} =0$.
    Of course $A(s)_{norm}$ need not be zero for $s\ge 0$.
  However the uniqueness portion of Theorem \ref{thm1N} applies, showing that $A(\cdot)$ is the unique strong solution with $A(0) = A_0$ and $B(s)_{norm} =0$
  for $s >0$.
  \end{proof}


\bibliographystyle{amsplain}
\bibliography{ymh}

\end{document}